\documentclass[10pt,a4paper]{amsart}
\usepackage{amsmath,amssymb,amsthm}
\usepackage{enumitem, scalerel}
\usepackage[utf8]{inputenc}
\usepackage[T1]{fontenc}
\usepackage[english]{babel}
\usepackage{libertine}
\usepackage[libertine,cmintegrals,cmbraces]{newtxmath}

\usepackage{microtype}
\usepackage{xcolor}
\definecolor{darkred}{RGB}{139,0,0}
\definecolor{darkblue}{RGB}{0,0,139}
\definecolor{darkgreen}{RGB}{0,100,0}
\usepackage{hyperref}
\hypersetup{
  colorlinks   = true, 
  urlcolor     = darkred, 
  linkcolor    = darkred, 
  citecolor   = darkgreen}
\usepackage{tikz}
\usepackage{tikz-cd}
\usepackage[capitalise]{cleveref}


\newtheorem{bigthm}{Theorem}
\newtheorem{bigcor}[bigthm]{Corollary}

\newtheorem{thm}{Theorem}[section]
\newtheorem*{nthm}{Theorem}
\newtheorem{lem}[thm]{Lemma}
\newtheorem{conj}[thm]{Conjecture}
\newtheorem{prop}[thm]{Proposition}
\newtheorem{cor}[thm]{Corollary}

\theoremstyle{definition}
\newtheorem{dfn}[thm]{Definition}

\theoremstyle{remark}
\newtheorem{ex}[thm]{Example}

\newtheorem{rem}[thm]{Remark}
\newtheorem*{nrem}{Remark}

\setenumerate[1]{leftmargin=*,labelindent=0pt,label=(\roman*),}
\setitemize[1]{label=\raisebox{0.25ex}{\tiny$\bullet$}}

\tikzset{
  symbol/.style={
    draw=none,
    every to/.append style={
      edge node={node [sloped, allow upside down, auto=false]{$#1$}}}
  }
}

\newcommand{\id}{\ensuremath{\mathrm{id}}}
\newcommand{\im}{\ensuremath{\mathrm{im}}}

\newcommand{\fr}{\ensuremath{\mathrm{fr}}}
\newcommand{\Fr}{\ensuremath{\mathrm{Fr}^{\scaleobj{0.8}{+}}}}

\newcommand{\free}{\ensuremath{\mathrm{free}}}

\newcommand{\Maps}{\ensuremath{\mathrm{Map}}}

\newcommand{\MSO}{\ensuremath{\mathbf{MSO}}}

\newcommand{\HZ}{\ensuremath{\mathbf{HZ}}}

\newcommand{\SO}{\ensuremath{\mathrm{SO}}}

\newcommand{\BO}{\ensuremath{\mathrm{BO}}}
\newcommand{\BSO}{\ensuremath{\mathrm{BSO}}}

\newcommand{\Diff}{\ensuremath{\mathrm{Diff}^{\scaleobj{0.8}{+}}}}
\newcommand{\Diffuo}{\ensuremath{\mathrm{Diff}}}
\newcommand{\BDiff}{\ensuremath{\mathrm{BDiff}^{\scaleobj{0.8}{+}}}}
\newcommand{\BDiffuo}{\ensuremath{\mathrm{BDiff}}}

\newcommand{\hAut}{\ensuremath{\mathrm{hAut}^{\scaleobj{0.8}{+}}}}

\newcommand{\interior}[1]{\ensuremath{\mathrm{int}(#1)}}
\newcommand{\ord}{\ensuremath{\mathrm{ord}}}

\newcommand{\catsingle}[1]{\ensuremath{\mathcal{#1}}}

\newcommand{\oH}{\ensuremath{\mathrm{H}}}
\newcommand{\oO}{\ensuremath{\mathrm{O}}}
\newcommand{\oB}{\ensuremath{\mathrm{B}}}
\newcommand{\oT}{\ensuremath{\mathrm{T}}}

\newcommand{\bfC}{\ensuremath{\mathbf{C}}}

\newcommand{\bfF}{\ensuremath{\mathbf{F}}}

\newcommand{\bfH}{\ensuremath{\mathbf{H}}}

\newcommand{\bfO}{\ensuremath{\mathbf{O}}}

\newcommand{\bfR}{\ensuremath{\mathbf{R}}}
\newcommand{\bfZ}{\ensuremath{\mathbf{Z}}}

\newcommand{\cH}{\ensuremath{\catsingle{H}}}

\newcommand{\map}[5]{\ensuremath{#1\colon\begin{array}{rcl} 
      #2 & \longrightarrow & #3 \\[0.3em] 
      #4 & \longmapsto & #5
    \end{array}}}   

\newcommand{\mapnoname}[4]{\ensuremath{\begin{array}{rcl} 
      #1 & \longrightarrow & #2 \\[0.3em] 
      #3 & \longmapsto & #4
    \end{array}}}

\newcommand{\ra}{\rightarrow}
\newcommand{\lra}{\longrightarrow}

\newcommand{\xra}[1]{\xrightarrow{#1}}
\newcommand{\xlra}[1]{\overset{#1}{\longrightarrow}}

\newcommand{\longtwoheadrightarrow}{\relbar\joinrel\twoheadrightarrow}

\newcommand{\Mod}[1]{\ (\mathrm{mod}\ #1)}

\newcommand{\BSp}{\mathrm{BSp}}

\newcommand{\Sp}{\mathrm{Sp}}
\newcommand{\PSp}{\mathrm{PSp}}
\newcommand{\PSL}{\mathrm{PSL}}
\newcommand{\GL}{\mathrm{GL}}
\newcommand{\SL}{\mathrm{SL}}
\newcommand{\Aut}{\mathrm{Aut}}
\newcommand{\Hom}{\mathrm{Hom}}

\newcommand{\coker}{\mathrm{coker}}
\newcommand{\bP}{\mathrm{bP}}
\newcommand{\bA}{\mathrm{bA}}
\newcommand{\Ext}{\mathrm{Ext}}
\newcommand{\Tor}{\mathrm{Tor}}

\newcommand{\sgn}{\mathrm{sgn}}

\addtolength{\oddsidemargin}{-10pt}
\addtolength{\evensidemargin}{-10pt}
\addtolength{\textwidth}{+20pt}

\begin{document}

\title{Mapping class groups of highly connected $(4k+2)$-manifolds}
\begin{abstract}
\ We compute the mapping class group of the manifolds $\sharp^g(S^{2k+1}\times S^{2k+1})$ for $k>0$ in terms of the automorphism group of the middle homology and the group of homotopy $(4k+3)$-spheres. We furthermore identify its Torelli subgroup, determine the abelianisations, and relate our results to the group of homotopy equivalences of these manifolds.
\end{abstract}
\author{Manuel Krannich}
\email{\href{mailto:krannich@dpmms.cam.ac.uk}{krannich@dpmms.cam.ac.uk}}
\subjclass[2010]{57R50, 55N22, 57R60, 11F75, 20J06}
\address{Centre for Mathematical Sciences, Wilberforce Road, Cambridge CB3 0WB, UK}
\maketitle

\addtocontents{toc}{\protect\setcounter{tocdepth}{0}}

The classical mapping class group $\Gamma_g$ of a genus $g$ surface naturally generalises to all even dimensions $2n$ as the group of isotopy classes
\[\Gamma_g^n=\pi_0\Diff(W_g)\] of orientation-preserving diffeomorphisms of the $g$-fold connected sum $W_g=\sharp^g(S^n\times S^n)$. Its action on the middle cohomology $ H(g)\coloneq\oH^n(W_g;\bfZ)\cong\bfZ^{2g}$ provides a homomorphism $\Gamma^n_g\ra \GL_{2g}(\bfZ)$ whose image is the symplectic group $\Sp_{2g}(\bfZ)$ in the surface case $2n=2$, and a certain arithmetic subgroup $G_g\subset\Sp_{2g}(\bfZ)$ or $G_g\subset\oO_{g,g}(\bfZ)$ in general, the description of which we shall recall later. The kernel $\oT_g^n\subset\Gamma_g^n$ of the resulting extension \begin{equation}\label{ses1}0 \lra \oT^n_g \lra \Gamma^n_g \lra G_g \lra 0\end{equation} is known as the \emph{Torelli group}---the subgroup of isotopy classes acting trivially on homology. In contrast to the surface case, the Torelli group in high dimensions $2n\ge6$ is comparatively manageable: there is an extension \begin{equation}\label{ses2}0 \lra \Theta_{2n+1} \lra \oT^n_g \lra  H(g)\otimes S\pi_n\SO(n) \lra 0\end{equation} due to Kreck \cite{Kreck}, which relates $\oT_g^n$ to the finite abelian group of homotopy spheres $\Theta_{2n+1}$ and the image of the stabilisation map $S\colon\pi_n\SO(n)\ra\pi_n\SO(n+1)$, shown in \cref{table:SO}.

The description of $\Gamma_g^n$ up to these two extension problems has found a variety of applications \cite{KauffmanKrylov, KrylovPseudo,BerglundMadsenI,BerglundMadsenII, EbertRW, operads,BotvinnikEbertWraith,Kupers,Grey,KrannichRW}, especially in relation to the study of moduli spaces of manifolds \cite{GRWusersguide}. The remaining extensions \eqref{ses1} and \eqref{ses2} have been studied more closely for particular values of $g$ and $n$ \cite{SatoDiff,Fried,Krylovthesis,Krylov,Crowley,GRWabelian} but are generally not well-understood (see e.g.\,\cite[p.1189]{Crowley},\cite[p.873]{GRWabelian},\cite[p.425]{operads}). In the present work, we resolve the remaining ambiguity for $n\ge3$ odd, resulting in a complete description of the mapping class group $\Gamma_g^n$ and the Torelli group $\oT_g^n$ in terms of the arithmetic group $G_g$ and the group of homotopy spheres $\Theta_{2n+1}$.

To explain our results, note that \eqref{ses1} and \eqref{ses2} induce further extensions
\begin{equation}\label{ses3}0 \lra \Theta_{2n+1} \lra \Gamma^n_g \lra \Gamma^n_g/\Theta_{2n+1} \lra 0\quad\text{and}\end{equation} 
\begin{equation}\label{ses4}0 \lra  H(g)\otimes S\pi_n\SO(n) \lra \Gamma^n_g/\Theta_{2n+1} \lra G_g \lra 0,\end{equation} which express $\Gamma_g^n$ in terms of $G_g$ and $\Theta_{2n+1}$ up to two extension problems that are similar to \eqref{ses1} and \eqref{ses2}, but are more convenient to analyse as both of their kernels are abelian. We resolve these two extension problems completely, beginning with an algebraic description of the second one in \cref{section:firstextension}, which enables us in particular to decide when it splits.

\begin{bigthm}\label{bigthm:splitextension}For $n\ge3$ odd and $g\ge1$, the extension
\[0 \lra  H(g)\otimes S\pi_n\SO(n) \lra \Gamma^n_g/\Theta_{2n+1} \lra G_g \lra 0\]
splits for $n\neq3,7$. For $n=3,7$, it splits if and only if $g=1$.
\end{bigthm}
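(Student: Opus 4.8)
\emph{Setup and reductions.}
Write $A_g$ for the $G_g$-module $H(g)\otimes S\pi_n\SO(n)$, so that \eqref{ses4} is classified by a class $\chi_g\in H^2(G_g;A_g)$, which \cref{section:firstextension} describes by an explicit $2$-cocycle assembled from geometric lifts of the generators of $G_g$. The first step is to compute $S\pi_n\SO(n)$ for $n\ge3$ odd: from the long exact sequence of $\SO(n)\to\SO(n+1)\to S^n$, using that the Euler-number map $\pi_n\SO(n+1)\to\pi_nS^n=\bfZ$ is surjective exactly when $S^n$ is parallelisable, i.e.\ for $n\in\{1,3,7\}$, one finds that $S\pi_n\SO(n)$ is infinite cyclic if $n\in\{3,7\}$, is $\bfZ/2$ if $n\equiv1\Mod 8$, and vanishes otherwise. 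If $S\pi_n\SO(n)=0$ then $A_g=0$ and \eqref{ses4} is split, so two cases remain.

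\emph{The case $n\equiv1\Mod 8$.}
Here $A_g=H(g)\otimes\bfZ/2$ and the goal is to prove $\chi_g=0$ for all $g\ge1$ by producing a homomorphic section $G_g\to\Gamma^n_g/\Theta_{2n+1}$. The plan is to lift a transvection generating set of $G_g$ (as recalled in \cref{section:firstextension}) to explicit diffeomorphisms of $W_g$ — generalised Dehn twists along embedded spheres representing the relevant homology classes — and then to run through the defining relations among these transvections: commutation when $\lambda(x,y)=0$, the braid relation when $\lambda(x,y)=\pm1$, and the remaining Steinberg-type relations. By the model of \cref{section:firstextension} each relation holds in $\Gamma^n_g/\Theta_{2n+1}$ up to a correction term in $A_g=H(g)\otimes\bfZ/2$, which one can read off from the cocycle; the substance of this case is the verification that all of these corrections vanish, or can be absorbed by a uniform change of the chosen lifts. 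Since the relations involve only boundedly many handles at a time, this amounts to checking finitely many types of relation, uniformly in $g$.

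\emph{The case $n\in\{3,7\}$.}
Now $S\pi_n\SO(n)\cong\bfZ$, the group $G_g$ is the full symplectic group $\Sp_{2g}(\bfZ)$, and $A_g=H(g)$ is its standard representation. For $g=1$ we have $W_1=S^n\times S^n$ and $G_1=\SL_2(\bfZ)$, and I would build a section from the multiplication on $S^n$ (a group for $n=3$, a Moufang loop with two-sided inverses for $n=7$): the diffeomorphisms $(x,y)\mapsto(xy^{\pm1},y)$ and $(x,y)\mapsto(x,yx^{\pm1})$ realise the elementary transvections, while $(x,y)\mapsto(r(y),x)$ for an orientation-reversing reflection $r$ of $S^n$ realises the standard order-four generator of $\SL_2(\bfZ)$ — one checks directly that it squares to $(x,y)\mapsto(r(x),r(y))$ and hence has order four, and that the remaining defining relation of $\SL_2(\bfZ)$ holds among these diffeomorphisms up to isotopy and elements of $\Theta_{2n+1}$. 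For $g\ge2$ I would instead prove $\chi_g\neq0$. The key point is that the $g=1$ section can be realised by diffeomorphisms fixing a disk, hence supported in a single connected-sum summand of $W_g$; taking one such summand per handle yields a section of \eqref{ses4} over the block subgroup $\prod_{i=1}^{g}\Sp_2(\bfZ)\le G_g$. Any obstruction therefore lives in the handle-mixing part of $\Sp_{2g}(\bfZ)$, and I would exhibit it as a nonzero class in $H^2(\Sp_4(\bfZ);H(2))$ obtained by restricting $\chi_g$ along a two-handle block inclusion $\Sp_4(\bfZ)\hookrightarrow G_g$ — computed from the cocycle of \cref{section:firstextension}, or by a further restriction to a small subgroup of $\Sp_4(\bfZ)$ mixing the two hyperbolic summands — which is automatically zero for $g=1$ because $\Sp_4(\bfZ)$ is not a subgroup of $G_1=\SL_2(\bfZ)$.

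\emph{Expected main obstacle.}
In both surviving cases the real work is the same: to extract from the algebraic model of \cref{section:firstextension} the precise failure of the geometric lifts to be multiplicative, i.e.\ to pin down the relevant $H(g)\otimes S\pi_n\SO(n)$-valued components of the extension cocycle, and then to show these all vanish when $n\equiv1\Mod 8$ while a specific one survives when $n\in\{3,7\}$ and $g\ge2$. That the bookkeeping comes out this way ultimately rests on the parallelisability of $S^n$ for $n\in\{3,7\}$ — which is exactly why the exceptional behaviour is confined to those two dimensions.
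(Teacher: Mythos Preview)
Your computation of $S\pi_n\SO(n)$ is wrong, and this collapses the entire case analysis. You claim that for odd $n\ge3$ the group $S\pi_n\SO(n)$ is $\bfZ$ only when $n\in\{3,7\}$, is $\bfZ/2$ when $n\equiv1\Mod 8$, and vanishes otherwise. In fact $S\pi_n\SO(n)\cong\bfZ$ for \emph{every} $n\equiv3\Mod 4$ (see \cref{table:SO}, or equivalently \cref{lemma:stabiliseorthogonalgroups}: for $n\neq1,3,7$ odd the map $S\pi_n\SO(n)\to\pi_n\SO$ is an isomorphism, and $\pi_n\SO\cong\bfZ$ for $n\equiv3\Mod 4$). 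So for $n=11,15,19,\ldots$ you have $A_g\cong H(g)$ just as in the exceptional cases, yet here the extension \emph{does} split; your dichotomy ``$A_g$ torsion $\Rightarrow$ splits, $A_g\cong\bfZ^{2g}$ $\Rightarrow$ does not split for $g\ge2$'' is therefore false, and your heuristic that the parallelisability of $S^n$ is what governs the splitting is misleading.

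Even setting this aside, the proposal is a plan rather than a proof: in the $n\equiv1\Mod 8$ case you do not actually carry out any of the relation checks, and in the $n\in\{3,7\}$, $g\ge2$ case you do not produce the promised nonzero class. The paper avoids all of this bookkeeping. For $n\neq3,7$ it does not lift generators and verify relations at all; instead it observes that acting on stable framings gives a genuine homomorphism $(s_F,p)\colon\Gamma^n_{g,1/2}\to(H(g)\otimes\pi_n\SO)\rtimes G_g$, and since $S\pi_n\SO(n)\to\pi_n\SO$ is an isomorphism for $n\neq1,3,7$ odd this is an isomorphism of extensions onto the trivial one --- splitting comes for free, with no case distinction on $n\Mod 8$. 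For $n=3,7$ and $g=1$ the paper simply quotes $\oH^2(\SL_2(\bfZ);\bfZ^2)=0$ (\cref{lemma:symplecticlowdegreecohomology}). For $n=3,7$ and $g\ge2$ the nonsplitting is proved by a characteristic-number argument: a splitting would, after composing with $(s_F,p)$, agree with the canonical section $\Sp_{2g}(\bfZ)\hookrightarrow\bfZ^{2g}\rtimes\Sp_{2g}(\bfZ)$ up to conjugation (since $\oH^1(\Sp_{2g}(\bfZ);\bfZ^{2g})=0$), hence would produce a class in $\oH_2(\Gamma^n_{g,1/2};\bfZ)$ with $\sgn=4$ and $\chi^2=0$, contradicting the divisibility $(\chi^2-\sgn)\in8\bfZ$ of \cref{lemma:algebraicpontryaginnumbers}\,(iii), which in turn rests on \cref{thm:almostclosedbundles} and Wall's classification.
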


Unlike \eqref{ses4}, the extension \eqref{ses3} is central and thus classified by a class in $\oH^2(\Gamma^n_g/\Theta_{2n+1};\Theta_{2n+1})$, which our main result, \cref{bigthm:mainextension} below, identifies in terms of three cohomology classes 
\begin{enumerate}
\item $\frac{\sgn}{8}\in\oH^2(\Gamma_g^n/\Theta_{2n+1};\bfZ)$ for $n\neq3,7$ odd,
\item $\frac{\chi^2}{2}\in\oH^2(\Gamma_g^n/\Theta_{2n+1};\bfZ)$ for $n\equiv3\Mod{4}$ and $n\neq3,7$,
\item $\frac{\chi^2-\sgn}{8}\in\oH^2(\Gamma_g^n/\Theta_{2n+1};\bfZ)$ for $n=3,7$,
\end{enumerate}
which can be expressed algebraically, using our description of $\Gamma^n_g/\Theta_{2n+1}$ mentioned above. Referring to \cref{section:mainsection} for the precise definition of these classes, we encourage the reader to think of them geometrically in terms of their pullbacks along the composition \[\BDiff(W_g)\lra\oB\Gamma_g^n\lra\oB(\Gamma_g^n/\Theta_{2n+1})\] induced by taking path components and quotients: the pullback of the first class, which is closely related to \emph{Meyer's signature cocycle} \cite{Meyer}, evaluates a class $[\pi]\in\oH_2(\BDiff(W_g);\bfZ)$ represented by an oriented smooth fibre bundle $\pi\colon E\ra S$ over a closed oriented surface $S$ with fibre $W_g$ to an eighth of the signature $\sgn(E)$ of its total space, the pullback of the second one assigns such a bundle the Pontryagin number $p_{(n+1)/4}^2(E)$ up to a fixed constant, and the pullback of the third class evaluates $[\pi]$ to a certain linear combination of $\sgn(E)$ and $p_{(n+1)/4}^2(E)$. In addition to these three classes, our identification of the extension \eqref{ses3} for $n\ge3$ odd involves two particular homotopy spheres: the first one, $\Sigma_P\in\Theta_{2n+1}$, is the \emph{Milnor sphere}---the boundary of the $E_8$-plumbing \cite[Sect.\,V]{BrowderBook}, and the second one, $\Sigma_Q\in\Theta_{2n+1}$, arises as the boundary of the plumbing of two copies of a linear $D^{n+1}$-bundles over $S^{n+1}$ classified by a generator of $S\pi_n\SO(n)$. We write $(-)\cdot\Sigma\colon\oH^2(\Gamma^n_g/\Theta_{2n+1};\bfZ)\ra\oH^2(\Gamma^n_g/\Theta_{2n+1};\Theta_{2n+1})$ for the change of coefficients induced by $\Sigma\in\Theta_{2n+1}$. 

\begin{table}[t]\label{table:SO}
  \centering
    \caption{The groups $S\pi_n\SO(n)$ for $n\ge3$, except that $S\pi_6\SO(6)=0$.} \label{table:SO}
  \begin{tabular}{ l | c c c c c c c c }
     $n\Mod{8}$ & $0$& $1$ & $2$ & $3$ & $4$ & $5$ &$6$   & $7$ \\  \hline
 $ S\pi_n\SO(n)$ &$(\bfZ/2)^2$ & $\bfZ/2$ &$\bfZ/2$ & $\bfZ$ & $\bfZ/2$ & $0$ &$\bfZ/2$ & $\bfZ$  \\
  \end{tabular}
\end{table}

\begin{bigthm}\label{bigthm:mainextension}For $n\ge3$ odd and $g\ge1$, the central extension
\[0 \lra \Theta_{2n+1} \lra \Gamma^n_g \lra \Gamma^n_g/\Theta_{2n+1} \lra 0\]
 is classified by
\begin{enumerate} 
\item $\frac{\sgn}{8}\cdot \Sigma_P \in \oH^2(\Gamma_g^n/\Theta_{2n+1};\Theta_{2n+1})$ for $n\equiv 1\Mod{4}$, 
\item $\frac{\sgn}{8}\cdot \Sigma_P + \frac{\chi^2}{2} \cdot \Sigma_Q \in \oH^2(\Gamma_g^n/\Theta_{2n+1};\Theta_{2n+1})$ for $n\equiv 3\Mod{4}$ if $n\neq 3,7$,
\item $\frac{\chi^2-\sgn}{8}\cdot \Sigma_Q \in \oH^2(\Gamma_g^n/\Theta_{2n+1};\Theta_{2n+1})$ for $n=3,7$.
\end{enumerate}
Moreover, this extension splits if and only if $n\equiv 1\Mod{4}$ and $g=1$.
\end{bigthm}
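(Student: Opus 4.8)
The plan is to compute the central extension class by pulling back along enough concrete subgroups of $\Gamma_g^n/\Theta_{2n+1}$ to pin it down, then matching the answer with explicitly constructed manifold bundles whose total-space invariants we can evaluate. First I would use the algebraic description of $\Gamma_g^n/\Theta_{2n+1}$ from \cref{section:firstextension} together with \cref{bigthm:splitextension}: for $n\neq 3,7$ the extension \eqref{ses4} splits, so $\Gamma_g^n/\Theta_{2n+1}$ contains a copy of $G_g$, and for all $n\ge 3$ odd it contains the abelian kernel $H(g)\otimes S\pi_n\SO(n)$. The strategy is to show the extension \eqref{ses3} is detected on $\oH^2$ by restriction to these pieces (plus the $g=1$ case, where $G_1$ is small and $W_1=S^n\times S^n$), because $\oH_2$ of $\Gamma_g^n/\Theta_{2n+1}$ is generated—rationally, and up to controlled torsion—by the fundamental classes of surface bundles built from these subgroups; this reduces the problem to computing the extension over each subgroup separately.

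Next I would realise the relevant classes geometrically. For the abelian kernel, I would take the $D^{n+1}$-bundle over $S^{n+1}$ classified by a generator of $S\pi_n\SO(n)$, plumb two copies, cap off to get a closed $(2n+2)$-manifold with boundary the homotopy sphere $\Sigma_Q$, and thereby produce a $W_g$-bundle over a surface whose monodromy lands in the kernel and whose obstruction to lifting to $\Gamma_g^n$ is exactly $\Sigma_Q$ times the value of $\tfrac{\chi^2}{2}$ (or $\tfrac{\chi^2-\sgn}{8}$) on the corresponding homology class—this is where the Euler-class/normal-bundle bookkeeping enters and is the source of the $\Sigma_Q$ contributions in cases (ii) and (iii). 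For the $G_g$-part, the key input is the $E_8$-plumbing: a bundle over a surface representing a class on which $\tfrac{\sgn}{8}$ evaluates nontrivially has total space whose signature defect, via Novikov additivity and the signature cocycle of Meyer \cite{Meyer}, is measured by the Milnor sphere $\Sigma_P$; combined with the fact that $\Theta_{2n+1}$ in dimensions $\equiv 1 \bmod 4$ is governed by the signature and in dimensions $\equiv 3 \bmod 4$ also by a Kervaire/Pontryagin-type invariant, this yields the $\tfrac{\sgn}{8}\cdot\Sigma_P$ terms and the case split between (i), (ii), (iii). I expect the main obstacle to be precisely this last identification: showing that the boundary homotopy sphere extracted from a plumbing construction over a surface bundle equals the stated universal combination of $\sgn$ and $p_{(n+1)/4}^2$ applied to $\Sigma_P,\Sigma_Q$, uniformly in $g$, which requires carefully comparing the surgery-theoretic description of $\Theta_{2n+1}$ (the $\bP$-subgroup, Kervaire invariant, Adams $e$-invariant) with the characteristic numbers of the total spaces, and controlling the torsion in $\oH^2$ that the rational signature/Pontryagin computation alone does not see.

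Finally, for the splitting statement I would argue in two directions. If $n\equiv 1\bmod 4$ and $g=1$: here $\tfrac{\sgn}{8}$ must be shown to vanish in $\oH^2(\Gamma_1^n/\Theta_{2n+1};\bfZ)$—since $W_1 = S^n\times S^n$ and $G_1$ is a small arithmetic group, one checks directly (using the algebraic model of $\Gamma_1^n/\Theta_{2n+1}$) that the relevant degree-two cohomology cannot support the signature class, so the extension splits. Conversely, for $g\ge 2$ (any $n\ge 3$ odd) or $n\equiv 3\bmod 4$ (any $g\ge 1$), I would exhibit an explicit surface bundle—an $E_8$-type bundle for the signature obstruction, or a plumbing bundle for the $\chi^2$ obstruction—on which the classifying class evaluates to a nonzero element of $\Theta_{2n+1}$, using that $\Sigma_P\neq 0$ when $n\equiv 1\bmod 4$ with $g\ge 2$, and that the $\chi^2$-term forces nontriviality when $n\equiv 3\bmod 4$; this shows non-splitting. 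The one subtlety to handle with care is that the vanishing of a rational class does not imply splitting, so the $g=1$, $n\equiv 1\bmod 4$ case genuinely needs the integral computation of $\oH^2(\Gamma_1^n/\Theta_{2n+1};\bfZ)$ rather than a dimension count.
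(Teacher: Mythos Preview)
Your overall shape is right—identify the extension class via its image $d_2\in\Hom(\oH_2(\Gamma_g^n/\Theta_{2n+1}),\Theta_{2n+1})$ by evaluating on surface bundles, then control the Ext ambiguity—but two concrete gaps would stop the argument as written.

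First, your detection strategy is incomplete. You propose restricting to the subgroups $G_g$ and $H(g)\otimes S\pi_n\SO(n)$ and asserting that $\oH_2(\Gamma_g^n/\Theta_{2n+1})$ is generated ``rationally, and up to controlled torsion'' by classes from these. This is not established, and the paper does not argue this way: instead it uses that the map $\oH_2(\BDiffuo_{\partial/2}(W_{g,1}))\to\oH_2(\Gamma^n_{g,1/2})$ is surjective, and proves a single geometric result (\cref{thm:almostclosedbundles}) showing that for \emph{any} $(W_{g,1},D^{2n-1})$-bundle $\pi\colon E\to S$ over a surface there is an almost closed $n$-connected $(2n+2)$-manifold $E'$ with $\partial E'=d_2([\pi])$, $\sgn(E')=\sgn(E)$, and $\chi^2(E')=\chi^2(T_\pi E)$. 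This is the key step you are missing: it converts the evaluation of $d_2$ on \emph{all} of $\oH_2$ into Wall's classification of almost closed highly connected manifolds (\cref{prop:computingboundaries}), rather than relying on a decomposition of $\oH_2$ into subgroup contributions. Your plumbing paragraph also has the direction reversed: plumbing produces an almost closed manifold, not a $W_g$-bundle over a surface, and there is no obvious way to pass from the former to the latter.

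Second, you correctly flag the Ext-term as a subtlety but offer no method. The paper's device is specific: the morphism $a\colon\bfZ/4\to G_g$ generated by $\left(\begin{smallmatrix}0&-1\\1&0\end{smallmatrix}\right)$ induces, for $g\gg0$, a splitting $\oH^2(\Gamma^n_{g,1/2};\Theta_{2n+1})\cong\oH^2(\bfZ/4;\Theta_{2n+1})\oplus\Hom(\oH_2(\Gamma^n_{g,1/2}),\Theta_{2n+1})$, and one checks the extension pulls back trivially to $\bfZ/4$ by exhibiting an explicit order-$4$ diffeomorphism of $S^n\times S^n$ (namely $(x,y)\mapsto(-y_1,y_2,\ldots,y_{n+1},x)$) with constant differential, which therefore lifts from $G_g$ to $\Gamma^n_{g,1}$. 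This concrete lift is what kills the Ext contribution and is not something a rational or signature argument can supply; without it (or an equivalent), your identification of the class is only determined modulo $\Ext(\oH_1(\Gamma^n_{g,1/2}),\Theta_{2n+1})$.
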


The extension \eqref{ses2} describing the Torelli group $\oT^n_{g}$ is the pullback of the extension determined in \cref{bigthm:mainextension} along the map $H(g)\otimes S\pi_n\SO(n)\rightarrow \Gamma_g^n/\Theta_{2n+1}$, so the combination of the previous result with our identification of $\Gamma_{g}^n/\Theta_{2n+1}$ provides an algebraic description of both $\Gamma_g^n$ and $\oT_g^n$ in terms of $G_g$ and $\Theta_{2n+1}$. We derive several consequences from this, beginning with deciding when the more commonly considered extensions \eqref{ses1} and \eqref{ses2} split.
\begin{bigcor}\label{bigthm:kreckextensions}Let $n\ge3$ odd and $g\ge1$.
\begin{enumerate} 
\item The extension
\[0 \lra \oT^n_g \lra \Gamma^n_g \lra G_g \lra 0\]
does not split for $g\ge2$, but admits a splitting for $g=1$ and $n\neq3,7$.
\item The extension 
\[0 \lra \Theta_{2n+1} \lra \oT^n_g \lra  H(g)\otimes S\pi_n\SO(n) \lra 0\] 
does not split for $n\equiv3\Mod{4}$, but splits $G_g$-equivariantly for $n\equiv 1 \Mod {4}$.
\end{enumerate}
\end{bigcor}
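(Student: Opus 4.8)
The plan is to deduce \cref{bigthm:kreckextensions} from \cref{bigthm:splitextension,bigthm:mainextension} by restriction and pullback, the only real input beyond these being a handful of elementary manipulations of classes in $\oH^2(G_g;\Theta_{2n+1})$.

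For part (ii), recall that \eqref{ses2} is the pullback of the central extension of \cref{bigthm:mainextension} along the inclusion of $H(g)\otimes S\pi_n\SO(n)=\kernel(\Gamma^n_g/\Theta_{2n+1}\ra G_g)$, so it is classified by the restriction of the class $c$ appearing there. Since the signature class $\tfrac{\sgn}{8}$ is pulled back from $\oH^2(G_g;\bfZ)$ along the projection $q\colon\Gamma^n_g/\Theta_{2n+1}\ra G_g$ of \eqref{ses4} — this is essentially Meyer's theorem that the signature of a $W_g$-bundle over a surface depends only on its homological monodromy — every occurrence of $\sgn$ in $c$ restricts to $0$ on the kernel $H(g)\otimes S\pi_n\SO(n)$. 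For $n\equiv1\Mod{4}$ this is all of $c$, so in fact \eqref{ses3} is the pullback along $q$ of a central extension of $G_g$ by $\Theta_{2n+1}$, and pulling further back along the trivial (hence $G_g$-equivariant) homomorphism $H(g)\otimes S\pi_n\SO(n)\ra G_g$ identifies $\oT^n_g$ with $\Theta_{2n+1}\times\big(H(g)\otimes S\pi_n\SO(n)\big)$ as an extension of $G_g$-groups, which is exactly the asserted $G_g$-equivariant splitting (and shows $\oT^n_g$ is then abelian). For $n\equiv3\Mod{4}$ the class of \eqref{ses2} is the restriction of $\tfrac{\chi^2}{2}$ (respectively of $\tfrac{\chi^2-\sgn}{8}$ for $n=3,7$) to $H(g)\otimes S\pi_n\SO(n)$, changed of coefficients along $\Sigma_Q$; using the algebraic descriptions of $\Gamma^n_g/\Theta_{2n+1}$ and of $\chi$ from \cref{section:firstextension,section:mainsection} I would identify this restriction, up to a unit, with the cohomology class of the intersection form on $H(g)$ — a primitive element of the free abelian group $\oH^2(H(g)\otimes S\pi_n\SO(n);\bfZ)$ — which therefore stays nonzero after the coefficient change, since $\Sigma_Q\neq0$ in $\Theta_{2n+1}$ by a direct analysis of its plumbing. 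Hence \eqref{ses2} does not split for $n\equiv3\Mod{4}$.

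For part (i), a splitting of \eqref{ses1} induces one of \eqref{ses4} via $\Gamma^n_g\ra\Gamma^n_g/\Theta_{2n+1}$, so for $n=3,7$ and $g\ge2$ this is impossible by \cref{bigthm:splitextension}. For $n\neq3,7$ and $g\ge2$ I would argue further: a splitting $s$ of \eqref{ses1} produces a subgroup $s(G_g)\le\Gamma^n_g$ meeting $\oT^n_g$ trivially, over whose image $\bar s(G_g)\le\Gamma^n_g/\Theta_{2n+1}$ the central extension \eqref{ses3} is split, so the resulting section $\bar s$ of \eqref{ses4} satisfies $\bar s^{*}c=0$ in $\oH^2(G_g;\Theta_{2n+1})$. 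When $n\equiv1\Mod{4}$ we have $c=q^{*}\big(\tfrac{\sgn}{8}\cdot\Sigma_P\big)$, hence $\bar s^{*}c=\tfrac{\sgn}{8}\cdot\Sigma_P$ independently of $s$; but this class is nonzero for $g\ge2$, for otherwise $c$ would vanish and \eqref{ses3} would split, contradicting \cref{bigthm:mainextension}. When $n\equiv3\Mod{4}$ the class $c$ also contains the summand involving $\tfrac{\chi^2}{2}$, which is not pulled back from $G_g$, so $\bar s^{*}c$ varies with $s$; here I would combine the restriction of $\tfrac{\chi^2}{2}$ to the $H(g)\otimes S\pi_n\SO(n)$-direction computed in part (ii) with the structure of $\oH^2(G_g;\bfZ)$ to show that $0$ does not lie in the affine subset $\{\sigma^{*}c\}\subseteq\oH^2(G_g;\Theta_{2n+1})$ swept out by the sections $\sigma$ of \eqref{ses4}, again a contradiction.

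It remains to produce the splitting of \eqref{ses1} for $g=1$ and $n\neq3,7$. If $n\equiv1\Mod{4}$ this is immediate: \eqref{ses4} splits by \cref{bigthm:splitextension} and \eqref{ses3} splits by \cref{bigthm:mainextension}, so $\Gamma^n_1\cong\Theta_{2n+1}\times\big((H(1)\otimes S\pi_n\SO(n))\rtimes G_1\big)$ contains $G_1$ as a complement to $\oT^n_1$. If $n\equiv3\Mod{4}$ one again uses that \eqref{ses4} splits and looks for a section $\sigma$ of it — not the obvious one — with $\sigma^{*}c=0$, which then lifts along \eqref{ses3} to a splitting of \eqref{ses1}; here one exploits the action of $\oH^1(G_1;H(1)\otimes S\pi_n\SO(n))$ on the set of sections together with the smallness of $\oH^2(G_1;\Theta_{2n+1})$. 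I expect the genuine work to be concentrated in the second-cohomology computations for the arithmetic groups $G_g$ — above all pinning down the divisibility of $\tfrac{\sgn}{8}$ in $\oH^2(G_g;\bfZ)$ and the effect of the coefficient changes by $\Sigma_P$ and $\Sigma_Q$ — whereas the passage between the four extensions \eqref{ses1}--\eqref{ses4} is formal.
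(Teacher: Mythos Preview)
Your treatment of part~(ii) is essentially the paper's argument: both pull back the class of \cref{bigthm:mainextension} along $H(g)\otimes S\pi_n\SO(n)\hookrightarrow\Gamma^n_{g,1/2}$, observe that the $\tfrac{\sgn}{8}$-contribution dies because it is pulled back from $G_g$, and for $n\equiv3\Mod4$ check that the surviving $\tfrac{\chi^2}{2}\cdot\Sigma_Q$ (or its $n=3,7$ analogue) is nonzero by evaluating on $\oH_2$ and using $\Sigma_Q\neq0$. The $G_g$-equivariant splitting for $n\equiv1\Mod4$ is likewise deduced the same way, from the extension class being pulled back from $G_g$.

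For the nonsplitting in part~(i) when $g\ge2$ and $n\neq3,7$, the paper takes a different and more uniform route. Rather than passing through sections $\bar s$ and computing $\bar s^*c$ case by case, it compares the Serre spectral sequences of the two extensions $0\to\Theta_{2n+1}\to\Gamma^n_{g,1}\to\Gamma^n_{g,1/2}\to0$ and $0\to\oT^n_{g,1}\to\Gamma^n_{g,1}\to G_g\to0$ via the map between them, obtaining a commuting square of $d_2$-differentials which reduces nonsplitting to the statement that $\im(d_2^\Gamma)=\bA_{2n+2}$ is not contained in $\langle\Sigma_Q\rangle$, i.e.\ that $\bA_{2n+2}/\langle\Sigma_Q\rangle\neq0$; this is \cref{lemma:quotientbySigmaQ} together with the nontriviality of $\sigma_n'/8$. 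Your approach does work for $n\equiv1\Mod4$, where $\bar s^*c=\tfrac{\sgn}{8}\cdot\Sigma_P$ for every section, but for $n\equiv3\Mod4$ you have left the computation of $\sigma^*(\tfrac{\chi^2}{2})$ unaddressed (it vanishes for the standard section, and $\oH^1(G_g;\bfZ^{2g})=0$ for $g\ge2$ so there is only one section up to conjugacy, making the ``affine subset'' a single point)---once filled in, this reduces to the same signature input the paper uses.

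Your plan for the splitting at $g=1$, $n\equiv3\Mod4$ is misdirected. There is no need to vary the section or appeal to ``smallness of $\oH^2(G_1;\Theta_{2n+1})$'': the \emph{standard} section $G_1\hookrightarrow\Gamma^n_{1,1/2}\cong(\bfZ^2\otimes\pi_n\SO)\rtimes G_1$ already has $\sigma_0^*c=0$, because both $\tfrac{\sgn}{8}$ and the pullback of $\tfrac{\chi^2}{2}$ to $\oH^2(\Sp_2^q(\bfZ);\bfZ)$ vanish---this is \cref{lemma:trivialforgone}, proved by checking that both classes evaluate trivially on $\oH_2(\Sp_2^q(\bfZ))$ and pull back trivially along $a\colon\bfZ/4\to\Sp_2^q(\bfZ)$. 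That lemma, not an argument varying over $\oH^1(G_1;H(1)\otimes S\pi_n\SO(n))\cong\bfZ/2$, is the missing ingredient.
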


\subsection*{Abelian quotients}The second application of our description of the groups $\Gamma^n_{g}$ and $\oT^n_g$ is a computation of their abelianisations.

Although \cref{bigthm:splitextension} exhibits the extension \eqref{ses4} as being nontrivial in some cases, its abelianisation turns out to split nevertheless (see \cref{corollary:abelianisationhalfmcg}), so there exists a splitting
\[\oH_1(\Gamma_g^n/\Theta_{2n+1})\cong\oH_1(G_g)\oplus( H(g)\otimes S\pi_n\SO(n))_{G_g},\] which participates in the following identification of the abelianisation of $\Gamma_{g}^n$ and $\oT_{g}^n$.

\begin{bigcor}\label{bigthm:abstractsplitting}Let $g\ge1$ and $n\ge3$ odd.
\begin{enumerate}
\item The extension \eqref{ses3} induces a split short exact sequence
\[0\lra\Theta_{2n+1}/K_g\lra\oH_1(\Gamma_{g}^n)\xlra{p_*}\oH_1(G_g) \oplus \big(H(g)\otimes S\pi_n\SO(n)\big)_{G_g}\lra0,\] where $K_g=\langle \Sigma_P,\Sigma_Q\rangle$ for $g\ge2$ and $K_g=\langle \Sigma_Q\rangle$ for $g=1$.
\item The extension \eqref{ses4} induces a split short exact sequence of $G_g$-modules
\[0\lra\Theta_{2n+1}/\langle\Sigma_Q\rangle \lra \oH_1(\oT_{g}^n)\xlra{\rho_*} H(g)\otimes S\pi_n\SO(n)\lra0.\] In particular, the commutator subgroup of $\oT_{g}^n$ is generated by $\Sigma_Q$.
\end{enumerate}
\end{bigcor}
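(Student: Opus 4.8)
The plan is to run the five‑term exact sequence in group homology for the two central extensions in question and to reduce the statement to a computation of the connecting homomorphisms together with a splitting argument for the resulting extensions of abelian groups. For \eqref{ses3} the five‑term sequence reads
\[
\oH_2(\Gamma^n_g/\Theta_{2n+1})\xrightarrow{\partial}\Theta_{2n+1}\longrightarrow\oH_1(\Gamma^n_g)\xrightarrow{p_*}\oH_1(\Gamma^n_g/\Theta_{2n+1})\longrightarrow 0,
\]
which exhibits $p_*$ as surjective with kernel $\Theta_{2n+1}/\im(\partial)$; combined with the splitting $\oH_1(\Gamma^n_g/\Theta_{2n+1})\cong\oH_1(G_g)\oplus(H(g)\otimes S\pi_n\SO(n))_{G_g}$ from \cref{corollary:abelianisationhalfmcg}, this yields the sequence in \cref{bigthm:abstractsplitting}(i) once $\im(\partial)=K_g$ is known. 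For \eqref{ses2}, whose kernel $A=H(g)\otimes S\pi_n\SO(n)$ is abelian so that $\oH_1(A)=A$ and $\oH_2(A)=\Lambda^2A$, the same machine produces
\[
\Lambda^2A\xrightarrow{\partial'}\Theta_{2n+1}\longrightarrow\oH_1(\oT^n_g)\xrightarrow{\rho_*}A\longrightarrow 0,
\]
which is the sequence in \cref{bigthm:abstractsplitting}(ii) provided $\im(\partial')=\langle\Sigma_Q\rangle$.

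Under the universal coefficient epimorphism, $\partial$ and $\partial'$ are the images of the corresponding extension classes, so by \cref{bigthm:mainextension} their images are generated by the elements $\langle c,x\rangle\cdot\Sigma$ with $c\in\{\tfrac{\sgn}{8},\tfrac{\chi^2}{2},\tfrac{\chi^2-\sgn}{8}\}$, $\Sigma\in\{\Sigma_P,\Sigma_Q\}$, and $x$ ranging over the relevant homology group; in particular both images lie in $\langle\Sigma_P,\Sigma_Q\rangle$, and the content is the reverse inclusions. For $\partial$ with $g\ge2$ I would pull back along $\BDiff(W_g)\to\oB(\Gamma^n_g/\Theta_{2n+1})$ and use the description of the three classes in terms of the signature and the Pontryagin number $p^2_{(n+1)/4}$ of total spaces of $W_g$-bundles over surfaces, realising enough characteristic numbers — via the bundles already constructed for \cref{bigthm:mainextension} — to force $\im(\partial)=\langle\Sigma_P,\Sigma_Q\rangle=K_g$. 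For $g=1$ the class $\tfrac{\sgn}{8}$ is pulled back from the virtually free group $G_1$, hence torsion in $\oH^2(\Gamma^n_1/\Theta_{2n+1};\bfZ)$ and therefore pairs trivially on $\oH_2(\Gamma^n_1/\Theta_{2n+1})$; only the $\Sigma_Q$-term survives, giving $\im(\partial)=\langle\Sigma_Q\rangle=K_1$ (with $\Sigma_Q=0$ when $n\equiv1\Mod{4}$). For $\partial'$ one moreover uses that \eqref{ses2} is the pullback of \eqref{ses3} along $A\to\Gamma^n_g/\Theta_{2n+1}$, a map composing trivially to $G_g$, so the $\tfrac{\sgn}{8}$-terms restrict to zero and $\im(\partial')\subseteq\langle\Sigma_Q\rangle$; for the reverse inclusion one evaluates the restricted $\chi^2$-class on $\Lambda^2A$, where the unimodularity of the intersection form on $H(g)$ and the pertinent bundle computation make the pairing hit a generator of $\langle\Sigma_Q\rangle$ (vacuously when $n\equiv1\Mod{4}$, consistently with \cref{bigthm:kreckextensions}(ii)).

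To split the two abelian extensions I would argue by pushout. The class of the sequence in (i) lies in $\Ext^1_{\bfZ}(\oH_1(\Gamma^n_g/\Theta_{2n+1}),\Theta_{2n+1}/K_g)$ and, by naturality of the universal coefficient sequence, equals the image of the class of \eqref{ses3} under the coefficient map $\Theta_{2n+1}\to\Theta_{2n+1}/K_g$; equivalently it classifies the pushout of the central extension \eqref{ses3} along that quotient. Since every homotopy sphere appearing in the class of \eqref{ses3} lies in $K_g$, this pushout splits, and splitting it yields a homomorphism $\Gamma^n_g\to\Theta_{2n+1}/K_g$ retracting $\Theta_{2n+1}/K_g\hookrightarrow\oH_1(\Gamma^n_g)$; this proves (i). The single case requiring separate care is $g=1$ with $n\equiv3\Mod{4}$ and $n\neq3,7$, where $\Sigma_P\notin K_1=\langle\Sigma_Q\rangle$: there I would use that at genus one $\tfrac{\sgn}{8}$ vanishes already in $\oH^2(\Gamma^n_1/\Theta_{2n+1};\bfZ)$, removing the $\Sigma_P$-term from the class. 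Part (ii) is the same pushout argument applied to \eqref{ses2} and carried out $G_g$-equivariantly: pushing out along $\Theta_{2n+1}\to\Theta_{2n+1}/\langle\Sigma_Q\rangle$ gives a $G_g$-equivariant central extension of $A$ by the trivial module $\Theta_{2n+1}/\langle\Sigma_Q\rangle$ whose class vanishes as above, so the sequence splits as one of $G_g$-modules — alternatively, for $n\equiv1\Mod{4}$ one may quote the $G_g$-equivariant splitting of \eqref{ses2} from \cref{bigthm:kreckextensions}(ii). Finally, as $A$ is abelian, $[\oT^n_g,\oT^n_g]\subseteq\ker\rho=\Theta_{2n+1}$, whence $[\oT^n_g,\oT^n_g]=\Theta_{2n+1}\cap[\oT^n_g,\oT^n_g]=\im(\partial')=\langle\Sigma_Q\rangle$, which is the last assertion.

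I expect the main obstacle to be the precise determination of $\im(\partial)$ and $\im(\partial')$ — in particular that the signature class contributes nothing at genus one and that the $\chi^2$-class is primitive enough for the reverse inclusions — since this is where the geometric input enters, through the explicit $W_g$-bundles over surfaces underlying \cref{bigthm:mainextension} and a careful placement of $\Sigma_P,\Sigma_Q$ inside $\bP_{2n+2}\subseteq\Theta_{2n+1}$; the homological five‑term formalism and the splitting‑by‑pushout are then formal.
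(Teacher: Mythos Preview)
Your approach is essentially the paper's: run the five-term sequence for both extensions, identify $\im(\partial)$ and $\im(\partial')$ via the extension classes of \cref{bigthm:mainextension} (these images are already computed in \cref{thm:mainextension2}, so the ``main obstacle'' you anticipate is not one), and split the resulting sequences by pushing out along $\Theta_{2n+1}\to\Theta_{2n+1}/K_g$ respectively $\Theta_{2n+1}\to\Theta_{2n+1}/\langle\Sigma_Q\rangle$.

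There is one genuine gap, in the $G_g$-equivariance of the splitting in (ii). You argue that the pushed-out central extension of $A=H(g)\otimes S\pi_n\SO(n)$ by $M=\Theta_{2n+1}/\langle\Sigma_Q\rangle$ has trivial class, hence splits. But vanishing of the class in $\oH^2(A;M)$ only yields a splitting of abelian groups; the obstruction to making a chosen splitting $G_g$-equivariant lies in $\oH^1(G_g;\Hom(A,M))$, which you do not control. The paper avoids this by working one level up: the pushout of the $\Gamma^n_{g,1}$-extension along $\Theta_{2n+1}\to M$ is classified by $\frac{\sgn}{8}\cdot[\Sigma_P]$, a class pulled back from $G_g$, so this pushout is itself pulled back from an extension $0\to M\to E\to G_g\to 0$. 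The resulting homomorphism $\Gamma^n_{g,1}/\Sigma_Q\to E$ restricts on $\oT^n_{g,1}/\Sigma_Q$ to a map into $M$, and this is automatically $G_g$-equivariant because the $G_g$-action on $\oT^n_{g,1}/\Sigma_Q$ is by conjugation in $\Gamma^n_{g,1}/\Sigma_Q$. That is the equivariant retraction.

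A smaller point: for the splitting in (i) at $g=1$ when $n\equiv 3\Mod{4}$, $n\neq 3,7$, you assert that $\frac{\sgn}{8}$ vanishes in $\oH^2(\Gamma^n_{1,1/2};\bfZ)$. Your virtually-free observation only shows it is torsion; vanishing uses the actual definition of the class (trivial on $\bfZ/4\hookrightarrow G_g$ and trivial pairing at genus one), as in \cref{lemma:trivialforgone}.
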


These splittings of $\oH_1(\Gamma_g^n)$ and $\oH_1(\oT_g^n)$ are constructed abstractly, but can often be made more concrete by means of a refinement of the mapping torus construction as a map \[t\colon \Gamma_g^n\lra\Omega^{\tau_{>n}}_{2n+1}\] to the bordism group of closed $(2n+1)$-manifolds $M$ equipped with a lift of their stable normal bundle $M\ra\BO$ to the $n$-connected cover $\tau_{>n}\BO\ra\BO$. To state the resulting more explicit description of the abelianisations of $\Gamma_g^n$ and $\oT_g^n$, we write $\sigma_n'$ for the minimal positive signature of a closed smooth $n$-connected $(2n+2)$-dimensional manifold. For $n\neq1,3,7$ odd, the intersection form of such a manifold is even, so $\sigma_n'$ is divisible by $8$.

\begin{bigcor}\label{bigthm:abeliansationmcg}Let $n\ge3$ odd and $g\ge1$.
\begin{enumerate}
\item The morphism
\[ t_*\oplus p_*\colon \oH_1(\Gamma^n_g)\lra \Omega^{\tau_{>n}}_{2n+1}\oplus \oH_1(G_g) \oplus \big(H(g)\otimes S\pi_n\SO(n)\big)_{G_g}\] is an isomorphism for $g\ge2$, and for $g=1$ if $n=3,7$. For $g=1$ and $n\neq3,7$, it is surjective, has kernel of order $\sigma_n'/8$ generated by $\Sigma_P$, and splits for $n\equiv 1\Mod{4}$.
\item The morphism
\[t_*\oplus \rho_*\colon \oH_1(\oT_g^n)\lra \Omega^{\tau_{>n}}_{2n+1}\oplus \big( H(g)\otimes S\pi_n\SO(n)\big)\] is an isomorphism for $n=3,7$. For $n\neq3,7$, it is surjective, has kernel of order $\sigma_n'/8$ generated by $\Sigma_P$, and splits $G_g$-equivariantly for $n\equiv 1\Mod{4}$.
\end{enumerate}
\end{bigcor}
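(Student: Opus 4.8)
\emph{Proof plan.} The statement refines \cref{bigthm:abstractsplitting}, which already determines the groups $\oH_1(\Gamma^n_g)$ and $\oH_1(\oT^n_g)$ together with the homomorphisms $p_*$ and $\rho_*$; what is missing is the behaviour of the mapping‑torus invariant $t_*$ on the remaining summands. By that corollary these summands are the images of the central subgroup $\Theta_{2n+1}\subset\Gamma^n_g$, namely $\Theta_{2n+1}/K_g$ inside $\oH_1(\Gamma^n_g)$ and $\Theta_{2n+1}/\langle\Sigma_Q\rangle$ inside $\oH_1(\oT^n_g)$. So the plan is to compute the composite
\[j\colon\Theta_{2n+1}\cong\pi_0\Diffuo_\partial(D^{2n})\longrightarrow\Gamma^n_g\xra{\;t\;}\Omega^{\tau_{>n}}_{2n+1}\]
and then to compare $\kernel(j)$ with $K_g$ (respectively with $\langle\Sigma_Q\rangle$).

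\emph{Identifying and computing $j$.} A diffeomorphism of $W_g$ representing $\Sigma\in\Theta_{2n+1}$ can be chosen supported in an embedded disc, so that its mapping torus is diffeomorphic to $(W_g\times S^1)\#\Sigma$, compatibly with $\tau_{>n}\BO$‑structures, the structure on the summand $\Sigma\setminus\interior{D^{2n+1}}$ being the unique one since this piece is contractible. As $t$ is a homomorphism into an abelian group it annihilates $\id_{W_g}$, and hence $j$ is the natural map sending a homotopy sphere to its $\tau_{>n}\BO$‑bordism class; in particular $j$ is independent of $g$. Surgery below the middle dimension, framed by means of the normal structure, shows that every class in $\Omega^{\tau_{>n}}_{2n+1}$ is represented by an $n$‑connected closed $(2n+1)$‑manifold, hence by a homotopy sphere, so $j$ is onto. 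For the kernel, the inclusion $\langle\Sigma_P,\Sigma_Q\rangle\subseteq\kernel(j)$ is geometric: $\Sigma_P$ bounds the stably parallelisable $E_8$‑plumbing and $\Sigma_Q$ bounds the $n$‑connected plumbing of two $D^{n+1}$‑bundles over $S^{n+1}$, which carries a $\tau_{>n}\BO$‑structure. For the reverse inclusion I would make a given $\tau_{>n}\BO$‑null‑bordism $n$‑connected by surgery relative to its boundary, present it handle‑theoretically as a plumbing, and deduce that its boundary homotopy sphere lies in $\langle\Sigma_P,\Sigma_Q\rangle$; this yields $\kernel(j)=\langle\Sigma_P,\Sigma_Q\rangle$, equivalently an isomorphism $\Omega^{\tau_{>n}}_{2n+1}\cong\Theta_{2n+1}/\langle\Sigma_P,\Sigma_Q\rangle$.

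\emph{Assembling the cases.} For $g\ge2$ one has $K_g=\langle\Sigma_P,\Sigma_Q\rangle=\kernel(j)$, so $j$ descends to an isomorphism $\Theta_{2n+1}/K_g\xra{\cong}\Omega^{\tau_{>n}}_{2n+1}$; feeding this into the split sequence of \cref{bigthm:abstractsplitting}(i) makes $t_*\oplus p_*$ block triangular with isomorphisms on the diagonal, hence an isomorphism. The Torelli case (ii) is the same whenever $\Theta_{2n+1}/\langle\Sigma_Q\rangle$ already equals $\Theta_{2n+1}/\langle\Sigma_P,\Sigma_Q\rangle$, that is when $\Sigma_P\in\langle\Sigma_Q\rangle$, which occurs exactly for $n=3,7$; the same input covers $g=1$ in (i). In the remaining cases the relevant map $\Theta_{2n+1}/\langle\Sigma_Q\rangle\to\Omega^{\tau_{>n}}_{2n+1}$ has kernel $\langle\Sigma_P,\Sigma_Q\rangle/\langle\Sigma_Q\rangle$, and here I would use that $\Sigma_P$ generates $\bP_{2n+2}$, of order $\sigma'_n/8$ by the Kervaire--Milnor identity $\sigma'_n=8\cdot|\bP_{2n+2}|$ (valid since the relevant intersection forms are even for $n\neq1,3,7$), together with $\langle\Sigma_P\rangle\cap\langle\Sigma_Q\rangle=0$, to conclude that this kernel is cyclic of order $\sigma'_n/8$ generated by $\Sigma_P$, surjectivity being inherited from $j$.

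\emph{Splittings and the main obstacle.} When $n\equiv1\Mod4$ the group $S\pi_n\SO(n)$ has exponent two and \eqref{ses2} splits $G_g$‑equivariantly by \cref{bigthm:kreckextensions}(ii); combining this with the central splitting of \eqref{ses3} available in this case and carrying it through the identifications above produces the claimed (respectively $G_g$‑equivariant) section of $t_*\oplus p_*$ (respectively $t_*\oplus\rho_*$). The main obstacle is the middle step: the surgery‑theoretic determination of $\kernel(j)$, that is of which homotopy spheres bound a $\tau_{>n}\BO$‑manifold—equivalently the computation of the bordism group $\Omega^{\tau_{>n}}_{2n+1}$—together with the arithmetic input identifying the $g=1$ discrepancy as $\sigma'_n/8$; the exceptional behaviour at $n=3,7$ is ultimately a shadow of the odd intersection forms of the quaternionic and octonionic projective planes.
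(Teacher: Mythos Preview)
Your overall plan—identify the composite $\Theta_{2n+1}\to\Gamma^n_g\xra{t}\Omega^{\tau_{>n}}_{2n+1}$ with the canonical surjection of kernel $\bA_{2n+2}=\langle\Sigma_P,\Sigma_Q\rangle$, then feed this into the split sequences of \cref{bigthm:abstractsplitting}—is exactly what the paper does in \cref{section:geometricsplitting}, packaging the identification via Wall's exact sequence \eqref{Wallexactsequence} and \cref{corollary:generatorsbA}.

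The genuine gap is in your order computation for $n\equiv3\Mod4$. Neither input you propose for $|\langle\Sigma_P,\Sigma_Q\rangle/\langle\Sigma_Q\rangle|$ holds there: the claim $\langle\Sigma_P\rangle\cap\langle\Sigma_Q\rangle=0$ fails whenever $[\Sigma_Q]=0\in\coker(J)_{2n+1}$ (which by \cite{BHS,BS} occurs for all odd $n\neq11$), since then $0\neq\Sigma_Q\in\bP_{2n+2}=\langle\Sigma_P\rangle$; and there is no ``Kervaire--Milnor identity'' $\sigma'_n=8|\bP_{2n+2}|$—the quantity $\sigma'_n$ is the minimal signature of a closed \emph{$n$-connected} $(2n+2)$-manifold, and in exactly these cases suitable combinations of $P$ and $Q$ close up with signature strictly smaller than $8|\bP_{2n+2}|$. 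The paper's \cref{lemma:quotientbySigmaQ} obtains $|\bA_{2n+2}/\langle\Sigma_Q\rangle|=\sigma'_n/8$ by a different route, comparing the signature on $A^{\tau_{>n}}_{2n+2}$ with its restriction to the image of $\Omega^{\tau_{>n}}_{2n+2}$. Your splitting argument has a related issue: the extension \eqref{ses3} does \emph{not} split for $g\ge2$ even when $n\equiv1\Mod4$ (last clause of \cref{bigthm:mainextension}), so it cannot be invoked for part (ii). The paper instead reduces to whether $\Theta_{2n+1}/\langle\Sigma_Q\rangle\to\coker(J)_{2n+1}/[\Sigma_Q]$ splits and appeals to Brumfiel's theorem \cite{Brumfiel} that $\Theta_{2n+1}\to\coker(J)_{2n+1}$ always splits, together with $\Sigma_Q=0$ for $n\equiv1\Mod4$.
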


\begin{nrem}\ \phantom{xyz}
\begin{enumerate}
\item In \cref{bigthm:abelianisationeven} below, we determine the abelianisation of $\Gamma^n_{g,1}$ and $\oT^n_{g,1}$ for $n\ge4$ even in which case the morphisms $t_*\oplus p_*$ and $t_*\oplus \rho_*$ are isomorphisms \emph{for all $g\ge1$} .
\item As shown in \cite[Prop.\,2.15]{KrannichReinhold}, the minimal signature $\sigma_n'$ is nontrivial for $n$ odd, grows at least exponentially with $n$, and can be expressed in terms of Bernoulli numbers.
\item For some values of $g$ and $n$, \cref{bigthm:abeliansationmcg} leaves open whether $t_*\oplus p_*$ and $t_*\oplus\rho_*$ split. The morphisms $p_*$ and $\rho_*$ always split by \cref{bigthm:abstractsplitting}, and in \cref{section:geometricsplitting} we relate the question of whether there exist compatible splittings of $t_*$ to a known open problem in the theory of highly connected manifold, showing in particular that such splittings do exist when assuming a conjecture of Galatius--Randal-Williams.
\item Work of Thurston \cite{Thurston} shows that the component $\Diffuo_0(W_g)\subset \Diff(W_g)$ of the identity is perfect as a discrete group, so the abelianisation of the full diffeomorphism group $\Diff(W_g)$ considered as a discrete group agrees with $\oH_1(\Gamma_g^n)$.
\end{enumerate}
\end{nrem}

In view of \cref{bigthm:abeliansationmcg}, it is of interest to determine the bordism groups $\Omega^{\tau_{>n}}_{2n+1}$, the abelianisation $\oH_1(G_g)$, and the coinvariants $(H(g)\otimes S\pi_n\SO(n))_{G_g}$. The computation
\begin{equation}\label{equ:coinvariants}
(H(g)\otimes S\pi_n\SO(n))_{G_g}\cong
\begin{cases}
0&g\ge2\text{ or }n=3,6,7\text{ or }n\equiv5\Mod{8}\\
\bfZ/2^2&g=1\text{ and }n\equiv0\Mod{8}\\
\bfZ/2&\text{otherwise}
\end{cases}
\end{equation} is straightforward (see \cref{lemma:coinvariantssymplectic} and \cref{table:SO}). The abelianisation of $G_g$ is  known and summarised in \cref{table:Aut} (see \cref{lemma:abelianisationSp}).
\begin{table}[t]
  \centering
    \caption{The abelianisation of $G_g$ for $n$ odd.} \label{table:Aut}
  \begin{tabular}{ l | c  c  c}
$\oH_1(G_g)$ & $g=1$ & $g=2$ & $g\ge3$\\\hline
$n=1,3,7$ & $\bfZ/12$ & $\bfZ/2$ & $0$\\
$n\neq 1,3,7$ odd & $\bfZ/4\oplus \bfZ$ & $\bfZ/4\oplus\bfZ/2$ & $\bfZ/4$
 \end{tabular}
\end{table}
Finally, the bordism groups $\Omega^{\tau_{>n}}_{2n+1}$ are closely connected to the stable homotopy groups of spheres: the canonical map \[\pi^s_{2n+1}\cong\Omega^{\fr}_{2n+1}\lra\Omega^{\tau_{>n}}_{2n+1}\] factors through the cokernel of the $J$-homomorphism and work of Schultz and Wall \cite{Schultz,Wall2n} implies that the induced morphism is often an isomorphism (see \cref{corollary:SchultzWall}).

\begin{nthm}[Schultz, Wall]\label{them:schultzwall}For $n$ odd, the natural morphism $\coker(J)_{2n+1}\rightarrow \Omega^{\tau_{>n}}_{2n+1}$ is surjective with cyclic kernel. It is an isomorphism for $n\equiv1\Mod{4}$ and for $n=3,7$.
\end{nthm}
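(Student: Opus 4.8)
The plan is to reduce the computation, by surgery below the middle dimension, to the classification of highly connected $(2n+2)$-manifolds with boundary, and then to extract the answer from the work of Wall and Schultz. First I would give a geometric description of $\Omega_{2n+1}^{\tau_{>n}}$: a $\tau_{>n}\BO$-structure permits surgery below the middle dimension, so every class is represented by an $n$-connected $(2n+1)$-manifold, and by Poincar\'e duality such a manifold is a homotopy sphere. Conversely, every homotopy $(2n+1)$-sphere $\Sigma$ is stably parallelisable by Kervaire--Milnor, and it carries a \emph{unique} $\tau_{>n}\BO$-structure, because the homotopy fibre of $\tau_{>n}\BO\to\BO$ is $(n-1)$-truncated, so the obstructions to---and the indeterminacy of---a lift of $\Sigma\to\BO$ lie in cohomology groups of $\Sigma$ in degrees between $1$ and $n$, which vanish. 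Hence $\Sigma\mapsto[\Sigma]$ is a well-defined surjective homomorphism $\Theta_{2n+1}\twoheadrightarrow\Omega_{2n+1}^{\tau_{>n}}$, and applying the same surgery argument to a nullbordism identifies its kernel with $K\coloneq\{\Sigma\in\Theta_{2n+1}:\Sigma=\partial W\text{ for some }n\text{-connected }W^{2n+2}\}$.

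Next I would compare this with $\coker(J)$. Since $2n+1\equiv3\Mod{4}$, the exact sequence of Kervaire and Milnor specialises to an isomorphism $\Theta_{2n+1}/\bP_{2n+2}\xra{\sim}\coker(J)_{2n+1}$, and $\bP_{2n+2}\subseteq K$ because a parallelisable nullbordism can also be made $n$-connected. Under these identifications the natural morphism in the statement is the projection $\Theta_{2n+1}/\bP_{2n+2}\twoheadrightarrow\Theta_{2n+1}/K$; it is therefore surjective with kernel $K/\bP_{2n+2}$, and it remains to show that $K/\bP_{2n+2}$ is cyclic and that $K=\bP_{2n+2}$ when $n\equiv1\Mod{4}$ or $n=3,7$.

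For this I would invoke Wall's classification of $n$-connected $(2n+2)$-manifolds \cite{Wall2n} ($n$ odd). Such a $W$ with homotopy-sphere boundary is determined by its unimodular symmetric intersection form $\lambda$ together with a tangential invariant $\alpha_W\colon H_{n+1}(W)\to\pi_n(\SO(n+1))$ subject to $e(\alpha_W(x))=\lambda(x,x)$, where $e$ is the Euler-number homomorphism; comparing two such $W$ through the boundary connected sum $W_1\natural\overline{W_2}$ and surgering off the diagonal Lagrangian---on which $\alpha$ vanishes---shows $\partial W$ depends only on the pair $(\lambda,\alpha_W)$. Surgering hyperbolic summands shows stabilising $\lambda$ leaves $\partial W$ fixed, the classical computation of $\bP_{2n+2}$ shows $\partial W=\tfrac{\sigma(W)}{8}\cdot\Sigma_P$ when $W$ is parallelisable, and for $n\neq3,7$ the image of $e$ is $2\bfZ$---so $\lambda$ is even---with kernel $S\pi_n\SO(n)$, so that the remaining freedom in $\alpha_W$ is a homomorphism $H_{n+1}(W)\to S\pi_n\SO(n)$. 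Since $S\pi_n\SO(n)$ is cyclic (\cref{table:SO}) and $\Sigma_Q$ is by definition the boundary of the corresponding plumbing, this gives $K=\langle\Sigma_P,\Sigma_Q\rangle$ for $n\neq3,7$, whence $K/\bP_{2n+2}$ is cyclic, generated by the class of $\Sigma_Q$. For $n\equiv1\Mod{4}$ one has $S\pi_n\SO(n)\in\{0,\bfZ/2\}$ and a short additional argument gives $\Sigma_Q\in\bP_{2n+2}$, hence $K=\bP_{2n+2}$; for $n=3$ one has $\Theta_7=\bP_8$, so $K=\bP_8$; and $n=7$ requires a separate verification that no $7$-connected $16$-manifold has boundary outside $\bP_{16}$.

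I expect the main obstacle to be this last step: extracting from Wall's classification the precise dependence of $\partial W$ on the tangential data modulo $\bP_{2n+2}$, identifying the resulting cyclic generator with $\Sigma_Q$, and handling the exceptional dimensions $n=3,7$, where $\pi_n(\SO(n+1))$ is larger than generically and odd Euler numbers occur, so that the clean statement ``$\partial W$ modulo $\bP_{2n+2}$ is controlled by a homomorphism to $S\pi_n\SO(n)$'' must be replaced by a direct computation. The reductions in the first two steps, by contrast, are essentially formal once surgery below the middle dimension and the Kervaire--Milnor exact sequence are available.
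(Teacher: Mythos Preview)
Your proposal is correct and follows essentially the same route as the paper. The paper packages your first two steps into the commutative diagram \eqref{equation:bAbP} sitting over Wall's exact sequence \eqref{Wallexactsequence}, which identifies the kernel of $\coker(J)_{2n+1}\to\Omega^{\tau_{>n}}_{2n+1}$ with $\bA_{2n+2}/\bP_{2n+2}$ (your $K/\bP_{2n+2}$); it then quotes Wall's computation of $A^{\tau_{>n}}_{2n+2}$ (\cref{thm:Wall}) rather than rederiving the dependence of $\partial W$ on $(\lambda,\alpha_W)$, and reads off \cref{corollary:generatorsbA} that $\bA_{2n+2}=\langle\Sigma_P,\Sigma_Q\rangle$, so the kernel is cyclic generated by $[\Sigma_Q]$.

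Two places where you should sharpen your expectations: your ``short additional argument'' that $\Sigma_Q\in\bP_{2n+2}$ when $n\equiv1\Mod{4}$ is trivial for $n\equiv5\Mod{8}$ (since $S\pi_n\SO(n)=0$) but for $n\equiv1\Mod{8}$ it is precisely Schultz's theorem (\cref{thm:Schultz}) that $\Sigma_Q$ is the standard sphere, which is not elementary; and the exceptional case $n=7$ is handled in the paper not by an ad hoc verification but by the identity $\Sigma_Q=-\Sigma_P$ (\cref{lemma:sigmaqsigmap}), which immediately gives $\Sigma_Q\in\bP_{16}$.
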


Combined with \cref{bigthm:abeliansationmcg}, this reduces the computation of the abelianisation of $\Gamma_g^n$ and $\oT^n_g$ in many cases to determining the cokernel of the $J$-homomorphism---a well-studied problem in stable homotopy theory. \cref{table:exampleabelianisationstorelli} shows the resulting calculation of the abelianisations of the groups $\Gamma_g^n$ and $\oT^n_g$ for the first few values of $n$ .

\begin{table}[h]
  \centering
    \caption{Some abelianisations of $\oT_g^n$ and $\Gamma_g^n$.}\label{table:exampleabelianisations} \label{table:exampleabelianisationstorelli}
    \begin{tabular}{ l | c c c c }
  $\oH_1(\oT_g^n)$ & $n=3$ & $n=5$  & $n=7$ & $n=9$ \\  \hline
      $g=0$ & $\bfZ/28$ & $\bfZ/992$ &  $\bfZ/2\oplus \bfZ/8128 $ & $\bfZ/2\oplus\bfZ/261632$  \\
    $g\ge1$ & $\bfZ^{2g}$ & $\bfZ/992$ &  $\bfZ/2\oplus \bfZ^{2g}$ & $\bfZ/2\oplus(\bfZ/2)^{2g}\oplus\bfZ/261632$  \\
\end{tabular}

\bigskip

\medskip

\begin{tabular}{ l | c c c c }
  $\oH_1(\Gamma_g^n)$ & $n=3$ & $n=5$  & $n=7$ & $n=9$ \\  \hline
      $g=0$ & $\bfZ/28$ & $\bfZ/992$ &  $\bfZ/2\oplus \bfZ/8128 $ & $\bfZ/2\oplus\bfZ/261632$  \\
  $g=1$ & $\bfZ/12$ & $\bfZ/4\oplus\bfZ\oplus\bfZ/992$ &  $\bfZ/12\oplus \bfZ/2$ & $\bfZ/4\oplus\bfZ\oplus (\bfZ/2)^2\oplus \bfZ/261632$  \\
   $g=2$ & $\bfZ/2$ & $\bfZ/4\oplus\bfZ/2$ &  $(\bfZ/2)^2$ & $(\bfZ/2)^2\oplus\bfZ/4 $  \\
  $g\ge3$ & $0$ & $\bfZ/4$ &  $\bfZ/2$ & $\bfZ/2\oplus\bfZ/4 $  \\
\end{tabular}
\end{table}

\begin{nrem}After the completion of this work, Burklund--Hahn--Senger \cite{BHS} and Burklund--Senger \cite{BS} showed that for $n$ odd, the homotopy sphere $\Sigma_Q\in\Theta_{2n+1}$ bounds a parallelisable manifold if and only if $n\neq 11$. This implies in particular that aside from $n=11$
\begin{enumerate}
\item the canonical map $\coker(J)_{2n+1}\ra\Omega_{2n+1}^{\tau_{>n}}$ is an isomorphism, which extends the theorem attributed to Schultz and Wall above,
\item the conjecture of Galatius--Randal-Williams mentioned in the third part of the previous remark holds, and 
\item the minimal signature $\sigma_n'$ appearing in \cref{bigthm:abeliansationmcg} is computable from \cite[Prop.\,2.15]{KrannichReinhold}.
\end{enumerate} 
\end{nrem}

\subsection*{Homotopy equivalences}
As an additional application of our results, we briefly discuss the group $\pi_0\hAut(W_g)$ of homotopy classes of orientation-preserving homotopy equivalences. 

The natural map $\Gamma_g^n\ra\pi_0\hAut(W_g)$ can be seen to factor through the quotient $\Gamma_g^n/\Theta_{2n+1}$ and to induce a commutative diagram of the form \begin{equation}\label{ses5}
\begin{tikzcd}
0\arrow[r]& H(g)\otimes S\pi_{n}\SO(n)\arrow[r]\arrow[d]&\Gamma_g^n/\Theta_{2n+1}\arrow[r]\arrow[d]&G_g\arrow[r]\arrow[d,equal]& 0\\
0\arrow[r]& H(g)\otimes S\pi_{2n}S^{n}\arrow[r]&\pi_0\hAut(W_g)\arrow[r]&G_g\arrow[r]& 0,
\end{tikzcd}
\end{equation}
which exhibits the lower row---an extension describing $\pi_0\hAut(W_g)$ due to Baues \cite{Baues}---as the extension pushout of the extension \eqref{ses4} along the left vertical morphism, which is induced by the restriction $J\colon S\pi_n\SO(n)\ra S\pi_{2n}S^{n}$ of the unstable $J$-homomorphism, where $S\pi_{2n}S^{n}$ is the image of the suspension map $S:\pi_{2n}S^{n}\ra\pi_{2n+1}S^{n+1}$. By \cref{bigthm:splitextension}, the upper row splits in most cases and thus induces a compatible splitting of Baues' extension. In the cases in which the upper row does not split, we show that Baues' extension cannot split either.

\begin{bigcor}\label{bigcor:hAut}Let $n\ge3$ odd and $g\ge1$.\begin{enumerate}
\item For $n\neq3,7$, the two extensions in \eqref{ses5} admit compatible splittings. For $n=3,7$, either of the extensions splits if and only if $g=1$.
\item The induced morphism $\oH_1(\pi_0\hAut(W_g))\ra\oH_1(G_g)$ is an isomorphism for $g\ge2$, and a split epimorphism with kernel the coinvariants $(H(g)\otimes S\pi_{2n}S^{n})_{G_g}$, which vanish for $g\ge2$ or $n=3,7$, and agree with $S\pi_{2n}S^{n}/(2\cdot S\pi_{2n}S^{n})$ otherwise.
\end{enumerate}
\end{bigcor}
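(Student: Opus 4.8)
The plan is to deduce everything from the pushout description of the bottom row of \eqref{ses5} together with the splitting results of \cref{bigthm:splitextension}. For part (i), recall that Baues' extension is obtained from the extension \eqref{ses4} by pushing out along the map $H(g)\otimes S\pi_n\SO(n)\ra H(g)\otimes S\pi_{2n}S^n$ induced by the unstable $J$-homomorphism. When $n\neq3,7$, \cref{bigthm:splitextension} provides a splitting of the top row, and pushing a splitting forward along the left vertical map yields a compatible splitting of the bottom row; this settles the positive direction. For the converse when $n=3,7$: by \cref{bigthm:splitextension} the top row does not split for $g\ge2$, so it remains to show the bottom row does not split for $g\ge2$ either, and that both split for $g=1$. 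The $g=1$ case follows because the top row splits (\cref{bigthm:splitextension}) and hence so does the pushout. For $g\ge2$ and $n=3,7$, I would argue that the restriction $J\colon S\pi_n\SO(n)\ra S\pi_{2n}S^n$ is \emph{injective} (indeed for $n=3,7$ the source is $\bfZ$ and the unstable $J$-homomorphism is known to be injective in these degrees, cf.\,the relation between $\Sigma_Q$ and the Hopf invariant), so that the left vertical map $H(g)\otimes S\pi_n\SO(n)\ra H(g)\otimes S\pi_{2n}S^n$ is a split injection of $G_g$-modules. A split injection of kernels carries non-split extensions to non-split extensions under pushout (one can retract and recover the original extension), so non-splitting of the top row forces non-splitting of Baues' extension. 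This is the step I expect to require the most care: verifying that $J$ is a split injection on $S\pi_n\SO(n)$ for $n=3,7$ and that splitness genuinely transfers back along the pushout.

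For part (ii), I would run the five-term exact sequence in group homology for the bottom row of \eqref{ses5}, exactly parallel to the computation of $\oH_1(\Gamma_g^n/\Theta_{2n+1})$ used in \cref{corollary:abelianisationhalfmcg}. This gives an exact sequence
\[
(H(g)\otimes S\pi_{2n}S^n)_{G_g}\lra \oH_1(\pi_0\hAut(W_g))\lra \oH_1(G_g)\lra 0,
\]
and the map $\oH_1(\pi_0\hAut(W_g))\ra\oH_1(G_g)$ is a split epimorphism because the extension \eqref{ses1}-type sequence admits a section after abelianisation (the section $G_g\to\pi_0\hAut(W_g)$ coming from the splitting of Baues' extension when $n\neq3,7$, and from direct inspection in the remaining finitely many relevant cases). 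It remains to identify the kernel with the coinvariants $(H(g)\otimes S\pi_{2n}S^n)_{G_g}$, i.e.\ to show the first map above is injective; this follows from the existence of a compatible splitting of Baues' extension (part (i)) which makes the five-term sequence split off the coinvariants cleanly, so that $\oH_1(\pi_0\hAut(W_g))\cong \oH_1(G_g)\oplus (H(g)\otimes S\pi_{2n}S^n)_{G_g}$.

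Finally I would compute the coinvariants $(H(g)\otimes S\pi_{2n}S^n)_{G_g}$. Since $H(g)$ is generated as a $G_g$-module by a single hyperbolic pair and $G_g$ acts through $\Sp_{2g}(\bfZ)$ or $\oO_{g,g}(\bfZ)$, the argument is identical in structure to the proof of \eqref{equ:coinvariants} (\cref{lemma:coinvariantssymplectic}): for $g\ge2$ the transvections in $G_g$ force $H(g)_{G_g}=0$, hence the coinvariants vanish; for $g=1$ one computes directly that $(H(1)\otimes A)_{G_1}\cong A/2A$ for the relevant coefficient module $A=S\pi_{2n}S^n$, using that $G_1$ contains an element acting as $-1$ on $H(1)$. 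This reproduces the stated answer, and for $n=3,7$ one has $S\pi_{2n}S^n$ such that $A/2A=0$, consistent with the vanishing claimed in those cases. I expect these coinvariant computations to be routine given \cref{lemma:coinvariantssymplectic}; the genuine obstacle remains the transfer-of-splitting argument in part (i) for $n=3,7$.
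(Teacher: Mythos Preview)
Your overall architecture is right---push splittings forward along the pushout, and run the five-term sequence for the abelianisation---but the key step in part~(i) for $n=3,7$, $g\ge2$ has a genuine error. You claim that $J\colon S\pi_n\SO(n)\to S\pi_{2n}S^n$ is a split injection, reasoning from the fact that the source is $\bfZ$. But $S\pi_{2n}S^n$ is the torsion subgroup of $\pi_{2n+1}S^{n+1}$, hence finite; concretely for $n=3,7$ one has $S\pi_n\SO(n)\cong\bfZ$, $S\pi_{2n}S^n\cong\bfZ/d_n$ with $d_3=12$, $d_7=120$, and $J$ is reduction modulo $d_n$. So $J$ is a \emph{surjection with infinite kernel}, not an injection, and your retract-along-a-split-injection argument collapses. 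The paper's route is different: it tensors the short exact sequence $0\to\bfZ\xrightarrow{d_n}\bfZ\to\bfZ/d_n\to0$ with $H(g)$ and runs the long exact sequence in $\oH^*(G_g;-)$. Since $\oH^2(G_g;H(g))$ is annihilated by~$2$ (\cref{lemma:symplecticlowdegreecohomology}) and $d_n$ is even, multiplication by $d_n$ is zero on $\oH^2$, so $J_*\colon\oH^2(G_g;H(g)\otimes S\pi_n\SO(n))\to\oH^2(G_g;H(g)\otimes S\pi_{2n}S^n)$ is injective, and non-splitting transfers.

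There is also a smaller slip in part~(ii): you say the coinvariants vanish for $n=3,7$ because $A/2A=0$ for $A=S\pi_{2n}S^n$, but in fact $\bfZ/12$ and $\bfZ/120$ have nontrivial $A/2A$. The correct reason is that for $n=3,7$ the group $G_g$ is the full $\Sp_{2g}(\bfZ)$ rather than $\Sp_{2g}^q(\bfZ)$, and \cref{lemma:coinvariantssymplectic} shows $(\bfZ^{2g}\otimes A)_{\Sp_{2g}(\bfZ)}=0$ already for $g=1$. This also explains why you only need the splitting of Baues' extension for $g=1$ and $n\neq3,7$ in part~(ii): for $g\ge2$ or $n=3,7$ the coinvariants vanish, so the map to $\oH_1(G_g)$ is automatically an isomorphism and no splitting is needed.
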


\subsection*{The groups $\Gamma_{g,1}^n$ for $n$ even}Some parts in our analysis of $\Gamma_{g,1}^n$ go through when $n\ge4$ is even as well, but a few key steps do not and would require new arguments. For instance, a different approach to the extension problem \eqref{ses4} would be necessary, as well as an extension of \cref{thm:almostclosedbundles} to incorporate the Arf invariant. The abelianisation of the groups $\Gamma^n_{g,1}$ and $\oT^n_{g,1}$, however, can be determined without fully solving the extensions \eqref{ses3} and \eqref{ses4} if $n$ is even. It turns out that in this case, the morphisms considered in \cref{bigthm:abeliansationmcg} are isomorphisms for \emph{all} $g\ge1$, which we shall prove as part of \cref{section:abelianisationneven}.

\begin{bigthm}\label{bigthm:abelianisationeven}For $n\ge4$ even and $g\ge1$, the morphisms 
\begin{gather*}t_*\oplus p_*\colon \oH_1(\Gamma_{g,1}^n) \lra \Omega^{\tau_{>n}}_{2n+1}\oplus \oH_1(G_g) \oplus \big( H(g)\otimes S\pi_n\SO(n)\big)_{G_g}\\
 t_*\oplus \rho_*\colon \oH_1(\oT_{g,1}^n) \lra \Omega^{\tau_{>n}}_{2n+1}\oplus\big(H(g)\otimes S\pi_n\SO(n)\big)\end{gather*}
 are isomorphisms for $g\ge1$. The coinvariants $\big( H(g)\otimes S\pi_n\SO(n)\big)_{G_g}$ are described in \eqref{equ:coinvariants}.
\end{bigthm}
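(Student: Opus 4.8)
The plan is to mimic, for $n$ even, the route used for $n$ odd in Corollaries~\ref{bigthm:abeliansationmcg} and \ref{bigthm:abstractsplitting}, replacing each step that invoked the solution of the extension problems \eqref{ses3}--\eqref{ses4} by a direct argument that never needs to know those extensions explicitly. Concretely, I would run the Hochschild--Serre five-term exact sequences for the two central/abelian extensions simultaneously, bracket the outcome between a surjectivity input coming from the mapping-torus map $t$ and an injectivity input coming from $p_*$ and $\rho_*$, and conclude by a counting/comparison argument. Since the two displayed morphisms are compatible via the quotient $\oT_{g,1}^n\hookrightarrow\Gamma_{g,1}^n\twoheadrightarrow G_g$, it suffices to treat $\oT_{g,1}^n$ carefully and then bootstrap to $\Gamma_{g,1}^n$ using that $\oH_1(G_g)$ already appears as a summand in the target and that the extension \eqref{ses1} induces a right-exact sequence $\oH_1(\oT_{g,1}^n)_{G_g}\to \oH_1(\Gamma_{g,1}^n)\to\oH_1(G_g)\to 0$.

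The key steps, in order. \emph{Step 1: surjectivity.} Show that $t_*\oplus\rho_*$ is surjective. Surjectivity onto $H(g)\otimes S\pi_n\SO(n)$ is immediate from \eqref{ses2} (any Torelli class mapping to a generator is realised). Surjectivity onto $\Omega^{\tau_{>n}}_{2n+1}$ is the substantive point: every $n$-connected-stably-framed bordism class in dimension $2n+1$ is realised as the mapping torus of a diffeomorphism of some $W_{g}$ (for $g$ large, then destabilise using that the abelianisations are computed in a range / are stably constant in $g$, or argue directly on $W_{g,1}$), and such a mapping torus is the image under $t$ of a Torelli element because it acts trivially on homology after the framing is accounted for. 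Here I would cite the construction of $t$ promised in the introduction and the surjectivity statement implicit in Kreck's work. \emph{Step 2: injectivity.} Compute the domain $\oH_1(\oT_{g,1}^n)$ via \eqref{ses2}: because $\Theta_{2n+1}$ is central (indeed finite) and $H(g)\otimes S\pi_n\SO(n)$ is a finitely generated abelian group, the five-term sequence gives
\[
\oH_2\big(H(g)\otimes S\pi_n\SO(n)\big)\lra \Theta_{2n+1}\lra \oH_1(\oT_{g,1}^n)\lra H(g)\otimes S\pi_n\SO(n)\lra 0 .
\]
The image of $\Theta_{2n+1}$ here is $\Theta_{2n+1}/\langle\Sigma_Q\rangle$ by the even-dimensional analogue of the identification of the commutator subgroup (the class $\Sigma_Q$ is a commutator already for $n$ even, by the same plumbing-on-$S^{n+1}$ argument that is dimension-insensitive, whereas $\Sigma_P$ is not killed because the relevant quadratic refinement survives). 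One then checks $t_*$ is injective on the resulting $\Theta_{2n+1}/\langle\Sigma_Q\rangle$: the mapping torus of the Milnor-sphere twist has nonzero image in $\Omega^{\tau_{>n}}_{2n+1}$ precisely because $\sigma_n'/8$ detects it, and — crucially, and this is where the even case \emph{differs} from the odd case — for $n$ even one additionally has the Arf/Kervaire obstruction available, which rigidifies the picture so that no residual kernel of order $\sigma_n'/8$ survives; this is the content one extracts from the promised extension of \cref{thm:almostclosedbundles} to the Arf invariant. \emph{Step 3:} combine Steps 1--2: a surjection between finitely generated abelian groups which is injective is an isomorphism; then pass to $\Gamma_{g,1}^n$ using the right-exact sequence above together with the already-established splitting $\oH_1(\Gamma_{g,1}^n)\cong\oH_1(G_g)\oplus(\cdots)$ coming from the $n$-even analogue of \cref{corollary:abelianisationhalfmcg}, and read off the coinvariants from \eqref{equ:coinvariants}.

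The main obstacle I expect is Step 2, specifically showing there is \emph{no} residual cyclic kernel of order $\sigma_n'/8$ for $n$ even — i.e.\ that $t_*$ is actually injective on all of $\oH_1(\oT_{g,1}^n)$ rather than merely surjective with the same $\Sigma_P$-kernel that occurs for $n$ odd. This requires the even-dimensional refinement of the almost-closed-manifold bordism computation incorporating the Arf invariant (flagged in the introduction as a step that "would require new arguments"), and checking that this extra invariant is genuinely detected by the target $\Omega^{\tau_{>n}}_{2n+1}$ of $t$. A secondary technical point is the destabilisation in Step 1 — ensuring the surjectivity of $t_*$ holds for the stated $g\ge1$ and not merely in a stable range — which I would handle either by Wall's classification of $n$-connected almost-closed $(2n+1)$-manifolds or by a direct handle-trading argument on $W_{g,1}$.
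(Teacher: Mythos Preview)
Your Step~2 misidentifies the mechanism that makes the even case work, and the argument as written would fail. You claim that the image of $\Theta_{2n+1}$ in $\oH_1(\oT_{g,1}^n)$ is $\Theta_{2n+1}/\langle\Sigma_Q\rangle$ and that $\Sigma_P$ ``is not killed,'' then appeal to an Arf/Kervaire detection in $\Omega^{\tau_{>n}}_{2n+1}$ to eliminate a residual $\sigma_n'/8$ kernel. But $t$ restricted to $\Theta_{2n+1}$ is the canonical surjection $\Theta_{2n+1}\to\Omega^{\tau_{>n}}_{2n+1}$ with kernel exactly $\bA_{2n+2}$, so $\Sigma_P$ maps to \emph{zero} there---no invariant on $\Omega^{\tau_{>n}}_{2n+1}$ can detect it. The remark in the introduction about needing the Arf invariant refers to the full extension problem, not to the abelianisation; it is a red herring here.

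What actually happens for $n$ even is that the commutator subgroup of $\oT_{g,1}^n$ is \emph{all of} $\bA_{2n+2}$, not just $\langle\Sigma_Q\rangle$. The paper's proof establishes this as follows: for $n\ge4$ even the plumbing pairing $S\pi_n\SO(n)\otimes S\pi_n\SO(n)\to A_{2n+2}^{\tau_{>n}}$ is surjective (Wall; this fails for $n$ odd, where one needs the $E_8$-plumbing $P$ separately), so the Milnor pairing $S\pi_n\SO(n)\otimes S\pi_n\SO(n)\to\Theta_{2n+1}$ has image $\bA_{2n+2}$. One then writes down an explicit $2$-cocycle for the extension $0\to\Theta_{2n+1}\to\oT_1^n\to S\pi_n\SO(n)^{\oplus2}\to0$ using the twist diffeomorphisms $t_1(\eta)(x,y)=(x,\eta(x)y)$ and $t_2(\xi)(x,y)=(\xi(y)x,y)$, and computes that the induced differential $d_2\colon\Lambda^2(S\pi_n\SO(n)^{\oplus2})\to\Theta_{2n+1}$ evaluated on $(\eta,0)\wedge(0,\xi)$ is the commutator $[t_1(\eta),t_2(\xi)]$. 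By Lawson this commutator equals the Milnor pairing of $\eta$ and $\xi$, so $\im(d_2)\supset\bA_{2n+2}$. Once this is in hand, the kernel of $t_*\oplus\rho_*$ is $\bA_{2n+2}/\im(d_2)=0$ and you are done---no Arf argument, no detection in the target. Your surjectivity Step~1 and the bootstrap to $\Gamma_{g,1}^n$ in Step~3 are fine in outline.
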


\subsection*{Other highly connected manifolds}
Instead of restricting to $W_g$, one could consider any $(n-1)$-connected almost parallelisable manifold $M$ of dimension $2n\ge6$. Baues' and Kreck's work \cite{Baues,Kreck} applies in this generality, so there are analogues of the sequences \eqref{ses1}--\eqref{ses5} describing $\pi_0\Diff(M)$ and $\pi_0\hAut(M)$. However, for $n$ odd---the case of our main interest---Wall's classification of highly connected manifolds \cite{Wall2n} implies that any such manifold is diffeomorphic to a connected sum $W_g\sharp\Sigma$ with an exotic sphere $\Sigma\in\Theta_{2n}$, aside from those of Kervaire invariant $1$, which only exist in dimensions $6$, $14$, $30$, $62$, and possibly $126$ by work of Hill--Hopkins--Ravenel \cite{HHR}. The mapping class group $\pi_0\Diff(W_g\sharp\Sigma)$ for $\Sigma\in\Theta_{2n}$ and $n$ odd in turn is completely understood in terms of $\Gamma_g^n$: Kreck's work \cite[Lem.\,3, Thm 3]{Kreck} shows that the former is a quotient of the latter by a known element $\Sigma'\in\Theta_{2n+1}$ of order at most $2$, which is trivial if and only if $\eta\cdot[\Sigma]\in\coker(J)_{2n+1}$ vanishes.

\subsection*{Previous results}The extensions \eqref{ses1} and \eqref{ses2} and their variants \eqref{ses3} and \eqref{ses4} have been studied by various authors before, and some special cases of our results were already known:
\begin{enumerate}
\item As an application of their programme on moduli spaces of manifolds, Galatius--Randal-Williams \cite{GRWabelian} determined the abelianisation of $\Gamma_g^n$ for $g\ge5$ and used this to determine the extension \eqref{ses3} for $n\equiv 5\Mod{8}$ up to automorphisms of $\Theta_{2n+1}$ as long $g\ge5$. Our work recovers and extends their results, also applies to low genera $g<5$, and does not rely on their work on moduli spaces of manifolds.
\item Theorems~\ref{bigthm:splitextension} and~\ref{bigcor:hAut} for $n=3,7$ reprove results due to Crowley \cite{Crowley}. 
\item Baues \cite[Thm\,8.14, Thm\,10.3]{Baues} showed that the lower extension in \eqref{ses5} splits for $n\neq3,7$ odd, which we recover as part of the first part of \cref{bigcor:hAut}.
\item The case $(g,n)=(1,3)$ of \cref{bigthm:splitextension} and \cref{bigthm:kreckextensions} (ii) can be deduced from work of Krylov \cite{Krylov} and Fried \cite{Fried}, who also showed that the extension of \cref{bigthm:kreckextensions} (i) does not split in this case. Krylov \cite[Thms\,2.1, 3.2, 3.3]{Krylovthesis} moreover established the case $n\equiv 5\Mod{8}$ of \cref{bigthm:kreckextensions} (i) for $g=1$. For $n\neq3,7$, he also proved the case $n\equiv3\Mod{4}$ of \cref{bigthm:splitextension} and the case $n\equiv3\Mod{4}$ of \cref{bigthm:kreckextensions} (ii) for $g=1$.
\end{enumerate}

\subsection*{Further applications}Our main result \cref{bigthm:mainextension} has been used in \cite{KrannichKupers} in conjunction with Galatius--Randal-Williams' work on moduli spaces of manifolds \cite{GRWusersguide} to compute the second stable homology of the \emph{theta-subgroup} of $\Sp_{2g}(\bfZ)$ (see \cref{section:Wallsform}), or equivalently, the second quadratic symplectic algebraic $K$-theory group of the integers $\mathrm{KSp}^q_2(\mathbf{Z})$.

\subsection*{Outline}\cref{section:foundations} serves to recall foundational material on diffeomorphism groups and their classifying spaces, as well as to introduce different variants of the extensions \eqref{ses1} and \eqref{ses2} and to establish some of their basic properties. In \cref{section:firstextension}, we study the action of $\Gamma_g^n$ on the set of stable framings of $W_g$ to identify the extension \eqref{ses3} and prove \cref{bigthm:splitextension}. \cref{section:mainsection} aims at the proof of our main result \cref{bigthm:mainextension}, which requires some preparation. We recall the relation between relative Pontryagin classes and obstruction theory in \cref{section:obstructionclass}, discuss aspects of Wall's classification of highly connected manifolds in \cref{section:highlyconnected}, relate this class of manifolds to $W_{g,1}$-bundles over surfaces with certain boundary conditions in \cref{section:extensionclass} (which incidentally is the key geometric insight to prove \cref{bigthm:mainextension}), construct the cohomology classes appearing in the statement of \cref{bigthm:mainextension} in Sections~\ref{section:algebraicsignature} and~\ref{section:algebraicobstructions}, and finish with the proof of \cref{bigthm:mainextension} in \cref{section:proofmainthm}. In \cref{section:applications}, we analyse the extensions \eqref{ses1} and \eqref{ses2} and compute the abelianisation of $\Gamma^n_{g}$ and $\oT_g^n$, proving Corollaries~\ref{bigthm:kreckextensions}--\ref{bigthm:abeliansationmcg} and \cref{bigthm:abelianisationeven}. \cref{section:homotopyaut} briefly discusses the group of homotopy equivalences and proves \cref{bigcor:hAut}. In the appendix, we compute various low-degree (co)homology groups of the symplectic group $\Sp_{2g}(\bfZ)$ and its arithmetic subgroup $G_g\subset \Sp_{2g}(\bfZ)$.

\subsection*{Acknowledgements}I would like to thank Oscar Randal-Williams for several valuable discussions, Aurélien Djament for an explanation of an application of a result of his, and Fabian Hebestreit for many useful comments on an earlier version of this work. I was supported by the European Research Council (ERC) under the European Union’s Horizon 2020 research and innovation programme (grant agreement No 756444).
\numberwithin{equation}{section}

\addtocontents{toc}{\protect\setcounter{tocdepth}{1}}

\bigskip

\tableofcontents

\clearpage

\section{Variations on two extensions of Kreck}\label{section:foundations}
\subsection{Different flavours of diffeomorphisms}\label{section:differentdiffeos}
Throughout this work, we write
\[W_g=\sharp^{g}(S^n\times S^n)\quad\text{and}\quad W_{g,1}=\sharp^{g}(S^n\times S^n)\backslash \interior{D^{2n}}\] for the $g$-fold connected sum of $S^n\times S^n$, including $W_0=S^{2n}$, and the manifold obtained from $W_g$ removing the interior of an embedded disc $D^{2n}\subset W_{g}$. Occasionally, we view the manifold $W_{g,1}$ alternatively as the iterated boundary connected sum $W_{g,1}=\natural^gW_{1,1}$ of $W_{1,1}=S^n\times S^n\backslash \interior{D^{2n}}$. We call $g$ the \emph{genus} of $W_g$ or $W_{g,1}$ and denote by $\Diff(W_g)$ and $\Diff(W_{g,1})$ the groups of orientation-preserving diffeomorphisms, not necessarily fixing the boundary in the case of $W_{g,1}$. We shall also consider the subgroups \[\Diffuo_\partial(W_{g,1})\subset\Diffuo_{\partial/2}(W_{g,1})\subset\Diff(W_{g,1})\] of diffeomorphisms required to fix a neighbourhood of the boundary $\partial W_{g,1}\cong S^{2n-1}$ or a neighbourhood of a chosen disc $D^{2n-1}\subset \partial W_{g,1}$ in the boundary, respectively. All groups of diffeomorphisms are implicitly equipped with the smooth topology so that 
\begin{enumerate}
\item $\BDiff(W_{g,1})$ and $\BDiff(W_{g})$ classify smooth oriented $W_{g,1}$-bundles or $W_g$-bundles,
\item $\BDiffuo_{\partial}(W_{g,1})$ classifies $(W_{g,1},S^{2n-1})$-bundles, i.e.\,smooth $W_{g,1}$-bundles with a trivialisation of their $S^{2n-1}$-bundle of boundaries, and 
\item $\BDiffuo_{\partial/2}(W_{g,1})$ classifies $(W_{g,1},D^{2n-1})$-bundles, that is, smooth $W_{g,1}$-bundles with a trivialised $D^{2n-1}$-subbundle of its $S^{2n-1}$-bundle of boundaries.
\end{enumerate}

Taking path components, we obtain various groups of isotopy classes
\[\Gamma_g^n=\pi_0\Diff(W_g),\quad\Gamma_{g,1/2}^n=\pi_0\Diffuo_{\partial/2}(W_g),\quad
\text{and}\quad\Gamma_{g,1}^n=\pi_0\Diffuo_\partial(W_{g,1}).\] Extending diffeomorphisms by the identity provides a map $\Diffuo_\partial(W_{g,1})\rightarrow \Diff(W_g)$, which induces an isomorphism on path components by work of Kreck as long as $n\ge3$.
\begin{lem}[Kreck]\label{lemma:fixingadiscdoesnotmatter}
The canonical map $\Gamma^n_{g,1}\rightarrow\Gamma^n_g$ is an isomorphism for $n\ge3$.
\end{lem}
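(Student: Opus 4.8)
The plan is to compare $\Diffuo_\partial(W_{g,1})$ with $\Diff(W_g)$ by factoring the extension-by-the-identity map through the intermediate group $\Diff(W_{g,1})$ of diffeomorphisms \emph{not} required to fix the boundary, and to analyse each of the two resulting maps on $\pi_0$ separately. First I would consider the restriction-to-the-boundary fibration sequence
\[
\Diffuo_\partial(W_{g,1})\lra \Diff(W_{g,1})\lra \Emb^{\cong}(D^{2n},W_{g,1}),
\]
where the base is the space of orientation-preserving embeddings of a disc (equivalently, by the parametrised isotopy extension theorem, the homogeneous space detecting how the boundary collar can be moved). The base is $(2n-2)$-connected — it is equivalent to the orientation-preserving frame bundle of $W_{g,1}$, which is $n$-connected since $W_{g,1}$ is $(n-1)$-connected and $2n\ge 6$ — so the long exact sequence of the fibration shows that $\pi_0\Diffuo_\partial(W_{g,1})\to\pi_0\Diff(W_{g,1})$ is an isomorphism once $n\ge 2$; more precisely one gets surjectivity from $\pi_0$ of the base being trivial and injectivity from $\pi_1$ of the base vanishing.

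Next I would treat the map $\pi_0\Diff(W_{g,1})\to\pi_0\Diff(W_g)$ induced by gluing a disc $D^{2n}$ back on. Here the relevant comparison is the fibration
\[
\Diffuo(W_{g,1}\ \mathrm{rel}\ \emptyset)\lra \Diff(W_g)\lra \Emb^{\cong}(D^{2n},W_g)
\]
obtained by restricting a diffeomorphism of $W_g$ to a neighbourhood of the chosen disc; again the base $\Emb^{\cong}(D^{2n},W_g)\simeq \Fr^+(W_g)$ is $n$-connected for the same connectivity reason, so its $\pi_0$ and $\pi_1$ vanish and the map on $\pi_0$ of total space and fibre is an isomorphism for $n\ge 2$. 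Identifying the fibre with $\Diff(W_{g,1})$ (diffeomorphisms of $W_g$ that are the identity near the disc, equivalently diffeomorphisms of the complement) gives the desired isomorphism $\pi_0\Diff(W_{g,1})\xrightarrow{\ \cong\ }\pi_0\Gamma_g^n$. Composing the two isomorphisms yields $\Gamma_{g,1}^n\cong\Gamma_g^n$.

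The main obstacle is not any single computation but making the two connectivity inputs precise: one must verify that the relevant embedding spaces are genuinely $n$-connected (so that both their $\pi_0$ and $\pi_1$ vanish, the latter being needed for \emph{injectivity} on $\pi_0$), which rests on the fact that $W_{g,1}$ and $W_g$ are $(n-1)$-connected together with $2n-1\ge n+1$, i.e.\ precisely $n\ge 3$ — this is where the hypothesis is used, since for $n=2$ one only gets $1$-connectivity of the frame bundle and the argument for injectivity breaks. Alternatively, and perhaps more in the spirit of Kreck's original argument, one can bypass the fibration bookkeeping: given a diffeomorphism $\phi$ of $W_g$, isotope it so that it fixes a point, then its derivative at that point, hence a whole disc (using that $\GL^+_{2n}(\bfR)$ is connected and $\pi_1$-surjection onto $\pi_1 W_g=0$ is automatic), producing a representative in the image of $\Diffuo_\partial(W_{g,1})$; and given two such representatives that become isotopic in $\Diff(W_g)$, use the isotopy extension theorem to upgrade the ambient isotopy to one fixing the disc throughout, again invoking $n$-connectivity of the frame bundle to kill the obstruction to straightening the isotopy near the disc. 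Either route reduces the statement to the stated connectivity of $\Fr^+(W_g)$, which is elementary. I would cite Kreck's paper for the original proof and present whichever of these two arguments is shortest.
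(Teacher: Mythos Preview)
Your connectivity claim for the embedding space is wrong, and this is precisely where the genuine content of the lemma lies. The space $\Emb^{+}(D^{2n},W_g)\simeq \Fr^{+}(W_g)$ sits in the fibration $\SO(2n)\to \Fr^{+}(W_g)\to W_g$. For $n\ge 3$ the base $W_g$ is $(n-1)$-connected, so in particular $\pi_2 W_g=0$, and the long exact sequence gives
\[
\pi_1\Fr^{+}(W_g)\cong \pi_1\SO(2n)\cong \bfZ/2.
\]
The frame bundle is therefore \emph{not} simply connected (let alone $n$-connected), regardless of how large $n$ is: the obstruction comes from $\pi_1\SO(2n)$, not from the topology of $W_g$. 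The same remark applies verbatim to $\Fr^{+}(W_{g,1})$. Consequently your fibration argument only yields surjectivity of $\Gamma^n_{g,1}\to\Gamma^n_g$ together with a kernel generated by the image of a generator of $\pi_1\Fr^{+}(W_g)\cong\bfZ/2$; it does \emph{not} give injectivity. Your ``alternative'' hands-on argument has the same gap: when you try to straighten an isotopy near the disc, the obstruction lives in $\pi_1\Fr^{+}(W_g)$, which is nontrivial.

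The paper's proof proceeds exactly this far with the fibration $\Diffuo_\partial(W_{g,1})\to\Diff(W_g)\to\Fr^{+}(W_g)$ and then identifies the potential kernel concretely: it is generated by the diffeomorphism that ``twists'' a collar $[0,1]\times S^{2n-1}\subset W_{g,1}$ by a loop in $\SO(2n)$ representing the generator of $\pi_1\SO(2n)$. Showing that this twist is isotopic to the identity \emph{rel boundary} is a nontrivial geometric fact, and the paper appeals to Kreck's Lemmas~3(b) and~4, which use that $W_g$ bounds the parallelisable handlebody $\natural^g(D^{n+1}\times S^n)$. This step is the actual substance of the lemma, and your proposal omits it entirely.
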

\begin{proof}Taking the differential at the centre of the disc induces a fibration $\Diff(W_g)\ra \Fr(W_g)$ to the oriented frame bundle of $W_g$. Its fibre is the subgroup of diffeomorphisms that fix a point and its tangent space, so it is equivalent to the subgroup of diffeomorphisms fixing a small disc around that point, which is in turn equivalent to $\Diffuo_\partial(W_{g,1})$. We thus have fibration sequences of the form
\[\Diffuo_\partial(W_{g,1})\lra \Diff(W_{g,1})\lra \Fr(W_g)\quad\text{and}\quad\SO(2n)\lra\Fr(W_g)\lra W_g\] whose long exact sequences show that the morphism in question is surjective and also that its kernel is generated by a single isotopy class given by ``twisting'' a collar $[0,1]\times S^{2n-1}\subset W_{g,1}$ using a smooth based loop in $\SO(2n)$ that represents a generator of $\pi_1\SO(2n)\cong\bfZ/2$ (see \cite[p.\,647]{Kreck}). It follows from \cite[Lem.\,3 b), Lem.\,4]{Kreck} that this isotopy class is trivial since $W_g$ bounds the parallelisable handlebody $\natural^g(D^{n+1}\times S^n)$.
\end{proof}

For the purpose of studying the mapping class group $\Gamma_g^n$, we can thus equally well work with $\Diffuo_\partial(W_{g,1})$ instead of $\Diff(W_{g})$, which is advantageous since there is a stabilisation map $\Diffuo_{\partial/2}(W_{g,1})\rightarrow\Diffuo_{\partial/2}(W_{g+1,1})$ by extending diffeomorphisms over an additional boundary connected summand via the identity, which restricts to a map $\Diffuo_{\partial}(W_{g,1})\ra\Diffuo_{\partial}(W_{g+1,1})$ and thus induces stabilisation maps of the form \begin{equation}\label{equation:stabilisation}\Gamma^n_{g,1/2}\lra\Gamma^n_{g,1/2}\quad\text{ and }·\quad\Gamma^n_{g,1}\lra\Gamma^n_{g+1,1},\end{equation} that allow us to compare mapping class groups of different genera. The group $\Gamma_{0,1}$ has a convenient alternative description: gluing two closed $d$-discs along their boundaries via a diffeomorphism supported in a disc $D^{d}\subset \partial D^{d+1}$ gives a morphism $\pi_0\Diffuo_\partial(D^{d})\lra\Theta_{d+1}$ to Kervaire--Milnor's \cite{KervaireMilnor} finite abelian group $\Theta_d$ of oriented homotopy $d$-spheres up to $h$-cobordism. By work of Cerf \cite{Cerf}, this is an isomorphism for $d\ge5$, so we identify these two groups henceforth. Iterating the stabilisation map yields maps
\begin{equation}\label{equation:thetainclusion}\Theta_{2n+1}=\pi_0\Diffuo_{\partial}(D^{2n})=\Gamma^n_{0,1}\lra\Gamma_{g,1}^n\lra \Gamma_{g,1/2}^n.\end{equation} 

\begin{lem}\label{lemma:central}For $n\ge3$, the image of $\Theta_{2n+1}$ in $\Gamma_{g,1}^n$ is central and becomes trivial in $\Gamma_{g,1/2}^n$. The induced morphism \[\Big(\Gamma^n_{g,1}/(\im(\Theta_{2n+1}\ra\Gamma^n_{g,1})\Big)\lra\Gamma_{g,1/2}^n\] is an isomorphism.
\end{lem}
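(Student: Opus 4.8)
The statement packages three assertions: (a) the image of $\Theta_{2n+1}$ in $\Gamma_{g,1}^n$ is central; (b) it dies in $\Gamma_{g,1/2}^n$; (c) the induced map $\Gamma_{g,1}^n/\im(\Theta_{2n+1})\to\Gamma_{g,1/2}^n$ is an isomorphism. I would prove these in that order, since (c) is essentially formal once (a) and (b) are in place.

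\textbf{Centrality (a).} Recall that the map $\Theta_{2n+1}=\pi_0\Diffuo_\partial(D^{2n})\to\Gamma_{g,1}^n$ is realised by choosing an embedded disc $D^{2n}\subset\interior{W_{g,1}}$ (away from a collar of $\partial W_{g,1}$) and extending a boundary-fixing diffeomorphism of that disc by the identity; equivalently, pick a collar $[0,1]\times S^{2n-1}$ of the boundary sphere of $W_{g,1}$ and let $\Theta_{2n+1}$ act there, supported in the interior of the collar. Given an arbitrary $\phi\in\Diffuo_\partial(W_{g,1})$ and a representative $\sigma$ of a class in $\Theta_{2n+1}$ supported in a small disc $D\subset\interior{W_{g,1}}$, the conjugate $\phi\sigma\phi^{-1}$ is supported in $\phi(D)$ and, as a diffeomorphism supported in a disc, represents the same element of $\Theta_{2n+1}$ (Cerf's theorem, already invoked in the excerpt, identifies $\pi_0\Diffuo_\partial(D^{2n})$ with $\Theta_{2n+1}$ independently of the chosen disc, because $\Emb(D^{2n},\interior{W_{g,1}})$ is connected: any two embedded discs are isotopic since $W_{g,1}$ is connected and oriented of dimension $\ge 6$). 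Hence $\phi\sigma\phi^{-1}$ and $\sigma$ are isotopic, which is exactly centrality of $\im(\Theta_{2n+1})$ in $\Gamma_{g,1}^n$.

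\textbf{Vanishing in $\Gamma_{g,1/2}^n$ (b).} Here the point is that $\Diffuo_{\partial/2}(W_{g,1})$ only fixes a disc $D^{2n-1}\subset\partial W_{g,1}$, leaving the complementary hemisphere free. Realise a class of $\Theta_{2n+1}$ by a diffeomorphism of $W_{g,1}$ supported in a collar $[0,1]\times S^{2n-1}$ of the boundary. Push this support outward into a collar of the free hemisphere $S^{2n-1}\setminus\interior{D^{2n-1}}$; more precisely, the relevant class is the image of the ``boundary twist'' generator under $\Gamma_{0,1}^n\to\Gamma_{g,1}^n$, and one may pass to the $g=0$ case. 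For $g=0$ the assertion is that $\Theta_{2n+1}=\pi_0\Diffuo_\partial(D^{2n})\to\pi_0\Diffuo_{\partial/2}(D^{2n})$ is zero, which holds because once a diffeomorphism of $D^{2n}$ need only fix a disc $D^{2n-1}$ in the boundary sphere rather than all of $\partial D^{2n}$, the gluing construction that produces a homotopy sphere can be ``undone'': the pseudoisotopy/isotopy extension argument of Cerf, or equivalently the observation that $\Diffuo_\partial(D^{2n})\to\Diffuo_{\partial/2}(D^{2n})$ factors through a contractible space, kills the class. (This is standard; the cleanest phrasing is that $\Diffuo_{\partial/2}(D^{2n})$ is contractible, whence $\pi_0=0$.) Naturality of the stabilisation maps then propagates vanishing from $g=0$ to all $g$.

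\textbf{The isomorphism (c).} By (a) and (b), the surjection $\Diffuo_\partial(W_{g,1})\to\Diffuo_{\partial/2}(W_{g,1})$ induces a well-defined surjective homomorphism $q\colon\Gamma_{g,1}^n/\im(\Theta_{2n+1})\to\Gamma_{g,1/2}^n$; surjectivity on $\pi_0$ follows because the inclusion $\Diffuo_\partial(W_{g,1})\hookrightarrow\Diffuo_{\partial/2}(W_{g,1})$ sits in a fibration sequence with base $\Diffuo_\partial(D^{2n-1}\times[0,1])\simeq\Emb\big((D^{2n-1},\partial)\hookrightarrow(S^{2n-1},\partial)\big)$-type space whose relevant homotopy — namely the space of collars/half-boundary data — is connected, so every component of $\Diffuo_{\partial/2}(W_{g,1})$ meets the image. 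For injectivity, the same fibration sequence
\[
\Diffuo_\partial(W_{g,1})\lra\Diffuo_{\partial/2}(W_{g,1})\lra X,
\]
where $X$ is the (homotopy) fibre classifying the ``half-boundary'' degrees of freedom, gives an exact sequence
\[
\pi_1 X\lra\pi_0\Diffuo_\partial(W_{g,1})\lra\pi_0\Diffuo_{\partial/2}(W_{g,1})\lra\pi_0 X.
\]
One identifies $X$ with a space built from $\Diffuo$ of a half-disc rel part of its boundary — concretely a space equivalent to $\Omega^{\infty}$ of something, but all we need is that $\pi_0 X=\ast$ (giving surjectivity, as above) and that the image of $\pi_1 X$ in $\pi_0\Diffuo_\partial(W_{g,1})$ is exactly $\im(\Theta_{2n+1})$. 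The latter is the crux: the boundary map $\pi_1 X\to\Gamma_{g,1}^n$ sends a loop of half-boundary data to the diffeomorphism obtained by ``dragging a disc around the boundary sphere once'', which by the analysis in the proof of \cref{lemma:fixingadiscdoesnotmatter} and Kreck's work \cite{Kreck} is precisely the subgroup generated by boundary twists — i.e.\ $\im(\Theta_{2n+1})$. Exactness then yields $\kernel(q)=\im(\Theta_{2n+1})/\im(\Theta_{2n+1})=0$, so $q$ is an isomorphism.

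\textbf{Main obstacle.} The delicate point is the identification in (c) of the image of $\pi_1 X$ under the connecting map with exactly $\im(\Theta_{2n+1})$ — one inclusion (that boundary twists come from $\pi_1 X$) is the content of (b) run in reverse, but the reverse inclusion requires knowing that the fibre $X$ has no extra $\pi_1$ beyond what produces boundary twists, which is where one must invoke the precise structure of the frame-bundle fibration and Kreck's lemmas \cite[Lem.\,3, Lem.\,4]{Kreck} rather than soft arguments; getting the bookkeeping of base-point conditions ($\partial$ vs.\ $\partial/2$ vs.\ a point plus frame) exactly right is the part most prone to error.
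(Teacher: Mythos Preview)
Your argument for centrality (a) is correct and essentially the same as the paper's. The overall shape of (b) and (c)---a fibration with fibre $\Diffuo_\partial(W_{g,1})$ inside $\Diffuo_{\partial/2}(W_{g,1})$, analysed via its long exact sequence---also matches the paper. But because you never identify the base correctly, several of your steps are either unjustified or false.

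The paper takes the base to be $\Diffuo_\partial(D^{2n-1})$, via restricting a diffeomorphism to the moving hemisphere of $\partial W_{g,1}$; your suggestion ``$\Diffuo_\partial(D^{2n-1}\times[0,1])$'' is the wrong group (that is $\Diffuo_\partial(D^{2n})$). With the correct base, the two key inputs become transparent and differ from what you cite. First, surjectivity of $\Gamma_{g,1}^n\to\Gamma_{g,1/2}^n$ does \emph{not} follow from the base being connected---indeed $\pi_0\Diffuo_\partial(D^{2n-1})\cong\Theta_{2n}$ is typically nonzero---but from the fact that the map $\Gamma_{g,1/2}^n\to\Theta_{2n}$ has trivial image. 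That image is the inertia group of $W_g$, which vanishes by Kosinski and Wall \cite{Kosinski,WallInertia}; this is an input you do not mention. Second, the connecting map $\pi_1\Diffuo_\partial(D^{2n-1})\to\Gamma_{g,1}^n$ factors as the Gromoll map $\pi_1\Diffuo_\partial(D^{2n-1})\to\pi_0\Diffuo_\partial(D^{2n})=\Theta_{2n+1}$ followed by the stabilisation $\Theta_{2n+1}\to\Gamma_{g,1}^n$, and Cerf \cite{Cerf} shows the Gromoll map is surjective on $\pi_0$, so the image is exactly $\im(\Theta_{2n+1})$. This is the point you flag as the ``main obstacle'', but it does not use \cref{lemma:fixingadiscdoesnotmatter} or Kreck's lemmas; those concern a different fibration (over the frame bundle of $W_g$) and a different kernel (the single $\SO(2n)$-twist, not all of $\Theta_{2n+1}$).

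A separate error: your parenthetical claim in (b) that $\Diffuo_{\partial/2}(D^{2n})$ is contractible is false. This space is the pseudoisotopy space of $D^{2n-1}$, and Cerf's theorem shows it is \emph{connected}---giving the $\pi_0=0$ you actually need---but contractibility would force the Gromoll map $\Omega\Diffuo_\partial(D^{2n-1})\to\Diffuo_\partial(D^{2n})$ to be a weak equivalence, which is neither known nor expected. Once the long exact sequence is run as above, (b) falls out for free and need not be argued separately.
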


\begin{proof}Every diffeomorphism of $W_{g,1}$ supported in a disc is isotopic to one that is supported in an arbitrary small neighbourhood of the boundary and thus commutes with any diffeomorphism in $\Diffuo_\partial(W_{g,1})$ up to isotopy, which shows the first part of the claim. For the others, we consider the sequence of topological groups
\begin{equation}\label{equation:restrictionfibration}\Diffuo_\partial(W_{g,1})\lra\Diffuo_{\partial/2}(W_{g,1})\lra\Diffuo_{\partial}(D^{2n-1})\end{equation} induced by restricting diffeomorphisms in $\Diffuo_{\partial/2}(W_{g,1})$ to the moving part of the boundary. This is a fibration sequence by the parametrised isotopy extension theorem. Mapping this sequence for $g=0$ into \eqref{equation:restrictionfibration} via the iterated stabilisation map, we see that the looped map $\Omega\Diffuo_\partial(D^{2n-1})\ra\Diffuo_{\partial}(W_{g,1})$ induced by \eqref{equation:restrictionfibration} factors as the composition
\[\Omega\Diffuo_\partial(D^{2n-1})\lra\Diffuo_\partial(D^{2n})\lra\Diffuo_{\partial}(W_{g,1})\] of the map defining the Gromoll filtration with the iterated stabilisation map. Since the first map in this factorisation is surjective on path components by Cerf's work  \cite{Cerf}, the claim will follow from the long exact sequence on homotopy groups of \eqref{equation:restrictionfibration} once we show that the map $\Gamma_{g,1/2}^n\ra\pi_0\Diffuo_{\partial}(D^{2n-1})=\Theta_{2n}$ has trivial image. Using that any orientation preserving diffeomorphism fixes any chosen oriented codimension $0$ disc up isotopy by the isotopy extension theorem, one easily sees that this image agrees with the the inertia group of $W_g$, which is known to vanish by work of Kosinski and Wall \cite{Kosinski,WallInertia}.
\end{proof}

\subsection{Wall's quadratic form}\label{section:Wallsform}
We recall Wall's quadratic from associated to an $(n-1)$-connected $2n$-manifold \cite{Wall2n}, specialised to the case of our interest---the iterated connected sums $W_g=\sharp^g(S^n\times S^n)$ in dimension $2n\ge6$.

The intersection form $\lambda\colon  H(g){\otimes}H(g)\rightarrow \bfZ$ on the middle cohomology $ H(g)\coloneq\oH^n(W_g;\bfZ)$ is a nondegenerate $(-1)^{n}$-symmetric bilinear form. We use Poincaré duality to identify $H(g)$ with $\pi_n(W_g;\bfZ)\cong \oH_n(W_g)$ and a result of Haefliger \cite{Haefliger} to represent classes in $\pi_n(W_g)$  by embedded spheres $e\colon S^n \hookrightarrow W_g$, unique up to isotopy as long as $n\ge4$. As $W_g$ is stably parallelisable, the normal bundle of such $e$ is stably trivial and hence gives a class \[q([e])\in\ker\big(\pi_n(\BSO(n))\rightarrow\pi_n(\BSO(n+1)\big)\cong\bfZ/\Lambda_n=\begin{cases}
\bfZ&\mbox{if }n\text{ even}\\
\bfZ/2&\mbox{if }n\text{ odd},n\neq 3,7\\
0&\mbox{if }n=3,7.
\end{cases},\] where $\Lambda_n$ is the image of the usual map $\pi_n(\SO(n+1))\ra\pi_n(S^n)\cong\bfZ$ (see e.g.\,\cite[§1.B]{Levine}).\footnote{Note that $\bfZ/\Lambda_n$ is the trivial in the case $n=3$ where Haefliger's result does not apply.} This defines a function $q\colon  H(g)\lra \bfZ/\Lambda_n$, which Wall \cite{Wall2n} showed to satisfy
\begin{enumerate}
\item $q(k\cdot[e])=k^2\cdot q([e])$ and
\item $q([e]+[f])=q([e])+q([f])+\lambda([e],[f]) \Mod{\Lambda_n}$.
\end{enumerate}
Note that for $n$ even, (i) and (ii) together imply $q([e])=\frac{1}{2}\lambda([e],[e])\in\bfZ$, so $q$ can in this case be recovered from $\lambda$. The triple $(H(g),\lambda, q)$ is the \emph{quadratic form} associated to $W_g$. The decomposition $W_g=\sharp^g (S^n\times S^n)$ into connected summands induces a basis $(e_1,\ldots,e_g,f_1,\ldots f_g)$ of $ H(g)\cong \bfZ^{2g}$ with respect to which $q$ and $\lambda$ have the form
\[\map{q}{\bfZ^{2g}}{\bfZ/\Lambda_n}{(x_1,\ldots x_g,y_1,\ldots,y_g)}{\sum_{i=1}^gx_iy_i}\quad\text{and}\quad J_{g,(-1)^n}=\left(\begin{matrix} 0 & I \\ (-1)^n I & 0 \end{matrix}\right),\] so the automorphism group of the quadratic form can be identified as \[G_g\coloneq\Aut\big(H(g),\lambda,q\big)\cong\begin{cases}
\oO_{g,g}(\bfZ)&n\text{ even}\\
\Sp_{2g}^q(\bfZ)&n\text{ odd},n\neq 3,7\\
\Sp_{2g}(\bfZ)&n=3,7,
\end{cases}\]  where \begin{align*}&\oO_{g,g}(\bfZ)=\{A\in\bfZ^{2g\times 2g}\mid A^T J_{g,1} A=J_{g,1}\},\\ &\Sp_{2g}(\bfZ)=\{A\in\bfZ^{2g\times 2g}\mid A^T J_{g,-1} A=J_{g,-1}\},\\
&\Sp_{2g}^q(\bfZ)=\{A\in\Sp_{2g}(\bfZ)\mid qA=q\}.\end{align*}
In the theory of theta-functions, the finite index subgroup $\Sp_{2g}^q(\bfZ)\le\Sp_{2g}(\bfZ)$ is known as the \emph{theta group}; it is the stabiliser of the standard theta-characteristic with respect to the transitive $\Sp_{2g}(\bfZ)$-action on the set of even characteristics (see e.g.\,\cite{Weintraub}). Using this description, it is straightforward to compute its index in $\Sp_{2g}(\bfZ)$ to be $2^{2g-1}+2^{g-1}$. 

\subsection{Kreck's extensions}\label{section:krecksextensions}To recall Kreck's extensions \cite[Prop.\,3]{Kreck} describing $\Gamma_{g,1}^n$ for $n\ge3$, note that an orientation-preserving diffeomorphism of $W_g$ induces an automorphism of the quadratic form $( H(g),\lambda,q)$. This provides a morphism $\Gamma_{g,1}^n\rightarrow G_g$, which Kreck proved to be surjective using work of Wall \cite{WallII}.\footnote{Kreck phrases his results in terms of pseudo-isotopy instead of isotopy. By Cerf's ``pseudo-isotopy implies isotopy'' \cite{Cerf}, this does not make a difference as long as $n\ge3$.} This explains the first extension\begin{equation}\label{equation:kreckses1}0\lra \oT_{g,1}^n\lra \Gamma_{g,1}^n\lra G_g\lra 0.\end{equation} The second extension describes the Torelli subgroup $\oT_{g,1}^n\subset \Gamma_{g,1}^n$ and has the form \begin{equation}\label{equation:kreckses2}0\lra \Theta_{2n+1}\lra \oT_{g,1}^n\xlra{\rho} H(g)\otimes S\pi_n\SO(n)\lra 0,\end{equation} where $S\pi_n\SO(n)$ denotes the image of the morphism $S\colon\pi_n\SO(n)\ra\pi_n\SO(n+1)$ induced by the usual inclusion $\SO(n)\subset\SO(n+1)$. The isomorphism type of this image can be extracted from work of Kervaire \cite{Kervaire} to be as shown in \cref{table:SO}. As a diffeomorphism supported in a disc acts trivially on cohomology, the morphism $\Theta_{2n+1}\ra\Gamma_{g,1}^n$ in \eqref{equation:thetainclusion} has image in $\oT_{g,1}^n$, which explains first map in the extension \eqref{equation:kreckses2}. To define the second one, we canonically identify $ H(g)\otimes S\pi_n\SO(n)$ with $\Hom(\oH_n(W_g;\bfZ),S\pi_n\SO(n))$ using that $\oH_n(W_g;\bfZ)$ is free and note that for a given isotopy class $[\phi]\in\oT_g^n$ and a class $[e]\in \oH_n(W_g;\bfZ)$ represented by an embedded sphere $e\colon S^n\hookrightarrow W_g$, the embedding $\phi\circ e$ is isotopic to $e$, so we can assume that $\phi$ fixes $e$ pointwise by the isotopy extension theorem. The derivative of $\phi$ thus induces an automorphism of the once stabilised normal bundle $\vartheta(e)\oplus \varepsilon$, which after choosing a trivialisation $\vartheta(e)\oplus \varepsilon\cong \varepsilon^{n+1}$ defines the image $\rho([\phi])([e])\in\pi_n\SO(n+1)$ of $[e]$ under the morphism $\rho([\phi])\in\Hom(\oH_n(W_g;\bfZ),S\pi_n\SO(n))$, noting that $\rho([\phi])([e])$ is independent of all choices and actually lies in the subgroup $S\pi_n\SO(n)\subset \pi_n\SO(n+1)$ (see \cite[Lem.\,1]{Kreck}).

Instead of the extensions \eqref{equation:kreckses1} and \eqref{equation:kreckses2}, we shall mostly be concerned with two closely related variants which we describe now. By Kreck's result, the morphism $\Theta_{2n+1}\ra\Gamma_{g,1}^n$ is injective, so gives rise to an extension $0\ra\Theta_{2n+1}\ra\Gamma^n_{g,1}\ra\Gamma^n_{g,1}/\Theta_{2n+1}\ra0$, which combined with the canonical identification $\Gamma^n_{g,1}/\Theta_{2n+1}\cong\Gamma^n_{g,1/2}$ of \cref{lemma:central} has the form
\begin{equation}\label{equation:mainextension2}0\lra\Theta_{2n+1}\lra\Gamma^n_{g,1}\lra\Gamma^n_{g,1/2}\lra0\end{equation} and agrees with the extension induced by taking path components of the chain of inclusions $\Diffuo_{\partial}(D^{2n})\subset\Diffuo_{\partial}(W_{g,1})\subset\Diffuo_{\partial/2}(W_{g,1})$. The action of $\Gamma^n_{g,1/2}$ on $H(g)$ preserves the quadratic form as $\Gamma^n_{g,1}\ra \Gamma^n_{g,1/2}$ is surjective by \cref{lemma:central}, so this action yields an extension \begin{equation}\label{equation:splitextension2}
0\lra H(g)\otimes S\pi_n\SO(n)\lra \Gamma^n_{g,1/2}\xlra{p} G_g\lra0,
\end{equation} which, via the isomorphism $\oT_{g,1}^n/\Theta_{2n+1}\cong H(g)\otimes S\pi_n\SO(n)$ induced by \eqref{equation:kreckses2}, corresponds to the quotient of the extension \eqref{equation:kreckses1} by $\Theta_{2n+1}$, using $\Gamma^n_{g,1}/\Theta_{2n+1}\cong\Gamma^n_{g,1/2}$ once more. 

\begin{lem}The action of $G_g$ on $H(g)\otimes S\pi_n\SO(n)\cong \Hom(\oH_n(W_{g};\bfZ),S\pi_n\SO(n))$ induced by the extension \eqref{equation:splitextension2} is through the standard action of $G_g$ on $\oH_n(W_{g};\bfZ)$.
\end{lem}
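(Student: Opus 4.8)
The plan is to unwind the definition of the morphism $\rho$ from \eqref{equation:kreckses2} and to trace the conjugation action through it. Recall that an extension $0\to A\to E\xra{p} Q\to 0$ with $A$ abelian equips $A$ with a $Q$-action by conjugating with lifts; applying this to \eqref{equation:splitextension2}, and using that its kernel $\oT^n_{g,1/2}:=\ker(p)\cong H(g)\otimes S\pi_n\SO(n)$ is abelian so that conjugation by it is trivial, we see that for $A\in G_g$ with a chosen lift $[\psi]\in\Gamma^n_{g,1/2}$ along $p$ and for $[\phi]\in\oT^n_{g,1/2}$, the action is $A\cdot[\phi]=[\psi\circ\phi\circ\psi^{-1}]$. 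Identifying $\oT^n_{g,1/2}$ with $\Hom(\oH_n(W_g;\bfZ),S\pi_n\SO(n))$ via \eqref{equation:kreckses2} and \cref{lemma:central}, and recalling that the standard $G_g$-action on this group is the one with $(A\cdot h)([e])=h(A^{-1}[e])$ for the standard action of $G_g$ on $\oH_n(W_g;\bfZ)$, the lemma becomes equivalent to the identity $\rho(\psi\circ\phi\circ\psi^{-1})([e])=\rho(\phi)(A^{-1}[e])$ in $S\pi_n\SO(n)$, to be checked for all $[e]\in\oH_n(W_g;\bfZ)$.

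To prove this identity I would argue as follows. Fix a diffeomorphism $\psi$ representing $[\psi]$, so that $\psi_*=A$ on $\oH_n(W_g;\bfZ)$, fix an embedded sphere $e\colon S^n\hookrightarrow W_g$ representing $[e]$, and set $f=\psi^{-1}\circ e$, an embedded sphere representing $A^{-1}[e]$. Since $[\phi]$ is Torelli, $\phi\circ f$ is isotopic to $f$, so by the isotopy extension theorem the representative $\phi$ may be changed within its isotopy class so that $\phi\circ f=f$ as maps $S^n\to W_g$; this affects neither side of the desired identity. Now $\psi\circ\phi\circ\psi^{-1}$ fixes $e=\psi\circ f$ pointwise, and by the chain rule the automorphism of the normal bundle $\vartheta(e)$ induced by its derivative is the conjugate, via the bundle isomorphism $d\psi\colon\vartheta(f)\to\vartheta(e)$ covering $\id_{S^n}$ (as $\psi\circ f=e$), of the automorphism of $\vartheta(f)$ induced by the derivative of $\phi$.

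It then remains to compare the two elements of $S\pi_n\SO(n)\subset\pi_n\SO(n+1)$ obtained from these automorphisms by stabilising once, trivialising, and reading off the resulting maps $S^n\to\SO(n+1)$. The key observation is that, since $\rho$ does not depend on the chosen trivialisation, one may trivialise $\vartheta(e)\oplus\varepsilon$ by transporting a chosen trivialisation of $\vartheta(f)\oplus\varepsilon$ along $d\psi\oplus\id$; with this choice the conjugating isomorphism cancels and the two maps $S^n\to\SO(n+1)$ coincide on the nose. Hence $\rho(\psi\circ\phi\circ\psi^{-1})([e])=\rho(\phi)([f])=\rho(\phi)(A^{-1}[e])$, which is what we wanted — the independence statements for $\rho$ recalled in \cref{section:krecksextensions} ensuring that these particular choices of sphere and trivialisation compute the correct values.

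I do not expect a genuine obstacle here: the proof is a careful unwinding of definitions. The points needing a little attention are the routine checks that $\psi\circ\phi\circ\psi^{-1}$ again lies in $\oT^n_{g,1/2}$ (it fixes a neighbourhood of the chosen boundary disc and acts trivially on homology), that $\rho$ genuinely descends to $\oT^n_{g,1/2}$ (a diffeomorphism supported in a disc can be isotoped off any fixed embedded sphere, hence acts trivially under $\rho$), and the chain-rule identification of the induced normal-bundle automorphism above. The one place where one might have anticipated difficulty — needing to know that conjugation by a loop in $\SO(n+1)$ acts trivially on $\pi_n\SO(n+1)$ — is bypassed entirely by choosing the trivialisation of $\vartheta(e)\oplus\varepsilon$ compatibly with $d\psi$.
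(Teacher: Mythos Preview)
Your proof is correct and follows essentially the same approach as the paper: both reduce to verifying $\rho(\psi\phi\psi^{-1})([e])=\rho(\phi)(\psi^{-1}_*[e])$ by transporting the trivialisation of $\vartheta(\psi^{-1}\circ e)\oplus\varepsilon$ along $d\psi\oplus\id$ to trivialise $\vartheta(e)\oplus\varepsilon$, whereupon the chain rule makes the two resulting maps $S^n\to\SO(n+1)$ coincide. The paper phrases this slightly more tersely (and swaps the roles of the symbols $\phi$ and $\psi$), passing through the commutative diagram with the $\Gamma^n_{g,1}$-extension rather than checking directly that $\rho$ descends to $\oT^n_{g,1/2}$, but the substance is identical.
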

\begin{proof}
In view of the commutative diagram
\begin{center}
\begin{tikzcd}
0\arrow[r]&\oT_{g,1}^n\arrow[r]\arrow[d,"\rho"]& \Gamma_{g,1}^n\arrow[r]\arrow[d]& G_g\arrow[d,equal]\arrow[r]&0\\
0\arrow[r]&H(g)\otimes S\pi_n\SO(n)\arrow[r]& \Gamma^n_{g,1/2}\arrow[r,"p"]&G_g\arrow[r]&0,
\end{tikzcd}
\end{center}
it suffices to establish the identity $\rho(\phi \circ \psi\circ \phi^{-1})=p(\phi)\cdot \rho(\psi)$ for all $\phi\in \Gamma^n_{g,1}$ and $\psi\in \oT_{g,1}^n$. Unwrapping the definition of $\rho$, the image of $p(\phi)\cdot \rho(\psi)$ on a homology class in $\oH_n(W_{g,1};\bfZ)$ is given by the automorphism
\[\varepsilon^{n+1}\xlra{F^{-1}}\vartheta(\phi^{-1}\circ e)\oplus \varepsilon\xlra{d(\psi)}\vartheta(\phi^{-1}\circ e)\oplus \varepsilon\xlra{F}\varepsilon^{n+1},\]
where $e$ is an embedded sphere which represents the homology class and is pointwise fixed by $\phi\circ \psi\circ \phi^{-1}$ and $F$ is any choice of framing of $\vartheta(\phi^{-1}\circ e)\oplus \varepsilon$. Choosing the framing
\[\vartheta(e)\oplus \varepsilon\xlra{d(\phi^{-1})\oplus \varepsilon}\vartheta(\phi^{-1}\circ e)\oplus \varepsilon\xlra{F}\varepsilon^{n+1}\] 
to compute the image of $[e]\in\oH_n(W_{g,1};\bfZ)$ under $\rho(\phi \circ \psi\circ \phi^{-1})$, the claimed identity is a consequence of the chain rule for the differential.
\end{proof}

\subsection{Stabilisation}\label{section:stabilisation}Iterating the stabilisation map \eqref{equation:stabilisation} induces a morphism \begin{center}\begin{tikzcd}
0\arrow[r]&\Theta_{2n+1}\arrow[r]\arrow[d,equal]&\Gamma^n_{h,1}\arrow[r]\arrow[d]&\Gamma^n_{h,1/2}\arrow[d]\arrow[r]&0\\
0\arrow[r]&\Theta_{2n+1}\arrow[r]&\Gamma^n_{g,1}\arrow[r]&\Gamma^n_{g,1/2}\arrow[r]&0,
\end{tikzcd}\end{center} of group extensions for $h\le g$, which exhibits the upper row as the pullback of the lower row, so the extension \eqref{equation:mainextension2} for a fixed genus $g$ determines those for all $h\le g$. The situation for the extension \eqref{equation:splitextension2} is similar: writing $W_{g,1}\cong W_{h,1}\natural W_{g-h,1}$, we obtain a decomposition $H(g)\cong H(h)\oplus H(g-h)$, which yields a stabilisation map \[s\coloneq(-)\oplus \id_{H(g-h)}\colon G_h\lra G_{g}\] and two morphisms of $G_h$-modules, an inclusion $H(h)\ra s^*H(g)$ and a projection $s^*H(g)\ra H(h)$. These morphisms express the extension \eqref{equation:splitextension2} for genus $h\le g$ as being obtained from that for genus $g$ by pulling back along $s\colon G_h\ra G_{g}$ followed by forming the extension pushout along $s^*H(g)\ra H(h)$. They also induce a morphism of the form
\begin{center}
\begin{tikzcd}
0\arrow[r]&H(h)\otimes S\pi_n\SO(n)\arrow[r]\arrow[d]&\Gamma^n_{h,1/2}\arrow[r]\arrow[d]&G_h\arrow[d]\arrow[r]&0\\
0\arrow[r]&H(g)\otimes S\pi_n\SO(n)\arrow[r]&\Gamma^n_{g,1/2}\arrow[r]&G_{g}\arrow[r]&0.
\end{tikzcd}\end{center}

\section{The action on the set of stable framings and \cref{bigthm:splitextension}}
\label{section:firstextension}This section serves to resolve the extension problem
\begin{equation}\label{equation:Crowleyextension}0\lra H(g)\otimes S\pi_n\SO(n)\lra\Gamma_{g,1/2}^n\lra G_g\lra 0,\end{equation} described in the previous section. Our approach is in parts inspired by work of Crowley \cite{Crowley}, who identified this extension in the case  $n=3,7$.

The group $\Gamma^n_{g,1/2}$ acts on the set of equivalence classes of stable framings \[F\colon TW_{g,1}\oplus\varepsilon^k\cong\varepsilon^{2n+k}\quad \text{for }k\gg0\] that extend the standard stable framing on $TD^{2n-1}$, by pulling back stable framings along the derivative. As the equivalence classes of such framings naturally form a torsor for the group of pointed homotopy classes $[W_{g,1},\SO]_*$, the action of $\Gamma^n_{g,1/2}$ on a fixed choice of stable framing $F$ as above yields a function \[s_F\colon \pi_0\Diffuo_{\partial/2}(W_{g,1})\lra[W_{g,1},\SO]_*\cong\Hom(\oH_n(W_g;\bfZ),\pi_n\SO)\cong  H(g)\otimes\pi_n\SO,\] where the first isomorphism is induced by $\pi_n(-)$ and the Hurewicz isomorphism, and the second one by the universal coefficient theorem. This function is a $1$-cocycle (or \emph{crossed homomorphism}) for the canonical action of $\Gamma^n_{g,1/2}$ on $ H(g)\otimes \pi_n\SO$ (cf.\,\cite[Prop.\,3.1]{Crowley}) and as this action factors through the map $p\colon \Gamma^n_{g,1/2}\ra G_g$, we obtain a morphism of the form \[(s_F,p)\colon \Gamma^n_{g,1/2}\rightarrow ( H(g)\otimes \pi_n\SO)\rtimes G_g,\] which is independent of $F$ up to conjugation in the target by a straightforward check. This induces a morphism from \eqref{equation:Crowleyextension} to the trivial extension of $G_g$ by the $G_g$-module $H(g)\otimes \pi_n\SO$, \begin{equation}\label{equation:stableframing}
\begin{tikzcd}[column sep=0.4cm]
0\arrow[r]& H(g)\otimes S\pi_n\SO(n)\arrow[d]\arrow[r]&\Gamma_{g,1/2}^n\arrow[d,"{(s_F,p)}"]\arrow[r,"p"]&G_g\arrow[d, equal]\arrow[r]& 0\\
0\arrow[r]& H(g)\otimes \pi_n\SO\arrow[r]&(H(g) \otimes\pi_n\SO)\rtimes G_g\arrow[r]&G_g\arrow[r]& 0.
\end{tikzcd}
\end{equation}
The left vertical map is induced by the natural map $S\pi_n\SO(n)\ra \pi_n\SO$ originating from the inclusion $\SO(n)\subset\SO$ and is an isomorphism for $n\neq1,3,7$ odd as a consequence of the following lemma whose proof is standard (see e.g.\,\cite[§1B)]{Levine}).

\begin{lem}\label{lemma:stabiliseorthogonalgroups}For $n$ odd, the morphism $S\pi_n\SO(n)\ra \pi_n\SO$ induced by the inclusion $\SO(n)\subset\SO$ is an isomorphism for $n\neq1,3,7$ odd. For $n=1,3,7$, it is injective with cokernel $\bfZ/2$.
\end{lem}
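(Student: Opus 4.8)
The plan is to reduce the statement to two fibrations over spheres together with one Euler‑class computation; as the lemma indicates, this is standard, so I will only sketch it. Since $\SO(1)$ is the trivial group, the case $n=1$ is immediate: $S\pi_1\SO(1)=0$ maps into $\pi_1\SO\cong\bfZ/2$, which is trivially injective with cokernel $\bfZ/2$. So assume from now on that $n\ge3$ is odd.

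First I would extract the two relevant long exact sequences. From the fibration $\SO(n)\to\SO(n+1)\xrightarrow{p}S^n$, with $p$ the evaluation at a fixed unit vector, exactness at $\pi_n\SO(n+1)$ gives $S\pi_n\SO(n)=\im(S)=\ker\!\big(p_*\colon\pi_n\SO(n+1)\to\pi_nS^n\cong\bfZ\big)$, while $\im(p_*)=\Lambda_n$ by definition of $\Lambda_n$; hence $p_*$ induces an isomorphism $\pi_n\SO(n+1)/S\pi_n\SO(n)\xrightarrow{\sim}\Lambda_n$. From the fibration $\SO(n+1)\to\SO(n+2)\to S^{n+1}$, exactness gives that $\pi_n\SO(n+1)\to\pi_n\SO(n+2)$ is surjective with kernel $K:=\im\!\big(\partial\colon\pi_{n+1}S^{n+1}\cong\bfZ\to\pi_n\SO(n+1)\big)$; since $\SO(n+2)\to S^{n+1}$ is the oriented orthonormal frame bundle of $TS^{n+1}$, the generator $\tau:=\partial(\iota_{n+1})$ of $K$ is the clutching function of $TS^{n+1}$. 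Finally, the fibrations $\SO(m)\to\SO(m+1)\to S^m$ for $m\ge n+2$ show $\pi_n\SO(n+2)\cong\pi_n\SO$. Therefore the morphism appearing in the lemma is the composite $\ker(p_*)\hookrightarrow\pi_n\SO(n+1)\twoheadrightarrow\pi_n\SO(n+1)/K\cong\pi_n\SO$.

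The single nonformal input is the identity $p_*(\tau)=\pm2$: for an oriented rank‑$(n{+}1)$ bundle over $S^{n+1}$ with clutching function $c$ one has $p_*(c)=\pm e$, where $e\in\oH^{n+1}(S^{n+1};\bfZ)\cong\bfZ$ is its Euler number, and for $TS^{n+1}$ this equals $\pm\chi(S^{n+1})=\pm2$ because $n+1$ is even. Granting this, the two assertions are pure bookkeeping. For injectivity: $p_*(\tau)\ne0$ forces $K\cong\bfZ$, and $K\cap\ker(p_*)=\{m\tau:p_*(m\tau)=0\}=0$ since $p_*(m\tau)=\pm2m$; hence $S\pi_n\SO(n)=\ker(p_*)$ injects into $\pi_n\SO(n+1)/K\cong\pi_n\SO$. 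For the cokernel: it equals $\pi_n\SO(n+1)/(\ker(p_*)+K)$, which under the isomorphism $p_*\colon\pi_n\SO(n+1)/\ker(p_*)\xrightarrow{\sim}\Lambda_n$ becomes $\Lambda_n/\langle p_*(\tau)\rangle=\Lambda_n/2\bfZ$. By the values of $\Lambda_n$ recalled in \cref{section:Wallsform}, this vanishes for $n$ odd with $n\ne3,7$ (where $\Lambda_n=2\bfZ$) and equals $\bfZ/2$ for $n=3,7$ (where $\Lambda_n=\bfZ$), as claimed.

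The only genuine obstacle is the Euler‑number identity $p_*(\tau)=\pm2$, together with correctly invoking the classical values of $\Lambda_n$ (equivalently, that the evaluation $\pi_n\SO(n+1)\to\pi_nS^n$ is surjective precisely when $n\in\{1,3,7\}$). Both are standard facts — see e.g.\ \cite[\S1B]{Levine} — so essentially no new work is needed beyond organising them.
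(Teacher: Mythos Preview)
Your argument is correct and is precisely the standard computation the paper defers to by citing \cite[\S1B]{Levine} without proof. You have spelled out the two fibration sequences and the Euler-number identity $p_*(\tau)=\pm\chi(S^{n+1})=\pm2$ that underlie that reference, and the bookkeeping reducing the cokernel to $\Lambda_n/2\bfZ$ is clean; there is nothing to add.
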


As a result, the diagram \eqref{equation:stableframing} induces a splitting of \eqref{equation:Crowleyextension} for $n\neq3,7$ odd, since all vertical maps are isomorphisms. This proves the cases $n\neq3,7$ of the following reformulation of \cref{bigthm:splitextension} (see \cref{section:krecksextensions}). We postpone the proof of the cases $n=3,7$ to \cref{section:proofmainthm}.

\begin{thm}\label{thm:splitextension2}For $n\ge3$ odd, the extension
\[0\lra H(g)\otimes S\pi_n\SO(n)\lra \Gamma^n_{g,1/2}\xlra{p} G_g\lra0\] splits for $n\neq3,7$. For $n=3,7$, it splits if and only if $g=1$.
\end{thm}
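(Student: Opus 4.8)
The plan is to split the argument according to the residue of $n$. For $n\neq 3,7$ odd, the splitting has in fact already been produced above: the left and middle (hence all) vertical maps in diagram \eqref{equation:stableframing} are isomorphisms by \cref{lemma:stabiliseorthogonalgroups}, and the bottom row is a split extension, so the top row \eqref{equation:Crowleyextension} splits as well. So the substance of the remaining statement is the cases $n=3,7$, and there \cref{lemma:stabiliseorthogonalgroups} fails — the map $S\pi_n\SO(n)\to\pi_n\SO$ is injective with cokernel $\bfZ/2$ — so the stable-framing cocycle no longer sees the whole extension and we must work harder.

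For $n=3,7$ with $g=1$: here $S\pi_n\SO(n)=0$ by \cref{table:SO}, so the extension \eqref{equation:Crowleyextension} reads $0\to 0\to \Gamma^n_{1,1/2}\to G_1\to 0$, i.e. it is (trivially) an isomorphism and in particular split. Wait — that is too quick; let me reconsider. For $n=3,7$ one has $\Lambda_n=0$, hence $q$ is vacuous and $G_g=\Sp_{2g}(\bfZ)$, and $S\pi_n\SO(n)\cong\bfZ$ is nonzero. So the $g=1$ case is not formal and requires an actual construction of a section $\Sp_2(\bfZ)\to\Gamma^n_{1,1/2}$. The natural approach is to exhibit explicit diffeomorphisms of $W_{1,1}=S^n\times S^n\setminus\interior{D^{2n}}$ realising generators of $\Sp_2(\bfZ)$ — the two Dehn-twist-type maps $(x,y)\mapsto(x,y)$ twisted along the two factors and the "swap" $S^n\times S^n\to S^n\times S^n$ — and to check that the subgroup of $\Gamma^n_{1,1/2}$ they generate maps isomorphically onto $\Sp_2(\bfZ)$, i.e. that the relations in $\Sp_2(\bfZ)=\SL_2(\bfZ)$ are satisfied already in $\Gamma^n_{1,1/2}$ and not merely up to the kernel $H(1)\otimes S\pi_n\SO(n)$. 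This is where one leans on $n=3,7$: the Hopf maps give trivialisations that make these model diffeomorphisms, and their composites, have controlled stable-framing defect; alternatively one invokes that $S^n$ is parallelisable for $n=3,7$ so $S^n\times S^n$ has an honest framing and the relevant obstruction vanishes. I expect the cleanest route is to appeal to \cref{section:proofmainthm} as the paper promises, where the full extension \eqref{ses3} is analysed; there the $g=1$, $n=3,7$ splitting will drop out of the class $\frac{\chi^2-\sgn}{8}\cdot\Sigma_Q$ vanishing on the relevant genus-$1$ group together with the section just described.

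For $n=3,7$ with $g\ge2$: one must show the extension does \emph{not} split. The strategy is to restrict to the genus-$1$ stabilisation $\Gamma^n_{1,1/2}\to\Gamma^n_{g,1/2}$ and its compatible map of extensions (the second diagram of \cref{section:stabilisation}), or better, to detect the obstruction after passing to a suitable finite quotient where a cohomological computation is feasible. Concretely, a splitting of \eqref{equation:Crowleyextension} would give a splitting of the induced central-by-abelian data; pulling back along an embedded $\Sp_2(\bfZ)\times\Sp_{2(g-1)}(\bfZ)\hookrightarrow\Sp_{2g}(\bfZ)$ or along the Torelli subgroup and computing the relevant $\oH^2$ with coefficients in $H(g)\otimes S\pi_n\SO(n)\cong\Hom(\bfZ^{2g},\bfZ)$ should reveal a non-trivial class. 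The key input is the structure of $\oH^*(\Sp_{2g}(\bfZ);\bfZ^{2g})$ in low degrees (worked out in the appendix of the paper, as referenced), together with the explicit cocycle $s_F$: even though $s_F$ only records the image in $H(g)\otimes\pi_n\SO$, comparing $s_F$ against the would-be splitting cocycle modulo the $\bfZ/2$ cokernel pins down the obstruction. The main obstacle, and the reason the paper defers this to \cref{section:proofmainthm}, is that for $n=3,7$ the extension \eqref{equation:Crowleyextension} cannot be handled in isolation — the stable-framing technique is blind to precisely the part of the extension that matters — so one genuinely needs the finer geometric analysis of $W_{g,1}$-bundles over surfaces (the bounding/plumbing constructions producing $\Sigma_P,\Sigma_Q$) to certify non-splitting; I would therefore present this direction as a corollary of the main extension computation rather than attempt a self-contained cohomological proof here.
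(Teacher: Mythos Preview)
Your handling of $n\neq3,7$ is correct and matches the paper. For $n=3,7$, however, the proposal is a sketch of hoped-for arguments rather than a proof, and it misses the actual mechanisms the paper uses in both directions.

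For $g=1$ and $n=3,7$: the paper's argument is purely cohomological and one line. Since $G_g=\Sp_2(\bfZ)$ and $S\pi_n\SO(n)\cong\bfZ$, the extension class lives in $\oH^2(\Sp_2(\bfZ);\bfZ^2)$, and this group \emph{vanishes} by \cref{lemma:symplecticlowdegreecohomology}. No explicit diffeomorphisms, no lifting of generators, no checking of relations is needed. Your proposed route of realising $\SL_2(\bfZ)$ by model diffeomorphisms and verifying the braid-type relations in $\Gamma^n_{1,1/2}$ could in principle work, but it is substantially harder and you do not carry it out.

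For $g\ge2$ and $n=3,7$: your discussion does not land on the key idea. The paper argues by contradiction: a splitting $s\colon\Sp_{2g}(\bfZ)\to\Gamma^n_{g,1/2}$ would, after composing with $(s_F,p)$, give a splitting of the lower row of \eqref{equation:stableframing}. Since $\oH^1(\Sp_{2g}(\bfZ);\bfZ^{2g}\otimes\pi_n\SO)=0$ (\cref{lemma:symplecticlowdegreecohomology}), this splitting coincides with the canonical one up to conjugation, so in particular $\chi^2(s_*[f])=0$ for every $[f]\in\oH_2(\Sp_{2g}(\bfZ);\bfZ)$. But \cref{lemma:algebraicsignatures} produces a class $[f]$ with $\sgn([f])=4$, so $\chi^2(s_*[f])-\sgn(s_*[f])=-4$, contradicting the divisibility by $8$ established in \cref{lemma:algebraicpontryaginnumbers}\,(iii). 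The obstruction is thus detected not by a twisted-cohomology computation of the extension class, but by an incompatibility between the signature and the $\chi^2$ invariant on $\oH_2$; the ``finer geometric analysis'' you allude to enters only insofar as it justifies the divisibility result for $\chi^2-\sgn$.
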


Even though the extension does not split for $n=3,7$, the morphism \eqref{equation:stableframing} is still injective by \cref{lemma:stabiliseorthogonalgroups} and thus expresses the extension in question as a subextension of the trivial extension of $G_g$ by the $G_g$-module $H(g)\otimes \pi_n\SO$. Crowley \cite[Cor.\,3.5]{Crowley} gave an algebraic description of this subextension and concluded that it splits if and only if $g=1$. We proceed differently and prove this fact in \cref{section:extensionclass}
directly, which can in turn be used to determine the extension in the following way: by the discussion in \cref{section:stabilisation}, it is sufficient to determine its extension class in $\oH^2(G_g;H(g)\otimes S\pi_n\SO(n))$ for $g\gg0$. For $n=3,7$, we have $G_g\cong\Sp_{2g}(\bfZ)$ with its usual action on $H(g)\otimes S\pi_n\SO(n)\cong\bfZ^{2g}$. Using work of Djament \cite[Thm 1]{Djament}, one can compute $\oH^2(\Sp_{2g}(\bfZ);\bfZ^{2g})\cong \bfZ/2$ for $g\gg0$, so there is only one nontrivial extension of $G_g$ by $H(g)\otimes S\pi_n\SO(n)$, which must be the one in consideration because of the second part of \cref{thm:splitextension2}. Note that this line of argument gives a geometric proof for the following useful fact on the twisted cohomology of $\Sp_{2g}(\bfZ)$ as a byproduct, which can also be derived algebraically (see for instance \cite[Sect.\,2]{Crowley}).

\begin{cor}The pullback of the unique nontrivial class in $\oH^2(\Sp_{2g}(\bfZ);\bfZ^{2g})$ for $g\gg0$ to $\oH^2(\Sp_{2h}(\bfZ);\bfZ^{2h})$ for $h\le g$ is trivial if and only if $h=1$.
\end{cor}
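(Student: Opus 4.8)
The plan is to make the ``unique nontrivial class'' concrete as the extension class of \eqref{equation:Crowleyextension} in the case $n\in\{3,7\}$, and then to deduce the pullback behaviour from \cref{thm:splitextension2} together with the naturality of \eqref{equation:Crowleyextension} under stabilisation recorded in \cref{section:stabilisation}.

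First I would fix $n\in\{3,7\}$, so that $G_g\cong\Sp_{2g}(\bfZ)$ acts on $H(g)\otimes S\pi_n\SO(n)\cong\bfZ^{2g}$ in the standard way; here I use $S\pi_n\SO(n)\cong\bfZ$ from \cref{table:SO}. Write $e_g\in\oH^2(\Sp_{2g}(\bfZ);\bfZ^{2g})$ for the class of the extension \eqref{equation:Crowleyextension}. Since $\oH^2(\Sp_{2g}(\bfZ);\bfZ^{2g})\cong\bfZ/2$ for $g\gg0$ (the computation via Djament's theorem recalled above) and \cref{thm:splitextension2}---whose proof for $n=3,7$ is carried out independently in \cref{section:proofmainthm}---tells us that \eqref{equation:Crowleyextension} splits exactly when $g=1$, the class $e_g$ is nonzero for every $g\ge2$. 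Hence for $g\gg0$ the class $e_g$ is the unique nontrivial element of $\oH^2(\Sp_{2g}(\bfZ);\bfZ^{2g})$, i.e.\ the class appearing in the statement.

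Next I would invoke \cref{section:stabilisation}: writing $W_{g,1}\cong W_{h,1}\natural W_{g-h,1}$ for $h\le g$ exhibits the genus-$h$ extension \eqref{equation:Crowleyextension} as the extension pushout---along the projection of $\Sp_{2h}(\bfZ)$-modules $\bfZ^{2g}\cong\bfZ^{2h}\oplus\bfZ^{2(g-h)}\to\bfZ^{2h}$, with $\bfZ^{2(g-h)}$ a trivial module---of the restriction of the genus-$g$ extension along the standard inclusion $\Sp_{2h}(\bfZ)\hookrightarrow\Sp_{2g}(\bfZ)$. On the level of cohomology classes this composite of restriction followed by the coefficient projection is precisely the pullback map in the statement, and it sends $e_g$ to $e_h$. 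Applying \cref{thm:splitextension2} once more, $e_h$ vanishes if and only if $h=1$, which is the assertion. The only nonformal ingredient is \cref{thm:splitextension2} for $n=3,7$; everything else is bookkeeping with the naturality of \eqref{equation:Crowleyextension} under stabilisation, so the real work sits in the geometric splitting argument of \cref{section:extensionclass} underlying that theorem in these dimensions (a purely algebraic alternative is indicated in \cite[Sect.\,2]{Crowley}).
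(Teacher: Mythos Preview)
Your proposal is correct and follows essentially the same route as the paper: identify the unique nontrivial class for $g\gg0$ with the extension class of \eqref{equation:Crowleyextension} for $n\in\{3,7\}$ via Djament's computation and the nonsplitting part of \cref{thm:splitextension2}, use the stabilisation discussion of \cref{section:stabilisation} to see that pullback sends this class to the genus-$h$ extension class, and then invoke \cref{thm:splitextension2} once more to decide when the latter vanishes. Your care in noting that the $n=3,7$ case of \cref{thm:splitextension2} is proved independently in \cref{section:proofmainthm}, so that no circularity arises, matches the paper's own logical structure.
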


We close this section by relating the abelianisation of $\Gamma^n_{g,1/2}$ to that of $G_g$. The latter is content of \cref{lemma:abelianisationSp}.

\begin{cor}\label{corollary:abelianisationhalfmcg}
For $n\ge3$ odd, the morphism \[\oH_1(\Gamma^n_{g,1/2})\lra\oH_1(G_g)\] is split surjective and has the coinvariants $(H(g)\otimes S\pi_n\SO(n))_{G_g}$ as its kernel, which vanish for $g\ge2$. For $g=1$ it vanishes if and only if $n\equiv5\Mod{8}$ or $n=3,7$, and has order $2$ otherwise.
\end{cor}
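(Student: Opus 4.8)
The plan is to apply the five-term exact sequence in group homology (the low-degree exact sequence of the Lyndon--Hochschild--Serre spectral sequence) to the extension \eqref{equation:splitextension2}, namely
\[
\oH_2(G_g)\lra \big(H(g)\otimes S\pi_n\SO(n)\big)_{G_g}\lra \oH_1(\Gamma^n_{g,1/2})\xlra{p_*}\oH_1(G_g)\lra 0.
\]
The right-hand surjectivity is automatic since $p$ is surjective. The key input is \cref{thm:splitextension2}: for $n\neq 3,7$ the extension \eqref{equation:splitextension2} splits, so $p$ admits a section and hence $p_*$ is split surjective, with kernel precisely the image of the coinvariants $(H(g)\otimes S\pi_n\SO(n))_{G_g}$. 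To upgrade this to the statement that the kernel \emph{is} the coinvariants, I must show that the first map $\oH_2(G_g)\to (H(g)\otimes S\pi_n\SO(n))_{G_g}$ is zero; but for a split extension the differential $d_2$ on the relevant line of the spectral sequence vanishes, so this transgression-type map is zero and the identification of the kernel with the full coinvariant group follows. For $n=3,7$ one needs the extra observation that even though \eqref{equation:splitextension2} need not split for $g\ge 2$, the coinvariants $(H(g)\otimes S\pi_n\SO(n))_{G_g}$ vanish in those cases by \eqref{equu:coinvariants}—more precisely by \eqref{equ:coinvariants}—so the five-term sequence forces $p_*$ to be an isomorphism there, while for $g=1$ and $n=3,7$ the extension splits by \cref{thm:splitextension2} and the same argument as before applies with the coinvariants again vanishing (the $g=1$, $n=3,7$ entry of \eqref{equ:coinvariants} is $0$). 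In all cases the outcome matches the claimed description.

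It then remains to read off the value of the coinvariant group from \eqref{equ:coinvariants}: for $g\ge 2$ it is $0$; for $g=1$ it is $0$ exactly when $n=3,7$ or $n\equiv 5\Mod 8$ (since then $S\pi_n\SO(n)$ is either $0$ or already appears only through a trivial coinvariant computation), it is $\bfZ/2^2$ when $g=1$ and $n\equiv 0\Mod 8$, and $\bfZ/2$ otherwise. Comparing with the statement, the only subtlety is the phrasing ``order $2$ otherwise'': for $n\equiv 0\Mod 8$ and $g=1$ the group is $(\bfZ/2)^2$ of order $4$, not $2$, so strictly the corollary as stated should be read with the $n\equiv 0\Mod 8$ case carved out; I would simply quote \eqref{equ:coinvariants} verbatim rather than re-deriving it, since \cref{lemma:coinvariantssymplectic} already contains that computation.

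The splitting assertion for $\oH_1$ follows formally: a set-theoretic (in fact group-theoretic) section $G_g\to\Gamma^n_{g,1/2}$ of $p$, which exists for $n\neq 3,7$ and for $g=1$ by \cref{thm:splitextension2}, induces a section of $p_*$ on abelianisations; and in the remaining cases ($n=3,7$, $g\ge 2$) the map $p_*$ is an isomorphism, hence trivially split. I expect the only genuine point requiring care—the ``main obstacle'', such as it is—to be bookkeeping the case division so that wherever \eqref{equation:splitextension2} fails to split, the coinvariant group is already trivial and therefore the non-splitting does not obstruct the conclusion; this is exactly where \cref{thm:splitextension2} and the computation \eqref{equ:coinvariants} have to be played off against each other. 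Everything else is a direct application of the five-term exact sequence together with the vanishing of $d_2$ for a split extension.
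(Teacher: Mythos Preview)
Your proposal is correct and follows essentially the same route as the paper: apply the five-term exact sequence of the extension \eqref{equation:splitextension2}, use the splitting from \cref{thm:splitextension2} where available (for $g=1$ in all cases, and for all $g$ when $n\neq3,7$), and invoke the vanishing of the coinvariants for $g\ge2$ from \cref{lemma:coinvariantssymplectic} in the remaining cases. The paper organises the case division by $g$ rather than by $n$, but the argument is the same.

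One small correction: your concern about the $n\equiv0\Mod{8}$ entry of \eqref{equ:coinvariants} is misplaced, since the corollary is stated only for $n\ge3$ \emph{odd}. For odd $n$ the group $S\pi_n\SO(n)$ is $\bfZ/2$, $\bfZ$, $0$, or $\bfZ$ according to $n\Mod{8}$ (see \cref{table:SO}), and the coinvariants for $g=1$ are $A/2A$ with $A=S\pi_n\SO(n)$ when $G_1=\Sp_2^q(\bfZ)$ and vanish when $G_1=\Sp_2(\bfZ)$; this gives exactly $\bfZ/2$ in the ``otherwise'' cases, so the statement is correct as written.
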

\begin{proof}
The claim regarding the coinvariants follows from \cref{lemma:coinvariantssymplectic} and \cref{table:SO}. Since they vanish for $g\ge2$, the remaining statement follows from the exact sequence
\[\oH_2(\Gamma^n_{g,1/2})\ra\oH_2(G_g)\ra \big(H(g)\otimes S\pi_n\SO(n)\big)_{G_g}\lra\oH_1(\Gamma^n_{g,1/2})\lra\oH_1(G_g)\lra0\] induced by  \eqref{equation:Crowleyextension}, combined with the fact that this extension splits for $g=1$ by \cref{thm:splitextension2}.
\end{proof}

\section{Signatures, obstructions, and \cref{bigthm:mainextension}}\label{section:mainsection}
By \cref{lemma:central}, the extension 
\[0\lra\Theta_{2n+1}\lra\Gamma^n_{g,1}\lra\Gamma^n_{g,1/2}\lra0\] discussed in \cref{section:krecksextensions} is central and is as such classified by a class in $\oH^2(\Gamma^n_{g,1/2};\Theta_{2n+1})$ with $\Gamma^n_{g,1/2}$ acting trivially on $\Theta_{2n+1}$. In this section, we identify this extension class in terms of the algebraic description of $\Gamma^n_{g,1/2}$ provided in the previous section, leading to a proof of our main result \cref{bigthm:mainextension}. Our approach is partially based on ideas of Galatius--Randal-Williams \cite[Sect.\,7]{GRWabelian}, who determined the extension for $n\equiv5\Mod{8}$ and $g\ge5$ up to automorphisms of $\Theta_{2n+1}$. 

We begin with an elementary recollection on the relation between Pontryagin classes and obstructions to extending trivialisations of vector bundles, mainly to fix notation.

\subsection{Obstructions and Pontryagin classes}\label{section:obstructionclass}
Let $k\ge1$ and $\xi\colon X\ra\tau_{>4k-1}\BO$ be a map to the $(4k-1)$st connected cover of $\BO$ with a lift $\bar{\xi}\colon A\ra\tau_{>4k}\BO$ over a subspace $A\subset X$ along the canonical map $\tau_{>4k}\BO\ra \tau_{>4k-1}\BO$. Such data has a relative Pontryagin class $p_k(\xi,\bar{\xi})\in\oH^{4k}(X,A;\bfZ)$ given as the pullback along the map $(\xi,\bar{\xi})\colon (X,A)\ra(\tau_{>4k-1}\BO,\tau_{>4k}\BO)$ of the unique lift to $\oH^{4k}(\tau_{>4k-1}\BO,\tau_{>4k}\BO;\bfZ)$ of the pullback $p_k\in\oH^{4k}(\tau_{>4k-1}\BO;\bfZ)$ of the usual Pontryagin class $p_k\in\oH^{4k}(\BSO;\bfZ)$. The class $p_k(\xi,\bar{\xi})$ is related to the primary obstruction $\chi(\xi,\bar{\xi})\in \oH^{4k}(X,A;\pi_{4k-1}\SO)$ to solving the lifting problem
\begin{center}
\begin{tikzcd}
A\arrow[d]\arrow[r,"\bar{\xi}"]&\tau_{>4k}\BO\arrow[d]\\
X\arrow[r,"\xi",swap]\arrow[ur,dashed]&\tau_{>4k-1}\BO
\end{tikzcd}
\end{center}
by the equality\[p_k(\xi,\bar{\xi})=\pm a_k(2k-1)!\cdot \chi(\xi,\bar{\xi}),\] up to the choice of a generator $\pi_{4k-1}\SO\cong \bfZ$ (cf.\,\cite[Lem.\,2]{MilnorKervaireBernoulli}). We suppress the lift $\bar{\xi}$ from the notation whenever there is no source of confusing. For us, $X=M$ will usually be a compact oriented $8k$-manifold and $A=\partial M$ its boundary, in which case we can evaluate $\chi^2(\xi,\bar{\xi})\in\oH^{8k}(M,\partial M;\bfZ)$ against the relative fundamental class $[M,\partial M]$ to obtain a number $\chi^2(\xi,\bar{\xi})\in\bfZ$. The following two sources of manifolds are relevant for us.

\begin{ex}\label{example:chiexamples}Fix an integer $n\equiv 3\Mod{4}$.
\begin{enumerate}
\item For a compact oriented $n$-connected $(2n+2)$-manifold whose boundary is a homotopy sphere, there is a (up to homotopy) unique lift $M\ra\tau_{>n}\BO$ of the stable oriented normal bundle. On the boundary $\partial M$, this lifts uniquely further to $\tau_{>n+1}\BO$, so we obtain a canonical class $\chi(M)\in\oH^{n+1}(M,\partial M;\bfZ)$ and a characteristic number $\chi^2(M)\in\bfZ$.
\item Consider a $(W_{g,1},D^{2n-1})$-bundle $\pi\colon E\ra B$, i.e.\,a smooth $W_{g,1}$-bundle with a trivialised $D^{2n-1}$-subbundle of its $\partial W_{g,1}$-bundle $\partial\pi\colon \partial E\ra B$ of boundaries. The standard framing of $D^{2n-1}$ induces a trivialisation of stable vertical tangent bundle $T_\pi E\colon E\ra \BSO$ over the subbundle $B\times D^{2n-1}\subset \partial E$, which extends uniquely to a $\tau_{>n+1}\BO$-structure on $T_\pi E|_{\partial E}$ by obstruction theory. Using that $W_g$ is $n$-parallelisable, another application of obstruction theory shows that the induced $\tau_{>n}\BO$-structure on $T_\pi E|_{\partial E}$ extends uniquely to a $\tau_{>n}\BO$-structure on $T_\pi E$, so the above discussion provides a class $\chi(T_\pi E)\in \oH^{n+1}(E,\partial E;\bfZ)$, and, assuming $B$ is an oriented closed surface, a number $\chi^2(T_\pi E)\in\bfZ$.
\end{enumerate}
\end{ex}

\subsection{Highly connected almost closed manifolds}\label{section:highlyconnected}As a consequence of \cref{thm:almostclosedbundles}, we shall see that $(W_{g,1},D^{2n-1})$-bundles over surfaces are closely connected to $n$-connected almost closed $(2n+2)$-manifolds. These manifolds were classified by Wall \cite{Wall2n}, which we now recall for $n\ge3$ in a form tailored to later applications, partly following \cite[Sect.\,2]{KrannichReinhold}.

A compact manifold $M$ is \emph{almost closed} if its boundary is a homotopy sphere. We write $A_{d}^{\tau_{>n}}$ for the abelian group of almost closed oriented $n$-connected $d$-manifolds up to oriented $n$-connected bordism restricting to an $h$-cobordism on the boundary. Recall that $\Omega_{d}^{\tau_{>n}}$ denotes the bordism group of closed $d$-manifolds $M$ equipped with a \emph{$\tau_{>n}\BO$-structure} on their stable normal bundle $M\ra\BO$, i.e.\,a lift $M\ra\tau_{>n}\BO$ to the $n$-connected cover. By classical surgery, the group $\Omega_{d}^{\tau_{>n}}$ is canonically isomorphic to the bordism group of closed oriented $n$-connected $d$-manifolds up to $n$-connected bordism as long as $d\ge2n+1$, so we will use both descriptions interchangeably. There is an exact sequence \begin{equation}\label{Wallexactsequence}\Theta_{2n+2}\lra\Omega_{2n+2}^{\tau_{>n}}\lra A_{2n+2}^{\tau_{>n}}\xlra{\partial}\Theta_{2n+1}\lra\Omega_{2n+1}^{\tau_{>n}}\lra 0\end{equation} due to Wall \cite[p.\,293]{Wall2n+1} in which the two outer morphisms are the obvious ones, noting that homotopy $d$-spheres $n$-connected for $n<d$. The morphisms $\Omega_{2n+2}^{\tau_{>n}}\ra A_{2n+2}^{\tau_{>n}}$ and $\partial\colon A_{2n+2}^{\tau_{>n}}\ra\Theta_{2n+1}$ are given by cutting out an embedded disc and by assigning to an almost closed manifold its boundary, respectively. By surgery theory, the subgroup \[\bA_{2n+2}\coloneq \im(A_{2n+2}^{\tau_{>n}}\xlra{\partial}\Theta_{2n+1})\] of homotopy $(2n+1)$-spheres bounding $n$-connected manifolds contains the cyclic subgroup $\bP_{2n+2}\subset \Theta_{2n+2}$ of homotopy $(2n+1)$-spheres bounding parallelisable manifolds, so the right end of \eqref{Wallexactsequence} receives canonical a map from Kervaire--Milnor's exact sequence \cite{KervaireMilnor}, 
\begin{equation}\label{equation:bAbP}
\begin{tikzcd}
0\arrow[r]&\bP_{2n+2}\arrow[r]\arrow[d,hook]&\Theta_{2n+1}\arrow[d,equal]\arrow[r]&\coker(J)_{2n+1}\arrow[r]\arrow[d,twoheadrightarrow]&0\\
0\arrow[r]&\bA_{2n+2}\arrow[r]&\Theta_{2n+1}\arrow[r]&\Omega^{\tau_{>n}}_{2n+1}\arrow[r]&0,
\end{tikzcd}
\end{equation}
which in particular induces a morphism $\coker(J)_{2n+1}\ra\Omega^{\tau_{>n}}_{2n+1}$, concretely given by representing a class in $\coker(J)_{2n+1}$ by a stably framed manifold and restricting its stable framing to a $\tau_{>n}\BO$-structure.

\subsubsection{Wall's classification}\label{section:Wallclassification}For our purposes, Wall's computation \cite{Wall2n,Wall2n+1} of $A_{2n+2}^{\tau_{>n}}$ is for $n\ge3$ odd is most conveniently stated in terms of two particular almost closed $n$-connected $(2n+2)$-manifolds, namely
\begin{enumerate}\item \emph{Milnor's $E_8$-plumbing} $P$, arising from plumbing together $8$ copies of the disc bundle of the tangent bundle of the standard $(n+1)$-sphere such that the intersection form of $P$ agrees with the $E_8$-form (see e.g.\cite[Ch.\,V.2]{BrowderBook}), and
\item the manifold $Q$, obtained from plumbing together two copies of a linear $D^{n+1}$-bundle over the $(n+1)$-sphere representing a generator of $S\pi_n\SO(n)$. 
\end{enumerate}
The following can be derived from Wall's work, as explained for instance in \cite[Thm\,2.1]{KrannichReinhold}.

\begin{thm}[Wall]\label{thm:Wall}
For $n\ge3$ odd, the bordism group $A_{2n+2}^{\tau_{>n}}$ satisfies
\[A_{2n+2}^{\tau_{>n}}\cong 
\begin{cases}
\bfZ\oplus \bfZ/2&\mbox{if }n\equiv1\Mod{8}\\
\bfZ\oplus \bfZ&\mbox{if }n\equiv3\Mod{4}\\
\bfZ&\mbox{if }n\equiv5\Mod{8}.
\end{cases}\] The first summand is generated by the class of $P$ in all cases but $n=3,7$ in which it is generated by $\bfH P^2$ and $\bfO P^2$. The second summand for $n\not{equiv}5\Mod{8}$ is generated by $Q$.
\end{thm}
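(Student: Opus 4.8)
The plan is to reduce the computation to Wall's algebraic classification of highly connected handlebodies and then to read off the bordism invariants that distinguish the asserted generators; I follow \cite[Sect.\,2]{KrannichReinhold}. The first step is that every class in $A_{2n+2}^{\tau_{>n}}$ is represented by a \emph{handlebody}, a manifold built from $D^{2n+2}$ by attaching handles of index $n+1$ only: given an almost closed oriented $n$-connected $(2n+2)$-manifold $M$, standard handle trading --- using the $n$-connectedness of $M$ to cancel handles of index $\le n$ and, dually, $\partial M$ being a homotopy sphere to cancel handles of index $\ge n+2$ (cf.\ \cite{Wall2n,Wall2n+1}) --- produces such a handlebody diffeomorphic to $M$. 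By Wall's classification \cite{Wall2n}, a handlebody $V$ with $\partial V$ a homotopy sphere is determined, up to diffeomorphism relative to $\partial V$, by the triple $(H,\lambda,\alpha)$ consisting of the middle homology $H=\oH_{n+1}(V;\bfZ)$ (finitely generated free), the intersection form $\lambda$ on $H$ (symmetric, since $n+1$ is even, and unimodular, since $\partial V$ is a homotopy sphere), and the quadratic refinement $\alpha\colon H\to\pi_n\SO(n+1)$ recording the normal bundles of embedded representatives of middle homology classes --- rank-$(n+1)$ bundles over $S^{n+1}$ --- subject to $p\circ\alpha(x)=\lambda(x,x)$ for the Euler-number homomorphism $p\colon\pi_n\SO(n+1)\to\pi_n(S^n)\cong\bfZ$ and to the usual quadratic identity with defect $\lambda$.

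Under this dictionary boundary connected sum of handlebodies corresponds to orthogonal sum of triples and orientation reversal to the evident involution, so $A_{2n+2}^{\tau_{>n}}$ becomes a Grothendieck--Witt-type group of triples $(H,\lambda,\alpha)$. The main obstacle is to make precise that the $\tau_{>n}\BO$-bordism relation --- an $n$-connected bordism restricting to an $h$-cobordism on the boundary --- translates into \emph{stable isometry} of such triples, where one is allowed to add orthogonal copies of the standard piece $\bigl(S^{n+1}\times S^{n+1}\bigr)\setminus\interior{D^{2n+2}}$ (hyperbolic form, vanishing refinement). This is precisely the content of \cite{Wall2n,Wall2n+1}: one must show both that a triple with stably hyperbolic form and trivial Arf-type invariant (defined below) is realised by a null-bordant handlebody and, conversely, that those two invariants are complete. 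Granting this, the computation is purely algebraic.

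The first invariant is the signature $\sgn\colon A_{2n+2}^{\tau_{>n}}\to\bfZ$, which is a bordism invariant by Novikov additivity together with the vanishing of the signature of an $h$-cobordism between homotopy spheres and of any closed manifold that bounds. The image $\Lambda_n$ of $p$ equals $2\bfZ$ for $n\ne 3,7$ and all of $\bfZ$ for $n=3,7$ (\cref{section:Wallsform}); hence for $n\ne 3,7$ the form $\lambda$ is even, $\sgn$ takes values in $8\bfZ$, and Milnor's $E_8$-plumbing $P$ (of signature $8$) generates the free summand, whereas for $n=3,7$ odd forms occur and the free summand is generated instead by $\bfH P^2$ respectively $\bfO P^2$, of signature $1$ (so $P$ is no longer a generator). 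The second invariant detects $\alpha$ beyond $\lambda$: the kernel of $\sgn$ consists of classes with stably hyperbolic intersection form, on which $\alpha$ takes values in $\ker(p)$, and the exact sequence of the fibration $\SO(n)\to\SO(n+1)\to S^n$ identifies $\ker(p)$ with the group $S\pi_n\SO(n)$ of \cref{table:SO}; by Wall's classification the kernel of $\sgn$ is identified with $S\pi_n\SO(n)$ itself via an Arf-type invariant of $\alpha$. The manifold $Q$ --- the plumbing of two copies of the linear $D^{n+1}$-bundle over $S^{n+1}$ classified by a generator of $S\pi_n\SO(n)$ --- has hyperbolic intersection form, hence $\sgn(Q)=0$, and its core spheres have normal bundle that generator, so $[Q]$ maps to a generator of $S\pi_n\SO(n)$. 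Since $\sgn$ admits an obvious section, $A_{2n+2}^{\tau_{>n}}\cong\bfZ\langle[P]\rangle\oplus S\pi_n\SO(n)$ --- with $[P]$ replaced by $[\bfH P^2]$ or $[\bfO P^2]$ for $n=3,7$, the second summand generated by $[Q]$, and the second summand trivial exactly for $n\equiv 5\Mod{8}$. (Concretely, the kernel of $\sgn$ is also cut out by the characteristic number $\chi^2$ of \cref{section:obstructionclass} when $n\equiv 3\Mod{4}$ --- corrected by $\sgn$ when $n=3,7$, as $\bfH P^2$ and $\bfO P^2$ have non-vanishing $\chi^2$ --- which is the source of the case distinction in \cref{bigthm:mainextension}.)

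An alternative route extracts $A_{2n+2}^{\tau_{>n}}$ from Wall's exact sequence \eqref{Wallexactsequence} by combining his computation of the bordism group $\Omega_{2n+2}^{\tau_{>n}}$ of closed $n$-connected $(2n+2)$-manifolds with a determination of the subgroup $\bA_{2n+2}\subseteq\Theta_{2n+1}$; both routes ultimately rest on the analysis of Wall's quadratic forms above, which is the crux.
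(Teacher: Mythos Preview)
The paper does not give its own proof of this statement; it attributes the result to Wall and defers to \cite[Thm\,2.1]{KrannichReinhold} for the derivation, which is exactly the reference you say you are following. Your outline---reduce to handlebodies by handle trading, invoke Wall's classification via the triple $(H,\lambda,\alpha)$, split off a free summand by the signature, and identify the kernel with $S\pi_n\SO(n)$ via the residual normal-bundle data---is the standard route and is consistent with what the paper cites, so there is nothing to compare beyond noting that you have supplied the sketch the paper omits.

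One small point of phrasing: calling the second invariant an ``Arf-type invariant'' is apt for $n\equiv1\Mod{8}$ where $S\pi_n\SO(n)\cong\bfZ/2$, but for $n\equiv3\Mod{4}$ the summand is infinite cyclic and the detecting invariant is the characteristic number $\chi^2$ (or equivalently $p_{(n+1)/2}$), as you yourself note in the parenthetical. It would be cleaner to separate the two cases from the outset rather than subsume the integral invariant under Arf-type language.
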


From a consultation of \cref{table:SO}, one sees that the group $S\pi_n\SO(n)$ vanishes for $n\equiv5\Mod{8}$, so $Q\in A_{2n+2}^{\tau_{>n}}$ is trivial in this case, which shows that the subgroup $\bA_{2n+2}$ is for all $n\ge3$ odd generated by the boundaries 
\[\Sigma_P\coloneq \partial P\in\bA_{2n+2}\quad\text{and}\quad\Sigma_Q\coloneq \partial Q\in\bA_{2n+3}.\]
In the cases $n\equiv 1\Mod{8}$ in which $Q$ defines a $\bfZ/2$-summand, its boundary $\Sigma_Q$ is trivial by a result of Schultz \cite[Cor.\,3.2, Thm\,3.4 iii)]{Schultz}. For $n\equiv3\Mod{4}$ on the other hand, it is nontrivial by a calculation of Kosinski \cite[p.\,238-239]{Kosinski}.

\begin{thm}[Kosinski, Schultz]\label{thm:Schultz}The homotopy sphere $\Sigma_Q\in\Theta_{2n+1}$ is trivial for $n\equiv 1\Mod{4}$ and nontrivial for $n\equiv 3\Mod{4}$
\end{thm}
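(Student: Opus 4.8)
The plan is to treat the three residues $n\equiv5\pmod8$, $n\equiv1\pmod8$ (which together form the case $n\equiv1\pmod4$) and $n\equiv3\pmod4$ separately, reducing each to a known computation. The case $n\equiv5\pmod8$ requires nothing new: by \cref{table:SO} the group $S\pi_n\SO(n)$ vanishes, so the linear $D^{n+1}$-bundle over $S^{n+1}$ used to build $Q$ is trivial, whence $Q$ represents the zero class in $A^{\tau_{>n}}_{2n+2}$ (as already noted just before \cref{thm:Wall}), and therefore $\Sigma_Q=\partial Q=0$ by exactness of \eqref{Wallexactsequence}.

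For $n\equiv1\pmod8$ the picture genuinely changes: now $S\pi_n\SO(n)\cong\bfZ/2$ is nonzero and injects into $\pi_n\SO$ by \cref{lemma:stabiliseorthogonalgroups}, so the bundle defining $Q$ is stably nontrivial, $Q$ is not parallelisable, and a priori $\partial Q$ could be exotic; indeed \cref{thm:Wall} exhibits $Q$ as a generator of a $\bfZ/2$-summand of $A^{\tau_{>n}}_{2n+2}$. The assertion that nonetheless $\Sigma_Q=0$ is exactly the content of Schultz's analysis of these plumbings \cite[Cor.\,3.2, Thm\,3.4 iii)]{Schultz}. The only thing to check on our side is that the manifold we call $Q$ — the plumbing of two copies of a $D^{n+1}$-bundle over $S^{n+1}$ classified by a generator of $S\pi_n\SO(n)$, which being stably a pullback from $\SO(n)$ splits off a trivial line and hence has vanishing Euler number (consistent with its intersection form being the standard symplectic one) — agrees with the plumbing considered by Schultz, after which his computation applies.

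For $n\equiv3\pmod4$ we must instead show $\Sigma_Q\neq0$, and this is Kosinski's calculation \cite[p.\,238--239]{Kosinski}. The natural way to organise it is by contradiction: were $\partial Q$ standard, one could cap $Q$ off with a disc to obtain a closed $n$-connected $(2n+2)$-manifold $\widehat{Q}$, and a characteristic number of $\widehat{Q}$ — concretely the number built from $p_{(n+1)/4}$, or equivalently the invariant $\chi^2$ of \cref{section:obstructionclass} — would be forced to satisfy an integrality/divisibility constraint. The relative Pontryagin formula $p_k(\xi,\bar{\xi})=\pm a_k(2k-1)!\cdot\chi(\xi,\bar{\xi})$ recalled in \cref{section:obstructionclass}, applied to the bundle data defining $Q$ (a generator of $S\pi_n\SO(n)$), computes this number explicitly and shows the constraint fails; this is precisely Kosinski's computation. (As a sanity check, for $n=3,7$ one has $\bA_{2n+2}=\Theta_{2n+1}$ and $\partial P$ standard, so $\Sigma_Q$ must in fact generate $\Theta_{2n+1}$.) An alternative, equivalent route is to evaluate the Eells--Kuiper $\mu$-invariant of $\partial Q$ and observe that it is nonzero.

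The genuinely non-formal inputs are the cited results of Schultz and Kosinski, so from the present perspective the main obstacle is bookkeeping rather than mathematics: one must align normalisations — the chosen generator of $S\pi_n\SO(n)$, the plumbing and orientation conventions, the constants $a_k$ — so that $Q$ and $\Sigma_Q$ here coincide with the manifolds in \cite{Schultz,Kosinski}, and keep straight which residue of $n$ modulo $8$ lands in which of their cases. Conceptually the subtlest point is $n\equiv1\pmod8$, where a stably nontrivial bundle still bounds a \emph{standard} sphere after plumbing; this is exactly why Schultz's input is needed and cannot be extracted from \eqref{Wallexactsequence} and \cref{thm:Wall} alone.
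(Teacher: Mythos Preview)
Your proposal is correct and matches the paper's treatment: the paper does not give an independent proof but simply attributes the three cases exactly as you do, noting that $Q$ is trivial for $n\equiv5\Mod{8}$ since $S\pi_n\SO(n)=0$, citing Schultz \cite[Cor.\,3.2, Thm\,3.4 iii)]{Schultz} for $n\equiv1\Mod{8}$, and Kosinski \cite[p.\,238--239]{Kosinski} for $n\equiv3\Mod{4}$. Your additional exposition (the characteristic-number obstruction for the Kosinski case, the remark on why $n\equiv1\Mod{8}$ is the subtle point) goes beyond what the paper records but is consistent with it.
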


In the exceptional dimensions $n=3,7$, the homotopy sphere $\Sigma_Q$ agrees with the inverse of the Milnor sphere $\Sigma_P$, as explained in \cite[Cor.\,2.8 ii)]{KrannichReinhold}.

\begin{lem}\label{lemma:sigmaqsigmap}$\Sigma_Q=-\Sigma_P$ for $n=3,7$.
\end{lem}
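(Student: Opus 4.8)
The plan is to express $[P]\in A_{2n+2}^{\tau_{>n}}$ in the basis furnished by \cref{thm:Wall} and then apply the boundary map $\partial\colon A_{2n+2}^{\tau_{>n}}\to\Theta_{2n+1}$ of Wall's exact sequence \eqref{Wallexactsequence}. For $n=3,7$ that theorem gives $A_{2n+2}^{\tau_{>n}}\cong\bfZ^2$ with basis $\{[\bfH P^2],[Q]\}$, resp.\ $\{[\bfO P^2],[Q]\}$, so $[P]=a[\bfH P^2]+b[Q]$ for unique integers $a,b$, which I will pin down using the two invariants $\sgn$ and $\chi^2$. Both are bordism invariants and are additive under boundary connected sum, hence define homomorphisms $A_{2n+2}^{\tau_{>n}}\to\bfZ$; for $\chi^2$ this uses $n\equiv3\Mod 4$, so that the characteristic number of \cref{example:chiexamples} is available, together with the fact that a relative Pontryagin number is a bordism invariant that vanishes on homotopy spheres. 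The relevant values of $\partial$ are $\partial[P]=\Sigma_P$, $\partial[Q]=\Sigma_Q$, and $\partial[\bfH P^2]=0$ (the latter since $[\bfH P^2]$ lies in the image of $\Omega_{2n+2}^{\tau_{>n}}\to A_{2n+2}^{\tau_{>n}}$, hence in $\ker\partial$ by exactness of \eqref{Wallexactsequence}); likewise $\partial[\bfO P^2]=0$.

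Next I evaluate the invariants. For $P$: $\sgn(P)=8$ because its intersection form is $E_8$, and $\chi(P)=0$ because $P$ is parallelisable, so the (unique) $\tau_{>n}\BO$-structure on its stable normal bundle lifts to $\tau_{>n+1}\BO$ and the primary obstruction vanishes; in particular $\chi^2(P)=0$. For $\bfH P^2$: $\sgn=1$, and from $p(\bfH P^2)=1+2u+7u^2$ one gets normal class $p_1=-2u$; the relation $p_1=\pm a_1\cdot1!\cdot\chi=\pm2\chi$ of \cref{section:obstructionclass} then gives $\chi=\mp u$, hence $\chi^2(\bfH P^2)=\langle u^2,[\bfH P^2]\rangle=1$ (for $n=7$ one uses $p(\bfO P^2)=1+6v+39v^2$, the relation $p_2=\pm a_2\cdot3!\cdot\chi=\pm6\chi$, so $\chi=\mp v$ and $\chi^2(\bfO P^2)=1$). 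For $Q$: the intersection form of the plumbing of two $D^{n+1}$-bundles over $S^{n+1}$ with Euler numbers $e_1,e_2$ is $\left(\begin{smallmatrix}e_1&1\\1&e_2\end{smallmatrix}\right)$, and a bundle classified by a generator of $S\pi_n\SO(n)$ has Euler number $0$, since by the homotopy exact sequence of $\SO(n)\to\SO(n+1)\to S^n$ one has $S\pi_n\SO(n)=\ker(\pi_n\SO(n+1)\to\pi_nS^n)$ and this map records the Euler number; thus the form of $Q$ is hyperbolic and $\sgn(Q)=0$.

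The remaining computation is $\chi^2(Q)=8$. By \cref{lemma:stabiliseorthogonalgroups} the generator of $S\pi_n\SO(n)\cong\bfZ$ maps to twice a generator of $\pi_n\SO\cong\bfZ$, so the bundle $\xi$ defining $Q$ has $\chi(\xi)=\pm2$ (equivalently $|p_{(n+1)/4}(\xi)|=4$ for $n=3$ and $=12$ for $n=7$). The normal bundle of the $i$-th zero-section $z_i\subset Q$ is $\xi$, so $\nu(Q)|_{z_i}$ is stably $-\xi$; hence the class $\chi(Q)\in\oH^{n+1}(Q,\partial Q)$ evaluates on $z_i$ to $-\chi(\xi)=\mp2$. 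Writing $\chi(Q)=\mp2(z_1^*+z_2^*)$ in the basis of $\oH^{n+1}(Q,\partial Q)$ dual to $z_1,z_2$, and using that for this dual basis cup products evaluate via the inverse intersection matrix $\left(\begin{smallmatrix}0&1\\1&0\end{smallmatrix}\right)$, one obtains $\chi^2(Q)=2(\mp2)(\mp2)=8$. Plugging everything in, $\sgn(P)=a$ gives $a=8$, and $\chi^2(P)=8\cdot\chi^2(\bfH P^2)+b\cdot\chi^2(Q)$ gives $0=8+8b$, so $b=-1$. Thus $[P]=8[\bfH P^2]-[Q]$ in $A_{2n+2}^{\tau_{>n}}$ (and similarly with $\bfO P^2$ for $n=7$), and applying $\partial$ yields $\Sigma_P=8\cdot0-\Sigma_Q=-\Sigma_Q$, as claimed.

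The only step with real content is the computation of $\chi^2(Q)$: it requires identifying the normal-bundle invariant of $\xi$ (that, stably, a generator of $S\pi_n\SO(n)$ is twice a generator of $\pi_n\SO$) and tracing its effect through the plumbing and the obstruction-theoretic definition of $\chi$. I also expect sign-bookkeeping to be delicate: the sign of $\chi^2(Q)$ (hence whether one lands on $-\Sigma_P$ rather than $+\Sigma_P$) depends on the orientation conventions for $Q$ and $P$ and on the chosen generator of $S\pi_n\SO(n)$, so one must fix these compatibly with the cited sources; the computation above determines the relation up to that convention. Equivalently, one may state the conclusion symmetrically as $[P\natural Q]=8[\bfH P^2]$ in $A_{2n+2}^{\tau_{>n}}$, so that $\Sigma_P\sharp\Sigma_Q=\partial(P\natural Q)=0$.
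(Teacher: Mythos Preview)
Your proof is correct and follows the same route as the paper's cited source \cite[Cor.\,2.8 ii)]{KrannichReinhold}: compute $\sgn$ and $\chi^2$ on $P$, $Q$, and $\bfH P^2$ (resp.\ $\bfO P^2$), use these to write $[P]=8[\bfH P^2]-[Q]$ in Wall's basis of $A_{2n+2}^{\tau_{>n}}$ from \cref{thm:Wall}, and apply $\partial$. The values you derive are exactly those the paper later records in \eqref{equation:numbersalmostclosed} and \eqref{equation:exceptionalnumbersalmostclosed}, and your computation of $\chi^2(Q)=8$ via \cref{lemma:stabiliseorthogonalgroups} (the generator of $S\pi_n\SO(n)$ being twice a stable generator for $n=3,7$) together with the hyperbolic intersection form of $Q$ is correct.
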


For $n\ge3$ odd, the Milnor sphere $\Sigma_P\in\Theta_{2n+1}$ is well-known to be nontrival and to generate the cyclic subgroup $\bP_{2n+2}\subset\Theta_{2n+1}$ whose order can be expressed in terms of numerators of divided Bernoulli numbers (see e.g.\,\cite[Lem.\,3.5 (2), Cor.\,3.20]{Levine}), so \cref{thm:Schultz} has the following corollary.

\begin{cor}\label{corollary:generatorsbA}For $n\ge3$ odd, the subgroup $\bA_{2n+1}$ is nontrivial. It is generated by $\Sigma_P$ for $n\equiv1\Mod{4}$, by $\Sigma_Q$ for $n=3,7$, and by $\Sigma_P$ and $\Sigma_Q$ for $n\equiv3\Mod{4}$.
\end{cor}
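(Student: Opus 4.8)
The plan is to read off the stated generating sets directly from Wall's computation of $A_{2n+2}^{\tau_{>n}}$ in \cref{thm:Wall}, using that $\bA_{2n+2}$ is \emph{by definition} the image of the boundary map $\partial\colon A_{2n+2}^{\tau_{>n}}\to\Theta_{2n+1}$, so that any generating set of the source is carried to a generating set of $\bA_{2n+2}$. By \cref{thm:Wall}, the group $A_{2n+2}^{\tau_{>n}}$ is generated by the class of $P$ together with the class of $Q$ when $n\not\equiv5\Mod{8}$, except in the exceptional dimensions $n=3,7$, where the free summand is instead generated by the \emph{closed} manifolds $\bfH P^2$ and $\bfO P^2$. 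Since $\partial$ vanishes on closed manifolds, this immediately shows that $\bA_{2n+2}$ is generated by $\Sigma_P=\partial P$ and $\Sigma_Q=\partial Q$ whenever $n\not\equiv5\Mod{8}$ and $n\neq3,7$, and by $\Sigma_Q$ alone when $n=3,7$.

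It then remains to simplify these generating sets in the various congruences. If $n\equiv5\Mod{8}$, then $S\pi_n\SO(n)=0$ by \cref{table:SO}, so $Q$ is already nullbordant rel boundary, $\Sigma_Q=0$, and $\bA_{2n+2}=\langle\Sigma_P\rangle$. More generally, if $n\equiv1\Mod{4}$, then $\Sigma_Q=0$ by \cref{thm:Schultz}, so again $\bA_{2n+2}=\langle\Sigma_P\rangle$; and for $n\equiv3\Mod{4}$ with $n\neq3,7$ both $\Sigma_P$ and $\Sigma_Q$ are retained. For $n=3,7$ we have already found $\bA_{2n+2}=\langle\Sigma_Q\rangle$, and by \cref{lemma:sigmaqsigmap} this equals $\langle\Sigma_P\rangle$ as well, so singling out $\Sigma_Q$ as the distinguished generator is consistent. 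This establishes the three cases of the statement.

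For nontriviality, recall from the diagram \eqref{equation:bAbP} that $\bA_{2n+2}$ contains the cyclic subgroup $\bP_{2n+2}$ of homotopy $(2n+1)$-spheres bounding parallelisable manifolds, which is generated by $\Sigma_P$. For $n\ge3$ odd, the order of $\bP_{2n+2}$ is a numerator of a divided Bernoulli number and in particular nonzero (cf.\ \cite[Lem.\,3.5 (2), Cor.\,3.20]{Levine}), so $\Sigma_P\neq0$ and hence $\bA_{2n+2}\neq0$; for $n=3,7$ one may alternatively note that $\Sigma_Q=-\Sigma_P$ is nonzero by \cref{lemma:sigmaqsigmap} together with the nonvanishing of $\bP_{2n+2}$.

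The argument is thus essentially bookkeeping assembling \cref{thm:Wall}, \cref{thm:Schultz}, \cref{lemma:sigmaqsigmap}, and the classical nonvanishing of $\bP_{2n+2}$. The one point requiring a little care is keeping straight which of Wall's generators are closed manifolds, hence invisible to $\partial$, in the dimensions $n=3,7$, and invoking \cref{lemma:sigmaqsigmap} there to reconcile the two natural choices of generator; no genuinely new input is needed.
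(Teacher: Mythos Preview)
Your proof is correct and follows precisely the paper's own line of argument: the corollary is stated without a separate proof because the preceding discussion already assembles Wall's computation of $A_{2n+2}^{\tau_{>n}}$ (\cref{thm:Wall}), the vanishing of $\Sigma_Q$ for $n\equiv1\Mod4$ (\cref{thm:Schultz}), the relation $\Sigma_Q=-\Sigma_P$ for $n=3,7$ (\cref{lemma:sigmaqsigmap}), and the nontriviality of $\Sigma_P$ via $\bP_{2n+2}$, exactly as you do. Your handling of the exceptional cases $n=3,7$---observing that the closed generators $\bfH P^2,\bfO P^2$ die under $\partial$---matches the paper's implicit reasoning.
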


Combining the previous results with the diagram \eqref{equation:bAbP}, we obtain the following result, which we already mentioned in the introduction.

\begin{cor}\label{corollary:SchultzWall}The natural morphism $\coker(J)_{2n+1}\rightarrow\Omega^{\tau_{>n}}_{2n+2}$ is an isomorphism for $n\equiv1\Mod{4}$ and for $n=3,7$. For $n\equiv3\Mod{4}$, it is an epimorphism whose kernel is generated by the class $[\Sigma_Q]\in\coker(J)_{2n+1}$.
\end{cor}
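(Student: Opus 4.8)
The plan is to read the morphism $\coker(J)_{2n+1}\to\Omega^{\tau_{>n}}_{2n+1}$ straight off the diagram \eqref{equation:bAbP}, using the explicit generators of $\bA_{2n+2}$ and $\bP_{2n+2}$ recorded above. Both rows of \eqref{equation:bAbP} are short exact sequences with common middle term $\Theta_{2n+1}$, the left vertical map is the inclusion $\bP_{2n+2}\hookrightarrow\bA_{2n+2}$, and the right vertical map is the induced surjection. Hence $\coker(J)_{2n+1}\cong\Theta_{2n+1}/\bP_{2n+2}$ and $\Omega^{\tau_{>n}}_{2n+1}\cong\Theta_{2n+1}/\bA_{2n+2}$, and the morphism in question is the evident further quotient; its kernel is therefore the image of $\bA_{2n+2}$ in $\coker(J)_{2n+1}$, i.e.\ the subgroup $\bA_{2n+2}/\bP_{2n+2}$. (Equivalently, this is what the snake lemma applied to \eqref{equation:bAbP} yields, since the left vertical map is injective and the middle one is the identity.) So the whole statement reduces to computing $\bA_{2n+2}/\bP_{2n+2}$ and pinning down a generator.

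First I would treat $n\equiv1\Mod4$: by \cref{corollary:generatorsbA} the group $\bA_{2n+2}$ is generated by $\Sigma_P$, which also generates $\bP_{2n+2}$, so $\bA_{2n+2}=\bP_{2n+2}$ and the morphism is an isomorphism. The case $n=3,7$ is handled the same way after invoking \cref{lemma:sigmaqsigmap}: there $\bA_{2n+2}$ is generated by $\Sigma_Q=-\Sigma_P$, which again generates $\bP_{2n+2}$, so the quotient is trivial and the morphism is an isomorphism.

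Finally, for general $n\equiv3\Mod4$, \cref{corollary:generatorsbA} gives that $\bA_{2n+2}$ is generated by $\Sigma_P$ and $\Sigma_Q$ while $\bP_{2n+2}$ is generated by $\Sigma_P$; hence $\bA_{2n+2}/\bP_{2n+2}$ is generated by the class of $\Sigma_Q$. Thus the kernel of $\coker(J)_{2n+1}\to\Omega^{\tau_{>n}}_{2n+1}$ is generated by $[\Sigma_Q]$, and surjectivity is already part of \eqref{equation:bAbP}. One could additionally remark, using \cref{thm:Schultz} together with the nontriviality of $\Sigma_P$ in $\bP_{2n+2}$, that this kernel is in fact nonzero, although the statement as phrased does not need it.

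I do not expect a genuine obstacle: the mathematical content sits entirely in the earlier results—Wall's classification \cref{thm:Wall}, the computations of Kosinski and Schultz feeding \cref{thm:Schultz} and \cref{corollary:generatorsbA}, and \cref{lemma:sigmaqsigmap}—so the proof is a short diagram chase. The only point demanding a little care is to check that the generating sets for $\bA_{2n+2}$ supplied by \cref{corollary:generatorsbA} are compatible with the generator $\Sigma_P$ of $\bP_{2n+2}$, so that passing to $\bA_{2n+2}/\bP_{2n+2}$ genuinely leaves only the class $[\Sigma_Q]$ (and nothing in the cases $n\equiv1\Mod4$ and $n=3,7$).
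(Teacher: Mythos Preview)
Your proof is correct and is exactly the argument the paper has in mind: the corollary is stated as an immediate consequence of \cref{corollary:generatorsbA}, \cref{lemma:sigmaqsigmap}, and the diagram \eqref{equation:bAbP}, and you have spelled out precisely that diagram chase. One caution about your optional final remark: \cref{thm:Schultz} only says $\Sigma_Q\neq0$ in $\Theta_{2n+1}$, which does \emph{not} imply $[\Sigma_Q]\neq0$ in $\coker(J)_{2n+1}$ (indeed, by \cref{rem:BHS} it vanishes there for all odd $n\neq11$), so that aside should be dropped.
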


$[\Sigma_Q]\in\coker(J)_{2n+1}$ is conjecturally trivial \cite[Conj.\,A]{GRWabelian} for all $n$ odd. Until recently (see \cref{rem:BHS}), this was only known for $n=3,7$ and $n\equiv1\Mod{4}$ by the results above.
\begin{conj}[Galatius--Randal-Williams]\label{conjecture:GRW}$[\Sigma_Q]=0$ for all $n\ge3$ odd.
\end{conj}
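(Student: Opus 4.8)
The plan is to turn the statement into a bordism question. By the top row of \eqref{equation:bAbP} we have $\coker(J)_{2n+1}\cong\Theta_{2n+1}/\bP_{2n+2}$, so $[\Sigma_Q]=0$ is equivalent to the assertion that the homotopy sphere $\Sigma_Q=\partial Q$ bounds a \emph{parallelisable} manifold. This holds for $n=3,7$ by \cref{lemma:sigmaqsigmap} (there $\Sigma_Q=-\Sigma_P\in\bP_{2n+2}$) and for $n\equiv1\Mod{4}$ by \cref{thm:Schultz} (there $\Sigma_Q$ is already null-bordant), so the open case is $n\equiv3\Mod{4}$ with $n\neq3,7$. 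The obvious candidate coboundary, namely $Q$ itself, does not work: its stable vertical tangent bundle restricted to a core sphere is classified by a generator of $S\pi_n\SO(n)\cong\pi_n\SO\cong\bfZ$ (using \cref{lemma:stabiliseorthogonalgroups}), so $Q$ is only almost parallelisable, its normal obstruction in $\oH^{n+1}(Q,\partial Q;\pi_n\SO)$ evaluating to a generator of $\pi_n\SO$ on each core sphere. One therefore has to build a genuinely different parallelisable manifold bounding $\Sigma_Q$, equivalently to cancel the normal obstruction of $Q$ against a closed $\tau_{>n}\BO$-manifold summand without altering the boundary.

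To decide whether this is possible, I would identify $[\Sigma_Q]$ as an explicit element of $\coker(J)_{2n+1}\subseteq\pi^s_{2n+1}/\im(J)$. Concretely, run the Kervaire--Milnor surgery analysis on the almost-framed manifold $Q$ and use the plumbing computations of Wall and Kosinski (cf.\ the references around \eqref{Wallexactsequence} and \cref{thm:Schultz}) to express $[\Sigma_Q]$ through the clutching data of the two $D^{n+1}$-bundles that define $Q$, that is, through a generator of $S\pi_n\SO(n)$ and the associated unstable $J$-homomorphism. One expects the resulting class to have low Adams filtration and to be representable by a product or Toda bracket of the elementary classes $\eta,\nu,\sigma$ and their relatives; the remaining task is then to show that this class dies in $\coker(J)_{2n+1}$ for every sufficiently large $n\equiv3\Mod{4}$, which I would attempt via the Adams spectral sequence together with the $K(1)$- and $\tmf$-based approximations to the sphere spectrum that control $\coker(J)$ in this range.

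The main obstacle is precisely this last step, and it is why the assertion is stated here only as a conjecture: once the geometry has been stripped away, what remains is a genuine question about the stable homotopy groups of spheres that cannot be settled by the elementary and bordism-theoretic methods used elsewhere in this paper. Its eventual resolution relies on the modern machinery of Burklund--Hahn--Senger \cite{BHS} and Burklund--Senger \cite{BS}, and in fact shows the statement to be delicate enough to \emph{fail} at $n=11$; so no soft uniform argument is available. A realistic partial programme would be to verify the remaining small cases $n\equiv3\Mod{4}$ by hand from the known stable stems, and to isolate the precise stable-homotopy statement equivalent to \cref{conjecture:GRW}, rather than to prove the conjecture outright.
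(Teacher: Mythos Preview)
The statement in question is a \emph{conjecture} in the paper, not a theorem: the paper offers no proof, and indeed \cref{rem:BHS} records that it was only settled after the paper was written (and that it \emph{fails} for $n=11$). So there is no proof in the paper to compare your proposal against.

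Your write-up correctly recognises this. You accurately reduce the question to whether $\Sigma_Q$ bounds a parallelisable manifold, correctly dispose of the cases $n=3,7$ and $n\equiv1\Mod{4}$ using exactly the results the paper cites (\cref{lemma:sigmaqsigmap} and \cref{thm:Schultz}), and correctly observe that for $n\equiv3\Mod{4}$, $n\neq3,7$, the question becomes a hard stable-homotopy problem that the methods of the paper cannot reach. Your pointer to \cite{BHS,BS} and the exceptional failure at $n=11$ matches the content of \cref{rem:BHS} precisely. In short: your proposal is not a proof, but it is not meant to be one, and your assessment of the situation agrees with the paper's.
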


\begin{rem}\label{rem:BHS}As mentioned in the introduction, after the completion of this work, Burklund--Hahn--Senger \cite{BHS} and Burklund--Senger \cite{BS} showed that $[\Sigma_Q]$ vanishes in $\coker(J)_{2n+1}$ for $n$ odd if and only if $n\neq 11$, confirming \cref{conjecture:GRW} for $n\ne11$ and disproving it for $n=11$. This has as a consequence that, for $n\neq 11$ odd, the subgroup $\bA_{2n+1}$ is generated by $\Sigma_P$ even for $n\equiv 3\Mod{4}$ and that the morphism $\coker(J)_{2n+1}\ra \Omega_{2n+2}^{\tau_{>n}}$ discussed in \cref{corollary:SchultzWall} is an isomorphism.
\end{rem}

\subsubsection{Invariants}It follows from \cref{thm:Wall} and \cref{thm:Schultz} that the boundary of an $n$-connected almost closed $(2n+2)$-manifold $M$ is determined by at most two integral bordism invariants of $M$. Concretely, we consider the signature $\sgn\colon A_{2n+2}^{\tau_{>n}}\ra\bfZ$ and for $n\equiv 3\Mod{4}$ the characteristic number $\chi^2\colon A_{2n+2}^{\tau_{>n}}\ra\bfZ$, explained in \cref{example:chiexamples}. As discussed for example in \cite[Sect.\,2.1]{KrannichReinhold}, these functionals evaluate to \begin{equation}\label{equation:numbersalmostclosed}\sgn(P)=8 \quad \quad\sgn(Q)=0\quad \quad\chi^2(P)=0\quad \quad\chi^2(Q)=\begin{cases}8&\text{for }n=3,7\\2&\text{otherwise}\end{cases},\end{equation} and on the closed manifolds $\bfH P^2$ and $\bfO P^2$ to
\begin{equation}\label{equation:exceptionalnumbersalmostclosed}\sgn(\bfH P^2)=\sgn(\bfO P^2)=1\quad \quad \chi^2(\bfH P^2)=\chi^2(\bfO P^2)=1,\end{equation} which results in the following formula for boundary spheres of highly connected manifolds when combined with the discussion above.

\begin{prop}\label{prop:computingboundaries}
For $n\ge 3$ odd, the boundary $\partial M\in \Theta_{2n+1}$ of an almost closed oriented $n$-connected $(2n+2)$-manifold $M$ satisfies
\[\partial M=
\begin{cases}
\sgn(M)/8\cdot\Sigma_P&\mbox{if }n\equiv1\Mod{4}\\
\sgn(M)/8\cdot\Sigma_P+\chi^2(M)/2\cdot\Sigma_Q&\mbox{if }n\equiv3\Mod{4}\text{ and }n\neq 3,7\\
\big(\chi^2(M)-\sgn(M)\big)/8\cdot\Sigma_Q&\mbox{if }n=3,7.
\end{cases}\]
\end{prop}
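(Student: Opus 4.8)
The plan is to deduce the formula from Wall's classification \cref{thm:Wall} together with the computations \eqref{equation:numbersalmostclosed} and \eqref{equation:exceptionalnumbersalmostclosed}, by observing that both sides of the claimed identity are additive in $M$ and then checking it on generators. More precisely, the first step is to note that the boundary map $\partial\colon A_{2n+2}^{\tau_{>n}}\to\Theta_{2n+1}$ of \eqref{Wallexactsequence} is a homomorphism, and that $\sgn$ and $\chi^2$ are homomorphisms $A_{2n+2}^{\tau_{>n}}\to\bfZ$. Since $\Sigma_P=\partial P$ and $\Sigma_Q=\partial Q$ by definition, and since $\sgn$, $\chi^2$, and $\partial$ all factor through the finitely generated group $A_{2n+2}^{\tau_{>n}}$, it suffices to verify the three cases of the formula after evaluating on a set of generators of $A_{2n+2}^{\tau_{>n}}$ given by \cref{thm:Wall}.

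The second step is the case-by-case check. For $n\equiv1\Mod8$ (so $n\ne3,7$), \cref{thm:Wall} gives generators $P$ and $Q$; evaluating the proposed right-hand side $\tfrac{\sgn(M)}8\Sigma_P$ on $P$ yields $\Sigma_P=\partial P$ by \eqref{equation:numbersalmostclosed}, and on $Q$ yields $0=\Sigma_Q=\partial Q$ by \cref{thm:Schultz}, so the formula holds on generators and hence on all of $A_{2n+2}^{\tau_{>n}}$. For $n\equiv3\Mod4$ and $n\ne3,7$, the generators are again $P$ and $Q$; the right-hand side $\tfrac{\sgn(M)}8\Sigma_P+\tfrac{\chi^2(M)}2\Sigma_Q$ evaluates on $P$ to $\Sigma_P+0=\partial P$ and on $Q$ to $0+\Sigma_Q=\partial Q$, using $\sgn(P)=8$, $\chi^2(P)=0$, $\sgn(Q)=0$, $\chi^2(Q)=2$. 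For $n=3,7$ the first summand of $A_{2n+2}^{\tau_{>n}}$ is generated by $\bfH P^2$ respectively $\bfO P^2$ and the second by $Q$; on $Q$ the right-hand side $\tfrac{\chi^2(M)-\sgn(M)}8\Sigma_Q$ gives $\tfrac{8-0}8\Sigma_Q=\Sigma_Q=\partial Q$, while on $\bfH P^2$ (resp.\ $\bfO P^2$) it gives $\tfrac{1-1}8\Sigma_Q=0$, matching $\partial(\bfH P^2)=0$ since $\bfH P^2$ is closed; here one uses \eqref{equation:numbersalmostclosed}, \eqref{equation:exceptionalnumbersalmostclosed}, and \cref{lemma:sigmaqsigmap} to see the answer can equivalently be phrased in terms of $\Sigma_Q$.

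I do not expect a serious obstacle: the argument is essentially bookkeeping once the ingredients from \cref{section:Wallclassification} are in place. The one point requiring a little care is that \cref{thm:Wall} describes $A_{2n+2}^{\tau_{>n}}$ up to isomorphism with named generators only for the first summand and (for $n\not\equiv5\Mod8$) the second summand generated by $Q$; for $n\equiv5\Mod8$ the group is cyclic generated by (the image of) $P$ and $\Sigma_Q$ is automatically trivial, so the $n\equiv1\Mod4$ formula applies verbatim. One should also note the integrality of the coefficients $\sgn(M)/8$ and (for $n\ne1,3,7$) $\chi^2(M)/2$: this follows since the intersection form of an $n$-connected $(2n+2)$-manifold with $n\equiv3\Mod4$, $n\ne3,7$, is even and skew-symmetric, so its signature is divisible by $8$ and, by \cref{thm:Wall}, $\chi^2$ takes even values on $A_{2n+2}^{\tau_{>n}}$ — but strictly speaking this integrality is only needed to make sense of the statement and is already implicit in \eqref{equation:numbersalmostclosed}. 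Thus the proof amounts to invoking additivity and checking the displayed values on the generators listed in \cref{thm:Wall}.
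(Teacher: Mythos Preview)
Your proposal is correct and is precisely the argument the paper has in mind: the proposition is stated there without a separate proof, introduced by ``which results in the following formula for boundary spheres of highly connected manifolds when combined with the discussion above,'' meaning it is meant to follow by additivity and evaluation on Wall's generators using the values in \eqref{equation:numbersalmostclosed}, \eqref{equation:exceptionalnumbersalmostclosed}, and \cref{thm:Schultz}. The only superfluous touch is your invocation of \cref{lemma:sigmaqsigmap} in the $n=3,7$ case, which is not needed since the formula there is already phrased in terms of $\Sigma_Q$.
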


\subsubsection{The minimal signature}\label{section:minimalsignature} As in the introduction, we denote by $\sigma_n'$ the minimal positive signature of a smooth closed $n$-connected $(2n+2)$-manifold. This satisfies $\sigma_n'=1$ for $n=1,3,7$ as witnessed by $\bfC P^2$, $\bfH P^2$, and $\bfO P^2$, and in all other cases, it can be expressed in terms of the subgroup $\bA_{2n+2}\subset \Theta_{2n+1}$ as follows.

\begin{lem}\label{lemma:quotientbySigmaQ}For $n\ge3$ odd, the quotient $\bA_{2n+2}/\langle \Sigma_Q \rangle $ is a cyclic group generated by the class of $\Sigma_P$. It is trivial if $n=3,7$ and of order $\sigma_n'/8$ otherwise.\end{lem}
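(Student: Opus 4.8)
The plan is to reduce the statement to the exact sequence \eqref{Wallexactsequence}, Wall's computation \cref{thm:Wall}, and the table of invariants \eqref{equation:numbersalmostclosed}. First, \cref{corollary:generatorsbA} shows that $\bA_{2n+2}$ is generated by $\Sigma_P$ and $\Sigma_Q$ for all odd $n\ge3$: for $n\equiv1\Mod4$ because $\Sigma_Q=0$ by \cref{thm:Schultz}, and for $n=3,7$ because $\Sigma_P=-\Sigma_Q$ by \cref{lemma:sigmaqsigmap}. Hence $\bA_{2n+2}/\langle\Sigma_Q\rangle$ is cyclic, generated by the image of $\Sigma_P$; and it is trivial when $n=3,7$, since there $\bA_{2n+2}$ is already generated by $\Sigma_Q$ alone. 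It thus remains to compute, for odd $n\neq3,7$, the order of $\Sigma_P$ in $\bA_{2n+2}/\langle\Sigma_Q\rangle$.

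For this I would consider the surjective homomorphism
\[\psi\colon A^{\tau_{>n}}_{2n+2}\xlra{\partial}\bA_{2n+2}\longtwoheadrightarrow\bA_{2n+2}/\langle\Sigma_Q\rangle,\]
under which $\Sigma_P$ is the image of the generator $P\in A^{\tau_{>n}}_{2n+2}$. As $\partial$ is surjective onto $\bA_{2n+2}$ with kernel $\im\big(\Omega^{\tau_{>n}}_{2n+2}\to A^{\tau_{>n}}_{2n+2}\big)$ by exactness of \eqref{Wallexactsequence}, a short computation gives $\ker(\psi)=\langle Q\rangle+\im\big(\Omega^{\tau_{>n}}_{2n+2}\to A^{\tau_{>n}}_{2n+2}\big)$, so that
\[\bA_{2n+2}/\langle\Sigma_Q\rangle\;\cong\;\big(A^{\tau_{>n}}_{2n+2}/\langle Q\rangle\big)\big/\im\big(\Omega^{\tau_{>n}}_{2n+2}\to A^{\tau_{>n}}_{2n+2}/\langle Q\rangle\big).\]

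To evaluate the two pieces I would invoke \cref{thm:Wall}: for every odd $n\neq3,7$ the quotient $A^{\tau_{>n}}_{2n+2}/\langle Q\rangle$ is infinite cyclic, generated by the image of $P$---here one uses for $n\equiv1\Mod8$ that $Q$ generates the full torsion subgroup $\bfZ/2$, and for $n\equiv5\Mod8$ that $Q$ already vanishes in $A^{\tau_{>n}}_{2n+2}$. Since $\sgn(P)=8$ and $\sgn(Q)=0$ by \eqref{equation:numbersalmostclosed}, the signature therefore induces an isomorphism $A^{\tau_{>n}}_{2n+2}/\langle Q\rangle\xrightarrow{\ \cong\ }8\bfZ$. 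Under this isomorphism the image of $\Omega^{\tau_{>n}}_{2n+2}$ becomes the subgroup of $\bfZ$ realised by signatures of closed oriented $n$-connected $(2n+2)$-manifolds---using that the map $\Omega^{\tau_{>n}}_{2n+2}\to A^{\tau_{>n}}_{2n+2}$ is ``cut out an embedded disc'', which leaves the intersection form unchanged, and that $\Omega^{\tau_{>n}}_{2n+2}$ is the bordism group of such manifolds as recalled in \cref{section:highlyconnected}. By definition of $\sigma_n'$ this subgroup is $\sigma_n'\bfZ$, and since $8\mid\sigma_n'$ for odd $n\neq3,7$ we conclude $\bA_{2n+2}/\langle\Sigma_Q\rangle\cong 8\bfZ/\sigma_n'\bfZ\cong\bfZ/(\sigma_n'/8)$, of order $\sigma_n'/8$.

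Everything here is formal once \cref{thm:Wall}, \cref{corollary:generatorsbA}, \cref{thm:Schultz} and \eqref{equation:numbersalmostclosed} are available; the only point needing genuine care---and the place where the hypothesis $n\neq3,7$ is used---is the identification $A^{\tau_{>n}}_{2n+2}/\langle Q\rangle\cong 8\bfZ$ via the signature. For $n=3,7$ the free generator of $A^{\tau_{>n}}_{2n+2}$ is instead $\bfH P^2$ or $\bfO P^2$, of signature $1$, so the argument genuinely fails there, in accordance with the quotient being trivial.
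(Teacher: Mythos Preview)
Your argument is correct and follows essentially the same route as the paper: both identify $A^{\tau_{>n}}_{2n+2}/\langle Q\rangle$ with $8\bfZ$ via the signature for $n\neq3,7$, and then recognise the image of $\Omega^{\tau_{>n}}_{2n+2}$ there as $\sigma_n'\bfZ$. The only cosmetic difference is that the paper packages these two facts into a commutative diagram with exact rows and reads off the result by taking vertical cokernels (snake lemma), whereas you compute the double quotient $A^{\tau_{>n}}_{2n+2}/\big(\langle Q\rangle+\im(\Omega^{\tau_{>n}}_{2n+2})\big)$ directly; the ingredients and the logic are the same.
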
 

\begin{proof}For $n=3,7$, the claim is a direct consequence of \cref{thm:Wall} and \cref{lemma:sigmaqsigmap} and in the case $n\neq3,7$, it follows from taking vertical cokernels in the commutative diagram
\begin{center}
\begin{tikzcd}
0\arrow[r]&\langle \ord(\Sigma_Q)\cdot Q \rangle\arrow[r]\arrow[d,hook]&\Omega_{2n+2}^{\tau_{>n}}/\Theta_{2n+1}\arrow[r,"\sgn"]\arrow[d,hook]&\sigma_n'\cdot\bfZ\arrow[r]\arrow[d,hook]&0\\
0\arrow[r]&\langle Q \rangle\arrow[r]&A_{2n+2}^{\tau_{>n}}\arrow[r,"\sgn"]&8\cdot\bfZ\arrow[r]&0
\end{tikzcd}
\end{center}
with exact rows, obtained from a combination of \cref{thm:Wall} with  \eqref{Wallexactsequence} and \eqref{equation:numbersalmostclosed}.
\end{proof}

\begin{rem}\label{remark:minimalsignature}In \cite[Prop.\,2.15]{KrannichReinhold}, the minimal signature $\sigma_n'$ was expressed in terms of Bernoulli numbers and the order of $[\Sigma_Q]\in\coker(J)_{2n+1}$, from which one can conclude that for $n\neq1,3,7$, the signature of such manifolds is divisible by $2^{n+3}$ if $(n+1)/2$ is odd and by $2^{n-2\nu_2(n+1)}$ otherwise, where $\nu_2(-)$ denotes the $2$-adic valuation (see \cite[Cor.\,2.18]{KrannichReinhold}).
\end{rem}

\subsection{Bundles over surfaces and almost closed manifolds} \label{section:extensionclass}
In order to identify the cohomology class in $\oH^2(\Gamma_{g,1/2}^n;\Theta_{2n+1})$ that classifies the central extension
\begin{equation}\label{equation:extension999}0\lra\Theta_{2n+1}\lra\Gamma^n_{g,1}\lra\Gamma^n_{g,1/2}\lra0,\end{equation}
we first determine how it evaluates against homology classes, i.e.\,identify its image 
\[d_2\colon\oH_2(\Gamma^n_{g,1/2};\bfZ)\lra \Theta_{2n+1}\]
under the map $h$ participating in the universal coefficient theorem
\[0\ra\Ext(\oH_1(\Gamma^n_{g,1/2};\bfZ),\Theta_{2n+1})\ra\oH^2(\Gamma^n_{g,1/2};\Theta_{2n+1})\xlra{h}\Hom(\oH_2(\Gamma^n_{g,1/2};\bfZ);\Theta_{2n+1})\ra 0,\] followed by resolving the remaining ambiguity originating from the Ext-term. Indicated by our choice of notation, the morphism $d_2$ can be viewed alternatively as the first possibly nontrivial differential in the $E_2$-page of the Serre spectral sequence of the extension \eqref{equation:extension999} (cf.\,\cite[Thm 4]{HochschildSerre}). Before identifying this differential, we remind the reader of two standard facts we shall make frequent use of.
\begin{enumerate}
\item The natural map $\MSO\ra\HZ$ is $4$-connected, so pushing forward fundamental classes induces an isomorphism $\Omega^{\SO}_*(X)\ra \oH_*(X;\bfZ)$ for $*\le3$ and any space $X$, and
\item the $1$-truncation of a connected space $X$ (in particular the natural map $\oB G\ra\oB\pi_0G$ for a topological group $G$) induces a surjection $\oH_2(X;\bfZ) \twoheadrightarrow\oH_2(K(\pi_1X,1);\bfZ)$, whose kernel agrees with the image of the Hurewicz homomorphism $\pi_2X\ra\oH_2(X;\bfZ)$.
\end{enumerate}

The key geometric ingredient to identify the differential $d_2$ is the following result.

\begin{thm}\label{thm:almostclosedbundles}
Let $n\ge3$ be odd and $\pi\colon E\ra S$ a $(W_{g,1},D^{2n-1})$-bundle over an oriented closed surface $S$. There exists a class $E'\in A_{2n+2}^{\tau_{>n}}$ such that
\begin{enumerate}
\item its boundary $\partial E'\in\Theta_{2n+1}$ is the image of the class $[\pi]\in\oH^2(\BDiffuo_{\partial/2}(W_{g,1});\bfZ)$ under
\[\oH_2(\BDiffuo_{\partial/2}(W_{g,1});\bfZ)\longtwoheadrightarrow\oH_2(\Gamma^n_{g,1/2};\bfZ)\xlra{d_2}\Theta_{2n+1},\]
\item it satisfies $\sgn(E)=\sgn(E')$, and $\chi^2(T_\pi E)=\chi^2(E')$ if $n\equiv3\Mod{4}.$
\end{enumerate}
\end{thm}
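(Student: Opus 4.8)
The plan is to construct $E'$ explicitly from the total space $E$ by a cutting-and-capping procedure and then verify the three claimed properties by bordism-theoretic bookkeeping. First I would use the $\tau_{>n}\BO$-structure on the vertical tangent bundle $T_\pi E$ produced in \cref{example:chiexamples}(ii): combining it with the tangent bundle of the surface $S$ and the standard framing coming from the trivialised $D^{2n-1}$-subbundle, one gets a $\tau_{>n}\BO$-structure on the stable \emph{normal} bundle of the compact $(2n+2)$-manifold-with-boundary $E$. Its boundary $\partial E$ is the union of the trivial piece $S\times D^{2n-1}$ and the remaining $\partial W_{g,1}$-bundle $\partial_0 E\to S$ with fibre $S^{2n-1}$; since the structure on the boundary refines to $\tau_{>n+1}\BO$ and $S^{2n-1}$ is $(2n-2)$-connected, one can do surgery on $\partial_0 E$ (relative to the trivialised part and within its $\tau_{>n+1}\BO$-bordism class, which is detected in $\Omega^{\tau_{>n+1}}_{2n-1}=\Omega^{\fr}_{2n-1}\cong\pi^s_{2n-1}$ only through the image of $[S]\otimes(\text{framing data})$, hence is nullbordant) to make it a homotopy sphere $S^{2n-1}$, tracing the trace of the surgery as a bordism with corners. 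Capping off the resulting homotopy-$S^{2n-1}$ boundary component with a disc bundle, or rather with the mapping cylinder of the surgery, produces an almost closed manifold: gluing in $S\times D^{2n}$ along $S\times S^{2n-1}$ kills the trivial part, and after a further sequence of surgeries on the interior (in dimensions $\le n$, legal since we are below the middle dimension and the $\tau_{>n}\BO$-structure lets us kill all homotopy below degree $n+1$) we obtain an $n$-connected almost closed $(2n+2)$-manifold $E'$ with a $\tau_{>n}\BO$-structure, i.e.\ a class in $A^{\tau_{>n}}_{2n+2}$.

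Next I would verify property~(ii). Interior surgeries below the middle dimension preserve both the signature and the relative Pontryagin/Euler characteristic number $\chi^2$, since they change the manifold by connected sum along $S^i\times D^{2n+2-i}$ with $i\le n$, whose effect on $\oH^{n+1}$ and on the intersection form is trivial; the boundary surgeries and the capping only modify a collar and likewise do not affect $\sgn$ or $\chi^2$ because those are computed from the interior. The only subtlety is that $\chi^2(T_\pi E)\in\oH^{2n+2}(E,\partial E)$ is defined using the \emph{vertical} tangent bundle whereas $\chi^2(E')$ uses the stable tangent bundle of the \emph{closed} manifold; but $T E\cong T_\pi E\oplus \pi^*TS$ stably and $\pi^*TS$ contributes nothing to $\oH^{n+1}$ (it is pulled back from a surface), so the relative Euler classes agree under the identification of $(E,\partial E)$ with $(E',\partial E')$ away from the collar. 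This gives $\sgn(E')=\sgn(E)$ and $\chi^2(E')=\chi^2(T_\pi E)$.

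For property~(i), recall that by the two standard facts listed before the theorem, the class $[\pi]\in\oH_2(\BDiffuo_{\partial/2}(W_{g,1});\bfZ)$ maps onto a class in $\oH_2(\Gamma^n_{g,1/2};\bfZ)$ and $d_2$ is the transgression in the Serre spectral sequence of the central extension \eqref{equation:extension999}; concretely, $d_2$ of a surface class is computed by pulling the extension back along $\pi_1S\to\Gamma^n_{g,1/2}$ and reading off the resulting central extension of $\pi_1 S$ by $\Theta_{2n+1}$, whose Euler class in $\oH^2(S;\Theta_{2n+1})\cong\Theta_{2n+1}$ is the obstruction to lifting the classifying map $S\to\BDiffuo_{\partial/2}(W_{g,1})$ to $\BDiffuo_\partial(W_{g,1})$. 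I would identify this obstruction geometrically: a lift corresponds to trivialising the $S^{2n-1}$-bundle $\partial_0 E\to S$ \emph{rel} the given $D^{2n-1}$-subbundle compatibly with the bundle structure, equivalently to a nullbordism of $\partial_0 E$ that closes up $E$ to a \emph{closed} $W_g$-bundle. The surgeries performed above exhibit exactly such a nullbordism up to the homotopy sphere that obstructs completing the last step; chasing through, the boundary homotopy sphere of $E'$ \emph{is} this obstruction, i.e.\ $\partial E'=d_2([\pi])$. The cleanest way to make this rigorous is to observe that $d_2$ is natural and additive, hence determined by its effect on the generators of $\oH_2(\Gamma^n_{g,1/2})$; alternatively one argues directly with the mapping torus of a loop of diffeomorphisms and the plumbing description of \cref{section:Wallclassification}. \textbf{The main obstacle} I anticipate is precisely this last identification --- making the comparison between the abstract transgression $d_2$ and the geometrically defined boundary $\partial E'$ watertight, in particular tracking the $\tau_{>n}\BO$-structures through the surgeries so that $E'$ is \emph{canonically} a well-defined element of $A^{\tau_{>n}}_{2n+2}$ and that the correspondence is compatible with the bordism relation used to define $\oH_2(\Gamma^n_{g,1/2})$ (via fact~(i), $\Omega^{\SO}_2\xrightarrow{\sim}\oH_2$). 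Once that dictionary is set up carefully, properties~(i) and~(ii) follow, and combining (i) and (ii) with \cref{prop:computingboundaries} will (in the eventual application) pin down the differential $d_2$, hence the extension class, in terms of $\tfrac{\sgn}{8}$ and $\tfrac{\chi^2}{2}$.
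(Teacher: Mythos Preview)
Your overall architecture---modify $E$ along its boundary to get an almost closed manifold, then surger below the middle dimension and track $\sgn$ and $\chi^2$---matches the paper's, and your treatment of property~(ii) is essentially the same as the paper's (signature by additivity/bordism invariance, $\chi^2$ by identifying $TE\cong_s T_\pi E$ via a framing of $S$ and using that the classes agree under excision). The gap is property~(i), and there are two concrete problems.

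First, there is dimensional confusion in your boundary discussion. The boundary $\partial E$ is a $(2n{+}1)$-manifold (an $S^{2n-1}$-bundle over the surface $S$), not a $(2n{-}1)$-manifold; the complement of the trivialised $S\times D^{2n-1}$ is a $D^{2n-1}$-bundle over $S$, not an $S^{2n-1}$-bundle. Accordingly, the relevant bordism group is not $\Omega^{\tau_{>n+1}}_{2n-1}$, the target of your ``surgery to a homotopy $S^{2n-1}$'' does not type-check, and your nullbordism claim (via $\pi^s_{2n-1}$) is not the right statement. What you actually need is a cobordism of the $(2n{+}1)$-dimensional boundary bundle to something on which the relation to $\Theta_{2n+1}$ is transparent.

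Second, and more importantly, the identification $\partial E'=d_2([\pi])$ is not established by anything you wrote, and the paper's mechanism for it is different from your sketch. The paper does \emph{not} try to surger $\partial E$ directly into a homotopy sphere. Instead it deloops the fibre sequence
\[
\Diffuo^{\id}_\partial(D^{2n-1})\longrightarrow \BDiffuo_\partial(W_{g,1})\longrightarrow \BDiffuo_{\partial/2}(W_{g,1})
\]
to obtain a map $\BDiffuo_{\partial/2}(W_{g,1})\to \BDiffuo^{\id}_\partial(D^{2n-1})$, which on $\oH_2$ sends $[\pi]$ to the class of its boundary bundle $\pi_+\colon E_+\to S$. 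Since $\BDiffuo^{\id}_\partial(D^{2n-1})$ is simply connected, Hurewicz identifies $\oH_2$ with $\pi_2$, so $[\pi_+]$ is bordant \emph{as a bundle} to some $(\partial W_{g,1},D^{2n-1})$-bundle $\pi_-\colon E_-\to S^2$; the Gromoll map $\pi_2\BDiffuo_\partial(D^{2n-1})\to\pi_1\BDiffuo_\partial(D^{2n})\cong\Theta_{2n+1}$ then sends this to the homotopy sphere $\Sigma_\pi$ obtained by doing surgery on $E_-$ along its trivialised $S^2\times D^{2n-1}$-subbundle---and \emph{that} composite is exactly $d_2$ by transgression. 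This is the step that makes the identification rigorous: the homotopy sphere is produced canonically from a bundle over $S^2$, not by ad hoc surgeries on $\partial E$ whose outcome would depend on choices. The nullbordism is then the explicit union $N=E\cup_{E_+}\bar E\cup_{E_-}W$ (with $\bar E$ the bundle bordism over a filling $K$ of $S\sqcup S^2$ and $W$ the surgery trace), and the $\tau_{>n}\BO$/$\tau_{>n+1}\BO$-structures are extended over $\bar E\cup_{E_-}W$ by an obstruction-theory computation. Your suggested fixes (``$d_2$ is determined on generators'', ``use mapping tori'') do not replace this argument: the whole point is that one needs an \emph{a priori} geometric model for $d_2$ before one can compare it to the boundary of whatever manifold the construction produces.
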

\begin{proof}
By the isotopy extension theorem, the restriction map to the moving part of the boundary $\Diffuo_{\partial/2}(W_{g,1})\ra\Diffuo_\partial(D^{2n-1})$ is a fibration. As its image is contained in the component of the identity (see the proof of \cref{lemma:central}), this fibration induces the upper row in a map of fibre sequences
\begin{equation}
\begin{tikzcd}\label{equ:fibremapgeometricthm}
\Diffuo^{\id}_\partial(D^{2n-1})\rar\dar&\BDiffuo_{\partial}(W_{g,1})\rar\dar& \BDiffuo_{\partial/2}(W_{g,1})\dar\\\oB\Theta_{2n+1}\rar&\oB\Gamma^n_{g,1}\rar&\oB\Gamma^n_{g,1/2}
\end{tikzcd}
\end{equation}
whose bottom row is induced by the extension \eqref{equation:extension999}. The two right vertical maps are induced by taking components and the left vertical map is the induced map on homotopy fibres. The latter agrees with the delooping of the Gromoll map $\Omega\Diffuo_\partial(D^{2n-1})\ra \Diffuo_\partial(D^{2n})$ followed by taking components, which one checks by looping the fibre sequences and using that\[\Omega\Diffuo^{\id}_\partial(D^{2n-1})\lra\Diffuo_{\partial}(W_{g,1})\] is given by ``twisting'' a collar $ [0,1]\times S^{2n-1}\subset W_{g,1}$, meaning that it sends a smooth loop $\gamma\in\Omega \Diffuo^{\id}_\partial(D^{2n-1})\subset \Omega\Diffuo^{\id}_\partial(S^{2n-1})$ to the diffeomorphism that is the identity outside the collar and is given by $(t,x)\mapsto (t,\gamma(t)\cdot x)$ on the collar. Now consider the square
\begin{equation}\label{equ:squaretransgression}
\begin{tikzcd}
\oH_2(\BDiffuo_{\partial/2}(W_{g,1});\bfZ)\arrow[d]\arrow[r]&\oH_2(\BDiffuo_{\partial}^{\id}(D^{2n-1});\bfZ)\arrow[d]\\
\oH_2(\Gamma^n_{g,1/2};\bfZ)\arrow[r]&\Theta_{2n+1}
\end{tikzcd}
\end{equation} 
obtained from delooping \eqref{equ:fibremapgeometricthm} once to the right and using $\oH_2(B^2\Theta_{2n+1})\cong\Theta_{2n+1}$. By transgression, the bottom horizontal arrow agrees with the differential $d_2$ in the statement.
Combining this with the Hurewicz theorem, the square \eqref{equ:squaretransgression} provides a factorisation 
\[\oH_2(\BDiffuo_{\partial/2}(W_{g,1});\bfZ)\ra \oH_2(\BDiffuo_{\partial}^{\id}(D^{2n-1});\bfZ)\cong\pi_2(\BDiffuo_{\partial}^{\id}(D^{2n-1}))\lra \Theta_{2n+1}\] of the map in the first part of the statement, which thus has the following geometric description: a smooth $(W_{g,1},D^{2n-1})$-bundle $\pi\colon E\ra S$ represents a class $[\pi]\in\oH^2(\BDiffuo_{\partial/2}(W_{g,1});\bfZ)$ and its image under the first map in the composition is the class $[\pi_+]\in\oH_2(\BDiffuo^{\id}_{\partial}(D^{2n-1});\bfZ)$ of its $(\partial W_{g,1},D^{2n-1})$-bundle $\pi_+\colon E_+\ra S$ of boundaries, which in turn maps under the inverse of the Hurewicz homomorphism to a $(\partial W_{g,1},D^{2n-1})$-bundle $\pi_-\colon E_-\ra S^2$ over the $2$-sphere that is bordant, as a bundle, to $\pi_-$. That is, there exists a $(\partial W_{g,1},D^{2n-1})$-bundle $\bar{\pi}\colon \bar{E}\ra K$ over an oriented bordism $K$ between $S$ and $S^2$ that restricts to $\pi_+$ over $S$ and to $\pi_-$ over $S^2$. We claim that the image of the $(\partial W_{g,1},D^{2n-1})$-bundle $\pi_-$ under the final map in the composition is the homotopy sphere $\Sigma_\pi\in\Theta_{2n+1}$ obtained by doing surgery on the total space $E_-$ along the trivialised subbundle $D^{2n-1}\times S^2\subset E_-$. This is most easily seen by thinking of a class in $\pi_k\BDiffuo_\partial(D^d)$ as a smooth bundle $D^d\ra P\ra D^k$ together with a trivialisation $\varphi\colon D^d\times \partial D^k\cong P|_{\partial D^k}$ and a trivialised $\partial D^k$-subbundle $\psi\colon \partial D^d\times D^k\hookrightarrow P$ such that $\varphi$ and $\psi$ agree on $\partial D^d\times \partial D^k$. From this point of view, the morphism $\pi_k\BDiffuo_\partial(D^d)\ra \pi_{k-1}\BDiffuo_\partial(D^{d}\times D^1)\cong \pi_{k-1}\BDiffuo_\partial(D^{d+1})$ induced by the Gromoll map is given by sending such a bundle $p\colon P\ra D^k\cong D^{k-1}\times D^1$ to the $(D^d\times D^1)$-bundle $(\mathrm{pr}_1\circ p)\colon P\ra D^{k-1}$, and the isomorphism $\pi_1\BDiffuo_\partial(D^{d})\cong \Theta_{d+1}$ is given by assigning to a disc bundle $D^d\ra P\ra D^1$ the manifold $P\cup_{\partial D^d\times D^1\cup D^d\times\partial D^1} D^d\times D^1$. Deccomposing the sphere into half-discs $S^n=D^n_+\cup D^n_-$, we see from this description that the composition $\pi_k\BDiffuo_\partial(D^d)\ra \Theta_{d+k}$ of the iterated Gromoll map with the isomorphism $\pi_1\BDiffuo_\partial(D^{d+k-1})\cong \Theta_{d+k}$ maps a class represented by an $S^{d}$-bundle $S^d\ra Q\ra S^k$ with a trivialisation $\varphi\colon D_+^k\times S^d\cong Q|_{D_+^k}$ and a trivialised subbundle $\psi\colon S^k\times D_+^d\hookrightarrow Q$ that agree on $D^k_+\times D^d_+$ to the homotopy sphere
\[\begin{gathered}
\Big(Q\backslash \mathrm{int}\big(\varphi (D_+^k\times S^d)\cup \psi(S^k\times D_+^d)\big)\Big)\cup (D^k\times D^d)\cong \Big(Q\backslash\mathrm{int}\big(\psi(S^k\times D^d_+)\big)\Big)\cup_{S^k\times \partial D^d}(D^{k+1}\times \partial D^d)
\end{gathered}
\]
obtained by doing surgery along the trivialised $D^d$-subbundle, where $D^k\times D^d$ is glued to $Q\backslash \mathrm{int}\big(\varphi (D_+^k\times S^d)\cup_{\partial(D^k\times S^d)} \psi(S^k\times D_+^d)\big)$ along the embedding
\[\partial(D^k\times D^d)=\partial D^k\times D^d\cup D^k\times \partial D^d\xra{\varphi|_{\partial D^k_+\times D^d_-}\cup \psi|_{D^k_-\times \partial D^d_+}}\partial\Big(Q\backslash \mathrm{int}\big(\varphi (D_+^k\times S^d)\cup \psi(S^k\times D_+^d)\Big).\]
 This in particular implies the claim we made above in the case $k=2$ and $d=2n-1$.

As a consequence of this description of the morphism in consideration, the image $\Sigma_\pi\in \Theta_{2n+1}$ of the class $[\pi]$ comes equipped with a nullbordism, namely $N\coloneq E\cup_{E_+}\bar{E}\cup_{E_-}W,$ where $W$ is the trace of the performed surgery. Omitting the trivialised $D^{2n-1}$-subbundles, the situation can be summarised schematically as follows
\begin{center}
\begin{tikzcd}[column sep=0.4cm, row sep=0.4cm]
W_{g,1}\arrow[d]&\partial W_{g,1}\arrow[d]&\partial W_{g,1}\arrow[d]&\partial W_{g,1}\arrow[d]&\phantom{\partial W_{g,1}}&\\
E\arrow[d,"\pi"]\arrow[r,symbol=\supset]&E_+\arrow[d,"\pi_+"]\arrow[r,symbol=\subset]&\bar{E}\arrow[r,symbol=\supset]\arrow[d,"\bar{\pi}"]&E_-\arrow[d,"\pi_-"]\arrow[r,symbol=\subset]&W\arrow[r,symbol=\supset]&\Sigma_\pi\\
S\arrow[r,symbol={=}]&S\arrow[r,symbol=\subset]&K\arrow[r,symbol=\supset]&S^2.&&&
\end{tikzcd}
\end{center}

A choice of a stable framing of $K$ induces stable framings on $S$ and $S^2$ and thus a stable isomorphism $TE\cong T_\pi E\oplus\pi^*TS\cong_sT_\pi E$ using which the canonical $\tau_{>n}\BO$-structure on $T_\pi E$ and the $\tau_{>n+1}\BO$-one on $T_\pi E|_{E_+}$ (see \cref{example:chiexamples}) induce a $\tau_{>n}\BO$-structure on $TE$ and a $\tau_{>n+1}\BO$-structure on $TE|_{E_+}\cong_sTE_+$. With these choices, we have $\chi(T_\pi E)=\chi(TE,TE_+)$. By construction, the restriction of this $\tau_{>n+1}\BO$-structure to $TE_+|_{S\times D^{2n-1}}\cong_s TS$ agrees with the $\tau_{>n+1}\BO$-structure on $TS$ obtained from the stable framing of $K$, so we obtain a $\tau_{>n+1}\BO$-structure on $T\bar{E}|_{E_+\cup K\times D^{2n-1}}$, which by obstruction theory extends to one on $T(\bar{E}\cup_{E_-}W)$: the relative Serre spectral sequence shows that $H^*(\bar{E},E_+\cup K\times D^{2n-1})$ vanishes for $*\le 2n-2$ and thus that $H^{i+1}(\bar{E}\cup_{E_-}W,E_+\cup K\times D^{2n-1};\pi_i(\tau_{\le n}\SO))\cong H^{i+1}(\bar{E}\cup_{E_-}W,\bar{E};\pi_i(\tau_{\le n}\SO))\cong H^{i+1}(W,E_-;\pi_i(\tau_{\le n}SO))\cong H^{i+1}(D^3,S^2;\pi_i(\tau_{\le n}SO))=0$ for $i\le 2n-3$, using $W\simeq E_{-}\cup_{S^2}D^3$ and $\pi_2SO=0$. The restriction of this $\tau_{>n+1}\BO$-structure on $T\bar{E}|_{E_+\cup K\times D^{2n-1}}$ to a $\tau_{>n}\BO$ and the canonical $\tau_{>n}\BO$-structure on $TE\cong T_\pi E$ (see \cref{example:chiexamples}) assemble to a $\tau_{>n}\BO$-structure on $N$. By construction, the canonical restriction map (using excision)
\[H^*(E,E_+;\bfZ)\cong H^*(N,\bar{E}\cup_{E_-}W; \bfZ)\lra H^*(N,\Sigma_\pi; \bfZ)\]
sends $\chi(T_\pi E)=\chi(TE,TE_+)$ to $\chi(TN,T\Sigma_\pi)$, so we conclude $\chi^2(T_\pi E)=\chi^2(TN,T\Sigma_\pi)$. To finish the proof, note that the $\tau_{>n}\BO$-structure on $TN$ allows us to do surgery away from the boundary on $N$ to obtain an $n$-connected manifold $E'$, which gives a class in $A_{2n+2}^{\tau_{>n}}$ as aimed for: $\partial E'=\Sigma_\pi$ holds by construction,  $\chi^2(E')=\chi^2(TN,T\Sigma_\pi)=\chi^2(T_\pi E)$ by the bordism invariance of Pontryagin numbers (see \cref{example:chiexamples}), and $\sgn(E')=\sgn(N)=\sgn(E)$ by the additivity and bordism invariance of the signature.\end{proof}

Combining the previous result with \cref{prop:computingboundaries}, we conclude that the composition \[\oH_2(\BDiffuo_{\partial/2}(W_{g,1});\bfZ) \longtwoheadrightarrow\oH_2(\Gamma^n_{g,1/2};\bfZ)\xlra{d_2}\Theta_{2n+1}\] sends a homology class $[\pi]$ represented by a bundle $\pi\colon E\ra S$ to a certain linear combination of $\Sigma_P$ and $\Sigma_Q$ whose coefficients involve the invariants $\sgn(E)$ and $\chi^2(T_\pi E)$. In the following two subsections, we shall see that these functionals \[\sgn\colon \oH_2(\BDiffuo_{\partial/2}(W_{g,1});\bfZ)\lra\bfZ\quad\text{and}\quad\chi^2\colon \oH_2(\BDiffuo_{\partial/2}(W_{g,1});\bfZ)\lra\bfZ\] factor through the composition
\[\oH_2(\BDiffuo_{\partial/2}(W_{g,1});\bfZ)\longtwoheadrightarrow \oH_2(\Gamma^n_{g,1/2} ;\bfZ)\xlra{(s_F,p)_*}\oH_2((H(g)\otimes \pi_n\SO)\rtimes G_g ;\bfZ)\] and have a more algebraic description in terms of $H(g)\otimes \pi_n\SO)\rtimes G_g$. This uses the morphism \[(s_F,p)\colon \Gamma^n_{g,1/2}\lra (H(g)\otimes \pi_n\SO)\rtimes G_g,\] induced by acting on a stable framing of $W_{g,1}$ (agreeing with the usual framing on $D^{2n-1}$) as explained in \cref{section:firstextension}.

\subsection{Signatures of bundles of symplectic lattices}\label{section:algebraicsignature}The standard action of the symplectic group $\Sp_{2g}(\bfZ)$ on $\bfZ^{2g}$ gives rise to a local system $\cH(g)$ over $\BSp_{2g}(\bfZ)$ and the usual symplectic form on $\bfZ^{2g}$ gives a morphism $\lambda\colon\cH(g)\otimes \cH(g)\ra \bfZ$ of local systems to the constant system. To an oriented closed surface $S$ with a map $f\colon S\rightarrow \BSp_{2g}(\bfZ)$, we can associate a bilinear form \[\langle -,-\rangle_f\colon \oH^1(S; f^*\cH(g))\otimes \oH^1(S; f^*\cH(g))\rightarrow \bfZ,\] defined as the composition \[\oH^1(S; f^*\cH(g))\otimes \oH^1(S; f^*\cH(g))\xrightarrow{\smile} \oH^2(S; f^*\cH(g)\otimes \cH(g))\xlra{\lambda}\oH^2(S;\bfZ)\xrightarrow{[S]}\bfZ.\]  As both the cup-product and the symplectic pairing $\lambda$ are antisymmetric, the form $\langle -,-\rangle_f$ is symmetric. The usual argument for the bordism invariance of the signature shows that its signature $\sgn(\langle-,-\rangle_f)$ depends only on the bordism class $[f]\in\Omega^{\SO}_{2}(\BSp_{2g}(\bfZ))\cong\oH_2(\Sp_{2g}(\bfZ);\bfZ)$ and thus induces a morphism
\begin{equation}\label{signaturemorphism}\sgn\colon\oH_2(\Sp_{2g}(\bfZ);\bfZ)\lra\bfZ,\end{equation} which is compatible with the usual inclusion $\Sp_{2g}(\bfZ)\subset\Sp_{2g+2}(\bfZ)$.

\begin{rem}As $\Sp_{2g}(\bfZ)$ is perfect for $g\ge3$ (see \cref{lemma:abelianisationSp}), the morphism \eqref{signaturemorphism} determines a unique cohomology class $\sgn\in\oH^2(\Sp_{2g}(\bfZ);\bfZ)$. There is a well-known (purely algebraic) cocycle representative of this class due to Meyer \cite{Meyer}, known as the \emph{Meyer cocycle}.
\end{rem}

The morphism \eqref{signaturemorphism} measures signatures of total spaces of smooth bundles over surfaces (even of fibrations of Poincaré complexes). More precisely, for a compact oriented $(4k+2)$-manifold $M$, the action of its group of diffeomorphisms on the middle cohomology induces a morphism $\Diff(M)\ra\Sp_{2g}(\bfZ)$ for $2g=\mathrm{rk}(\oH^{2k+1}(M))$ and the resulting composition \[\oH_2(\BDiff(M) ;\bfZ)\lra\oH_2(\Sp_{2g}(\bfZ) ;\bfZ)\xlra{\sgn}\bfZ\] can be shown to map a homology class represented by a smooth bundle over a surface to the signature of its total space. This fact can either be proved along the lines of \cite{ChernHirzebruchSerre} or extracted from \cite{MeyerThesis} and it has in particular the following consequence.

\begin{lem}\label{lemma:signatureidentification}For $n$ odd, the composition
\[\oH_2(\BDiffuo_{\partial/2}(W_{g,1});\bfZ)\lra\oH_2(\Sp_{2g}(\bfZ);\bfZ)\xra{\sgn}\bfZ\]
sends the class of an $(W_{g,1},D^{2n-1})$-bundle $\pi\colon E\ra S$ to the signature $\sgn(E)$ of its total space.
\end{lem}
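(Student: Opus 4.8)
The natural approach is to obtain the lemma from the fact recalled immediately above: for every compact oriented $(4k{+}2)$-manifold $M$, the composition $\oH_2(\BDiff(M);\bfZ)\to\oH_2(\Sp_{2g}(\bfZ);\bfZ)\xra{\sgn}\bfZ$ induced by the action on $\oH^{2k+1}(M)$ sends the class of a smooth oriented $M$-bundle over a closed oriented surface to the signature of its total space. One applies this with $M=W_{g,1}$, $2k+1=n$ (this is where the hypothesis that $n$ be odd enters, making $2n=4k+2$) and $2g=\mathrm{rk}\,\oH^n(W_{g,1})$. First I would record that every class in $\oH_2(\BDiffuo_{\partial/2}(W_{g,1});\bfZ)$ is represented by a $(W_{g,1},D^{2n-1})$-bundle $\pi\colon E\ra S$ over a closed oriented (possibly disconnected) surface $S$, since $\MSO\to\HZ$ is $4$-connected and hence $\Omega^{\SO}_2(X)\cong\oH_2(X;\bfZ)$ for all $X$. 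Forgetting the trivialised $D^{2n-1}$-subbundle presents $\pi$ as a smooth oriented $W_{g,1}$-bundle, classified by $S\to\BDiffuo_{\partial/2}(W_{g,1})\to\BDiff(W_{g,1})$, and the map $\BDiffuo_{\partial/2}(W_{g,1})\to\BSp_{2g}(\bfZ)$ appearing in the statement factors through $\BDiff(W_{g,1})$, both arising from the action on middle cohomology together with its intersection pairing; thus the monodromy map $f\colon S\to\BSp_{2g}(\bfZ)$ classifies the local system $f^*\cH(g)$ of middle cohomology groups of the bundle $E\ra S$. Granting the quoted fact, this proves the lemma.

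Since the quoted fact is only asserted above with references, I would in addition carry out the underlying computation in the present situation, the one delicate point being that the total space $E$ is a manifold with boundary, the boundary being an $S^{2n-1}$-bundle over $S$. As $W_{g,1}\simeq\vee^{2g}S^n$, the reduced cohomology of the fibre is concentrated in degree $n$, so in the Leray--Serre spectral sequences of $\pi\colon E\ra S$ and of $\pi\colon(E,\partial E)\ra S$ the only nonzero contribution to the middle degree $n+1$ (half of $\dim E=2n+2$) comes from the entry $E_2^{1,n}$, which in both cases is isomorphic to $\oH^1(S;f^*\cH(g))$, and as $S$ is a surface this entry already equals $E_\infty^{1,n}$. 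Using that $\oH^n(W_{g,1},\partial W_{g,1})\to\oH^n(W_{g,1})$ is an isomorphism for $n\ge2$, one obtains compatible isomorphisms $\oH^{n+1}(E,\partial E)\cong\oH^1(S;f^*\cH(g))\cong\oH^{n+1}(E)$ under which the restriction $\oH^{n+1}(E,\partial E)\to\oH^{n+1}(E)$ becomes the identity; in particular the intersection form of $E$ is defined on all of $\oH^{n+1}(E;\bfR)\cong\oH^1(S;f^*\cH(g))\otimes\bfR$.

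It then remains to identify this intersection form with $\langle-,-\rangle_f$, which I would deduce from multiplicativity of the Leray--Serre spectral sequence: on $E_\infty$, the cup-product pairing $\oH^{n+1}(E,\partial E)\otimes\oH^{n+1}(E)\to\oH^{2n+2}(E,\partial E)\cong\bfZ$ corresponds to the pairing $\oH^1(S;f^*\cH(g))\otimes\oH^1(S;f^*\cH(g))\to\oH^2(S;\bfZ)\cong\bfZ$ assembled from the cup product on $S$ and the fibrewise intersection pairing $\oH^n(W_{g,1},\partial W_{g,1})\otimes\oH^n(W_{g,1})\to\oH^{2n}(W_{g,1},\partial W_{g,1})\cong\bfZ$, and the latter is exactly the form $\lambda$ entering the definition of $\langle-,-\rangle_f$. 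Taking signatures then yields $\sgn(E)=\sgn(\langle-,-\rangle_f)$, which is the assertion of the lemma. I expect this last compatibility---that the cup product on $E$ corresponds to the pairing defining $\langle-,-\rangle_f$---to be the main obstacle: it is a standard property of the Leray--Serre spectral sequence, but making it precise requires careful bookkeeping with the relative spectral sequences and the orientations involved, and is exactly the point at which one either argues by hand along the lines of \cite{ChernHirzebruchSerre} or imports the statement from \cite{MeyerThesis}.
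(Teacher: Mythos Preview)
Your proposal is correct and follows the paper's approach: the paper does not give an explicit proof of this lemma but states it as an immediate consequence of the general fact (cited to \cite{ChernHirzebruchSerre} and \cite{MeyerThesis}) that for a compact oriented $(4k{+}2)$-manifold $M$ the composition $\oH_2(\BDiff(M);\bfZ)\to\oH_2(\Sp_{2g}(\bfZ);\bfZ)\xra{\sgn}\bfZ$ computes the signature of the total space, which is precisely your first paragraph. Your subsequent paragraphs go beyond the paper by actually sketching this underlying argument in the case at hand via the Leray--Serre spectral sequence; this is a welcome elaboration and correctly identifies the one genuine verification needed (compatibility of the cup product on $E$ with the pairing defining $\langle-,-\rangle_f$), which the paper simply outsources to the references.
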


We proceed by computing the image of the signature morphism \eqref{signaturemorphism} and of its pullback to the theta-subgroup $\Sp_{2g}^q(\bfZ)\subset\Sp_{2g}(\bfZ)$ as defined in \cref{section:Wallsform}.

\begin{lem}\label{lemma:algebraicsignatures}The signature morphism satisfies
\begin{gather*}\im\left(\oH_2(\Sp_{2g}(\bfZ);\bfZ)\xra{\sgn}\bfZ\right)=\begin{cases}0&\mbox{if }g=1\\
4\cdot\bfZ&\mbox{if }g\ge2\\
\end{cases}\\\im\left(\oH_2(\Sp^q_{2g}(\bfZ);\bfZ)\xra{\sgn}\bfZ\right)=\begin{cases}0&\mbox{if }g=1\\
8\cdot\bfZ&\mbox{if }g\ge2.\\
\end{cases}\end{gather*}\end{lem}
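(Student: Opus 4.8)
The plan is to compute the image of the signature morphism $\sgn\colon\oH_2(\Sp_{2g}(\bfZ);\bfZ)\ra\bfZ$ for all $g$ and then pull back to the theta subgroup $\Sp^q_{2g}(\bfZ)$. I would split into the stable range and the low-genus cases, exploiting homological stability together with explicit bundle constructions on both ends. For the lower bound on the image in the stable range, I would exhibit a $W_{h,1}$-bundle over a surface whose total space has the prescribed signature; by \cref{lemma:signatureidentification} the signature morphism applied to its class in $\oH_2(\BDiffuo_{\partial/2}(W_{h,1});\bfZ)$ equals the signature of the total space, and this factors through $\oH_2(\Sp_{2h}(\bfZ);\bfZ)$. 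A convenient source is a surface bundle over a surface: for a genus $2$ fibre bundle over a genus $2$ base one obtains total spaces of signature $\pm4$ (the Kodaira--Atiyah examples), which already realizes $4\in\bfZ$; taking boundary connected sums with trivial $S^n\times S^n$ summands to raise the dimension and genus without changing the signature shows $4\cdot\bfZ$ is contained in the image once $g\ge 2$. For the theta subgroup one needs total spaces whose fibrewise quadratic refinement $q$ is preserved, i.e.\ $n\not\equiv3,7$: here Milnor's $E_8$-plumbing $P$ enters, and the point is that a suitable $W_{g,1}$-bundle produces a total space of signature $8$ whose monodromy lies in $\Sp^q_{2g}(\bfZ)$, realizing $8\cdot\bfZ$.

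**The upper bound.**
For the divisibility statements I would argue that the signature morphism is \emph{stably} detected by a single cohomology class $\sgn\in\oH^2(\Sp_{2g}(\bfZ);\bfZ)\cong\bfZ$ (using that $\Sp_{2g}(\bfZ)$ is perfect for $g\ge 3$, as recorded in \cref{lemma:abelianisationSp}, so $\oH^2$ is torsion-free of rank one), and then compute the divisibility of its value on a generator of $\oH_2(\Sp_{2g}(\bfZ);\bfZ)\cong\bfZ$. The classical identification $\oH_2(\Sp_{2g}(\bfZ);\bfZ)\cong\bfZ$ for $g\ge3$ (and the known unstable values for $g=1,2$) together with the signature cocycle of Meyer \cite{Meyer} makes this a finite computation: Meyer's theorem states precisely that the signature of a surface bundle over a surface is divisible by $4$, giving the upper bound $\im(\sgn)\subseteq 4\cdot\bfZ$ for $g\ge2$; and for the theta subgroup the refinement is that the relevant intersection form is even, which by the same signature-cocycle analysis forces divisibility by $8$. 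The case $g=1$ is handled separately: $\Sp_2(\bfZ)=\SL_2(\bfZ)$ has $\oH_2(\SL_2(\bfZ);\bfZ)=0$ (indeed $\oH_1(\SL_2(\bfZ))=\bfZ/12$ and the Schur multiplier vanishes), so the image is $0$, and likewise $\Sp_2^q(\bfZ)$ is a finite-index subgroup of $\SL_2(\bfZ)$, hence virtually free, so again $\oH_2=0$ and the image vanishes.

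**Main obstacle.**
The delicate point is the theta-subgroup case: I need to produce a $W_{g,1}$-bundle over a surface whose monodromy preserves Wall's quadratic refinement $q$ (not merely the symplectic form) and whose total space realizes signature $8$, and I need to match this with the even-ness of the relevant quadratic form to get divisibility by $8$ rather than $4$. Concretely, the lower bound should come from viewing $\partial P$-type constructions fibred over a surface---or, better, from \cref{thm:almostclosedbundles}, which produces from a $(W_{g,1},D^{2n-1})$-bundle a class $E'\in A^{\tau_{>n}}_{2n+2}$ with $\sgn(E')=\sgn(E)$, so one reverses the construction and fibres Milnor's plumbing. The upper bound requires knowing that for $n\neq3,7$ the relevant middle intersection form on a closed $n$-connected $(2n+2)$-manifold is even (this is exactly the statement recalled in the excerpt that $\sigma'_n$ is divisible by $8$), and then invoking that the monodromy representation lands in $\Sp^q_{2g}(\bfZ)$ precisely because $q$ is a bundle-level invariant. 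I expect the cleanest route is to deduce both bounds simultaneously from \cref{lemma:signatureidentification}, \cref{thm:almostclosedbundles}, and \eqref{equation:numbersalmostclosed}, combined with the known structure of $\oH_2$ of the (theta) symplectic groups, rather than chasing Meyer's cocycle by hand.
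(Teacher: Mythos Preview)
Your treatment of the first displayed equality (for $\Sp_{2g}(\bfZ)$) and of the divisibility by $8$ in the second is essentially the paper's: the $\Sp_{2g}(\bfZ)$-statement is classical (the paper just cites \cite{BensonCampagnoloRanickiRovi}), and the upper bound for $\Sp^q_{2g}(\bfZ)$ follows, as you say, from \cref{lemma:signatureidentification} and \cref{thm:almostclosedbundles} together with the evenness of the intersection form of an $n$-connected almost closed $(2n+2)$-manifold for $n\neq3,7$. The $g=1$ cases are likewise fine, either by your virtual-freeness observation or, more directly, because the signature factors through $\oH_2(\Sp_2(\bfZ);\bfZ)=0$.

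The genuine gap is your \emph{lower bound} for $\Sp^q_{2g}(\bfZ)$, i.e.\ producing a class of signature exactly $8$. Your suggestion to ``reverse the construction and fibre Milnor's plumbing'' is not an argument: \cref{thm:almostclosedbundles} only goes one way, from a $(W_{g,1},D^{2n-1})$-bundle to an almost closed manifold, and there is no evident inverse construction exhibiting $P$ as (bordant to) the total space of such a bundle. Nor can you simply import a surface-bundle example, since the monodromy must land in the proper subgroup $\Sp^q_{2g}(\bfZ)\subsetneq\Sp_{2g}(\bfZ)$, and the known signature-$4$ surface bundles do not. The paper handles this by an entirely different, algebraic route for $g=2$ (which then stabilises): a transfer argument using $[\Sp_4(\bfZ):\Sp_4^q(\bfZ)]=10$ shows the image contains $10\cdot\bfZ$, so it suffices to show it contains $2\cdot\bfZ$ as well; this is deduced from nontriviality of $\oH_2(\Sp_4^q(\bfZ);\bfF_5)\to\oH_2(\Sp_4(\bfZ);\bfF_5)$, which is established via the level-$2$ congruence subgroup $\Sp_4(\bfZ,2)\subset\Sp_4^q(\bfZ)$ and the Serre spectral sequence of $0\to\Sp_4(\bfZ,2)\to\Sp_4(\bfZ)\to\Sp_4(\bfZ/2)\to0$, using the exceptional isomorphism $\Sp_4(\bfZ/2)\cong\Sigma_6$ (so its $\bfF_5$-homology vanishes in low degrees) and Sato's vanishing of $\oH_1(\Sp_4(\bfZ,2);\bfF_5)$. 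The paper even remarks afterwards that for small $g$ this image was not previously known, so you should not expect a soft geometric construction to close this gap.
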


\begin{proof}The signatures realised by classes in $\oH_2(\Sp_{2g}(\bfZ);\bfZ)$ are well-known (see e.g.\,\cite[Lem\,6.5, Thm\,6.6\,(vi)]{BensonCampagnoloRanickiRovi}). To prove that the signature of classes in $\oH_2(\Sp_{2g}^q(\bfZ))$ is divisible by $8$, recall from Sections~\ref{section:Wallsform} and~\ref{section:krecksextensions} that for $n$ odd the morphism $\Diffuo_{\partial/2}(W_{g,1})\ra \Sp_{2g}(\bfZ)$ lands in the subgroup $\Sp_{2g}^q(\bfZ)\subset\Sp_{2g}(\bfZ)$ as long as $n\neq1,3,7$, so we have a composition
\[\oH_2(\BDiffuo_{\partial/2}(W_{g,1});\bfZ)\lra\oH_2(\Gamma^n_{g,1/2};\bfZ)\lra\oH_2(\Sp_{2g}^q(\bfZ);\bfZ)\xra{\sgn}\bfZ,\] which maps the class of a bundle $\pi\colon E\ra S$ by \cref{lemma:signatureidentification} to $\sgn(E)$. The latter agrees by \cref{thm:almostclosedbundles} with the signature of an almost closed $n$-connected $(2n+2)$-manifold, so it is divisible by $8$ as the intersection form of such manifolds is unimodular and even (see e.g.\,\cite{Wall2n}). This proves the claimed divisibility, since the first two morphisms in the composition are surjective, the first one because of the second reminder at the beginning of \cref{section:extensionclass} and the second one by \cref{corollary:abelianisationhalfmcg}. As the signature morphism vanishes on $\oH_2(\Sp_{2}(\bfZ);\bfZ)$ by the first part, it certainly vanishes on $\oH_{2g}(\Sp^{q}_{2}(\bfZ);\bfZ)$. Consequently, by the compatibility of the signature with the inclusion $\Sp_{2g}(\bfZ)\subset\Sp_{2g+2}(\bfZ)$, the remaining claim follows from constructing a class in $\oH_2(\Sp_{2g}^q(\bfZ);\bfZ)$ of signature $8$ for $g=2$. Using $\oH_2(\Sp_{4}(\bfZ);\bfZ)\cong \bfZ\oplus\bfZ/2$ (see e.g.\,\cite[Lem.\,A.1(iii)]{BensonCampagnoloRanickiRovi}) and the first part of the claim, the existence of such a class is equivalent to the image of $\oH_2(\Sp^q_4(\bfZ);\bfZ)$ in the torsion free quotient $\oH_2(\Sp_{4}(\bfZ);\bfZ)_{\free}\cong\bfZ$ containing $2$. That it contains $10$ is ensured by transfer, since the index of $\Sp_{4}^q(\bfZ)\subset\Sp_{4}(\bfZ)$ is $10$ (see \cref{section:Wallsform}). As $\oH_1(\Sp_4(\bfZ);\bfZ)$ and $\oH_1(\Sp^q_4(\bfZ);\bfZ)$ are $2$-torsion by \cref{lemma:abelianisationSp}, it therefore suffices to show that $\oH_2(\Sp^q_4(\bfZ);\bfF_5)\ra\oH_2(\Sp_{4}(\bfZ);\bfF_5)$ is nontrivial, for which we consider the level $2$ congruence subgroup $\Sp_{4}(\bfZ,2)\subset\Sp_4(\bfZ)$, i.e.\,the kernel of the reduction map $\Sp_4(\bfZ)\ra\Sp_4(\bfZ/2)$, which is surjective (see e.g.\,\cite[Thm 1]{NewmanSmart} for an elementary proof). From the explicit description of $\Sp_{2g}^q(\bfZ)$ presented in \cref{section:Wallsform}, one sees that it contains the congruence subgroup $\Sp_{4}(\bfZ,2)$. As a result, it is enough to prove that $\oH_2(\Sp_4(\bfZ,2);\bfF_5)\ra\oH_2(\Sp_{4}(\bfZ);\bfF_5)$ is nontrivial, which follows from an application of the Serre spectral sequence of the extension
\[0\lra\Sp_4(\bfZ,2)\lra\Sp_4(\bfZ)\lra\Sp_4(\bfZ/2)\lra0,\] using that $\oH_1(\Sp_{4}(\bfZ,2);\bfF_5)$ vanishes by a result of Sato \cite[Cor.\,10.2]{Sato} and that the groups $\oH_*(\Sp_4(\bfZ/2);\bfF_5)$ are trivial in low degrees, because of the exceptional isomorphism between $\Sp_4(\bfZ/2)$ and the symmetric group in $6$ letters (see e.g.\,\cite[p.\,37]{OMeara}).
\end{proof}

\begin{rem}\ \phantom{xyz}
\begin{enumerate}
\item There are at least two other proofs for the divisibility of the signature of classes in $\oH_2(\Sp_{2g}^q(\bfZ);\bfZ)$ by $8$. One can be extracted from the proof of \cite[Lem.\,7.5 i)]{GRWabelian} and another one is given in \cite[Thm\,12.1]{Benson}. The proof in \cite{GRWabelian} shows actually something stronger, namely that the form $\langle-,-\rangle_f$ associated to a class $[f]\in\oH_2(\Sp^q_{2g}(\bfZ);\bfZ)$ is always even. We shall give a different proof of this fact as part of the second part of \cref{lemma:algebraicpontryaginnumbers} below.
 
\item For $g\ge4$, the existence of a class in $\oH_2(\Sp_{2g}^q(\bfZ);\bfZ)$ of signature $8$ was shown as part of the proof of \cite[Thm\,7.7]{GRWabelian}, using that the image of \[\oH_2(\Sp_{2g}(\bfZ,2);\bfZ)\lra\oH_2(\Sp_{2g}(\bfZ);\bfZ)\cong\bfZ\] for $g\ge4$ is known to be $2\cdot\bfZ$ by a result of Putman \cite[Thm\,F]{Putman}. However, this argument breaks for small values of $g$, in which case the image of $\sgn\colon\oH_2(\Sp_{2g}^q(\bfZ);\bfZ)\ra\bfZ$ was not known before, at least to the knowledge of the author.
\end{enumerate}
\end{rem}

By \cref{lemma:algebraicsignatures}, the signatures of classes in $\oH_2(\Sp_{2g}^q(\bfZ);\bfZ)$ are divisible by $8$, so we obtain a morphism 
\[\sgn/8\colon \oH_2(\Sp_{2g}^{q}(\bfZ);\bfZ)\lra\bfZ.\] To lift this morphism to a cohomology class in $\oH^2(\Sp_{2g}^q(\bfZ);\bfZ)$, we consider the morphism \begin{equation}\label{equation:abelianisationthetagroup}a\colon \bfZ/4\lra\Sp_2^q(\bfZ)\subset\Sp_{2g}^q(\bfZ),\end{equation} induced by the matrix $\left(\begin{smallmatrix} 0 & -1 \\ 1 & 0 \end{smallmatrix} \right)\in\Sp_2^q(\bfZ)$. By \cref{lemma:abelianisationSp}, this is an isomorphism on abelianisations for $g\ge3$  and thus induces a splitting of the universal coefficient theorem \begin{equation}\label{equation:splitting1}a^*\oplus h\colon \oH^2(\Sp_{2g}^q(\bfZ);\bfZ)\xlra{\cong} \oH^2(\bfZ/4;\bfZ)\oplus\Hom(\oH_2(\Sp^q_{2g}(\bfZ);\bfZ),\bfZ).\end{equation} This splitting is compatible with the inclusion $\Sp_{2g}^q(\bfZ)\subset\Sp^q_{2g+2}(\bfZ)$, so we can define a lift of the divided signature $\sgn/8$ to a class $\oH^2(\Sp_{2g}^q(\bfZ);\bfZ)$ as follows.

\begin{dfn}\label{definition:signatureclasses}\ \phantom{xyz}
\begin{enumerate}
\item Define the class \[\textstyle{\frac{\sgn}{8}\in\oH^2(\Sp_{2g}^q(\bfZ);\bfZ)}\] for $g\gg0$ via the splitting \eqref{equation:splitting1} by declaring its image in the first summand to be trivial and to be $\sgn/8$ in the second. For small $g$, the class $\frac{\sgn}{8}\in\oH^2(\Sp^q_{2g}(\bfZ);\bfZ)$ is defined as the pullback of $\frac{\sgn}{8}\in\oH^2(\Sp^q_{2g+2h}(\bfZ);\bfZ)$ for $h\gg0$.
\item Define the class \[\textstyle{\frac{\sgn}{8}\in\oH^2(\Gamma^n_{g,1/2};\bfZ)}\quad\text{for $n\neq1,3,7$ odd}\] as the pullback of the same-named class along the map $\Gamma^n_{g,1/2}\ra G_g\cong\Sp_{2g}^q(\bfZ)$ induced by the action on the middle cohomology.
\end{enumerate}
\end{dfn}

\subsection{Framing obstructions}\label{section:algebraicobstructions}
To describe the invariant \[\chi^2\colon\oH_2(\BDiffuo_{\partial/2}(W_{g,1});\bfZ)\lra\bfZ\] explained in \cref{section:obstructionclass} more algebraically, note that a map $f\colon S\rightarrow \oB(\bfZ^{2g}\rtimes\Sp_{2g}(\bfZ))$ from an oriented closed connected based surface $S$ to $\oB(\bfZ^{2g}\rtimes\Sp_{2g}(\bfZ))$ induces a $1$-cocycle 
\[\pi_1(S;*)\lra \bfZ^{2g}\rtimes\Sp_{2g}(\bfZ)\lra  \bfZ^{2g}\] and hence a class $[f]\in\oH^1(S; f^*\cH(g))$. The composition $S\ra\oB(\bfZ^{2g}\rtimes\Sp_{2g}(\bfZ))\ra\BSp_{2g}(\bfZ)$ defines a bilinear form $\langle -,-\rangle_f$ on $\oH^1(S; f^*\cH(g))$ as explained in \cref{section:algebraicsignature}, and hence a number $\langle [f],[f]\rangle_f\in\bfZ$. Varying $f$, this gives a morphism \[\chi^2\colon \oH_2(\bfZ^{2g}\rtimes\Sp_{2g}(\bfZ);\bfZ)\lra\bfZ,\] which is compatible with natural inclusion $\bfZ^{2g}\rtimes\Sp_{2g}(\bfZ)\subset \bfZ^{2g+2}\rtimes\Sp_{2g+2}(\bfZ)$ and takes for $n\equiv3\Mod{4}$ part in a composition
\[\oH_2(\Gamma^n_{g,1/2};\bfZ)\xra{(s_F,p)}\oH_2(\bfZ^{2g}\rtimes\Sp_{2g}(\bfZ);\bfZ)\xlra{\chi^2}\bfZ,\] where the first morphism is induced by acting on a stable framing $F$ of $W_{g,1}$ as in \cref{section:firstextension}. A priori, this requires three choices: a stable framing, a generator $\pi_n\SO\cong\bfZ$, and a symplectic basis $H(g)\cong \bfZ^{2g}$. However, the composition turns out to not be affected by these choices and the following proposition shows that it is related to the invariant of $(W_{g,1},D^{2n-1})$-bundles explained in \cref{example:chiexamples}.
\begin{prop}\label{lemma:pontryaginnumberidentification}For $n\equiv 3\Mod{4}$, the composition
\[\oH_2(\BDiffuo_{\partial/2}(W_{g,1});\bfZ) \longtwoheadrightarrow\oH_2(\Gamma_{g,1/2}^n;\bfZ)\xra{(s_F,p)}\oH_2(\bfZ^{2g}\rtimes\Sp_{2g}(\bfZ);\bfZ)\xra{\chi^2}\bfZ\] 
sends the class of a $(W_{g,1},D^{2n-1})$-bundle $\pi\colon E\ra S$ to $\chi^2(T_\pi E)$.
\end{prop}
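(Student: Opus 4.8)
The plan is to unwind all three maps in the composition and match the resulting invariant with the relative Pontryagin class $\chi(T_\pi E)\in\oH^{n+1}(E,\partial E;\bfZ)$ described in \cref{example:chiexamples}(ii). First I would recall that, by naturality of the mapping-torus-type constructions, a $(W_{g,1},D^{2n-1})$-bundle $\pi\colon E\ra S$ over an oriented closed surface $S$ determines a classifying map $S\ra\BDiffuo_{\partial/2}(W_{g,1})$, and the composition with $(s_F,p)$ produces a map $f\colon S\ra\oB(H(g)\otimes\pi_n\SO)\rtimes G_g)$ whose underlying $\Sp_{2g}(\bfZ)$-part is exactly the monodromy on $\oH^n(W_g;\bfZ)$. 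Thus the symplectic bilinear form $\langle-,-\rangle_f$ on $\oH^1(S;f^*\cH(g))$ is the usual intersection form coming from the bundle, and the class $[f]\in\oH^1(S;f^*\cH(g))$ is the \emph{obstruction to extending the stable framing $F$ fibrewise}: it is the cohomology class whose value on a $1$-cycle in $S$ records how $s_F$ twists the framing as one goes around that loop. The target number $\langle[f],[f]\rangle_f\in\bfZ$ is then a self-intersection of this framing-obstruction class.

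The key step is to identify $[f]\in\oH^1(S;f^*\cH(g))$ geometrically with (the image of) the relative obstruction class $\chi(T_\pi E)$. Here one uses that $\oH^{n+1}(E,\partial E;\bfZ)$, via the Serre spectral sequence of $\pi$ and Poincaré–Lefschetz duality on $E$, contains $\oH^1(S;\cH(g))$ as the relevant associated-graded piece (the fibrewise reduced cohomology $\widetilde{\oH}^n(W_{g,1};\bfZ)\cong H(g)$ contributes in total degree $n+1$ together with $\oH^1(S)$), and that the primary obstruction $\chi(T_\pi E)$ — which is the obstruction to lifting the canonical $\tau_{>n}\BO$-structure on $T_\pi E$ to a $\tau_{>n+1}\BO$-structure, i.e.\ to a fibrewise stable framing extending the boundary framing — is fibrewise exactly the obstruction recorded by $s_F$. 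Concretely: over a point of $S$ the fibre $W_{g,1}$ is stably parallelisable, a choice of stable framing is a point in $\Maps_*(W_{g,1},\SO)\simeq\Maps_*(W_{g,1},\SO/\ast)$, and the difference class of two such framings lives in $[W_{g,1},\SO]_*\cong H(g)\otimes\pi_n\SO$; globalising over $S$, the discrepancy between the fixed reference framing $F$ and the fibrewise one is precisely the $1$-cocycle defining $s_F$, while the obstruction-theoretic incarnation of the same discrepancy is $\chi(T_\pi E)$ under the identification above. I would set this up by comparing the two lifting problems — the fibrewise one over $S$ governed by $s_F$, and the one over the total space $E$ governed by $\chi$ — using that $T_\pi E\cong_s TE$ after trivialising $\pi^*TS$ (exactly as in the proof of \cref{thm:almostclosedbundles}), so the obstruction classes correspond under restriction/transgression. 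Then the Euler-class-type self-intersection $\langle[f],[f]\rangle_f$ matches the squaring $\chi^2(T_\pi E)=\langle\chi(T_\pi E)^2,[E,\partial E]\rangle$: cup product on $\oH^{n+1}(E,\partial E)$ restricted to the $\oH^1(S;\cH(g))$-summand is, by the multiplicativity of the spectral sequence, the composite of cup product on $S$ with the symplectic pairing $\lambda$ on the coefficients — which is the defining formula for $\langle-,-\rangle_f$.

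The main obstacle I expect is the bookkeeping in this identification: one must check that the "difference-of-framings" cocycle $s_F$ is literally the transgression/restriction of the obstruction class $\chi(T_\pi E)$, including that the choices (of stable framing $F$, of generator $\pi_n\SO\cong\bfZ$, of symplectic basis of $H(g)$) are absorbed consistently on both sides — on the left these choices move $(s_F,p)$ only by conjugation and sign, on the right they rescale $\chi$ by the same sign and leave $\chi^2$ unchanged — and that the relevant edge homomorphisms in the Serre spectral sequence of $\pi\colon E\ra S$ are isomorphisms in the degrees in play (which holds because $W_{g,1}$ has reduced cohomology concentrated in degree $n$ and $S$ is a surface, so there are no extension or differential issues in total degree $\le n+1$, and the pairing lands in degree $2n+2$ where only the $\oH^2(S)\otimes(\text{top of the fibre})$ term survives). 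Once the class $[f]$ is pinned down, the equality $\langle[f],[f]\rangle_f=\chi^2(T_\pi E)$ is formal from the compatibility of cup products; and surjectivity of $\oH_2(\BDiffuo_{\partial/2}(W_{g,1});\bfZ)\twoheadrightarrow\oH_2(\Gamma^n_{g,1/2};\bfZ)$ (the second reminder at the start of \cref{section:extensionclass}) lets us test the statement on bundle classes, which is all we need.
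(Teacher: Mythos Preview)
Your proposal is correct and follows essentially the same route as the paper: identify $\oH^{n+1}(E,\partial E;\bfZ)$ with $\oH^1(S;f^*\cH(g))$ via the relative Serre spectral sequence of $\pi$, show that under this isomorphism the obstruction class $\chi(T_\pi E)$ corresponds to the $1$-cocycle $[f]$ coming from $s_F$, and then use multiplicativity of the spectral sequence to equate $\chi^2(T_\pi E)$ with $\langle[f],[f]\rangle_f$. The paper carries out the ``bookkeeping'' step you flag as the obstacle by (a) unwinding the Serre-spectral-sequence isomorphism explicitly on the level of cocycles via the skeletal filtration of $S$, (b) lifting $\chi(T_\pi E)$ to a class relative to a fibre using the framing $F$, and (c) recognising $s_F$ as the connecting map of the fibration $\Maps_*(W_{g,1},\tau_{\le n}\SO)\to\BDiffuo^{\tau_{>n+1}}_{\partial/2}(W_{g,1})\to\BDiffuo_{\partial/2}(W_{g,1})$, which makes the comparison of the two lifting problems concrete; your sketch anticipates all of this but does not execute it.
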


\begin{proof}
The relative Serre spectral sequence of \[(W_{g,1},\partial W_{g,1})\lra (E,\partial E)\xlra{\pi} S\] induces canonical isomorphisms 
\[\oH^{2n+2}(E,\partial E;\bfZ)\cong \oH^2(S;\bfZ)\quad\text{and}\quad\oH^{n+1}(E,\partial E;\bfZ)\cong \oH^1(S;f^*\cH(g)),\] 
where $f$ denotes the composition \[S\lra \BDiffuo_{\partial/2}(W_{g,1})\lra\oB\Gamma_{g,1/2}^n\xlra{p}\BSp_{2g}(\bfZ).\] By the compatibility of the Serre spectral sequence with the cup-product and after identifying $\oH^1(S;f^*\cH(g))$ with $\oH^1(\pi_1(S;*);H(g)\otimes S\pi_n\SO(n))$, it suffices to show that the second isomorphism sends $\chi(T_\pi E)\in \oH^{n+1}(E,\partial E;\bfZ)$ up to signs to the class represented by the cocycle
\begin{equation}\label{equ:cocycleobs}
\pi_1(S;*) \lra\Gamma^n_{g,1/2}\xlra{s_F}[W_{g,1},\SO]_*\cong H(g)\otimes \pi_n\SO,\end{equation}
involving the choice of stable framing $F\colon TW_{g,1}\oplus \varepsilon^k\cong \varepsilon^{2n+k}$ as in \cref{section:firstextension}. As a first step, we describe this isomorphism more explicitly: 

Note that $\oH^{n+1}(E,\partial E;\pi_n\SO)\cong \oH^{n+1}(E;\pi_n\SO)$ as $\oH^{*}(\partial E;\pi_n\SO)$ is trivial for $*=n,n+1$. Unwinding the construction of the Serre spectral sequence using a skeletal filtration of $S$, one sees that after fixing an identification $W_{g,1}\cong \pi^{-1}(*)\subset E$, the image of a class $x\in \oH^{n+1}(E,\pi_n\SO)$ under the isomorphism in question is represented by the cocycle $\pi_1(S;*)\ra H(g)\otimes \pi_n\SO$ which maps a loop $\omega\colon ([0,1],\{0,1\})\ra (S,*)$ to the class obtained from a choice of lift $\widetilde{x}\in \oH^{n+1}(E,\pi^{-1}(*);\pi_n\SO)$ by pulling it back along
\[(W_{g,1}\times [0,1],W_{g,1}\times\{0,1\})\lra(\omega^*E,W_{g,1}\times \{0,1\})\lra (E,\pi^{-1}(*)),\] where the second morphism is induced by pulling back the bundle $\pi\colon E\ra S$ along $\omega$ and the first morphism is the unique (up to homotopy) trivialisation $\omega^*E\cong [0,1]\times W_{g,1}$ relative to $W_{g,1}\times\{0\}$ of the pullback bundle over $[0,1]$; here we used the canonical isomorphism $\oH^{n+1}(W_{g,1}\times [0,1],W_{g,1}\times\{0,1\};\pi_n\SO)\cong H(g)\otimes \pi_n\SO$.

Recall from \cref{example:chiexamples} that the class $\chi(T_\pi E)\in \oH^{n+1}(E;\pi_n\SO)$ is the primary obstruction to extending the canonical $\tau_{>n}\BO$-structure on $T_\pi E$ to a $\tau_{>n+1}\BO$-structure. The choice of framing $F$ induces such an extension on the fibre $W_{g,1}\cong\pi^{-1}(*)\subset E$ and thus induces a lift of $\chi(T_\pi E)$ to a relative class in $\oH^{n+1}(E,\pi^{-1}(*);\pi_n\SO)$. This uses that the $\tau_{>n}\BO$ structure on $W_{g,1}$ induced by the framing agrees with the restriction of the $\tau_{>n}\BO$-structure on $T_\pi E$ to $\pi_{-1}(*)\cong W_{g,1}$ by obstruction theory. Using the above description, we see that the image of $\chi(T_\pi E)$ under the isomorphism in question is thus represented by the cocyle $\pi_1(S;*)\ra H(g)\otimes\pi_n\SO$ that sends a loop $\omega$ to the primary obstruction to solving the lifting problem
\begin{equation}\label{equ:liftingproblem}
\begin{tikzcd}
W_{g,1}\times\{0,1\}\dar{\subset}\rar&\pi^{-1}(*)\dar \rar&\tau_{>n+1}\BSO\dar\\
W_{g,1}\times[0,1]\rar\arrow[urr,dashed,crossing over]&E\rar{T_\pi E}&\BSO,
\end{tikzcd}
\end{equation}
in $\oH^n(W_{g,1};\pi_n\SO)\cong H(g)\otimes\pi_n\SO $, since this obstruction agrees with the corresponding obstruction when replacing $\BSO$ by $\tau_{>n}\BSO$. Here the left square in the diagram is given by the trivialisation explained earlier. There is a useful alternative description of this obstruction class: relative to the subspace $W_{g,1}\times\{0\}\subset W_{g,1}\times\{0,1\}$ there is a unique (up to homotopy) lift in \eqref{equ:liftingproblem}, so the obstruction to finding a lift relative to the subspace $W_{g,1}\times \{0,1\}$ can be seen as an element in the group of path-components of the fibre of the principal fibration $\Maps_*(W_{g,1},\tau_{>n+1}\BSO)\ra \Maps_*(W_{g,1},\BSO)$, which is exactly $[W_{g,1},\tau_{\le n}\SO]_*\cong H(g)\otimes \pi_n\SO$.

To see that the cocyle we just described agrees with \eqref{equ:cocycleobs}, note that the function $s_F\colon \Gamma^n_{g,1/2}\ra H(g)\otimes \pi_n\SO\cong[W_{g,1},\tau_{\le n}\SO]_*$ induced by acting on the stable framing $F$ arises as the connecting map $\pi_1(\BDiffuo^{\tau_{>n+1}}_{\partial/2}(W_{g,1});[F])\ra \pi_0\Maps_*(W_{g,1},\tau_{\le n}\SO)$ of the fibration
 \[\Maps_*(W_{g,1},\tau_{\le n}\SO)\lra \BDiffuo^{\tau_{>n+1}}_{\partial/2}(W_{g,1})\lra \BDiffuo_{\partial/2}(W_{g,1}),\] 
 where $\BDiffuo^{\tau_{>n+1}}_{\partial/2}(W_{g,1})$ is the space that classifies $(W_{g,1},D^{2n-1})$-bundles with a $\tau_{>n+1}\BO$-structure on the vertical tangent bundle extending the given $\tau_{>n+1}\BO$-structure on the restriction to the trivial $D^{2n-1}$-subbundle induced by the standard framing of $D^{2n-1}$; here we identified the space of $\tau_{>n+1}\BO$-structures of $W_{g,1}$ relative to $D^{2n-1}$ with the mapping space $\Maps_*(W_{g,1},\tau_{\le n}\SO)$ by using the choice of stable framing $F$, which also induces the basepoint $[F]\in \BDiffuo^{\tau_{>n+1}}_{\partial/2}(W_{g,1})$. This shows that the value of \eqref{equ:cocycleobs} on a loop $[\omega]\in\pi_1(S;*)$ is given by the component in $[W_{g,1},\tau_{\le n}\SO]_*\cong H(g)\otimes\pi_n\SO$ obtained by evaluating a choice of path-lift
 \begin{center}
 \begin{tikzcd}
 \{0\}\arrow[r,]\arrow[d,"\subset"]&*\arrow[r,"F"]\dar&\BDiffuo^{\tau_{>n+1}}_{\partial/2}(W_{g,1})\dar\\
{\left[0,1\right]}\arrow[r,"\omega"]\arrow[urr,dashed, crossing over]&S\arrow[r]&\BDiffuo_{\partial/2}(W_{g,1}).
 \end{tikzcd}
 \end{center}
 at the end point. Such a path-lift precisely classifies a lift as in \eqref{equ:liftingproblem} relative to the subspace $W_{g,1}\times\{0\}\subset W_{g,1}\times\{0,1\}$, so the claim follows from the second description of the obstruction to solving the lifting problem \eqref{equ:liftingproblem} mentioned above.
\end{proof}
 
\begin{lem}\label{lemma:algebraicpontryaginnumbers}
Let $g\ge1$.
\begin{enumerate}
\item The image of the composition induced by the inclusion $\bfZ^{2g}\subset \bfZ^{2g}\rtimes \Sp^q_{2g}(\bfZ)\subset\bfZ^{2g}\rtimes \Sp_{2g}(\bfZ)$
 \[\oH_2(\bfZ^{2g};\bfZ)\lra\oH_2(\bfZ^{2g}\rtimes \Sp^q_{2g}(\bfZ);\bfZ)\lra \oH_2(\bfZ^{2g}\rtimes \Sp_{2g}(\bfZ);\bfZ)\xlra{\chi^2}\bfZ\]  contains $2\cdot\bfZ$ and agrees with $2\cdot\bfZ$ for $g=1$.
\item We have
\[
\begin{gathered}
\im\left(\oH_2(\bfZ^{2g}\rtimes \Sp^{q}_{2g}(\bfZ) ;\bfZ)\xra{(\sgn,\chi^2)}\bfZ\oplus\bfZ\right )=
\begin{cases}\langle(0,2)\rangle &\mbox{if }g=1\\
\langle (8,0),(0,2)\rangle&\mbox{if }g\ge2.
\end{cases}
\\
\im\left(\oH_2(\bfZ^{2g}\rtimes \Sp_{2g}(\bfZ) ;\bfZ)\xra{(\sgn,\chi^2)}\bfZ\oplus\bfZ\right )=
\begin{cases}\langle(0,2)\rangle &\mbox{if }g=1\\
\langle (4,0),(0,1)\rangle&\mbox{if }g\ge2.
\end{cases}
\end{gathered}
\]
\item For $n=3,7$, we have \[\im\left(\oH_2(\Gamma_{g,1/2}^n ;\bfZ)\xlra{(s_F,p)_*}\oH_2(\bfZ^{2g}\rtimes \Sp_{2g}(\bfZ) ;\bfZ)\xra{\chi^2-\sgn}\bfZ\right)=8\cdot\bfZ.\]
\end{enumerate}
\end{lem}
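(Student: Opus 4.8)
The plan is to establish the two inclusions $\im\subseteq 8\cdot\bfZ$ and $8\cdot\bfZ\subseteq\im$ separately, using the geometric descriptions of $\sgn$, $\chi^2$, and the differential $d_2$ obtained in \cref{lemma:signatureidentification}, \cref{lemma:pontryaginnumberidentification}, and \cref{thm:almostclosedbundles}.

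For the inclusion $\im\subseteq 8\cdot\bfZ$, recall the two reminders at the start of \cref{section:extensionclass}: the map $\oH_2(\BDiffuo_{\partial/2}(W_{g,1});\bfZ)\twoheadrightarrow\oH_2(\Gamma^n_{g,1/2};\bfZ)$ is surjective, and since $\MSO\ra\HZ$ is $4$-connected, every class in the source is represented by a $(W_{g,1},D^{2n-1})$-bundle $\pi\colon E\ra S$ over a closed oriented surface. So it is enough to evaluate $(\chi^2-\sgn)\circ(s_F,p)_*$ on such classes. For $n=3,7$ we have $G_g\cong\Sp_{2g}(\bfZ)$, and the projection $\bfZ^{2g}\rtimes\Sp_{2g}(\bfZ)\ra\Sp_{2g}(\bfZ)$ composed with $(s_F,p)$ is the action map $p$; hence \cref{lemma:signatureidentification} gives $\sgn\circ(s_F,p)_*[\pi]=\sgn(E)$, while \cref{lemma:pontryaginnumberidentification} (applicable because $n\equiv3\Mod{4}$) gives $\chi^2\circ(s_F,p)_*[\pi]=\chi^2(T_\pi E)$. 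By \cref{thm:almostclosedbundles} there is a class $E'\in A^{\tau_{>n}}_{2n+2}$ with $\sgn(E')=\sgn(E)$ and $\chi^2(E')=\chi^2(T_\pi E)$, so the value of $\chi^2-\sgn$ on $(s_F,p)_*[\pi]$ equals $\chi^2(E')-\sgn(E')$. Finally \cref{thm:Wall} identifies $A^{\tau_{>n}}_{2n+2}$ for $n=3,7$ with the free abelian group on $\bfH P^2$ (respectively $\bfO P^2$) and $Q$, and \eqref{equation:numbersalmostclosed} and \eqref{equation:exceptionalnumbersalmostclosed} give $(\chi^2-\sgn)(\bfH P^2)=1-1=0$ and $(\chi^2-\sgn)(Q)=8-0=8$; thus $\chi^2-\sgn$ takes values in $8\cdot\bfZ$ on all of $A^{\tau_{>n}}_{2n+2}$, which proves the inclusion. (This divisibility is also exactly what underlies the formula in \cref{prop:computingboundaries}.)

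For the reverse inclusion it suffices to realise the value $8$, and the idea is to use a homology class coming from the abelian kernel. Fix a symplectic basis pair $e_1,f_1\in H(g)$; under the identification $H(g)\otimes S\pi_n\SO(n)\cong\bfZ^{2g}$, let $x\in\oH_2(\Gamma^n_{g,1/2};\bfZ)$ be the image of the fundamental class of the torus $T^2$ under
\[\oH_2(T^2;\bfZ)\lra\oH_2(H(g)\otimes S\pi_n\SO(n);\bfZ)\lra\oH_2(\Gamma^n_{g,1/2};\bfZ),\]
where the first map is induced by a map $T^2\ra\oB(H(g)\otimes S\pi_n\SO(n))$ classifying the commuting pair $(e_1,f_1)$ and the second by the inclusion of the kernel of $p$. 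By the commuting left-hand square of \eqref{equation:stableframing}, the restriction of $(s_F,p)$ to $H(g)\otimes S\pi_n\SO(n)$ is the map $\iota_*$ induced by $S\pi_n\SO(n)\ra\pi_n\SO$, which for $n=3,7$ is multiplication by $2$ on $\bfZ^{2g}$ by \cref{lemma:stabiliseorthogonalgroups}. Hence $(s_F,p)_*x$ is the image in $\oH_2(\bfZ^{2g}\rtimes\Sp_{2g}(\bfZ);\bfZ)$ of the class in $\oH_2(\bfZ^{2g};\bfZ)$ represented by the map $T^2\ra\oB\bfZ^{2g}$ classifying $(2e_1,2f_1)$. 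This representative factors through $\oB\bfZ^{2g}$, so it dies in $\oH_2(\Sp_{2g}(\bfZ);\bfZ)$ and $\sgn\circ(s_F,p)_*x=0$; moreover the associated local system is trivial, so $\chi^2\circ(s_F,p)_*x$ is computed from the pairing $\langle-,-\rangle_f$ on $\oH^1(T^2;\bfZ)\otimes\bfZ^{2g}$, where antisymmetry of both the cup product on $\oH^1(T^2;\bfZ)$ and of the symplectic form $\lambda$ yields $\langle[f],[f]\rangle_f=2\lambda(2e_1,2f_1)=8\lambda(e_1,f_1)=8$. Therefore $(\chi^2-\sgn)\circ(s_F,p)_*x=8$, valid for every $g\ge1$, and together with the first part this gives the claim.

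I do not anticipate a genuine obstacle here; the only care needed is bookkeeping. First, one must arrange the class realising the lower bound to come from the abelian kernel $H(g)\otimes S\pi_n\SO(n)$, which both makes $\chi^2$ computable via the explicit torus formula and forces $\sgn$ to vanish. Second, one must track the multiplication-by-$2$ coming from $\iota\colon S\pi_n\SO(n)\hookrightarrow\pi_n\SO$ in the exceptional dimensions $n=3,7$: it enters once in each of the two $\oH^1$-factors, and combined with the factor $2$ produced by antisymmetrising $\lambda$ this is precisely what upgrades the divisibility from the $2$ appearing in part (i) to the $8$ asserted here.
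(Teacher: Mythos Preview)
Your argument for part (iii) is correct and follows the same route as the paper: the upper bound via \cref{thm:almostclosedbundles}, \cref{lemma:signatureidentification}, \cref{lemma:pontryaginnumberidentification} and the values \eqref{equation:numbersalmostclosed}--\eqref{equation:exceptionalnumbersalmostclosed} is exactly the paper's argument, and your lower bound is a concrete unpacking of the paper's. The paper takes a class $[f]\in\oH_2(\bfZ^{2g}\otimes\pi_n\SO;\bfZ)$ with $\chi^2([f])=2$ from part (i) and observes that $4\cdot[f]$ lifts to $\oH_2(\bfZ^{2g}\otimes S\pi_n\SO(n);\bfZ)$ because the cokernel of $\oH_2(\iota_*)$ is $4$-torsion; your torus class for $(e_1,f_1)$ in $H(g)\otimes S\pi_n\SO(n)$ is precisely such a lift, and your direct computation $\langle[f],[f]\rangle_f=2\lambda(2e_1,2f_1)=8$ is just the explicit form of $\chi^2(4\cdot[f])=4\cdot 2$.
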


\begin{proof}
By the compatibility of $\chi^2$ with the stabilisation maps, it suffices to show the first part for $g=1$, which follows from checking that the image of a generator in $\oH_2(\bfZ^{2} ;\bfZ)\cong\bfZ$ is mapped to $\pm2$ under $\chi^2$ by chasing through the definition. As the signature morphism pulls back from $\Sp_{2g}(\bfZ)$, the second part follows from the first part and \cref{lemma:algebraicsignatures} by showing that the image of $\chi^2$ is 
for $\Sp_{2g}^q(\bfZ)$ always divisible by $2\cdot\bfZ$ and for $\Sp_{2g}(\bfZ)$ divisible by $2$ if and only if $g=1$. In the case of $\Sp_{2g}^q(\bfZ)$, this can be shown ``geometrically'' as in the proof of \cref{lemma:algebraicsignatures}: choose $n\equiv3\Mod{4}, n\neq3,7$ and consider the composition
\[\oH_2(\BDiffuo_{\partial/2}(W_{g,1});\bfZ) \longtwoheadrightarrow\oH_2(\Gamma^n_{g,1/2};\bfZ)\xlra{(s_F,p)}\oH_2(\bfZ^{2g}\rtimes \Sp^{q}_{2g}(\bfZ) ;\bfZ)\xlra{\chi^2}\bfZ.\] The first morphism is surjective by the second reminder at the beginning of \cref{section:extensionclass} and the second morphism is an isomorphism as a result of \cref{section:firstextension}, so it suffices to show that the composition is divisible by $2$, which in turn follows from a combination of \cref{lemma:pontryaginnumberidentification}, \cref{thm:almostclosedbundles} and the fact that in these dimensions, an $n$-connected almost closed $(2n+2)$-manifold $E'$ satisfies $\chi^2(E')\in 2\cdot\bfZ$ by a combination of \cref{thm:Wall} and \eqref{equation:numbersalmostclosed}. For $\Sp_{2g}(\bfZ)$, we argue as follows: in the case $g=1$, we show that the morphism $\oH_2(\bfZ^2;\bfZ)\ra \oH_2(\bfZ^{2}\rtimes \Sp_{2}(\bfZ);\bfZ)$ is surjective, which will exhibit the claimed divisibility as a consequence of (i). It follows from \cref{lemma:symplecticlowdegreecohomology} that the group $\oH_1(\Sp_{2}(\bfZ);\bfZ^2)$ vanishes and as $\Sp_2(\bfZ)=\SL_2(\bfZ)$, we also have $\oH_2(\Sp_2(\bfZ);\bfZ)=0$\footnote{This can be seen for instance from the Meyer--Vietoris sequence of the well-known decomposition $\SL_2(\bfZ)\cong \bfZ/4\ast_{\bfZ/2}\bfZ/6$, where $\bfZ/4$ is generated by $\left(\begin{smallmatrix}0&1\\-1&0\end{smallmatrix}\right)$, $\bfZ/6$ by $\left(\begin{smallmatrix}1&1\\-1&0\end{smallmatrix}\right)$, and $\bfZ/2$ by $\left(\begin{smallmatrix}-1&0\\0&-1\end{smallmatrix}\right)$.}, so an application of the Serre spectral sequence to $\bfZ^{2}\rtimes \Sp_{2}(\bfZ)$ shows the claimed surjectivity. This leaves us with proving that $\chi^2$ is not divisible by $2$ for $g\ge2$ for which we use that there is class $[f\colon \pi_1S\ra \Sp_{2g}(\bfZ)]\in \oH_2(\Sp_{2g}(\bfZ);\bfZ)$ of signature $4$ by \cref{lemma:algebraicsignatures}, so the form $\langle -,-\rangle_f$ cannot be even and hence there is a $1$-cocycle $g\colon \pi_1S\ra \bfZ^{2g}$ for which $\langle [g],[g]\rangle_f$ is odd, which means that the image $\chi^2([g,f])$ of the class $[(g,f)]\in \oH_2(\bfZ^{2g}\rtimes \Sp_{2g}(\bfZ);\bfZ)$  induced by the morphism $(g,f)\colon \pi_1S\ra \bfZ^2\rtimes \Sp_{2g}(\bfZ)$  is odd. For the last part, note that the argument we gave for the divisibility in the second part for $\Sp_{2g}^q(\bfZ)$ shows for $n=3,7$ that the image of the composition in (iii) is contained in $8\cdot\bfZ$, since $\chi^2(E')-\sgn(E')$ is divisible by $8$ if $n=3,7$. Hence, to finish the proof, it suffices to establish the existence of a class in $\oH_2(\Gamma^n_{g,1/2};\bfZ)$ for $n=3,7$ on which the composition evaluates to $8$. To this end, we consider the square \begin{center}
\begin{tikzcd}
\oH_2(\bfZ^{2g}\otimes S\pi_n\SO(n);\bfZ)\arrow[r]\arrow[d]&\oH_2(\Gamma^n_{g,1/2};\bfZ)\arrow[d,"{(s_F,p)}",swap]\\
\oH_2(\bfZ^{2g}\otimes \pi_n\SO;\bfZ)\arrow[r]&\oH_2((\bfZ^{2g}\otimes \pi_n\SO)\rtimes \Sp_{2g}(\bfZ);\bfZ),
\end{tikzcd}
\end{center}induced by the embedding $(s_F,p)$ of the extension describing $\Gamma_{g,1/2}^n$ into the trivial extension of $\Sp_{2g}(\bfZ)$ by $\bfZ^{2g}\otimes \pi_n\SO$ (see \cref{section:firstextension}). By the first part, there is a class $[f]\in\oH_2(\bfZ^{2g}\otimes \pi_n\SO;\bfZ)$ with $\chi^2([f])=2$ and trivial signature, since the signature morphism pulls back from $\Sp_{2g}(\bfZ)$. As a result of \cref{lemma:stabiliseorthogonalgroups}, the cokernel of the left vertical map in the square is $4$-torsion if $n=3,7$, so $4\cdot[f]$ lifts to $\oH_2(\bfZ^{2g}\otimes S\pi_n\SO(n);\bfZ)$ and provides a class as desired. \end{proof}

Similar to the construction of $\frac{\sgn}{8}\in\oH^2(\Sp_{2g}^q(\bfZ) ;\bfZ )$, we would like to lift the morphism \[\chi^2/2\colon \oH_2(\bfZ^{2g}\rtimes\Sp_{2g}^q(\bfZ) ;\bfZ)\ra \bfZ\] resulting from the second part of \cref{lemma:algebraicpontryaginnumbers} to a class $\frac{\chi^2}{2}\in\oH^2(\bfZ^{2g}\rtimes\Sp_{2g}^q(\bfZ) ;\bfZ)$. To this end, observe that $\Sp_{2g}^q(\bfZ)\subset\bfZ^{2g}\rtimes \Sp_{2g}^q(\bfZ)$ induces an isomorphism on abelianisations for $g\ge2$ since the coinvariants $(\bfZ^{2g})_{\Sp_{2g}^q(\bfZ)}$ vanish in this range by \cref{lemma:coinvariantssymplectic}. The morphism 
\[a\colon \bfZ/4\lra\Sp_2^q(\bfZ)\subset\bfZ^{2g}\rtimes \Sp^q_{2g}(\bfZ)\]
 considered in \cref{section:algebraicsignature} thus induces a splitting \begin{equation}\label{equation:splitting2}a^*\oplus h\colon\oH^2(\bfZ^{2g}\rtimes\Sp_{2g}^q(\bfZ) ;\bfZ)\xra{\cong} \oH^2(\bfZ/4 ;\bfZ)\oplus\Hom(\oH_2(\bfZ^{2g}\rtimes\Sp_{2g}^q(\bfZ) ;\bfZ),\bfZ),\end{equation} for $g\ge3$, analogous to the splitting \eqref{equation:splitting1}. As before, this splitting is compatible with the inclusions $\bfZ^{2g}\rtimes\Sp_{2g}^q(\bfZ)\subset\bfZ^{2g+2}\rtimes\Sp_{2g+2}^q(\bfZ)$, so the following definition is valid.

\begin{dfn}\label{definition:chiclasses}\ \phantom{xyz}
\begin{enumerate}
\item Define the class \[\textstyle{\frac{\chi^2}{2}\in\oH^2(\bfZ^{2g}\rtimes \Sp_{2g}^q(\bfZ);\bfZ)}\] for $g\gg0$ via the splitting \eqref{equation:splitting2} by declaring its image in the first summand to be trivial and to be $\chi^2/2$ in the second. For small $g$, the class $\frac{\chi^2}{2}$ is defined as the pullback of the class for $g\gg0$.
\item Define the class \[\textstyle{\frac{\chi^2}{2}\in\oH^2(\Gamma^n_{g,1};\bfZ)\quad\text{for }n\neq1,3,7\text{ and }n\equiv3\Mod{4}}\] as the pullback of the same-named class along the map \[\Gamma^n_{g,1/2}\xlra{(s_F,p)} \big(H(g)\otimes S\pi_n\SO(n)\big)\rtimes G_g\cong \bfZ^{2g}\rtimes \Sp_{2g}^q(\bfZ).\]
\item Define the class \[\textstyle{\frac{\chi^2-\sgn}{8}\in\oH^2(\Gamma^n_{g,1/2};\bfZ)\cong\Hom(\oH_2(\Gamma^n_{g,1/2} ;\bfZ),\bfZ)\quad\text{for }n=3,7}\] for $g\gg0$ as image of $(\chi^2-\sgn)/8$ ensured by \cref{lemma:algebraicpontryaginnumbers}, and for small $g$ as the pullback of $\frac{\chi^2-\sgn}{8}\in \oH^2(\Gamma_{g+h,1/2};\bfZ)$ for $h\gg0$.
\end{enumerate}
\end{dfn}

The isomorphism $\oH^2(\Gamma^n_{g,1/2};\bfZ)\cong\Hom(\oH_2(\Gamma^n_{g,1/2};\bfZ),\bfZ)$ for $n=3,7$ in the previous definition is assured by \cref{corollary:abelianisationhalfmcg} and \cref{lemma:abelianisationSp}, as $\oH_1(\Gamma^n_{g,1/2};\bfZ)$ vanishes for $g\gg0$. 

\medskip

We finish this subsection with an auxiliary lemma convenient for later purposes.

\begin{lem}\label{lemma:trivialforgone}For $g=1$, the class $\frac{\sgn}{8}\in\oH^2(\Sp^q_{2g}(\bfZ);\bfZ)$ and the pullback of the class $\frac{\chi^2}{2}\in\oH^2(\bfZ^{2g}\rtimes \Sp^q_{2g}(\bfZ);\bfZ)$ to $\oH^2(\Sp^q_{2g}(\bfZ);\bfZ)$ are trivial.
\end{lem}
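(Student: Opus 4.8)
The plan is to reduce both vanishing statements to the single assertion that the restriction map
$a^{*}\colon\oH^{2}(\Sp_{2}^{q}(\bfZ);\bfZ)\to\oH^{2}(\bfZ/4;\bfZ)$
along the morphism $a\colon\bfZ/4\to\Sp_{2}^{q}(\bfZ)$ of \eqref{equation:abelianisationthetagroup} is injective.

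First I would unwind the definitions. By \cref{definition:signatureclasses} and \cref{definition:chiclasses}, the classes $\frac{\sgn}{8}\in\oH^{2}(\Sp_{2}^{q}(\bfZ);\bfZ)$ and $\frac{\chi^{2}}{2}\in\oH^{2}(\bfZ^{2}\rtimes\Sp_{2}^{q}(\bfZ);\bfZ)$ are, in the case $g=1$, pulled back along the stabilisation maps from the corresponding classes for some $g\gg0$, which in turn were pinned down via the universal coefficient splittings \eqref{equation:splitting1} and \eqref{equation:splitting2} precisely by requiring that their image in the summand $\oH^{2}(\bfZ/4;\bfZ)$ — that is, their pullback along $a\colon\bfZ/4\to\Sp_{2}^{q}(\bfZ)\subset\Sp_{2g}^{q}(\bfZ)$ resp.\ $a\colon\bfZ/4\to\Sp_{2}^{q}(\bfZ)\subset\bfZ^{2g}\rtimes\Sp_{2g}^{q}(\bfZ)$ — be trivial. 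Since these maps $a$ factor through $\Sp_{2}^{q}(\bfZ)$ (resp.\ through $\bfZ^{2}\rtimes\Sp_{2}^{q}(\bfZ)$ and then $\Sp_{2}^{q}(\bfZ)$) exactly as in the $g=1$ definitions, the two classes in question restrict trivially along $a\colon\bfZ/4\to\Sp_{2}^{q}(\bfZ)$. Hence it is enough to prove injectivity of $a^{*}$, and the rest is then forced.

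For the injectivity I would argue as follows. The group $\Sp_{2}^{q}(\bfZ)$ has finite index in $\Sp_{2}(\bfZ)=\SL_{2}(\bfZ)$, so it is finitely generated and virtually free; a transfer argument then gives $\oH^{2}(\Sp_{2}^{q}(\bfZ);\bfQ)=0$, hence $\oH^{2}(\Sp_{2}^{q}(\bfZ);\bfZ)$ is a finite group (this can alternatively be read off from the appendix, e.g.\ \cref{lemma:symplecticlowdegreecohomology}). Consequently the $\Hom$-term in the universal coefficient theorem vanishes, and naturally in the group one gets $\oH^{2}(\Sp_{2}^{q}(\bfZ);\bfZ)\cong\Ext(\oH_{1}(\Sp_{2}^{q}(\bfZ);\bfZ),\bfZ)$ and $\oH^{2}(\bfZ/4;\bfZ)\cong\Ext(\bfZ/4,\bfZ)$, under which $a^{*}$ becomes $\Ext(a_{*};\bfZ)$ for the induced map $a_{*}\colon\oH_{1}(\bfZ/4;\bfZ)\to\oH_{1}(\Sp_{2}^{q}(\bfZ);\bfZ)$. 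Now $\oH_{1}(\Sp_{2}^{q}(\bfZ);\bfZ)\cong\bfZ/4\oplus\bfZ$ by \cref{lemma:abelianisationSp} (see \cref{table:Aut}), so both $\Ext$-groups are cyclic of order $4$; and the composite $\oH_{1}(\bfZ/4;\bfZ)\xrightarrow{a_{*}}\oH_{1}(\Sp_{2}^{q}(\bfZ);\bfZ)\to\oH_{1}(\Sp_{2g}^{q}(\bfZ);\bfZ)$ is an isomorphism for $g\ge3$ (this is recalled just before \cref{definition:signatureclasses}), so $a_{*}$ is injective. Applying $\Ext(-,\bfZ)$ to the short exact sequence $0\to\bfZ/4\xrightarrow{a_{*}}\oH_{1}(\Sp_{2}^{q}(\bfZ);\bfZ)\to\coker(a_{*})\to0$ and using $\Ext^{2}_{\bfZ}=0$ shows $\Ext(a_{*};\bfZ)$ is surjective, hence — being a surjection between two cyclic groups of order $4$ — an isomorphism. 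This yields the injectivity of $a^{*}$ and finishes the proof.

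The only genuinely delicate point is the bookkeeping in the first step: one must check that the various stabilisation maps together with the maps $a$ appearing in \cref{definition:signatureclasses} and \cref{definition:chiclasses} compose so that restricting the $g=1$ classes along $a\colon\bfZ/4\to\Sp_{2}^{q}(\bfZ)$ literally reproduces the vanishing conditions defining the stable classes. Everything else reduces to the identification $\oH_{1}(\Sp_{2}^{q}(\bfZ);\bfZ)\cong\bfZ/4\oplus\bfZ$ supplied by the appendix and the elementary fact that a finitely generated virtually free group has finite second cohomology with trivial integral coefficients.
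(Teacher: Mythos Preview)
Your proof is correct and follows essentially the same route as the paper: both arguments show the classes restrict trivially along $a\colon\bfZ/4\to\Sp_{2}^{q}(\bfZ)$ by construction, and then finish via the universal coefficient theorem together with the description of $\oH_1(\Sp_2^q(\bfZ))\cong\bfZ/4\oplus\bfZ$ from the appendix. The only difference is in how the $\Hom$-term is handled: the paper verifies directly that both classes evaluate trivially on $\oH_2(\Sp_2^q(\bfZ);\bfZ)$ (using \cref{lemma:algebraicsignatures} for $\frac{\sgn}{8}$), whereas you observe that $\Sp_2^q(\bfZ)$ is virtually free so the entire $\Hom$-term vanishes---a slightly more uniform shortcut that avoids invoking the signature computation, but otherwise the same argument.
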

\begin{proof}
Both classes evaluate trivially on $\oH_2(\Sp_{2g}^{q}(\bfZ);\bfZ)$. For the first class, this is a consequence of \cref{lemma:algebraicsignatures}, for the second this holds by construction. Moreover, both classes pull back trivially to $\oH^2(\bfZ/4;\bfZ)$ along the morphism $a\colon\bfZ/4\ra\Sp^q_{2g}(\bfZ)$ by definition. Although this morphism does not induce an isomorphism on abelianisations for $g=1$, it still induces one on the torsion subgroups of the abelianisations by \cref{lemma:abelianisationSp} and this is sufficient to deduce the claim from the universal coefficient theorem.
\end{proof}

\subsection{The proof of \cref{bigthm:mainextension}}\label{section:proofmainthm}
We are ready to prove our main result \cref{bigthm:mainextension}, which we state equivalently in terms of the central extension \[0\lra\Theta_{2n+1}\lra\Gamma^n_{g}\lra\Gamma^n_{g,1/2}\lra0,\] explained in \cref{section:krecksextensions}. Our description of its extension class in $\oH^2(\Gamma_{g,1/2}^n;\Theta_{2n+1})$ involves the cohomology classes in $\oH^2(\Gamma_{g,1/2}^n;\bfZ)$ of Definitions~\ref{definition:signatureclasses} and~\ref{definition:chiclasses}, and the two homotopy spheres $\Sigma_P$ and $\Sigma_Q$ in the subgroup $\bA_{2n+2}\subset \Theta_{2n+1}$ examined in \cref{section:Wallclassification}. We write $(-)\cdot \Sigma\in \oH^2(\Gamma_{g,1/2}^n;\bfZ)\ra\oH^2(\Gamma_{g,1/2}^n;\Theta_{2n+1})$ for the change of coefficients induced by $\Sigma\in\Theta_{2n+1}$.

\begin{thm}\label{thm:mainextension2}For $n\ge 3$ odd, the extension \[0\lra\Theta_{2n+1}\lra\Gamma_{g,1}^n\lra\Gamma_{g,1/2}^n\lra0\] is classified by the class
\[\begin{cases}
\frac{\sgn}{8}\cdot\Sigma_P&\mbox{if }n\equiv1\Mod{4}\\
\frac{\sgn}{8}\cdot\Sigma_P+\frac{\chi^2}{2}\cdot\Sigma_Q&\mbox{if }n\equiv3\Mod{4},n\neq3,7\\
\frac{\chi^2-\sgn}{8}\cdot\Sigma_Q&\mbox{if }n=3,7.
\end{cases}\]
and its induced differential $d_2\colon \oH_2(\Gamma_{g,1/2}^n)\ra\Theta_{2n+1}$ has image \[\im\left(\oH_2(\Gamma^n_{g,1/2})\xlra{d_2}\Theta_{2n+1}\right)=
\begin{cases}
\bA_{2n+2}&\mbox{if }g\ge2\\ 
\langle \Sigma_Q \rangle&\mbox{if }g=1.
\end{cases}.\] Moreover, the extension splits if and only if $g=1$ and $n\equiv 1\Mod{4}$.
\end{thm}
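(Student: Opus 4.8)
The key computation has already been localized: by \cref{thm:almostclosedbundles} together with \cref{prop:computingboundaries}, the differential $d_2\colon\oH_2(\Gamma^n_{g,1/2})\to\Theta_{2n+1}$ sends the class $[\pi]$ of a $(W_{g,1},D^{2n-1})$-bundle $\pi\colon E\to S$ to the boundary $\partial E'\in\Theta_{2n+1}$, which is $\tfrac{\sgn(E)}{8}\Sigma_P$, or $\tfrac{\sgn(E)}{8}\Sigma_P+\tfrac{\chi^2(T_\pi E)}{2}\Sigma_Q$, or $\tfrac{\chi^2(T_\pi E)-\sgn(E)}{8}\Sigma_Q$ according to the three cases. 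By \cref{lemma:signatureidentification} and \cref{lemma:pontryaginnumberidentification}, the functionals $\sgn$ and $\chi^2$ on $\oH_2(\BDiffuo_{\partial/2}(W_{g,1});\bfZ)$ factor through the surjection onto $\oH_2(\Gamma^n_{g,1/2};\bfZ)$ and agree there with the algebraic classes of Definitions~\ref{definition:signatureclasses} and~\ref{definition:chiclasses}. Since $\oH_2(\BDiffuo_{\partial/2}(W_{g,1});\bfZ)\twoheadrightarrow\oH_2(\Gamma^n_{g,1/2};\bfZ)$ and every class in the target is represented by a smooth bundle over a surface (first reminder at the start of \cref{section:extensionclass}), this determines $h(d_2)\in\Hom(\oH_2(\Gamma^n_{g,1/2};\bfZ),\Theta_{2n+1})$, i.e.\ the image of the extension class under the map $h$ in the universal coefficient sequence. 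The first step is thus to assemble these pieces and conclude that the extension class differs from the asserted one by an element of $\Ext(\oH_1(\Gamma^n_{g,1/2};\bfZ),\Theta_{2n+1})$.

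\textbf{Resolving the Ext ambiguity.} The second step is to pin down this remaining $\Ext$-term. Here I would argue genus by genus using the stabilisation maps of \cref{section:stabilisation}: the extension \eqref{equation:mainextension2} for genus $h$ is the pullback of the one for genus $g\ge h$, and the asserted cohomology classes are pulled back from large genus by construction. For $g\gg0$ the group $\Gamma^n_{g,1/2}$ has abelianisation with no relevant $\Ext$-contribution after combining \cref{corollary:abelianisationhalfmcg} with \cref{table:Aut} (the torsion is cyclic, $\bfZ/4$ or $\bfZ/2$, and one checks the $\Ext$ against $\Theta_{2n+1}$ is detected on $\oH_2$ via the torsion generator $a\colon\bfZ/4\to\Sp^q_{2g}(\bfZ)$, exactly as in the splittings \eqref{equation:splitting1}, \eqref{equation:splitting2}); so the class is determined by $d_2$ there. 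For small $g$ one then pulls back. The map $a$ and \cref{lemma:trivialforgone} are the tools that handle $g=1$: both $\tfrac{\sgn}{8}$ and the pullback of $\tfrac{\chi^2}{2}$ to $\oH^2(\Sp^q_{2}(\bfZ);\bfZ)$ vanish, so for $g=1$ the asserted class lives in the part of cohomology controlled by $\oH_2$, and the genus-one computation of $\im(d_2)$ closes the argument.

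\textbf{Computing $\im(d_2)$ and the split/non-split dichotomy.} Combining the factorisation of $\sgn$ and $\chi^2$ through $\oH_2\big((H(g)\otimes\pi_n\SO)\rtimes G_g;\bfZ\big)$ with \cref{lemma:algebraicsignatures} and \cref{lemma:algebraicpontryaginnumbers}, I read off the image of $(\sgn,\chi^2)$: for $g\ge2$ it is (essentially) $8\bfZ\oplus 2\bfZ$ on $\Sp^q$, so via \cref{prop:computingboundaries} the image of $d_2$ is $\langle\Sigma_P,\Sigma_Q\rangle=\bA_{2n+2}$ by \cref{corollary:generatorsbA}; for $g=1$ the signature part dies and only $\chi^2$ survives with image $2\bfZ$, so $\im(d_2)=\langle\Sigma_Q\rangle$ (using $\Sigma_P$ is killed—for $n\equiv1\Mod4$ this is automatic, for $n\equiv3\Mod4$ one uses $\sgn=0$ on genus one, and for $n=3,7$ one uses \cref{lemma:sigmaqsigmap}). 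Finally, the extension splits iff its class is zero. For $g\ge2$, $\im(d_2)=\bA_{2n+2}\neq0$ by \cref{corollary:generatorsbA}, so it never splits. For $g=1$: if $n\equiv1\Mod4$ then $\Sigma_Q=0$ by \cref{thm:Schultz}, so $d_2=0$, and the residual $\Ext$-term is handled by \cref{lemma:trivialforgone}, giving a splitting; if $n\equiv3\Mod4$ (including $n=3,7$) then $\Sigma_Q\neq0$, so $\im(d_2)\neq0$ and there is no splitting. The main obstacle I expect is the bookkeeping in the $g=1$, $n\equiv1\Mod4$ case—one must be sure that not only $d_2$ but the \emph{whole} extension class vanishes, which is precisely why \cref{lemma:trivialforgone} is needed and must be invoked carefully; everything else is either geometric input already proved or a finite-group cohomology computation already available in the appendix.
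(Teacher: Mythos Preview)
Your overall architecture matches the paper: identify $d_2$ geometrically via \cref{thm:almostclosedbundles} and \cref{prop:computingboundaries}, reduce to large $g$ by stabilisation, then pull back. The image computation and the split/non-split analysis are fine.

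The genuine gap is in your second paragraph. For $n\neq 3,7$ and $g\ge 3$, the abelianisation of $\Gamma^n_{g,1/2}$ is $\bfZ/4$ (by \cref{corollary:abelianisationhalfmcg} and \cref{lemma:abelianisationSp}), so $\Ext(\oH_1(\Gamma^n_{g,1/2}),\Theta_{2n+1})$ is typically nonzero and the class is \emph{not} determined by $d_2$ alone. Your sentence ``the Ext against $\Theta_{2n+1}$ is detected on $\oH_2$ via the torsion generator $a$'' is either a slip or hides the real work: what the splitting $a^*\oplus h$ gives you is that the class is determined by $d_2$ \emph{together with} its pullback to $\oH^2(\bfZ/4;\Theta_{2n+1})$ along $a$. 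The asserted classes $\frac{\sgn}{8}\cdot\Sigma_P$ and $\frac{\chi^2}{2}\cdot\Sigma_Q$ restrict trivially to $\bfZ/4$ by construction (Definitions~\ref{definition:signatureclasses} and~\ref{definition:chiclasses}), but you must also show that the \emph{actual} extension class restricts trivially, i.e.\ that the pullback of $0\to\Theta_{2n+1}\to\Gamma^n_{g,1}\to\Gamma^n_{g,1/2}\to 0$ along $a\colon\bfZ/4\to G_g\subset\Gamma^n_{g,1/2}$ splits. Neither \cref{lemma:trivialforgone} nor the splittings \eqref{equation:splitting1}--\eqref{equation:splitting2} give this; they concern the asserted algebraic classes, not the geometric extension.

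The paper supplies the missing step by producing an explicit order-$4$ diffeomorphism of $S^n\times S^n\subset\bfR^{n+1}\times\bfR^{n+1}$,
\[(x_1,\ldots,x_{n+1},y_1,\ldots,y_{n+1})\longmapsto(-y_1,y_2,\ldots,y_{n+1},x_1,\ldots,x_{n+1}),\]
which acts as $\left(\begin{smallmatrix}0&-1\\1&0\end{smallmatrix}\right)$ on $H(1)$ and has constant differential, hence $s_F=0$. Via \cref{lemma:fixingadiscdoesnotmatter} and stabilisation this yields a lift $\bfZ/4\to\Gamma^n_{g,1}$ of $a$, so $a^*$ of the extension class vanishes. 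Without this (or an equivalent geometric construction), your identification of the extension class for $n\neq 3,7$ is incomplete, and consequently so is the $g=1$, $n\equiv 1\Mod 4$ splitting argument, which relies on that identification before invoking \cref{lemma:trivialforgone}.
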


\begin{proof}As all cohomology classes involved are compatible with the stabilisation map $\Gamma^n_{g,1/2}\ra\Gamma^n_{g+1,1/2}$, it is sufficient to show the first part of the claim for $g\gg0$ (see \cref{section:stabilisation}). We assume $n\neq3,7$ first. Identifying $\Gamma^n_{g,1/2}$ with $(H(g)\otimes \pi_n\SO)\rtimes G_g$ via the isomorphism $(s_F,p)$ of \cref{section:firstextension}, the morphism $a\colon \bfZ/4\ra(H(g)\otimes \pi_n\SO)\rtimes G_g$ of the previous section induces a morphism between the sequences of the universal coefficient theorem
\begin{center}
\begin{tikzcd}[column sep=0.3cm]
0\arrow[r]&\Ext(\oH_1(\Gamma^n_{g,1/2};\bfZ),\Theta_{2n+1})\arrow[d]\arrow[r]&\oH^2(\Gamma^n_{g,1/2};\Theta_{2n+1})\arrow[r]\arrow[d]&\Hom(\oH_2(\Gamma^n_{g,1/2};\bfZ),\Theta_{2n+1})\arrow[d]\arrow[r]&0\\
0\arrow[r]&\Ext(\bfZ/4,\Theta_{2n+1})\arrow[r]&\oH^2(\bfZ/4;\Theta_{2n+1})\arrow[r]&\Hom(\oH_2(\bfZ/4;\bfZ),\Theta_{2n+1})\arrow[r]&0.
\end{tikzcd}
\end{center}
By the exactness of the rows and the vanishing of $\oH_2(\bfZ/4;\bfZ)$, it is sufficient to show that the extension class in consideration agrees with the classes in the statement when mapped to $\Hom(\oH_2(\Gamma^n_{g,1/2};\bfZ),\Theta_{2n+1})$ and $\oH^2(\bfZ/4,\Theta_{2n+1})$. Regarding the images in the Hom-term, it  is enough to identify them after precomposition with the epimorphism 
\[\oH_2(\BDiffuo_{\partial/2}(W_{g,1});\bfZ)\lra\oH_2(\Gamma^n_{g,1/2};\bfZ),\] so from the construction of $\frac{\sgn}{8}$ and $\frac{\chi^2}{2}$ together with Lemmas~\ref{lemma:algebraicsignatures} and~\ref{lemma:algebraicpontryaginnumbers}, we see that it suffices to show that $\oH_2(\BDiffuo_{\partial/2}(W_{g,1});\bfZ)\ra\Theta_{2n+1}$ induced by the extension class maps the class of a bundle $\pi\colon E\ra S$ to $\sgn(E)/8\cdot\Sigma_P$ if $n\equiv 1\Mod{4}$ and to $\sgn(E)/8\cdot\Sigma_P+\chi^2(E)/2\cdot\Sigma_Q$ otherwise, which is a consequence of \cref{thm:almostclosedbundles} combined with \cref{prop:computingboundaries}. By construction, the classes $\frac{\sgn}{8}\cdot\Sigma_P$ and $\frac{\chi^2}{2}\cdot\Sigma_Q$ vanish in $\oH^2(\bfZ/4;\Theta_{2n+1})$, so the claim for $n\neq3,7$ follows from showing that the extension class is trivial in $\oH^2(\bfZ/4;\Theta_{2n+1})$, i.e.\,that the pullback of the extension to $\bfZ/4$ splits, which is in turn equivalent to the existence of a lift 
\begin{center}
\begin{tikzcd}
&\Gamma^n_{g,1}\arrow[d,"{(s_F,p)}"]\\
\bfZ/4\arrow[r]\arrow[ur,dashed,bend left=10]& G_g\subset{(H(g)\otimes \pi_n\SO)\rtimes G_g}.
\end{tikzcd}
\end{center}Using the standard embedding $W_1=S^n\times S^n\subset\bfR^{n+1}\times\bfR^{n+1}$, we consider the diffeomorphism
\[\mapnoname{S^n\times S^n}{S^n\times S^n}{(x_1,\ldots,x_{n+1},y_1,\ldots,y_{n+1})}{(-y_1,\ldots,y_{n+1},x_1,\ldots,x_{n+1}),}\] which is of order $4$, maps to $\left(\begin{smallmatrix} 0 & -1 \\ 1 & 0 \end{smallmatrix} \right)\in\Sp_2^q(\bfZ)$, and has constant differential, so it vanishes in $H(g)\otimes \pi_n\SO$. As the natural map $\Gamma_{1,1}^n\ra\Gamma^n_1$ is an isomorphism by \cref{lemma:fixingadiscdoesnotmatter}, this diffeomorphism induces a lift as required for $g=1$, which in turn provides a lift for all $g\ge1$ via the stabilisation map $\Gamma^n_{1,1}\ra\Gamma^n_{g,1}$. For $n=3,7$, the abelianisation of $\Gamma^n_{g,1/2}$ vanishes for $g\gg0$ due to \cref{corollary:abelianisationhalfmcg}, so it suffices to identify the extension class with the classes in the statement in $\Hom(\oH_2(\Gamma^n_{g,1/2};\bfZ),\Theta_{2n+1})$ which follows as in the case $n\neq 3,7$.

\cref{lemma:algebraicpontryaginnumbers} (iii) implies that the image of $d_2$ for $n=3,7$ is generated by $\Sigma_Q$ for all $g\ge1$. For $n\neq3,7$, the map $\Gamma_{g,1/2}^n\ra (H(g)\otimes\pi_n\SO)\rtimes  G_g$ is an isomorphism (see \cref{section:firstextension}), so \cref{lemma:algebraicpontryaginnumbers} tells us that the image of $d_2$ for $n\equiv3\Mod{4}$ is generated by $\Sigma_P$ and $\Sigma_Q$ if $g\ge2$ and by $\Sigma_Q$ if $g=1$. For $n\equiv1\Mod{4}$, it follows from \cref{lemma:algebraicsignatures} that the image of $d_2$ is generated by $\Sigma_P$ if $g\ge2$ and that it is trivial for $g=1$. In sum, this implies the second part of the claim by \cref{corollary:generatorsbA}, and also that the differential $d_2$ does not vanish for $g\ge2$, so the extension is nontrivial in these cases. For $n\equiv3\Mod{4}$, the homotopy sphere $\Sigma_Q$ is nontrivial by \cref{thm:Schultz}, so $d_2$ does not vanish for $g=1$ either. Finally, in the case $n\equiv1\Mod{4}$, the extension is classified by $\frac{\sgn}{8}\cdot\Sigma_P$, which is trivial for $g=1$ by \cref{lemma:trivialforgone}, so the extension splits and the proof is finished.
\end{proof}

It is time to make good for the missing part of the proof of \cref{thm:splitextension2}.

\begin{proof}[Proof of \cref{thm:splitextension2} for $n=3,7$]
We have $G_g=\Sp_{2g}(\bfZ)$, so the case $g=1$ follows from the fact that $\oH^2(\Sp_{2g}(\bfZ);\bfZ^{2g}\otimes S\pi_n\SO(n))$ vanishes by \cref{lemma:symplecticlowdegreecohomology}. To prove the case $g\ge2$, note that a hypothetical splitting $s\colon \Sp_{2g}(\bfZ)\ra\Gamma^n_{g,1/2}$ of the upper row of the commutative diagram
\begin{center}
\begin{tikzcd}[column sep=0.5cm]
0\arrow[r]&\bfZ^{2g}\otimes S\pi_n\SO(n)\arrow[r]\arrow[d]&\Gamma_{g,1/2}^n\arrow[r]\arrow[d,"{(s_F,p)}"]&\Sp_{2g}(\bfZ)\arrow[r]\arrow[d,equal]&0\\
0\arrow[r]&\bfZ^{2g}\otimes \pi_n\SO\arrow[r]&(\bfZ^{2g}\otimes \pi_n\SO)\rtimes \Sp_{2g}(\bfZ)\arrow[r]&\Sp_{2g}(\bfZ)\arrow[r]&0
\end{tikzcd}
\end{center} induces a splitting $(s_F,p)\circ s$ of the lower row, which agrees with the canonical splitting of the lower row up to conjugation with $\bfZ^{2g}\otimes \pi_n\SO$,  because such splittings up to conjugation are a torsor for $\oH^1(\Sp_{2g}(\bfZ);\bfZ^{2g}\otimes \pi_n\SO)$ which vanishes by \cref{lemma:symplecticlowdegreecohomology}. \cref{lemma:algebraicsignatures} on the other hand ensures that there is a class $[f]\in \oH_2(\Sp_{2g}(\bfZ);\bfZ)$ with signature $4$, so $s_*[f]\in\oH_2(\Gamma^n_{g,1/2};\bfZ)$ satisfies $\sgn(s_*[f])=4$ and $\chi^2(s_*[f])=0$, which contradicts  \cref{lemma:algebraicpontryaginnumbers} (iii).\end{proof}

\section{Kreck's extensions and their abelian quotients}\label{section:applications}
The inclusion $\oT^n_{g,1}\subset \Gamma_{g,1}^n$ of the Torelli group extends to a pullback diagram 
\begin{equation}\label{equation:pullback}
\begin{tikzcd}
0\arrow[r]&\Theta_{2n+1}\arrow[r]\arrow[d, equal]&\oT_{g,1}^n\arrow[r]\arrow[d]&H(g)\otimes S\pi_n\SO(n)\arrow[r]\arrow[d]&0\\
0\arrow[r]&\Theta_{2n+1}\arrow[r]&\Gamma_{g,1}^n\arrow[r]&\Gamma^n_{g,1/2}\arrow[r]&0
\end{tikzcd}
\end{equation}
of extensions whose bottom row we identified in \cref{thm:mainextension2}. We now apply this to obtain information about the top row, and moreover to compute the abelianisations of $\Gamma_{g,1}^n$ and $\oT_{g,1}^n$ in terms of the homotopy sphere $\Sigma_Q\in\Theta_{2n+2}$, the subgroup $\bA_{2n+2}\subset \Theta_{2n+2}$ of \cref{section:highlyconnected}, and the abelianisation of $\Gamma_{g,1/2}^n$ as computed in \cref{corollary:abelianisationhalfmcg}. 

\begin{thm}\label{theorem:bothabelianisations}Let $n\ge3$ be odd and $g\ge1$.
\begin{enumerate}
\item The kernel $K_g$ of the morphism $\Theta_{2n+1}\ra\oH_1(\Gamma^n_{g,1})$ is generated by $\Sigma_Q$ for $g=1$ and agrees with the subgroup $\bA_{2n+2}$ for $g\ge2$. The induced extension
\[0\lra\Theta_{2n+1}/K_g\lra\oH_1(\Gamma_{g,1}^n)\lra\oH_1(\Gamma^n_{g,1/2})\lra0\] splits.
\item The extension
\[0\lra\Theta_{2n+1}\lra\oT^n_{g,1}\lra H(g)\otimes S\pi_n\SO(n)\lra0\] is nontrivial for $n\equiv3\Mod{4}$ and splits $G_g$-equivariantly for $n\equiv1\Mod{4}$. The image of its differential $d_2\colon\oH_2(H(g)\otimes S\pi_n\SO(n);\bfZ)\ra\Theta_{2n+1}$ is generated by $\Sigma_Q$.
\item The commutator subgroup of $\oT^n_{g,1}$ is generated by $\Sigma_Q$ and the resulting extension
\[0\lra\Theta_{2n+1}/\Sigma_Q\lra\oH_1(\oT_{g,1}^n)\lra H(g)\otimes S\pi_n\SO(n)\lra0\] splits $G_g$-equivariantly.
\end{enumerate}
\end{thm}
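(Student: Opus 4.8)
The plan is to deduce all three parts from the pullback square \eqref{equation:pullback} together with the identification of its bottom row in \cref{thm:mainextension2}. Both rows are central extensions by \cref{lemma:central}, so each carries a five-term exact homology sequence, and these are compatible along the vertical maps; write $d_2^\Gamma$ and $d_2^T$ for the resulting differentials into $\Theta_{2n+1}$. I will use three standard facts about a central extension $1\to A\to E\to Q\to 1$ with class $c$ and differential $d_2\colon\oH_2(Q;\bfZ)\to A$: (a) $\ker(A\to\oH_1(E))=\im(d_2)$; (b) the induced sequence $0\to A/\im(d_2)\to\oH_1(E)\to\oH_1(Q)\to 0$ has abelian total term and is classified by the image $\bar c$ of $c$ under the coefficient change $A\to A/\im(d_2)$, which automatically lies in the $\Ext^1$-summand of $\oH^2(Q;-)$; and (c) if $Q$ is abelian then $[E,E]=\im(d_2)$. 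The structural input I exploit is that, by \cref{thm:mainextension2} and Definitions~\ref{definition:signatureclasses} and~\ref{definition:chiclasses}, the class $c\in\oH^2(\Gamma^n_{g,1/2};\Theta_{2n+1})$ of the bottom row is a $\bfZ$-linear combination of $\Sigma_P$- and $\Sigma_Q$-multiples of classes pulled back either along $p\colon\Gamma^n_{g,1/2}\to G_g$ (the $\frac{\sgn}{8}$-term) or along $(s_F,p)\colon\Gamma^n_{g,1/2}\to(H(g)\otimes\pi_n\SO)\rtimes G_g$ (the remaining terms).

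For part (i), fact (a) gives $K_g=\im(d_2^\Gamma)$, which is $\bA_{2n+2}$ for $g\ge2$ and $\langle\Sigma_Q\rangle$ for $g=1$ by \cref{thm:mainextension2}. By fact (b) the induced extension of $\oH_1(\Gamma^n_{g,1})$ is classified by the image of $c$ in $\oH^2(\Gamma^n_{g,1/2};\Theta_{2n+1}/K_g)$, and I claim this vanishes. For $g\ge2$ this holds because $\Sigma_P,\Sigma_Q\in\bA_{2n+2}=K_g$ by \cref{corollary:generatorsbA}. For $g=1$ the $\Sigma_Q$-terms die since $\Sigma_Q\in K_1$, while the $\frac{\sgn}{8}\cdot\Sigma_P$-term (present only when $n\neq 3,7$) vanishes already: $\frac{\sgn}{8}$ is pulled back from $\oH^2(\Sp_{2}^{q}(\bfZ);\bfZ)$ along $p$, and it is trivial there by \cref{lemma:trivialforgone}. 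Hence the extension splits.

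For parts (ii) and (iii) I use that the top row is the pullback of the bottom row along $H(g)\otimes S\pi_n\SO(n)\hookrightarrow\Gamma^n_{g,1/2}$; since the composite with $p$ is trivial, the $\frac{\sgn}{8}$-term restricts to zero, so the top-row class is $c'\cdot\Sigma_Q$ with $c'$ the restriction of $\frac{\chi^2}{2}$ (resp.\ $\frac{\chi^2-\sgn}{8}$ for $n=3,7$). I then compute $\im(d_2^T)=\langle\Sigma_Q\rangle$: for $n\neq 3,7$ the map $H(g)\otimes S\pi_n\SO(n)\to H(g)\otimes\pi_n\SO$ is an isomorphism (\cref{lemma:stabiliseorthogonalgroups}) and \cref{lemma:algebraicpontryaginnumbers}(i) makes $\frac{\chi^2}{2}$ surjective already on $\oH_2(\bfZ^{2g};\bfZ)$, while for $n=3,7$ that map is multiplication by $2$, hence multiplication by $4$ on $\oH_2$, but $\frac18\chi^2$ of the image still exhausts $\bfZ$ by \cref{lemma:algebraicpontryaginnumbers}(i). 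Since $\Sigma_Q\neq 0$ for $n\equiv 3\Mod{4}$ by \cref{thm:Schultz}, the Torelli extension is non-split there, and by fact (c) its commutator subgroup equals $\im(d_2^T)=\langle\Sigma_Q\rangle$, which gives the exact sequence of part (iii). For $n\equiv 1\Mod{4}$, where $c=\frac{\sgn}{8}\cdot\Sigma_P$ is pulled back along $p$, the bottom row is $p^*$ of a central extension $0\to\Theta_{2n+1}\to\widehat G_g\to G_g\to 0$, so $\Gamma^n_{g,1}\cong\Gamma^n_{g,1/2}\times_{G_g}\widehat G_g$, and $\oT^n_{g,1}$ --- the preimage of $\ker(p)=H(g)\otimes S\pi_n\SO(n)$ --- is the direct product $(H(g)\otimes S\pi_n\SO(n))\times\Theta_{2n+1}$, on which $G_g$ acts by the standard action on the first factor and trivially on the second, by the computation of the conjugation action in \cref{section:krecksextensions}; this is the asserted $G_g$-equivariant splitting, and abelianising it (with $\Sigma_Q=0$) settles part (iii) in this case. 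For $n\equiv 3\Mod{4}$, part (iii) follows from fact (b): the class of $0\to\Theta_{2n+1}/\langle\Sigma_Q\rangle\to\oH_1(\oT^n_{g,1})\to H(g)\otimes S\pi_n\SO(n)\to 0$ is the image of $c'\cdot\Sigma_Q$ under the coefficient change killing $\Sigma_Q$, hence zero, and the naturality in $G_g$ of all maps involved upgrades this to a $G_g$-equivariant splitting.

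The step I expect to be the main obstacle is the exact identification $\im(d_2^T)=\langle\Sigma_Q\rangle$ in the exceptional dimensions $n=3,7$: the inclusion $\im(d_2^T)\subseteq\langle\Sigma_Q\rangle$ is formal from the shape of the extension class, but surjectivity requires tracking the divisibility of $\chi^2$ precisely through the index-$2$ inclusion $S\pi_n\SO(n)\subsetneq\pi_n\SO$ and the ensuing factor of $4$ on second homology. A secondary subtlety is to make the ``$G_g$-equivariant'' assertions precise---pinning down the variant of equivariant cohomology in which the obstruction classes live (arising from the conjugation action of $\Gamma^n_{g,1}$ on $\oT^n_{g,1}\supset\Theta_{2n+1}$, which descends to $G_g$ because $\Theta_{2n+1}$ is central and $H(g)\otimes S\pi_n\SO(n)$ acts on $\oH_1(\oT^n_{g,1})$ by inner automorphisms of $\oT^n_{g,1}$) and verifying that the vanishing above genuinely takes place there.
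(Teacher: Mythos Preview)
Your argument for parts (i) and (ii), and the identification of the commutator subgroup in (iii), follows the paper's approach closely and is correct. The computation of $\im(d_2^T)$ for $n=3,7$ is more explicit than the paper's (which just cites \cref{lemma:algebraicpontryaginnumbers} ``and its proof''), and your divisibility tracking is right.

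The genuine gap is the $G_g$-equivariant splitting in (iii) for $n\equiv 3\Mod{4}$. Your assertion that ``naturality in $G_g$ of all maps involved upgrades this'' is not an argument: knowing that the class of the extension of abelian groups vanishes in $\oH^2(H(g)\otimes S\pi_n\SO(n);\Theta_{2n+1}/\Sigma_Q)$ only gives an abelian-group splitting, and the set of such splittings is a torsor over $\Hom(H(g)\otimes S\pi_n\SO(n),\Theta_{2n+1}/\Sigma_Q)$, which carries a nontrivial $G_g$-action. You would need to locate a $G_g$-fixed point, i.e.\ show vanishing in $\Ext^1_{\bfZ[G_g]}$, and you have not done this. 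Your fibre-product argument for $n\equiv 1\Mod{4}$ is fine, but it does not extend.

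The paper sidesteps this obstruction calculation entirely by a direct construction that works uniformly: since after killing $\Sigma_Q$ the extension class of the bottom row of \eqref{equation:pullback} is $\frac{\sgn}{8}\cdot\Sigma_P$ (or zero), which is pulled back along $p\colon\Gamma^n_{g,1/2}\to G_g$, the whole extension $\Gamma^n_{g,1}/\Sigma_Q$ is the pullback of a central extension $0\to\Theta_{2n+1}/\Sigma_Q\to E\to G_g\to 0$. This yields a homomorphism $\Gamma^n_{g,1}/\Sigma_Q\to E$ over $G_g$; restricting to $\oT^n_{g,1}/\Sigma_Q=\oH_1(\oT^n_{g,1})$ lands in $\ker(E\to G_g)=\Theta_{2n+1}/\Sigma_Q$ and gives a retraction. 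Equivariance is automatic: the $G_g$-action on $\oH_1(\oT^n_{g,1})$ comes from conjugation by $\Gamma^n_{g,1}/\Sigma_Q$, which maps to inner automorphisms of $E$, and these fix the central subgroup $\Theta_{2n+1}/\Sigma_Q$ pointwise. This is exactly the fibre-product idea you used for $n\equiv 1\Mod{4}$, applied to the quotient by $\Sigma_Q$ rather than to $\Gamma^n_{g,1}$ itself.
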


\begin{proof}
By the naturality of the Serre spectral sequence, the morphism of extension \eqref{equation:pullback} induces a ladder of exact sequences
\begin{center}
\begin{tikzcd}[column sep=0.5cm]
\oH_2(H(g)\otimes S\pi_n\SO(n) ;\bfZ)\arrow[r,"d_2^{\oT}"]\arrow[d]&\Theta_{2n+1}\arrow[r]\arrow[d,equal]&\oH_1(\oT_{g,1}^n)\arrow[r]\arrow[d]& H(g)\otimes S\pi_n\SO(n)\arrow[r]\arrow[d]&0\\
\oH_2(\Gamma_{g,1/2}^n ;\bfZ)\arrow[r,"d_2^\Gamma"]&\Theta_{2n+1}\arrow[r]&\oH_1(\Gamma_{g,1}^n)\arrow[r]& \oH_1(\Gamma_{g,1/2}^n)\arrow[r]&0
\end{tikzcd}
\end{center}
from which we see that the kernel $K_g$ in question agrees with the image of the differential $d_2^\Gamma$, which we described in \cref{thm:mainextension2}. By the universal coefficient theorem, the pushforward of the extension class of the bottom row of \eqref{equation:pullback} along the quotient map $\Theta_{2n+1}\ra\Theta_{2n+1}/\im(d_2^\Gamma)$ is classified by a class in $\Ext(\oH_1(\Gamma^n_{g,1/2}),\Theta_{2n+1}/\im(d_2^\Gamma))$, which also describes the exact sequence in (i). By a combination of \cref{thm:mainextension2} and \cref{corollary:generatorsbA}, this class is trivial for $n=3,7$, and for $n\neq3,7$ as long as $g\ge2$. For $g=1$ and $n\neq3,7$, this class agrees with the image of $\frac{\sgn}{8}\cdot\Sigma_P$ in $\oH^2(\Gamma^n_{g,1};\Theta_{2n+1}/\im(d_2^\Gamma))$ and therefore vanishes as $\frac{\sgn}{8}\in\oH^{2}(G_g;\bfZ)$ is trivial by \cref{lemma:trivialforgone}.

The extension class of the bottom row of \eqref{equation:pullback}, determined in \cref{thm:mainextension2}, pulls back to the extension class of the top one. Since $\frac{\sgn}{8}\in\oH^2(\Gamma^n_{g,1/2};\bfZ)$ is pulled back from $G_g$ by construction, it is trivial in $H(g)\otimes S\pi_n\SO(n)$, so the extension in (ii) is trivial for $n\equiv1\Mod{4}$, classified by $\frac{\chi^2}{2}\cdot\Sigma_Q$ for $n\equiv3\Mod{4}$ if $n\neq3,7$, and by $\frac{\chi^2-\sgn}{8}\cdot\Sigma_Q$ if $n=3,7$. From this, the claimed image of $d_2^T$ follows from \cref{lemma:algebraicpontryaginnumbers} and its proof. For $n\equiv3\Mod{4}$, the homotopy sphere $\Sigma_Q$ is nontrivial, so the extension does not split. This shows the second part of the statement, except for the claim regarding the equivariance, which will follow from the third part, since $\Sigma_Q$ is trivial for $n\equiv1\Mod{4}$ by \cref{thm:Schultz}.

The diagram above shows that the commutator subgroup of $\oT_{g,1}^n$ agrees with the image of $d_2^{\oT}$, which we already showed to be generated by $\Sigma_Q$. To construct a $G_g$-equivariant splitting as claimed, note that the quotient of the lower row of \eqref{equation:pullback} by $\Sigma_Q$ pulls back from $G_g$ by \cref{thm:mainextension2} because $\frac{\sgn}{8}\in\oH^{2}(\Gamma^n_{g,1};\Theta_{2n+1})$ has this property. Consequently, there is a central extension $0\ra \Theta_{2n+1}/\Sigma_Q\ra E\ra G_g\ra0$ fitting into a commutative diagram
\begin{center}
\begin{tikzcd}
0\arrow[r]&\Theta_{2n+1}/\Sigma_Q\arrow[r]\arrow[d,equal]&\oT_{g,1}/\Sigma_Q\arrow[r]\arrow[d]&H(g)\otimes S\pi_n\SO(n)\arrow[r]\arrow[d]&0\\
0\arrow[r]&\Theta_{2n+1}/\Sigma_Q\arrow[r]\arrow[d,equal]&\Gamma^n_{g,1}/\Sigma_Q\arrow[r]\arrow[d]&\Gamma^n_{g,1/2}\arrow[d,"p"]\arrow[r]&0\\
0\arrow[r]&\Theta_{2n+1}/\Sigma_Q\arrow[r]&E\arrow[r]&G_g\arrow[r]&0
\end{tikzcd}
\end{center}
whose middle vertical composition induces a splitting as claimed, using that the right column in the diagram is exact.
\end{proof}
The previous theorem, together with \cref{lemma:fixingadiscdoesnotmatter}, \cref{corollary:abelianisationhalfmcg}, and \cref{thm:Wall} has  \cref{bigthm:kreckextensions} (ii) and \cref{bigthm:abstractsplitting} as a consequence. It also implies the following.

\begin{cor}For $n\ge3$ odd and $g\ge1$, $\oT_{g,1}^n$ is abelian if and only if $n\equiv1\Mod{4}$.
\end{cor}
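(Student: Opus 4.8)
The plan is to read the statement off directly from the structural results just established. By part (iii) of \cref{theorem:bothabelianisations}, the commutator subgroup of $\oT_{g,1}^n$ is generated by the image of the homotopy sphere $\Sigma_Q\in\Theta_{2n+1}$ under the inclusion $\Theta_{2n+1}\hookrightarrow\oT_{g,1}^n$ appearing in the extension \eqref{equation:kreckses2}. This inclusion is injective by Kreck's result recalled in \cref{section:krecksextensions}, so the commutator subgroup is trivial precisely when $\Sigma_Q=0$ in $\Theta_{2n+1}$; equivalently, $\oT_{g,1}^n$ is abelian if and only if $\Sigma_Q$ vanishes.

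It then remains to feed in the computation of $\Sigma_Q$. By \cref{thm:Schultz} (Kosinski, Schultz), the homotopy sphere $\Sigma_Q\in\Theta_{2n+1}$ is trivial for $n\equiv1\Mod{4}$ and nontrivial for $n\equiv3\Mod{4}$; in the exceptional cases $n=3,7$ this is also visible from \cref{lemma:sigmaqsigmap}, since then $\Sigma_Q=-\Sigma_P$ and $\Sigma_P$ generates the nontrivial subgroup $\bP_{2n+2}\subset\Theta_{2n+1}$. Combining this with the previous paragraph gives the claim: for $n\equiv1\Mod{4}$ the commutator subgroup of $\oT_{g,1}^n$ is trivial and hence $\oT_{g,1}^n$ is abelian, while for $n\equiv3\Mod{4}$ the commutator subgroup is nontrivial and hence $\oT_{g,1}^n$ is not abelian.

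There is no genuine obstacle here, as all the work is carried out by \cref{theorem:bothabelianisations} together with the known (non)triviality of $\Sigma_Q$. The one point deserving a moment's care is that ``generated by $\Sigma_Q$'' in \cref{theorem:bothabelianisations} must be read as generated by the \emph{image} of $\Sigma_Q$ in $\oT_{g,1}^n$; this is precisely why the injectivity of $\Theta_{2n+1}\hookrightarrow\oT_{g,1}^n$ is needed to conclude that non-vanishing of $\Sigma_Q$ in $\Theta_{2n+1}$ actually forces $\oT_{g,1}^n$ to be non-abelian.
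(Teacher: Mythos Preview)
Your proposal is correct and follows exactly the approach the paper intends: the corollary is stated immediately after \cref{theorem:bothabelianisations} with the words ``It also implies the following'' and no further proof, so the argument is precisely to combine part (iii) of that theorem (the commutator subgroup of $\oT_{g,1}^n$ is generated by $\Sigma_Q$) with \cref{thm:Schultz} on the (non)triviality of $\Sigma_Q$. Your care about injectivity of $\Theta_{2n+1}\hookrightarrow\oT_{g,1}^n$ is appropriate and implicit in the paper's formulation.
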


\subsection{A geometric splitting}\label{section:geometricsplitting}The splittings of the abelianisations of $\Gamma_{g,1}^n$ and $\oT_{g,1}^n$ provided by \cref{theorem:bothabelianisations} are of a rather abstract nature. Aiming towards splitting these sequences more geometrically, we consider the following construction.

A diffeomorphism $\phi\in\Diffuo_\partial(W_{g,1})$ fixes a neighbourhood of the boundary pointwise, so its mapping torus $T_\phi$ comes equipped with a canonical germ of a collar of its boundary $S^1\times \partial W_{g,1}\subset T_\phi$ using which we obtain a closed oriented $(n-1)$-connected $(2n+1)$-manifold $\tilde{T}_\phi$ by gluing in $D^2\times S^{2n-1}$. By obstruction theory and the fact that $W_{g}$ is $n$-parallelisable, the stable normal bundle $\tilde{T}_\phi\ra\BO$ has a unique lift to $\tau_{>n}\BO\ra\BO$ compatible with the lift on $D^2\times S^{2n-1}$ induced by its standard stable framing. This gives rise to a morphism
\[t\colon \Gamma^n_{g,1}\lra\Omega^{\tau_{>n}}_{2n+1},\] which is compatible with the stabilisation map $s\colon \Gamma_{g,1}^n\ra\Gamma_{g,1}^n$ since \[ts(\phi)=[T_\phi\sharp (S^1\times S^n\times S^n)\cup_{S^1\times S^{2n-1}} D^2\times S^{2n-1}]=t(\phi)+[S^1 \times S^n\times S^n]=t(\phi),\] where $S^1\times S^n\times S^n$ carries the $\tau_{>n}\BO$-structure induced by the standard stable framing, which bounds. Using this, it is straight-forward to see that the composition
\[\Theta_{2n+1}=\Gamma_{0,1}^{n}\lra\Gamma_{g,1}^n\xlra{t}\Omega_{2n+1}^{\tau_{>n}}\] of the iterated stabilisation map with $t$ agrees with the canonical epimorphism appearing in Wall's exact sequence \eqref{Wallexactsequence}, so its kernel agrees with the subgroup $\bA_{2n+2}$. Together with the first part of \cref{theorem:bothabelianisations}, we conclude that the dashed arrow in the commutative diagram
\begin{center}
\begin{tikzcd}
0\arrow[r]&\Theta_{2n+1}/K_g\arrow[r]\arrow[dr,bend right,dashed,two heads]&\oH_1(\Gamma_{g,1}^n)\arrow[r,"p_*"]\arrow[d,"t_*"]&\oH_1(\Gamma^n_{g,1/2})\arrow[r]&0\\
&&\Omega_{2n+1}^{\tau_{>n}},&&
\end{tikzcd}
\end{center} is an isomorphism if and only if $K_g=\bA_{2n+2}$ which is the case for $g\ge2$, and for $g=1$ as long as $n=3,7$ since $\Sigma_Q$ generates $\bA_{2n+2}$ for $n=3,7$ by \cref{corollary:generatorsbA}. Consequently, in these cases, the morphism \begin{equation}\label{equation:concretesplitting}t_*\oplus p_*\colon \oH_1(\Gamma_{g,1}^n) \longtwoheadrightarrow\Omega_{2n+1}^{\tau_{>n}}\oplus\oH_1(\Gamma^n_{g,1/2})\end{equation} is an isomorphism, whereas its kernel for $g=1$ coincides with the quotient $\bA_{2n+2}/\Sigma_Q$, which is nontrivial as long as $n\neq3,7$, generated by $\Sigma_P$, and whose order can be interpreted in terms of signatures (see \cref{lemma:quotientbySigmaQ}). Since $p_*$ splits by \cref{theorem:bothabelianisations} and the natural map $\coker(J)_{2n+1}/[\Sigma_Q]\ra\Omega_{2n+1}^{\tau_{>n}}$ is an isomorphism by \cref{corollary:SchultzWall}, the morphism \eqref{equation:concretesplitting} splits for $g=1$ if and only if the natural map\begin{equation}\label{equation:whatIneedtosplit}\Theta_{2n+1}/\Sigma_Q \longtwoheadrightarrow\coker(J)_{2n+1}/[\Sigma_Q]\end{equation} does. Brumfiel \cite[Thm\,1.3]{Brumfiel} has shown that this morphism always splits before taking quotients, so the map \eqref{equation:whatIneedtosplit} (and hence also \eqref{equation:concretesplitting}) in particular splits whenever $[\Sigma_Q]\in\coker(J)_{2n+1}$ is trivial, which is conjecturally always the case (see \cref{conjecture:GRW}) and known in many cases as a result of \cref{thm:Schultz} (see also \cref{rem:BHS}).

The situation for $\oH_1(\oT_{g,1}^n)$ is similar. By \cref{theorem:bothabelianisations}, the morphism $\rho_*$ in the diagram
\begin{center}
\begin{tikzcd}
0\arrow[r]&\Theta_{2n+1}/\Sigma_Q\arrow[r]\arrow[dr,bend right,dashed,two heads]&\oH_1(\oT_{g,1}^n)\arrow[r,"\rho_*"]\arrow[d,"t_*"]&H(g)\otimes S\pi_n\SO(n)\arrow[r]&0\\
&&\Omega_{2n+1}^{\tau_{>n}}&&
\end{tikzcd}
\end{center} splits $G_g$-equivariantly and since the morphism $t_*$ is defined on $\oH_1(\Gamma^n_{g,1})$, its restriction to $\oH_1(\oT^n_{g,1})$ is $G_g$-equivariant when equipping $\Omega_{2n+1}^{\tau_{>n}}$ with the trivial action. By an analogous discussion to the one above, the kernel of the resulting morphism of $G_g$-modules 
 \begin{equation}\label{equation:torellisplitting}t_*\oplus\rho_*\colon \oH_1(\oT_{g,1}^n) \longtwoheadrightarrow\Omega_{2n+1}^{\tau_{>n}}\oplus H(g)\otimes S\pi_n\SO(n)\end{equation} is trivial for $n=3,7$ and given by the quotient $\bA_{2n+2}/\Sigma_Q$ for $n\neq3,7$. Moreover, this morphism splits if and only if it splits $G_g$-equivariantly,\footnote{If $(s_{t},s_{\rho})$ is a non-equivariant splitting of $t_*\oplus \rho_*$, then $(s_t,(\id-s_tt_*)s')$ is an equivariant splitting of $t_*\oplus \rho_*$, where  $s'$ an equivariant splitting of $\rho_*$ ensured by \cref{theorem:bothabelianisations}. This uses that $s_t$ is already equivariant since $\rho_*s_t=0$, so the image of $s_t$ is contained in the image of $\Theta_{2n+1}/\Sigma_Q\ra \oH_1(\oT^n_{g,1})$ which is fixed by the action.} which is precisely the case if the natural map \eqref{equation:whatIneedtosplit} admits a splitting. We summarise this discussion in the following corollary, which has \cref{bigthm:abeliansationmcg} as a consequence when combined with \cref{lemma:quotientbySigmaQ}.
 
\begin{cor}\label{corollary:geometricsplitting}Let $g\ge1$ and $n\ge3$ odd.
\begin{enumerate}
\item The morphism
\[t_*\oplus p_*\colon \oH_1(\Gamma_{g,1}^n) \longtwoheadrightarrow\Omega_{2n+1}^{\tau_{>n}}\oplus\oH_1(\Gamma^n_{g,1/2})\] is an isomorphism for $g\ge2$. For $g=1$, it is an epimorphism and has kernel $\bA_{2n+2}/\Sigma_Q$.
\item The morphism \[t_*\oplus\rho_*\colon \oH_1(\oT_{g,1}^n) \longtwoheadrightarrow\Omega_{2n+1}^{\tau_{>n}}\oplus H(g)\otimes S\pi_n\SO(n)\] is an epimorphism and has kernel $\bA_{2n+2}/\Sigma_Q$.
\item The morphism $t_*\oplus p_*$ splits for $g=1$ if and only if \[\Theta_{2n+1}/\Sigma_Q \longtwoheadrightarrow\coker(J)_{2n+1}/[\Sigma_Q]\] splits, which is the case for $n\equiv1\Mod{4}$. The same holds for $t_*\oplus \rho_*$ for all $g\ge2$.
\end{enumerate}
\end{cor}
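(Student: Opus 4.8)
The plan is to deduce all three parts from \cref{theorem:bothabelianisations} together with the two properties of the mapping-torus morphism $t\colon\Gamma^n_{g,1}\to\Omega^{\tau_{>n}}_{2n+1}$ recorded just above: it is compatible with stabilisation, and the composite $\Theta_{2n+1}=\Gamma^n_{0,1}\to\Gamma^n_{g,1}\xra{t}\Omega^{\tau_{>n}}_{2n+1}$ is the canonical epimorphism of Wall's sequence \eqref{Wallexactsequence}, hence is surjective with kernel $\bA_{2n+2}$ (and likewise for the restriction of $t$ to the Torelli subgroup, since $\Theta_{2n+1}\to\oT^n_{g,1}$ factors $\Theta_{2n+1}\to\Gamma^n_{g,1}$). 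So the work is organisational rather than substantial.

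First I would handle (i) and (ii). By \cref{theorem:bothabelianisations}(i) there is a split exact sequence $0\to\Theta_{2n+1}/K_g\to\oH_1(\Gamma^n_{g,1})\xra{p_*}\oH_1(\Gamma^n_{g,1/2})\to0$, with $K_g=\bA_{2n+2}$ for $g\ge2$ and $K_g=\langle\Sigma_Q\rangle$ for $g=1$. Since $K_g\subseteq\bA_{2n+2}$, the Wall map factors through $\Theta_{2n+1}/K_g=\ker(p_*)$, so $t_*$ restricted to $\ker(p_*)$ is a surjection onto $\Omega^{\tau_{>n}}_{2n+1}$ with kernel $\bA_{2n+2}/K_g$. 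It follows formally that $t_*\oplus p_*$ is surjective and that $\ker(t_*\oplus p_*)=\ker(t_*)\cap\ker(p_*)=\bA_{2n+2}/K_g$; this vanishes for $g\ge2$, giving an isomorphism, and equals $\bA_{2n+2}/\Sigma_Q$ for $g=1$. Part (ii) is the same computation run on the $G_g$-equivariantly split sequence of \cref{theorem:bothabelianisations}(iii), using that conjugate diffeomorphisms have diffeomorphic mapping tori (and that the $\tau_{>n}\BO$-structure involved is unique), so that $t_*|_{\oH_1(\oT^n_{g,1})}$ is $G_g$-invariant for the trivial action on $\Omega^{\tau_{>n}}_{2n+1}$.

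For (iii) I would use that $p_*$ (resp.\ $\rho_*$) splits to move the splitting question onto the kernel. Fixing a splitting of $p_*$, one gets $\oH_1(\Gamma^n_{1,1})\cong(\Theta_{2n+1}/\Sigma_Q)\oplus\oH_1(\Gamma^n_{1,1/2})$, with respect to which $t_*\oplus p_*$ is ``upper triangular'' with diagonal $(\bar t,\id)$ for $\bar t\colon\Theta_{2n+1}/\Sigma_Q\to\Omega^{\tau_{>n}}_{2n+1}$ the map induced by $t$; post-composing with the target automorphism that clears the off-diagonal term reduces the splitting of $t_*\oplus p_*$ to that of $\bar t$. Now $\bar t$ is the Wall quotient $\Theta_{2n+1}/\Sigma_Q\twoheadrightarrow\Theta_{2n+1}/\bA_{2n+2}$; via the isomorphism $\coker(J)_{2n+1}/[\Sigma_Q]\cong\Omega^{\tau_{>n}}_{2n+1}$ of \cref{corollary:SchultzWall} it is identified with the natural map \eqref{equation:whatIneedtosplit}, the identification boiling down to the subgroup equality $\bP_{2n+2}+\langle\Sigma_Q\rangle=\bA_{2n+2}$ that one reads off \cref{corollary:generatorsbA} (and \cref{lemma:sigmaqsigmap} for $n=3,7$). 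Finally, \eqref{equation:whatIneedtosplit} splits whenever $[\Sigma_Q]\in\coker(J)_{2n+1}$ vanishes, since then $\Sigma_Q$ lies in $\bP_{2n+2}$ and Brumfiel's direct-sum splitting $\Theta_{2n+1}\cong\bP_{2n+2}\oplus\coker(J)_{2n+1}$ \cite{Brumfiel} descends; in particular it splits for $n\equiv1\Mod{4}$ by \cref{thm:Schultz}. The Torelli statement is obtained identically with $G_g$-modules and the equivariant splitting of $\rho_*$, plus the observation (as in the footnote to the surrounding discussion) that any splitting of $t_*\oplus\rho_*$ can be upgraded to a $G_g$-equivariant one.

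The only genuinely delicate points, and hence the places I expect to have to be careful, are in (iii): that clearing the off-diagonal term legitimately reduces splitting of the total morphism to splitting of $\bar t$ (which uses that $t_*$ is already surjective on $\ker(p_*)$, i.e.\ that the Wall epimorphism factors through $\Theta_{2n+1}/K_g$), and the case-by-case verification that $\bar t$ really is the natural map \eqref{equation:whatIneedtosplit} rather than merely a surjection with the same source and target.
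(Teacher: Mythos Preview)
Your proposal is correct and follows essentially the same route as the paper's own argument, which is the discussion in \cref{section:geometricsplitting} immediately preceding the corollary: reduce everything to \cref{theorem:bothabelianisations} and the fact that the restriction of $t_*$ to $\ker(p_*)=\Theta_{2n+1}/K_g$ is the Wall epimorphism modulo $K_g$. Your upper-triangular reduction in (iii) and the subgroup check $\bP_{2n+2}+\langle\Sigma_Q\rangle=\bA_{2n+2}$ make explicit a step the paper leaves implicit (it simply asserts the equivalence with \eqref{equation:whatIneedtosplit} using \cref{corollary:SchultzWall} and the commutativity of \eqref{equation:bAbP}), but the underlying argument is the same.
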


\subsection{Abelianising $\Gamma_{g,1}^n$ and $\oT_{g,1}^n$ for $n$ even}\label{section:abelianisationneven}For $n\ge4$ even, the abelianisations of $\Gamma_{g,1}^n$ and $\oT_{g,1}^n$ can be computed without fully determining the extensions \[0\ra\Theta_{2n+1}\ra\Gamma_{g,1}^n\ra\Gamma_{g,1/2}^n\ra0\quad\text{and}\quad 0\ra H(g)\otimes S\pi_n\SO(n)\ra\Gamma_{g,1/2}^n\ra G_g\ra0.\] Indeed, arguing similarly as in the proof of \cref{corollary:abelianisationhalfmcg}, the second extension provides an isomorphism $\oH_1(\Gamma_{g,1/2}^n)\cong\oH_1(G_g)\oplus ( H(g)\otimes S\pi_n\SO(n))_{G_g}$, using that the coinvariants vanish also for $G_g=\oO_{g,g}(\bfZ)$ as long as $g\ge2$ by \cref{lemma:coinvariantssymplectic} and that the extension splits for $g=1$, which is straightforward to check by noting that it is easy to lift elements of
\[G_1=\langle\begin{pmatrix}-1&0\\0&-1\end{pmatrix},\begin{pmatrix}0&1\\1&0\end{pmatrix}\rangle\cong\bfZ/2\oplus\bfZ/2\] to $\Gamma_{1,1}^n\cong\Gamma_n^1\cong\pi_0\Diff(S^n\times S^n)$ (so in particular to $\Gamma_{1,1/2}^n$) using the flip of the factors and a diffeomorphism of $S^n$ of degree $-1$\footnote{There are several mistakes in the literature related to the fact that $G_1=\oO_{1,1}(\bfZ)$ is isomorphic to $(\bfZ/2)^2$ and not $\bfZ/4$: in \cite[p.\,645]{Kreck}, it should be $\tilde{\pi}_0\Diffuo(S^2\times S^2)\cong(\bfZ/2)^2$, in \cite[Thm 1]{Sato} it should be $\tilde{\pi}_0 \Diffuo(S^p\times S^q)/\tilde{\pi}_0S \Diffuo(S^p\times S^q)\cong(\bfZ/2)^2$ for $p=q$ even, and finally in the proof of \cite[Thm\,2.6]{Krylovthesis} it should be $\mathrm{Aut}(\oH_k(S^k\times S^k))\cong (\bfZ/2)^2$ for $k$ even.}. In contrast to the case $n\ge3$, the resulting analogues of the morphisms \eqref{equation:concretesplitting} and \eqref{equation:torellisplitting} for $n\ge4$ even are isomorphisms \emph{for all} $g\ge1$; this is \cref{bigthm:abelianisationeven}. 

\begin{proof}[Proof of \cref{bigthm:abelianisationeven}]Wall's exact sequence \eqref{Wallexactsequence} is also valid for $n\ge4$ even, so implies similarly to the case $n$ odd that both morphisms in question are surjective and that their kernels agree with the quotients of $\bA_{2n+2}$ by the images of the differentials
\[\oH_2(\Gamma_{g,1/2}^n;\bfZ)\xlra{d_2}\Theta_{2n+1}\quad\text{and}\quad  \oH_2(H(g)\otimes S\pi_n\SO(n);\bfZ)\xlra{d_2}\Theta_{2n+1}\] induced by the extensions  \eqref{equation:kreckses2} and \eqref{equation:mainextension2}, so we have to show that these images agree with $\bA_{2n+2}$. By comparing the extensions for different $g$ via the stabilisation map (see \cref{section:stabilisation}) and noting that the second differential factors through the first, we see that it suffices to show this for the second differential in the case $g=1$. Plumbing disc bundles over $S^{n+1}$ defines a pairing of the form
\[S\pi_n\SO(n)\otimes S\pi_n\SO(n)\ra A_{2n+2}^{\tau_{>n}}\] which can be seen to be surjective for $n\ge4$ even, unlike in the case $n\ge3$ odd \cite[p.\,295]{Wall2n+1}. The composition of the pairing with the boundary map $\partial\colon A_{2n+2}^{\tau_{>n}}\ra\Theta_{2n+1}$ with image $\bA_{2n+2}$ is usually called the \emph{Milnor pairing}. Using the surjectivity, it is enough to show that the image of the second differential contains the image of the Milnor pairing. To do so, we rewrite the extension inducing the second differential via the canonical isomorphism $\oT_{1,1}^n\cong \oT_1^n=\ker(\pi_0\Diff(S^n\times S^n)\ra G_1)$ resulting from \cref{lemma:fixingadiscdoesnotmatter} as
\begin{equation}\label{equ:seseven}0\lra \Theta_{2n+1}\xlra{\iota} \oT_1^n \xlra{\rho} S\pi_n\SO(n)^{\oplus 2}\lra 0.\end{equation} and constructsin an explicit cocycle $f\colon S\pi_n\SO(n)^{\oplus 2}\times S\pi_n\SO(n)^{\oplus 2}\ra \Theta_{2n+1}$ classifying this extension as follows: define morphisms \[t_1\colon S\pi_n\SO(n)\ra \oT_1^n\quad\text{and}\quad t_2\colon S\pi_n\SO(n)\ra \oT_1^n\] by assigning to a class $\eta\in S\pi_n\SO(n)$ represented by $\eta\colon (S^n,D^n_+)\ra(\SO(n),\id)\subset (\SO(n+1),\id)$ the diffeomorphism $t_1(\eta)(x,y)=(x,\eta(x)\cdot y)$ and $t_2(\eta)(x,y)=(\eta(y)\cdot x,y)$, where $D^n_+\subset S^n$ is the upper half-disc. Choosing the centre $*\in D^n_+\subset S^n$ as the base point, the diffeomorphisms $t_1(\eta)$ and $t_2(\eta)$ fix both spheres $*\times S^n,S^n\times *\subset S^n\times S^n$, one because $\eta$ is a based map and the other because it factors through the stabilisation map. From the description of  $\rho$ in \cref{section:krecksextensions}, we see that $\rho t_1(\eta)=(\eta,0)$ and $\rho t_2(\eta)=(0,\eta)$, so the function $S\pi_n\SO(n)^{\oplus 2}\ra \oT_1^n$ mapping $(\eta,\xi)$ to $t_1(\eta)\circ t_2(\xi)$ is a set-theoretical section to $\rho$, which implies that 
\[\map{f}{S\pi_n\SO(n)^{\oplus 2}\times S\pi_n\SO(n)^{\oplus 2}}{\Theta_{2n+1}}{\big((\eta_1,\xi_1),(\eta_2,\xi_2)\big)}{t_1(\eta_1) t_2(\xi_1) t_1(\eta_2) t_2(\xi_2) \big(t_1(\eta_1+\eta_2) t_2(\xi_1+\xi_2)\big)^{-1}}\]
defines a $2$-cocycle that represents the extension class of \eqref{equ:seseven} in $\oH^2(S\pi_n\SO(n)^{\oplus 2};\Theta_{2n+1})$ (see e.g.\,\cite[Ch.\,IV.3]{Brown}); here we identified $\Theta_{2n+1}$ with its image in $\oT_1^n$. The differential is the image of this class under the map $\oH^2(S\pi_n\SO(n)^{\oplus 2};\Theta_{2n+1})\ra\Hom(\oH_2(S\pi_n\SO(n)^{\oplus 2});\Theta_{2n+1})$ participating in the universal coefficient theorem (see \cref{section:extensionclass}), so with respect to the canonical isomorphism $\Lambda^2(S\pi_n\SO(n)^{\oplus 2})\cong \oH_2(S\pi_n\SO(n)^{\oplus 2})$ (see e.g.\,\cite[Thm V.6.4 (iii)]{Brown}, the differential takes the form (see e.g.\,\cite[Ex.\,IV.4.8 (c), Ex.\,V.6.5]{Brown})
\[\mapnoname{\Lambda^2(S\pi_n\SO(n)^{\oplus 2})}{\Theta_{2n+1}}{\big((\eta_1,\xi_1)\wedge(\eta_2,\xi_2)\big)}{f\big((\eta_1,\xi_1),(\eta_2,\xi_2)\big)-f\big((\eta_2,\xi_2),(\eta_1,\xi_1)\big).}\] The precomposition of this differential with the map $S\pi_n\SO(n)\otimes S\pi_n\SO(n)\ra \Lambda^2(S\pi_n\SO(n)^{\oplus 2})$ mapping $\eta\otimes\xi$ to $(\eta,0)\wedge(0,\xi)$ thus agrees with the pairing
\[S\pi_n\SO(n)\otimes S\pi_n\SO(n)\ra \Theta_{2n+1}\] that sends $\eta\otimes\xi$ to the commutator $t_1(\eta)t_2(\xi)t_1(\eta)^{-1}t_2(\xi)^{-1}$. By the discussion in \cite[p.\,834]{Lawson}, this coincides with the Milnor pairing, so its image is $\bA_{2n+2}$ by the discussion above, which concludes the claim.
\end{proof}

\subsection{Splitting the homology action}We conclude our study of Kreck's extensions and its abelianisations with the following result, which proves the remaining first part of \cref{bigthm:kreckextensions}. The reader shall be reminded once more of \cref{lemma:fixingadiscdoesnotmatter}, saying that the natural map $\Gamma_{g,1}^n\ra\Gamma^n_g$ is an isomorphism for $n\ge3$.

\begin{thm}\label{theorem:splittingtoG}For $n\ge3$ odd, the extension
\[0\lra\oT_{g,1}^n\lra \Gamma_{g,1}^n\lra G_g\lra 0\]
does not split for $g\ge2$, but admits a splitting for $g=1$ and $n\neq3,7$.
\end{thm}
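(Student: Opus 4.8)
The plan is to handle the two assertions separately, using the structural results already in hand. For the positive statement, when $g=1$ we use \cref{lemma:fixingadiscdoesnotmatter} to work with $\Gamma^n_{1,1}\cong\Gamma^n_1=\pi_0\Diff(S^n\times S^n)$, and observe that $G_1\cong\Sp_2^q(\bfZ)\cong\bfZ/4$ for $n\neq 3,7$ odd. A splitting is then the same as a lift to $\Gamma^n_{1,1}$ of the generator $\left(\begin{smallmatrix}0&-1\\1&0\end{smallmatrix}\right)\in\Sp_2^q(\bfZ)$ of order exactly $4$. But such a lift was already produced in the proof of \cref{thm:mainextension2}: the diffeomorphism of $S^n\times S^n\subset\bfR^{n+1}\times\bfR^{n+1}$ given by $(x,y)\mapsto(-y,x)$ has order $4$, has constant differential, and maps to that generator. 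Since it has order $4$ in $\Gamma^n_{1,1}$ and maps isomorphically onto $G_1$, it defines the required splitting. (One must check the order does not drop; this is where having constant differential and mapping to an honest element of order $4$ in $G_1$ is used — the order of the diffeomorphism is at least the order of its image.)

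For the negative statement, suppose for contradiction that $s\colon G_g\to\Gamma^n_{g,1}$ splits the extension for some $g\ge2$. Composing with the quotient $\Gamma^n_{g,1}\to\Gamma^n_{g,1/2}$ gives a splitting $\bar s\colon G_g\to\Gamma^n_{g,1/2}$ of \eqref{equation:splitextension2}. Pushing further into the trivial extension via $(s_F,p)$ and arguing exactly as in the proof of \cref{thm:splitextension2} for $n=3,7$ — namely that splittings of a trivial extension are, up to conjugacy, a torsor over $\oH^1(G_g;H(g)\otimes\pi_n\SO)$, which vanishes in the relevant range by \cref{lemma:symplecticlowdegreecohomology} (and its analogue for $\Sp^q_{2g}(\bfZ)$, resp.\ for $G_g$) — we may assume $\bar s$ is, up to conjugation, the "identity" section $G_g\hookrightarrow H(g)\otimes\pi_n\SO)\rtimes G_g$. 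Hence on $\oH_2$ the map $\bar s_*\colon\oH_2(G_g;\bfZ)\to\oH_2(\Gamma^n_{g,1/2};\bfZ)$ splits $p_*$, and by \cref{lemma:signatureidentification}, \cref{lemma:pontryaginnumberidentification} (together with the compatibility of $\sgn$ and $\chi^2$ with $p$) the invariant $\sgn$ of any class in the image of $\bar s_*$ is the pullback of $\sgn\colon\oH_2(G_g;\bfZ)\to\bfZ$, while $\chi^2$ pulls back from $G_g$ too and hence the image of $\bar s_*$ is annihilated by $\chi^2-\sgn$ precisely insofar as these already agree on $G_g$.

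Now take the class $[f]\in\oH_2(\Sp_{2g}(\bfZ);\bfZ)$ of signature $4$ furnished by \cref{lemma:algebraicsignatures} (for $n=3,7$ we have $G_g=\Sp_{2g}(\bfZ)$; for $n\neq1,3,7$ one uses instead the class in $\oH_2(\Sp^q_{2g}(\bfZ);\bfZ)$ of signature $8$, again from \cref{lemma:algebraicsignatures}). Then $\bar s_*[f]\in\oH_2(\Gamma^n_{g,1/2};\bfZ)$ satisfies $\sgn(\bar s_*[f])\neq 0$ while $\chi^2(\bar s_*[f])$ equals the value of $\chi^2$ on the corresponding class in $H(g)\otimes\pi_n\SO)\rtimes G_g$ coming from $G_g$, which is $0$ since that class lies in the image of $\oH_2(G_g)$ on which $\chi^2$ is determined by $\sgn$ — more precisely, for $n=3,7$ we get $\chi^2(\bar s_*[f])-\sgn(\bar s_*[f])\neq 0$ and not divisible appropriately, contradicting \cref{lemma:algebraicpontryaginnumbers} (iii); for $n\neq1,3,7$ we get $(\sgn,\chi^2)(\bar s_*[f])=(8,0)$ realised by a class pulled back from $G_g$, contradicting that $\chi^2$ restricted to the image of $\oH_2(G_g)$ vanishes while $\bar s$ is a section. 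In either case we obtain a contradiction, so no splitting exists for $g\ge 2$.

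**Main obstacle.** The delicate point is the reduction "a hypothetical splitting is, up to conjugacy, the standard one", which rests on the vanishing of the relevant $\oH^1$ of the arithmetic group with coefficients in $H(g)\otimes\pi_n\SO$; one must make sure \cref{lemma:symplecticlowdegreecohomology} (and its $\Sp^q$- and $\oO_{g,g}$-analogues referenced in the appendix) genuinely covers the needed module and genus range. Granting that, the rest is a bookkeeping argument with the two invariants $\sgn$ and $\chi^2$ and their behaviour under $p$, already packaged in \cref{lemma:signatureidentification}, \cref{lemma:pontryaginnumberidentification}, \cref{lemma:algebraicsignatures}, and \cref{lemma:algebraicpontryaginnumbers}.
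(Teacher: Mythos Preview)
Your argument has genuine gaps in both halves.

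\medskip

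\textbf{The case $g=1$, $n\neq3,7$.} Your claim that $G_1\cong\Sp_2^q(\bfZ)\cong\bfZ/4$ is false: by \cref{lemma:abelianisationSp} the abelianisation of $\Sp_2^q(\bfZ)$ is $\bfZ/4\oplus\bfZ$, so $\Sp_2^q(\bfZ)$ is infinite (it contains $\left(\begin{smallmatrix}1&2\\0&1\end{smallmatrix}\right)$ of infinite order). Lifting the single order-$4$ element therefore does not produce a section of $\Gamma^n_{1,1}\to G_1$. The paper instead uses the splitting $G_g\subset (H(g)\otimes\pi_n\SO)\rtimes G_g\cong\Gamma^n_{g,1/2}$ provided by $(s_F,p)$ and shows that the pullback of the central extension $0\to\Theta_{2n+1}\to\Gamma^n_{g,1}\to\Gamma^n_{g,1/2}\to0$ along this inclusion is trivial for $g=1$: by \cref{thm:mainextension2} the extension class is $\frac{\sgn}{8}\cdot\Sigma_P$ (plus $\frac{\chi^2}{2}\cdot\Sigma_Q$ if $n\equiv3\Mod4$), and both pieces vanish when pulled back to $G_1$ by \cref{lemma:trivialforgone}.

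\medskip

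\textbf{The case $g\ge2$, $n\neq3,7$.} Here your strategy cannot work. By \cref{thm:splitextension2} the extension $\Gamma^n_{g,1/2}\to G_g$ \emph{does} split for $n\neq3,7$, so passing from a hypothetical section $s\colon G_g\to\Gamma^n_{g,1}$ to its image $\bar s\colon G_g\to\Gamma^n_{g,1/2}$ throws away exactly the information you need; no contradiction is available at the level of $\Gamma^n_{g,1/2}$. Concretely, your asserted contradiction ``$(\sgn,\chi^2)(\bar s_*[f])=(8,0)$'' is none: $(8,0)$ lies in the image described in \cref{lemma:algebraicpontryaginnumbers}\,(ii). (For $n=3,7$ your reduction is fine and is precisely the proof of \cref{thm:splitextension2}.) The paper's argument for $n\neq3,7$ works with the extension $0\to\oT^n_{g,1}\to\Gamma^n_{g,1}\to G_g\to0$ directly: comparing Serre spectral sequences with the extension $0\to\Theta_{2n+1}\to\Gamma^n_{g,1}\to\Gamma^n_{g,1/2}\to0$, one sees that the differential $d_2\colon\oH_2(G_g)\to\oH_1(\oT^n_{g,1})_{G_g}$ factors through $\Theta_{2n+1}/\Sigma_Q$, and the computation of $\im(d_2)$ in \cref{thm:mainextension2} together with $\bA_{2n+2}/\Sigma_Q\neq0$ (\cref{lemma:quotientbySigmaQ}, \cref{remark:minimalsignature}) shows this differential is nonzero, which obstructs any splitting.
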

\begin{proof}
The case $n=3,7$ and $g\ge2$ is clear, since \cref{bigthm:splitextension} shows that under this assumption even the quotient of the extension by the subgroup $\Theta_{2n+1}\subset \oT_{g,1}^n$ does not split. To deal with the other cases, we consider the morphism of extensions
\begin{center}
\begin{tikzcd}
0\arrow[r]&\Theta_{2n+1}\arrow[r]\arrow[d]&\Gamma_{g,1}^n\arrow[r]\arrow[d]&\Gamma^n_{g,1/2}\arrow[r]\arrow[d]&0\\
0\arrow[r]&\oT_{g,1}^n\arrow[r]&\Gamma_{g,1}^n\arrow[r]&G_g\arrow[r]&0.
\end{tikzcd}
\end{center}
By the naturality of the Serre spectral sequence, we have a commutative square of the form
\begin{center}
\begin{tikzcd}
\oH_2(\Gamma^n_{g,1/2};\bfZ)\arrow[d]\arrow[r,"d_2"]&\Theta_{2n+1}\arrow[d]\\
\oH_2(G_g;\bfZ)\arrow[r,"d_2"]&\oH_1(\oT_{g,1}^n;\bfZ)_{G_g}
\end{tikzcd}
\end{center}
whose right vertical map has kernel generated by $\Sigma_Q$ by the computation of $\oH_1(\oT_{g,1}^n;\bfZ)$ as a $G_g$-module in \cref{theorem:bothabelianisations}. Therefore, to finish the proof of the first claim, it suffices to show that the differential $d_2\colon \oH_2(\Gamma^n_{g,1/2};\bfZ)\ra\Theta_{2n+1}/\Sigma_Q$ is nontrivial for $g\ge2$ and $n\neq3,7$, which follows from \cref{thm:mainextension2} together with the fact that $\bA_{2n+2}/\Sigma_Q$ is nontrivial in these cases by \cref{lemma:quotientbySigmaQ} and \cref{remark:minimalsignature}. Turning towards the second claim, we assume $n\neq3,7$ and recall that the isomorphism $(s_F,p)\colon \Gamma^n_{g,1/2}\ra (H(g)\otimes \pi_n\SO)\rtimes G_g$ induces a splitting of the right vertical map $\Gamma^n_{g,1/2}\ra G_g$ in the above diagram (see \cref{section:firstextension}), so the claim follows from showing that the pullback of $\Gamma^n_{g,1}\ra\Gamma_{g,1}^n$ along $G_g\subset (H(g)\otimes \pi_n\SO)\rtimes G_g\cong\Gamma_{g,1}^n$ splits for $g=1$, which is a consequence of \cref{bigthm:mainextension} and \cref{lemma:trivialforgone}.
\end{proof}

\begin{rem}\cref{theorem:splittingtoG} leaves open whether $\Gamma^n_{g,1}\ra G_g$ admits a splitting for $g=1$ in dimensions $n=3,7$. Krylov \cite[Thm\,2.1]{Krylov} and Fried \cite[Sect.\,2]{Fried} showed that this can not be the case for $n=3$ and we expect the same to hold for $n=7$.
\end{rem}

\section{Homotopy equivalences}\label{section:homotopyaut}
Our final result \cref{bigcor:hAut} is concerned with the morphism of extensions
\begin{equation}\label{seshAut}
\begin{tikzcd}
0\arrow[r]& H(g)\otimes S\pi_{n}\SO(n)\arrow[r]\arrow[d]&\Gamma_{g}^n/\Theta_{2n+1}\arrow[r]\arrow[d]&G_g\arrow[r]\arrow[d,equal]& 0\\
0\arrow[r]& H(g)\otimes S\pi_{2n}S^{n}\arrow[r]&\pi_0\hAut(W_g)\arrow[r]&G_g\arrow[r]& 0,
\end{tikzcd}
\end{equation}underlying work of Baues \cite[Thm\,10.3]{Baues}, relating the mapping class group $\Gamma_g^n$ to the group $\pi_0\hAut(W_g)$ of homotopy classes of orientation preserving homotopy equivalences. The left vertical morphism is induced by the restriction of the unstable $J$ homomorphism $J\colon \pi_n\SO(n+1)\ra\pi_{2n+1}S^{n+1}$ to the image of the stabilisation $S\colon \pi_n\SO(n)\ra\pi_n\SO(n+1)$ in the source and to the image of the suspension map $S\colon \pi_{2n}S^{n}\ra \pi_{2n+1}S^{n+1}$ in the target, justified by the fact that the square
\begin{center}
\begin{tikzcd}
\pi_n\SO(n)\arrow[r,"S"]\arrow[d,"J"]&\pi_n\SO(n+1)\arrow[d,"J"]\\
\pi_{2n}S^n\arrow[r,"S"]&\pi_{2n+1}S^{n+1}
\end{tikzcd}
\end{center}
commutes up to sign (see \cite[Cor.\,11.2]{Toda}).

\begin{proof}[Proof of \cref{bigcor:hAut}]By \cref{bigthm:splitextension}, the upper row of \eqref{seshAut} splits for $n\neq1,3,7$ odd, so the first part of (i) is immediate. The second part follows from \cref{bigthm:splitextension} as well if we show that the existence of a splitting of the lower row for $n=3,7$ is equivalent to one of the upper row. To this end, note that for $n=3,7$, we have isomorphisms $S\pi_n\SO(n)\cong\bfZ$ and $S\pi_{2n}S^n=\Tor(\pi_{2n+1}S^{n+1})\cong \bfZ/d_n$ for $d_3=12$ and $d_7=120$ with respect to which the $J$-homomorphism $J\colon S\pi_n\SO(n)\ra S\pi_{n}S^n$ is given by reduction by $d_n$ (see e.g.\,\cite[Ch.\,XIV]{Toda}). By \cref{lemma:symplecticlowdegreecohomology}, the group $\oH^2(G_g;H(g))$ is annihilated by $2$, so in particular by $d_n$. Using this, the first claim follows from the long exact sequence on cohomology induced by the exact sequence
\[0\ra H(g)\otimes S\pi_n\SO(n)\xra{d_n\cdot(-)} H(g)\otimes S\pi_n\SO(n)\xra{J_*} H(g)\otimes S\pi_{2n}S^n\ra0\] of $G_g$-modules, since the extension class of the lower row is obtained from that of the upper one by the change of coefficients $J_*\colon H(g)\otimes S\pi_n\SO(n)\ra H(g)\otimes S\pi_{2n}S^n$. 

To prove the second part, we consider the exact sequence
\[(H(g)\otimes S\pi_{2n}S^{n})_{G_g}\lra\oH_1(\pi_0\hAut(W_g))\lra\oH_1(G_g)\lra 0\] induced by the Serre spectral sequence of the lower row of \eqref{seshAut}. The left morphism in this sequence is split injective as long as the extension splits, which, together with the first part and a consultation of \cref{lemma:coinvariantssymplectic}, exhibits $\oH_1(\pi_0\hAut(W_g))$ to be as asserted.
\end{proof}

\appendix

\section{Low-degree cohomology of symplectic groups}
This appendix contains various results on the low-degree (co)homology of the integral symplectic group $\Sp_{2g}(\bfZ)$ and its theta subgroup $\Sp_{2g}^q(\bfZ)$ (see \cref{section:Wallsform}).
\begin{lem}\label{lemma:abelianisationSp}\ \phantom{xyz}
\begin{enumerate}
\item The abelianisations of $\Sp_{2g}(\bfZ)$ and $\Sp^q_{2g}(\bfZ)$ satisfy
\[\oH_1(\Sp_{2g}(\bfZ))\cong\begin{cases}\bfZ/12&\mbox{if }g=1\\\bfZ/2&\mbox{if }g=2\\0&\mbox{if }g\ge3\end{cases}\quad\text{and}\quad\oH_1(\Sp^q_{2g}(\bfZ))\cong\begin{cases}\bfZ/4\oplus\bfZ&\mbox{if }g=1\\\bfZ/4\oplus \bfZ/2&\mbox{if }g=2\\\bfZ/4&\mbox{if }g\ge3.\end{cases}\]
\item The element $(\begin{smallmatrix} 1 & 1 \\ 0 & 1 \end{smallmatrix})\in\Sp_2(\bfZ)\subset\Sp_{2g}(\bfZ)$ generates $\oH_1(\Sp_{2g}(\bfZ))$ for $g=1,2$.
\item The element $(\begin{smallmatrix} 0 & -1 \\ 1 & 0 \end{smallmatrix}) \in\Sp_2^q(\bfZ)\subset\Sp_{2g}^q(\bfZ)$ generates the $\bfZ/4$-summand in $\oH_1(\Sp_{2g}^q(\bfZ))$ for all $g\ge1$. 
\item The $\bfZ$-summand in $\oH_1(\Sp^q_{2}(\bfZ))$ is generated by $(\begin{smallmatrix} 1 & 2 \\ 0 & 1 \end{smallmatrix})\in\Sp^q_{2}(\bfZ)$ and the $\bfZ/2$-summand in $\oH_1(\Sp^q_{4}(\bfZ))$ is generated by $(\begin{smallmatrix} S & 0 \\ 0 & S \end{smallmatrix})\in \Sp^q_{4}(\bfZ)$, where $S=(\begin{smallmatrix} 0 & 1 \\ 1 & 0 \end{smallmatrix})$.
\item The stabilisation map \[\oH_1(\Sp_{2g}(\bfZ))\lra\oH_1(\Sp_{2g+2}(\bfZ))\] is surjective for all $g\ge1$ and the stabilisation map \[\oH_1(\Sp^q_{2g}(\bfZ))\lra\oH_1(\Sp^q_{2g+2}(\bfZ))\] is surjective for $g\ge2$, but has cokernel $\bfZ/2$ for $g=1$.
\end{enumerate}
\end{lem}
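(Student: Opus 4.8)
I would separate the classical statements about $\Sp_{2g}(\bfZ)$ from the real content, which concerns the theta group $\Sp^q_{2g}(\bfZ)$. For the former: when $g=1$ one uses the amalgam decomposition $\Sp_2(\bfZ)=\SL_2(\bfZ)\cong\bfZ/4\ast_{\bfZ/2}\bfZ/6$, which gives $\oH_1\cong\bfZ/12$; for $g=2$ one cites the known identification $\oH_1(\Sp_4(\bfZ))\cong\bfZ/2$; and for $g\ge3$ the perfectness of $\Sp_{2g}(\bfZ)$. Since $\Sp_{2g}(\bfZ)$ acts transitively on primitive vectors, all symplectic transvections $t_v\colon w\mapsto w+\lambda(v,w)v$ are conjugate and so represent a single class $\tau\in\oH_1(\Sp_{2g}(\bfZ))$; as transvections generate $\Sp_{2g}(\bfZ)$ this class generates $\oH_1$ and equals the image of $(\begin{smallmatrix}1&1\\0&1\end{smallmatrix})$, which proves (ii), and since the stabilisation map carries $\tau$ to $\tau$ it also proves the $\Sp_{2g}(\bfZ)$-part of (v).

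For the theta group I would use that $q$ factors through the reduction $\bfZ^{2g}\to\bfF_2^{2g}$, so that $\Sp^q_{2g}(\bfZ)$ is the preimage under the surjection $\Sp_{2g}(\bfZ)\to\Sp_{2g}(\bfF_2)$ of the orthogonal group $\oO^+_{2g}(\bfF_2)$ of the Arf-invariant-zero form; its order accounts for the index $2^{2g-1}+2^{g-1}$ recorded in \cref{section:Wallsform}, and the kernel $\Sp_{2g}(\bfZ,2)$ of the reduction lies in $\Sp^q_{2g}(\bfZ)$ (again by the description in \cref{section:Wallsform}). The five-term exact sequence of the extension $1\to\Sp_{2g}(\bfZ,2)\to\Sp^q_{2g}(\bfZ)\to\oO^+_{2g}(\bfF_2)\to1$ then presents $\oH_1(\Sp^q_{2g}(\bfZ))$ as an extension of $\oH_1(\oO^+_{2g}(\bfF_2))$ by a quotient of the coinvariants $\oH_1(\Sp_{2g}(\bfZ,2))_{\oO^+_{2g}(\bfF_2)}$. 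The outer term I would read off from the Dickson invariant and the near-simplicity of $\Omega^+_{2g}(\bfF_2)$: it is $\bfZ/2$ for $g\ge3$ (e.g.\ $\oO^+_6(\bfF_2)\cong S_8$), $(\bfZ/2)^2$ for $g=2$ (where $\oO^+_4(\bfF_2)\cong S_3\wr\bfZ/2$), and $\bfZ/2$ for $g=1$. The inner term I would compute from Sato's determination of $\oH_1(\Sp_{2g}(\bfZ,2))$ together with a direct analysis of its orthogonal coinvariants. Matching these bounds with the explicit quotients below yields (i) for $\Sp^q_{2g}(\bfZ)$.

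To pin down the generators in (iii) and (iv) I would exhibit the relevant quotients directly. The $\bfZ/4$-summand is detected by the theta multiplier — the character $\Sp^q_{2g}(\bfZ)\to\mu_4$ governing the transformation of the genus-$g$ theta constant (equivalently a homomorphism factoring through $\Sp_{2g}(\bfZ/4)$) — under which the order-$4$ element $(\begin{smallmatrix}0&-1\\1&0\end{smallmatrix})$ supported on a hyperbolic plane maps to a generator; this also shows that it survives stabilisation. For $g=1$, $\Sp^q_2(\bfZ)$ is an explicit finite-index subgroup of $\SL_2(\bfZ)\cong\bfZ/4\ast_{\bfZ/2}\bfZ/6$, so its abelianisation follows from Bass--Serre theory; it is generated by $(\begin{smallmatrix}0&-1\\1&0\end{smallmatrix})$ and by the transvection square $(\begin{smallmatrix}1&2\\0&1\end{smallmatrix})=t_{e_1}^2$, giving $\bfZ/4\oplus\bfZ$ as claimed (a transfer argument against $\oH_1(\SL_2(\bfZ))\cong\bfZ/12$, of odd index $3$ here, serves as a cross-check). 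The extra $\bfZ/2$-summand for $g=2$ is detected by the pullback of the Dickson invariant of $\oO^+_4(\bfF_2)$, which is nontrivial on $(\begin{smallmatrix}S&0\\0&S\end{smallmatrix})$. All of this is compatible with stabilisation, so (v) for $g\ge2$ is immediate once the $\oO^+$-level statement is known, and for $g=1\to2$ the cokernel is exactly $\bfZ/2$ because the free generator $t_{e_1}^2=(\begin{smallmatrix}1&2\\0&1\end{smallmatrix})$ becomes a product of commutators in $\Sp^q_4(\bfZ)$ (no torsion-free part remains) while $(\begin{smallmatrix}S&0\\0&S\end{smallmatrix})$ contributes a new $\bfZ/2$.

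The main obstacle is the upper bound for the theta group: controlling the coinvariants $\oH_1(\Sp_{2g}(\bfZ,2))_{\oO^+_{2g}(\bfF_2)}$, which is a piece of $\bfF_2$-linear algebra in which the module structure of $\mathfrak{sp}_{2g}(\bfF_2)$ and Frobenius twists must be treated with care, together with the exceptional behaviour of $\oO^+_{2g}(\bfF_2)$ for $g=1,2$, which forces $\Sp^q_2(\bfZ)$ and $\Sp^q_4(\bfZ)$ to be handled essentially by hand. Showing that $t_{e_1}^2$ dies in $\oH_1(\Sp^q_4(\bfZ))$, rather than mapping into its $\bfZ/4$-summand, is the crux of the $g=1$ case of (v).
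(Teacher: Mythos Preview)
Your approach differs substantially from the paper's. The paper does not attempt any computation: it simply cites the literature for every claim. The isomorphism type of $\oH_1(\Sp^q_{2g}(\bfZ))$ is taken from Endres \cite{Endres} for $g\ge2$ and Weintraub \cite{Weintraub} for $g=1$; the generator of the $\bfZ/4$-summand from Johnson--Millson \cite{JohnsonMillson} (for $g\ge3$) and \cite[Cor.~2]{Weintraub} (for $g=1,2$); the generator of the $\bfZ$- and $\bfZ/2$-summands again from \cite{Weintraub} and the proof of \cite[Thm~2]{Endres}; and the vanishing of $\left(\begin{smallmatrix}1&2\\0&1\end{smallmatrix}\right)$ in $\oH_1(\Sp^q_4(\bfZ))$ from the explicit formulae in \cite[Cor.~2]{Weintraub}. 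For $\Sp_{2g}(\bfZ)$ the paper cites \cite{BensonCampagnoloRanickiRovi} and, for the nontriviality of $\oH_1(\Sp_2(\bfZ))\to\oH_1(\Sp_4(\bfZ))$, the proof of \cite[Thm~2.1]{GritsenkoHulek}. Your route through the extension $1\to\Sp_{2g}(\bfZ,2)\to\Sp^q_{2g}(\bfZ)\to\oO^+_{2g}(\bfF_2)\to1$, the theta multiplier, and the Dickson invariant is more conceptual and self-contained, and it correctly identifies the ingredients (indeed the theta-multiplier detection is essentially what \cite{JohnsonMillson} does). The trade-off is that the step you flag---computing the $\oO^+_{2g}(\bfF_2)$-coinvariants of $\oH_1(\Sp_{2g}(\bfZ,2))$ sharply enough to bound $\oH_1(\Sp^q_{2g}(\bfZ))$ from above---is genuine work and is not carried out in your sketch; without it you have only the lower bound $\bfZ/4$ (resp.\ $\bfZ/4\oplus\bfZ/2$, $\bfZ/4\oplus\bfZ$) from the explicit quotients. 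The paper sidesteps this entirely by appeal to \cite{Endres,Weintraub}.
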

\begin{proof}The fact that the abelianisation of $\Sp_{2}(\bfZ)=\SL_2(\bfZ)$ is generated by $\left(\begin{smallmatrix} 1 & 1 \\ 0 & 1 \end{smallmatrix} \right)$ and of order $12$ is well-known, and so is the isomorphism type of $\oH_1(\Sp_{2g}(\bfZ))$ for $g\ge2$ (see e.g.\,\cite[Lem.\,A.1 (ii)]{BensonCampagnoloRanickiRovi}). The remaining claims regarding $\Sp_{2g}(\bfZ)$ follows from showing that $\oH_1(\Sp_{2}(\bfZ))\ra\oH_1(\Sp_{4}(\bfZ))$ is nontrivial, which can for instance be extracted from the proof of \cite[Thm\,2.1]{GritsenkoHulek}: in their notation $\Gamma_1=\Sp_4(\bfZ)$ and the map $i_\infty\colon \SL_2(\bfZ)\ra\Gamma_1$ identifies with the stabilisation $\Sp_2(\bfZ)\ra\Sp_4(\bfZ)$. The isomorphism type of $\oH_1(\Sp^q_{2g}(\bfZ))$ for $g\ge2$ is determined in \cite[Thm 2]{Endres} and for $g=1$ in \cite[Thm 1]{Weintraub}. The first claim of (ii) follows from the main formula of \cite{JohnsonMillson} for $g\ge3$ and from \cite[Cor.\,2]{Weintraub} for $g=1,2$, which also gives the claimed generator of the $\bfZ$-summand in $\oH_1(\Sp^q_{2}(\bfZ))$. The proof of \cite[Thm 2]{Endres} provides the asserted generator of the $\bfZ/2$-summand in $\oH_1(\Sp^q_{4}(\bfZ))$. The final claim follows from the first four items once we show that the image of $\left(\begin{smallmatrix} 1 & 2 \\ 0 & 1 \end{smallmatrix} \right)$ in $\oH_1(\Sp^q_4(\bfZ))$ vanishes, which is another consequence of the formulas in \cite[Cor.\,2]{Weintraub}. 
\end{proof}

\begin{lem}\label{lemma:coinvariantssymplectic}
The (co)invariants of the standard actions of $\Sp^q_{2g}(\bfZ)$ and $\oO_{g,g}(\bfZ)$ on $\bfZ^{2g}\otimes A$ for an abelian group $A$ satisfy
\[\begin{gathered}(\bfZ^{2g}\otimes A)_{\Sp^q_{2g}(\bfZ)}\cong(\bfZ^{2g}\otimes A)_{\oO_{g,g}(\bfZ)}\cong\begin{cases}A/2&\mbox{if }g=1\\0&\mbox{if }g\ge2\end{cases}\quad\text{and}\\(\bfZ^{2g}\otimes A)^{\Sp^q_{2g}(\bfZ)}\cong(\bfZ^{2g}\otimes A)^{\oO_{g,g}(\bfZ)}\cong\begin{cases}\Hom(\bfZ/2,A)&\mbox{if }g=1\\0&\mbox{if }g\ge2\end{cases}.\end{gathered}\] The same applies to the action of $\Sp_{2g}(\bfZ)$, except that the (co)invariants also vanish for $g=1$.
\end{lem}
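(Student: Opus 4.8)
The statement is a routine computation with the standard modules, so I would treat the three families $\Sp^q_{2g}(\bfZ)$, $\oO_{g,g}(\bfZ)$, $\Sp_{2g}(\bfZ)$ in parallel, and reduce everything to the case $A=\bfZ$ by a universal-coefficient-style argument, since $\bfZ^{2g}\otimes A$ is the $A$-linearisation of $\bfZ^{2g}$; more precisely, for a finitely generated abelian group $M$ with $G$-action, $(M\otimes A)_G\cong M_G\otimes A$ and $(M\otimes A)^G$ is the kernel of $M\otimes A\to M\otimes A$ induced by (a presentation of) $M_G$, so both (co)invariants of $\bfZ^{2g}\otimes A$ are determined functorially by the single computation of the map $\bfZ^{2g}\to\bfZ^{2g}$ obtained from a set of generators of the relations. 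Concretely, once I know that $(\bfZ^{2g})_G$ is $0$ for $g\geq 2$ and $\bfZ/2$ for $g=1$ (in the first two cases) resp.\ $0$ always (for $\Sp_{2g}(\bfZ)$), the stated formulas for general $A$ follow: $(\bfZ^{2g}\otimes A)_G\cong (\bfZ^{2g})_G\otimes A$, and $(\bfZ^{2g}\otimes A)^G\cong\Hom((\bfZ^{2g})_G^\vee,A)$ where the relevant quotient is $\bfZ/2$ or $0$ — here one uses that $\bfZ^{2g}$ is $\bfZ$-free so that invariants inject into $\bfZ^{2g}\otimes A$ and are detected after tensoring a presentation of the coinvariants.

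**Key steps.** First I would fix the standard symplectic basis $(e_1,\dots,e_g,f_1,\dots,f_g)$ as in \cref{section:Wallsform}, so that $\Sp_{2g}(\bfZ)$ acts preserving $J_{g,-1}$ and $\Sp^q_{2g}(\bfZ)$ additionally preserves the quadratic refinement $q$. Second, for $\Sp_{2g}(\bfZ)$ I would exhibit enough elementary symplectic transvections $T_v(x)=x+\lambda(x,v)v$ to kill every basis vector in the coinvariants: for instance the transvection along $e_1$ sends $f_1\mapsto f_1+e_1$, so $e_1=0$ in the coinvariants, and symmetric/hyperbolic exchanges move this around to show all $e_i,f_i$ vanish — this already gives $(\bfZ^{2g})_{\Sp_{2g}(\bfZ)}=0$ for all $g\geq 1$, hence the last sentence of the lemma. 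Third, for $\Sp^q_{2g}(\bfZ)$ and $\oO_{g,g}(\bfZ)$ I would note that the transvection $T_v$ lies in the theta group precisely when $q(v)\equiv 0$, so only the squared transvections $T_v^2\colon x\mapsto x+2\lambda(x,v)v$ are automatically available; these give $2e_i=0$, $2f_i=0$ in the coinvariants, so $(\bfZ^{2g})_{\Sp^q_{2g}(\bfZ)}$ is a quotient of $(\bfZ/2)^{2g}$. For $g\geq 2$ I would then use a genuine theta-group element mixing two hyperbolic summands — e.g.\ the transvection along $e_1+f_2$, which has $q$-value $\lambda(e_1,f_2)=\pm1\equiv 1$... so instead I would use $e_1+e_2+f_1+f_2$ or a hyperbolic rotation interchanging $(e_1,f_1)$ with $(e_2,f_2)$ together with the sign changes $e_i\leftrightarrow f_i$ available in $\oO_{g,g}(\bfZ)$ — to produce the relation $e_1\equiv e_2$ etc., collapsing $(\bfZ/2)^{2g}$ down to a single $\bfZ/2$, and finally show that single $\bfZ/2$ is nonzero for $g=1$ but dies for $g\geq 2$ by producing an explicit element realising $e_1\mapsto e_1+f_1$ in a two-dimensional sub-configuration (available only once $g\geq 2$). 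For $g=1$ the group $\Sp^q_2(\bfZ)$ is generated by $\left(\begin{smallmatrix}0&-1\\1&0\end{smallmatrix}\right)$ and $\left(\begin{smallmatrix}1&2\\0&1\end{smallmatrix}\right)$ (cf.\ \cref{lemma:abelianisationSp}), and a direct check on these two generators shows the coinvariants of $\bfZ^2$ are exactly $\bfZ/2$ generated by $e_1$ (equivalently $f_1$), with $\oO_{1,1}(\bfZ)$ giving the same answer since it is generated by $\left(\begin{smallmatrix}-1&0\\0&-1\end{smallmatrix}\right)$ and $\left(\begin{smallmatrix}0&1\\1&0\end{smallmatrix}\right)$. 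Fourth, I would pass from coinvariants to invariants: since $\bfZ^{2g}$ is self-dual as a $\Sp_{2g}(\bfZ)$-module via $\lambda$ (and up to the evident twist for $\oO_{g,g}$), the invariants of $\bfZ^{2g}\otimes A$ are $\Hom$ of the coinvariants of the dual into $A$, yielding $\Hom(\bfZ/2,A)$ for $g=1$ and $0$ for $g\geq 2$; alternatively one checks directly that a fixed vector $\sum a_i e_i+\sum b_i f_i$ must be annihilated by all the (squared) transvections, forcing $2a_i=2b_i=0$ and then $a_i=b_i$, i.e.\ giving the $2$-torsion subgroup of $A$ for $g=1$ and $0$ otherwise.

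**Main obstacle.** Nothing here is deep; the only place requiring a little care is choosing, for each of the three groups, explicit matrices that genuinely lie in the subgroup in question (this is the real constraint for $\Sp^q_{2g}(\bfZ)$, whose transvections must respect $q$) and that simultaneously produce all the relations $2e_i=0$, $e_i=e_j$, and — crucially, only for $g\geq 2$ — a relation of the form $e_1=e_1+f_1$ collapsing the last $\bfZ/2$. I expect the genus bookkeeping in this final collapse, and verifying that the relevant element really preserves the quadratic form $q$, to be the fiddly step; everything else is immediate from the presentations recalled in \cref{section:Wallsform} and \cref{lemma:abelianisationSp}. I would organise the write-up so that the $g=1$ base case is checked by hand on generators and the $g\geq 2$ case is reduced to $g=2$ by the stabilisation maps (which are surjective on the relevant modules), keeping the explicit matrix computations to a minimum.
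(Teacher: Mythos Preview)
Your overall approach matches the paper's: exhibit explicit matrices to compute the coinvariants, pass to invariants via the self-duality of $\bfZ^{2g}$, and handle $g=1$ directly on generators. Two detail-level errors in your third step need fixing, and they sit precisely at what you flagged as the ``fiddly'' point.

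First, your criterion for $T_v\in\Sp^q_{2g}(\bfZ)$ is backwards. From
\[
q\bigl(T_v(x)\bigr)=q(x)+\lambda(x,v)^2\bigl(q(v)+1\bigr)\pmod 2
\]
one sees that $T_v$ preserves $q$ iff $q(v)=1$, not $q(v)=0$. In particular $T_{e_i}$ and $T_{f_i}$ are \emph{not} in the theta group, consistent with the $g=1$ coinvariants being nonzero.

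Second, your proposed collapse element for $g\geq2$ cannot exist: no element of $\Sp^q_{2g}(\bfZ)$ sends $e_1$ to $e_1+f_1$, since $q(e_1)=0$ but $q(e_1+f_1)=1$. Your alternative transvection vectors $e_1+f_2$ and $e_1+e_2+f_1+f_2$ both have $q$-value $0$ (note $\lambda(e_1,f_2)=0$, not $\pm1$), so by the corrected criterion the associated transvections are not in $\Sp^q$ either; and even if they were, acting with them only yields relations of the form $4[e_1\otimes a]=0$, which is already known. What actually does the job is not a symplectic transvection at all but the Siegel-type shear
\[
e_1\longmapsto e_1+e_2,\qquad f_2\longmapsto f_2-f_1,\qquad\text{other basis vectors fixed},
\]
which one checks directly lies in $\Sp^q_{2g}(\bfZ)$ and gives $[e_2\otimes a]=0$ in the coinvariants; combined with $[e_1\otimes a]=[e_2\otimes a]$ from the permutation matrices, this kills everything. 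This is precisely the third matrix in the paper's proof. With this correction the remainder of your outline---the squared-transvection step, the duality passage to invariants, and the $g=1$ check on the explicit generators of $\Sp^q_2(\bfZ)$ and $\oO_{1,1}(\bfZ)$---goes through and coincides with the paper's argument.
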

\begin{proof} We prove the claim for $\Sp_{2g}^{(q)}(\bfZ)$ first. The self-duality of $\bfZ^{2g}$ induced by the symplectic form implies
\[(\bfZ^{2g}\otimes A)^{\Sp^{(q)}_{2g}(\bfZ)}\cong\Hom(\bfZ^{2g},A)^{\Sp^{(q)}_{2g}(\bfZ)}\cong \Hom((\bfZ^{2g})_{\Sp^{(q)}_{2g}(\bfZ)},A),\] so the computation of the invariants is a consequence of that of the coinvariants. To settle the case $g\ge2$ for both $\Sp^q_{2g}(\bfZ)$ and $\Sp_{2g}(\bfZ)$, it thus suffices to prove that the coinvariants $(\bfZ^{2g}\otimes A)_{\Sp_{2g}^q(\bfZ)}$ with respect to the smaller group vanish. We consider the  matrices \[\left(\begin{smallmatrix}P_\sigma&0\\0&P_\sigma\end{smallmatrix}\right)\in\Sp_{2g}^q(\bfZ)\quad\text{for }\sigma\in\Sigma_g,\quad\left(\begin{smallmatrix}0&-I_g\\I_g&0\end{smallmatrix}\right)\in\Sp_{2g}^q(\bfZ),\quad\text{and}\quad \left(\begin{smallmatrix}1&0&&&&\\1&1&&&&\\&&I_{g-2}&&&\\&&&1&-1&\\&&&0&1&\\&&&&&I_{g-2}\end{smallmatrix}\right)\in\Sp_{2g}^q(\bfZ),\] which can be seen to be contained in the subgroup $\Sp_{2g}^q(\bfZ)\subset \Sp_{2g}(\bfZ)$ using the description in \cref{section:Wallsform}; here $I_g\in\GL_g(\bfZ)$ is the unit matrix and $P_\sigma\in\GL_g(\bfZ)$ the permutation matrix associated to $\sigma\in\Sigma_g$. It suffices to show that $e_i\otimes a$ and $ f_i\otimes a$ are trivial in the coinvariants for $1\le i\le g$ and $a\in A$, where $(e_1,\ldots,e_g,f_1,\ldots,f_g)$ is the standard symplectic basis of $\bfZ^{2g}$. Writing $[-]$ for the residue class of an element in the coinvaraints, acting with the permutation matrices shows $[e_i\otimes a]=[ e_j\otimes a]$ and $[f_i\otimes a]=[f_j\otimes a]$ for all ${1\le i,j\le g}$, so it suffices to prove $[e_1\otimes a]=[ f_1\otimes a]=0$. Using the second matrix, we see that $[ e_1\otimes a]=[f_1\otimes a]$ and $2[ e_1\otimes a]=0$, and finally the third matrix shows $[ e_1\otimes a]=[e_1\otimes a]+[ e_2\otimes a]=2[ e_1\otimes a]=0$, so the coinvariants are trivial. In the case $g=1$, the first part of the proof is still valid and shows that $(\bfZ^{2}\otimes A)_{\Sp_{2}^q(\bfZ)}$ and $(\bfZ^{2}\otimes A)_{\Sp_{2}^q(\bfZ)}$ are quotients of $A/2$ generated by $[ e_1\otimes a]=[ f_1\otimes a]$ for $a\in A$. For $\Sp_{2}(\bfZ)$, we may use $(\begin{smallmatrix}1&1\\0&1\end{smallmatrix})\in\Sp_{2}(\bfZ)$ to conclude $[ f_1\otimes a]=[ e_1\otimes a]+[f_1\otimes a]=2[e_1\otimes a]=0$, so the coinvariants vanish. For $\Sp^q_{2}(\bfZ)$, one uses that $\Sp^q_{2}(\bfZ)$ is generated by $(\begin{smallmatrix}1&2\\0&1\end{smallmatrix})$ and $(\begin{smallmatrix}0&1\\-1&0\end{smallmatrix})$ (see e.g.\,\cite[p.\,385]{Weintraub}) to see that the surjection $\bfZ^{2}\otimes A\ra A/2$ induced by adding coordinates is invariant and surjective, and thus induces an isomorphism \[(\bfZ^{2g}\otimes A)_{\Sp_{2g}^q(\bfZ)}\cong A/2,\] as claimed. The proof for $\oO_{g,g}(\bfZ)$ is almost identical to that for $\Sp_{2g}^q(\bfZ)$, except that one has to replace the second displayed matrix by $(\begin{smallmatrix}0&I\\I&0\end{smallmatrix})$, also act with $-I_{2g}$, and use that $\oO_{1,1}(\bfZ)$ is generated by $(\begin{smallmatrix}-1&0\\-1&0\end{smallmatrix})$ and $(\begin{smallmatrix}0&1\\1&0\end{smallmatrix})$.\end{proof}

\begin{lem}\label{lemma:symplecticlowdegreecohomology}The cohomology groups $\oH^*(\Sp_{2g}(\bfZ),\bfZ^{2g})$ and $\oH^*(\Sp^{q}_{2g}(\bfZ),\bfZ^{2g})$ with coefficients in the standard module are annihilated by $2$ and in low degrees given by
\[ \begin{array}{lll}
\oH^0(\Sp_{2g}(\bfZ);\bfZ^{2g})=0  &  \oH^0(\Sp^q_{2g}(\bfZ);\bfZ^{2g})=0  \\
\oH^1(\Sp_{2g}(\bfZ);\bfZ^{2g})\cong0  &  \oH^1(\Sp^{q}_{2g}(\bfZ);\bfZ^{2g})\cong\begin{cases}\bfZ/2&g=1\\0&g\ge2\end{cases} \\
\oH^2(\Sp_{2g}(\bfZ);\bfZ^{2g})\cong0\ \ \text{for }g=1&  \oH^2(\Sp^{q}_{2g}(\bfZ);\bfZ^{2g})\cong\bfZ/2\oplus\bfZ/2\ \ \text{for }g=1.
\end{array}
\]
\end{lem}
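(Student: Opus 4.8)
The plan is to organise the proof by cohomological degree: degrees $0$ and $1$ can be handled uniformly in $g$, while the two degree-$2$ statements (which are only asserted for $g=1$) require an explicit computation.

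\textbf{Annihilation by $2$ and degree $0$.} The matrix $-I_{2g}$ lies in both $\Sp_{2g}(\bfZ)$ and $\Sp^q_{2g}(\bfZ)$ — it preserves $q$ since $q(-x)=q(x)$ — is central, and acts on $\bfZ^{2g}$ as $-\mathrm{id}$. Conjugation by a central element induces the identity on group cohomology with any coefficients, but conjugation by $-I_{2g}$ also induces the coefficient automorphism it defines, namely $-\mathrm{id}$; hence $-\mathrm{id}=\mathrm{id}$ on $\oH^*(-;\bfZ^{2g})$, so all of these cohomology groups are annihilated by $2$. In degree $0$ we have $\oH^0(\Gamma;\bfZ^{2g})=(\bfZ^{2g})^\Gamma$, which by \cref{lemma:coinvariantssymplectic} (applied with $A=\bfZ$) vanishes for $\Gamma=\Sp_{2g}(\bfZ)$, and equals $\Hom(\bfZ/2,\bfZ)=0$ for $\Gamma=\Sp^q_{2g}(\bfZ)$.

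\textbf{Degree $1$.} I would use the short exact coefficient sequence $0\to\bfZ^{2g}\to\bfQ^{2g}\to(\bfQ/\bfZ)^{2g}\to0$ and its long exact cohomology sequence. The standard representation has no rational invariants, and $\oH^1(\Gamma;\bfQ^{2g})=0$: for $g\ge2$ this is Borel's vanishing theorem for arithmetic groups with nontrivial irreducible rational coefficients, and for $g=1$ it follows from the fact that $\Sp_2(\bfZ)=\SL_2(\bfZ)$ is virtually free and $\oH^{>0}(C;\bfQ^2)=0$ for each of its finite cyclic subgroups $C$, together with $\oH^0(C;\bfQ^2)=0$ (for $\Sp^q_2(\bfZ)$ one passes to $\SL_2(\bfZ)$ by Shapiro's lemma and checks that the relevant fixed spaces $(\bfQ^2\otimes\bfQ[\SL_2(\bfZ)/\Sp^q_2(\bfZ)])^C$ vanish). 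The connecting homomorphism then yields an isomorphism $\oH^1(\Gamma;\bfZ^{2g})\cong(\bfZ^{2g}\otimes\bfQ/\bfZ)^\Gamma$, and \cref{lemma:coinvariantssymplectic} with $A=\bfQ/\bfZ$ identifies the right-hand side, giving $0$ for $\Sp_{2g}(\bfZ)$ for all $g$, $\Hom(\bfZ/2,\bfQ/\bfZ)\cong\bfZ/2$ for $\Sp^q_2(\bfZ)$, and $0$ for $\Sp^q_{2g}(\bfZ)$ with $g\ge2$.

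\textbf{Degree $2$ for $g=1$.} This is the computational core. For $\Sp_2(\bfZ)=\SL_2(\bfZ)\cong\bfZ/4\ast_{\bfZ/2}\bfZ/6$ I would run the Mayer--Vietoris sequence of this amalgam with coefficients in the standard module $\bfZ^2$, reading off $\oH^*(\bfZ/4;\bfZ^2)$, $\oH^*(\bfZ/6;\bfZ^2)$ and $\oH^*(\bfZ/2;\bfZ^2)$ (the last with the sign action) from the periodic resolutions of these cyclic groups — the norm maps vanish in all three cases, so the groups are controlled by the integral matrices $S-I$, $T-I$ and $-2I$ — and then identify the restriction maps to the edge group $\langle-I\rangle$ to pin down $\oH^2$. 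For $\Sp^q_2(\bfZ)$, which has index $3$ in $\SL_2(\bfZ)$ and contains the principal congruence subgroup $\Sp_2(\bfZ,2)\cong F_2\times\bfZ/2$ with index $2$, I would either decompose it as a graph of groups via Bass--Serre theory applied to the tree of $\SL_2(\bfZ)$ (all edge stabilisers equal $\langle-I\rangle$, since $-I\in\Sp^q_2(\bfZ)$) and run Mayer--Vietoris again, or use the Lyndon--Hochschild--Serre spectral sequence of $1\to\Sp_2(\bfZ,2)\to\Sp^q_2(\bfZ)\to\bfZ/2\to1$, computing $\oH^*(\Sp_2(\bfZ,2);\bfZ^2)$ by the Künneth theorem after noting that, as a module over $F_2\times\bfZ/2$, $\bfZ^2$ is the external tensor product of the standard module of $F_2$ with the sign module of $\bfZ/2$.

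The general-$g$ and low-degree parts are essentially formal once \cref{lemma:coinvariantssymplectic} and Borel's vanishing theorem are available; the genuine work — and the most likely place for a slip — is the $g=1$, degree-$2$ computation for $\Sp^q_2(\bfZ)$: pinning down its precise group structure and keeping track of the restriction maps, respectively of the differentials and edge maps in the spectral sequence.
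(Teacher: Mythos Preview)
Your proof is correct, but the paper takes a more streamlined route in two places.

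For degree~$1$, rather than invoking Borel's vanishing theorem and analysing the virtually free case separately, the paper observes that since $\oH^*(\Sp^{(q)}_{2g}(\bfZ);\bfZ^{2g})$ is $2$-torsion and $\bfZ^{2g}$ is self-dual as a $\Sp^{(q)}_{2g}(\bfZ)$-module, the universal coefficient theorem with twisted coefficients gives directly
\[\oH^i(\Sp^{(q)}_{2g}(\bfZ);\bfZ^{2g})\cong\Ext^1_\bfZ\big(\oH_{i-1}(\Sp^{(q)}_{2g}(\bfZ);\bfZ^{2g}),\bfZ\big),\]
reducing degrees $\le1$ to the coinvariant computation of \cref{lemma:coinvariantssymplectic} without any appeal to Borel.

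For degree~$2$ at $g=1$, the paper runs the Lyndon--Hochschild--Serre spectral sequence of the \emph{central} extension $0\to\{\pm I\}\to\Sp_2^{(q)}(\bfZ)\to\PSp_2^{(q)}(\bfZ)\to0$: since $-I$ acts by $-1$ on $\bfZ^2$, one has $\oH^q(\bfZ/2;\bfZ^2)=0$ for $q$ even and $\cong\bfF_2^2$ for $q$ odd, so the spectral sequence collapses to an isomorphism $\oH^2(\Sp_2^{(q)}(\bfZ);\bfZ^2)\cong\oH^1(\PSp_2^{(q)}(\bfZ);\bfF_2^2)$. The right-hand side is then read off by Mayer--Vietoris from the free-product presentations $\PSp_2(\bfZ)\cong\bfZ/2\ast\bfZ/3$ and $\PSp_2^q(\bfZ)\cong\bfZ/2\ast\bfZ$. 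This is tidier than your proposal of working with the integral module directly over the amalgam, or over $\Sp_2(\bfZ,2)$: passing to $\bfF_2$-coefficients over the projective group sidesteps precisely the step you flag as delicate, namely pinning down a graph-of-groups structure for $\Sp_2^q(\bfZ)$ and tracking integral restriction maps.
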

\begin{proof}The negative of the identity matrix $-I\in\Sp^{(q)}_{2g}(\bfZ)$ is central and acts by $-1$ on $\bfZ^{2g}$, so the first claim follows from the ``centre kills trick" which is worth recalling: multiplication $g\cdot(-)\colon M\ra M$ by an element $g\in G$ of a discrete group $G$ on a $G$-module $M$ is equivariant with respect to the conjugation $c_g\colon G\ra G$ by $g$ and induces the identity on $\oH^*(G;M)$ by the usual argument. If $g\in G$ is central, conjugation by $g$ is trivial, so $\oH^*(G;M)$ is annihilated by the action of $(1-g)$ on $M$.

Since $\oH^*(\Sp^{(q)}_{2g}(\bfZ);\bfZ^{2g})$ is $2$-torsion, the self-duality of $\bfZ^{2g}$ combined with the universal coefficient theorem for nontrivial coefficients (see e.g.\,\cite[p.\,283]{Spanier}) implies \[\Ext^1_\bfZ(\oH_{i-1}(\Sp^{(q)}_{2g}(\bfZ);\bfZ^{2g}),\bfZ)\cong\oH^i(\Sp^{(q)}_{2g}(\bfZ);\bfZ^{2g}),\] so the computation of $\oH^*(\Sp^{(q)}_{2g}(\bfZ);\bfZ^{2g})$ for $*\le1$ is a consequence of \cref{lemma:coinvariantssymplectic}. 

From the Lyndon--Hochschild--Serre spectral sequence for the extension \[0\lra\{\pm1\}\lra\Sp_2^{(q)}(\bfZ)\lra\PSp_2^{(q)}(\bfZ)\lra0\] with coefficients in $\bfZ^2$, we see that  
\[\oH^{2}(\Sp^{(q)}_{2}(\bfZ);\bfZ^{2})\cong{\oH^1(\PSp^{(q)}_2(\bfZ);\bfF_2^2)}\] and the right hand side can easily be computed to be as claimed by applying the Mayer--Vietoris sequence to the presentation
\[\PSp_2(\bfZ)=\langle S,T\mid S^2=1,T^3=1\rangle\quad\text{and}\quad\PSp^q_2(\bfZ)=\langle S,R\mid S^2=1\rangle\] for \[S=\begin{pmatrix} 0 & -1 \\ 1 & 0 \end{pmatrix},\quad T=\begin{pmatrix} 0 & -1 \\ 1 & 1 \end{pmatrix},\quad\text{and}\quad R=\begin{pmatrix} 1 & 2 \\ 0 & 1 \end{pmatrix},\] which is well-known for $\PSp_2(\bfZ)\cong \PSL_2(\bfZ)$ and appears e.g.\,in \cite[p.\,385]{Weintraub} for $\PSp^q_2(\bfZ)$.
\end{proof}

\newcommand{\etalchar}[1]{$^{#1}$}
\providecommand{\bysame}{\leavevmode\hbox to3em{\hrulefill}\thinspace}

\providecommand{\href}[2]{#2}

\vspace{0.2cm}

\end{document}